\tikzstyle{startstop} = [rectangle, rounded corners, minimum width=3cm, minimum height=1cm, text centered, draw=black, fill=red!30]
\tikzstyle{io} = [trapezium, trapezium left angle=70, trapezium right angle=110, minimum width=3cm, minimum height=1cm, text centered, draw=black, fill=blue!30]
\tikzstyle{process} = [rectangle, minimum width=3cm, minimum height=1cm, text centered, draw=black, fill=orange!30]
\tikzstyle{arrow} = [thick,->,>=stealth]
\newcommand{\noi}{\noindent}
\newcommand{\hexagonal}[1]{\mathrel{\textcircled{\scriptsize$#1$}}}
\theoremstyle{plain}
\newtheorem{theorem}{Theorem}
\newtheorem{proposition}[theorem]{Proposition}
\newtheorem{lemme}[theorem]{Lemma}
\newtheorem{corollaire}[theorem]{Corollary}
\newtheorem{definition}[theorem]{Definition}
\newtheorem{remarque}[theorem]{Remark}
\numberwithin{equation}{section}
\numberwithin{theorem}{section}
\def\Im{\textrm{Im}}
\def\Re{\textrm{Re}}
\def\ov{\overline} 
\def\11{{\rm 1~\hspace{-1.4ex}l} }
\def\R{\mathbb R}
\def\C{\mathbb C}
\def\N{\mathbb N}
\def\E{\mathbb E}
\def\T{\mathbb T}
\def\S{\mathbb S}
\def\lg{\langle}
\def\rg{\rangle}
\def\wh{\widehat}
\def\e{\mathrm{e}}
\def\d{\mathrm{d}  }
\begin{document}

\title[Probabilistic well-posedeness for NLS on the $2d$ sphere I]{Probabilistic well-posedeness for the nonlinear Schr{\"o}dinger equation on the $2d$ sphere I: positive regularities}

\author{Nicolas Burq}
\address{Universit\'e Paris-Saclay, Laboratoire de Math\'ematique d’Orsay, UMR CNRS 8628, Orsay,
France, and Institut universitaire de France }
\email{nicolas.burq@universite-paris-saclay.fr}

\author{Nicolas Camps}
\address{Univ Rennes, IRMAR - UMR CNRS 6625, F-35000 Rennes, France}
\email{nicolas.camps@univ-rennes.fr}

\author{Chenmin Sun}
\address{CNRS, Universit\'e Paris-Est Cr\'eteil,  Laboratoire d'Analyse et de Math\'ematiques Appliqu\'ees, UMR CNRS 8050, Cr\'eteil, France}
\email{chenmin.sun@cnrs.fr}

\author{Nikolay Tzvetkov}
\address{Ecole Normale Sup\'erieure de Lyon, Unit\'e de Math\'ematiques Pures et Appliqu\'es,  UMR CNRS 5669, Lyon, France}
\email{nikolay.tzvetkov@ens-lyon.fr}

\subjclass[2020]{35Q55, 35A01, 35R01, 35R60, 37KXX}


\date{\today}

\begin{abstract} 
We establish the probabilistic well-posedness of the nonlinear Schr\"odinger equation on the $2d$ sphere $\mathbb{S}^{2}$. The initial data are distributed according to  Gaussian measures with typical regularity $H^{s}(\mathbb{S}^{2})$, for $s>0$. This level of regularity goes significantly beyond existing deterministic results, in a regime where the flow map cannot be extended uniformly continuously. 
\end{abstract}

\ \vskip -1cm  \hrule \vskip 1cm \vspace{-8pt}
 \maketitle 
{ \textwidth=4cm \hrule}

\maketitle
\setcounter{tocdepth}{1}
\tableofcontents

\section{Introduction}

\subsection{Context}
The study of the nonlinear Schr\"odinger equation (NLS) on general Riemannian manifolds was initiated in \cite{BGT}, following a series of works by Bourgain~\cite{B1} on the flat torus.
It was shown in \cite{BGT} that in the cases of Riemannian surfaces the Cauchy problem for the cubic NLS is locally well-posed in the Sobolev space $H^s$ with $s>\frac{1}{2}$. This result is of interest because the classical methods, based solely on Sobolev embeddings, yield well-posedness under the considerably stronger restriction $s>1$ (one half of the dimension). In the case of the sphere ${\mathbb{S}^2}$ the restriction $s>\frac{1}{2}$ was relaxed to $s>\frac{1}{4}$ in \cite{BGT2}. 
As proved in \cite{BGT_MRL}, the restriction $s>\frac{1}{4}$ is in a sense an optimal limit, given that it corresponds to the threshold of semi-linear well-posedness methods. 
The aim of this paper is to show that, in the case of the $2d$ sphere $\mathbb{S}^{2}$, it is possible to go beyond the $s>\frac{1}{4}$ threshold by randomizing the initial data in a Sobolev space of low regularity. This is in the spirit of the program initiated in~\cite{BT1/2}, which aims to study dispersive partial differential equations with random data (not necessarily related to an invariant measure) beyond the deterministic thresholds, where some instabilities are known to occur.
\subsection{Setup and main results}
Let $\mathbb{S}^2$ be the unit sphere in $\R^3$, endowed with the canonical metric, where we normalize the measure on $\mathbb{S}^2$ to adopt the convention $\int_{\mathbb{S}^2}1=1.$ The cubic NLS, posed on  $\mathbb{S}^2$, we consider in this article is\footnote{In this work, the Wick-ordered nonlinearity is only favorable when estimating the regularity for purely random objects. } 
\begin{equation}\label{NLScubic} 
i\partial_t u-(-\Delta+1) u=(|u|^2-2\|u\|_{L^2}^2)u,
\end{equation}
where $u\colon \R_t\times\mathbb{S}^2\rightarrow \C$ and $\Delta$ is the Laplace-Beltrami operator on $\mathbb{S}^2$. 
It is well-known that $-\Delta+1$ is a self adjoint operator on  $L^2(\mathbb{S}^2)$ with domain the Sobolev space $H^2(\S^2)$ and that its spectrum is discrete with eigenvalues $\lambda_n^2$ given by
	\[
\lambda_n^2=n^2+n+1,\quad n=0,1,2,\cdots
	\]
Moreover, $\lambda_n^2$ has multiplicity $2n+1$ and the corresponding eigenspace consists of the spherical harmonics of degree $n$ (i.e. the restriction of the harmonic 
polynomials of degree $n$ on $\R^3$ to $\S^2$). We denote by $E_{n}$ the eigenspace of $-\Delta+1$ associated with the eigenvalue $\lambda_n^2$.
\medskip

If $u$ is a solution of \eqref{NLScubic} then $v=\e^{it-2it\|u\|_{L^2}^2}u$ is a solution of
\begin{equation}\label{NLScubic_pak} 
i\partial_t v+\Delta v=|v|^2v
\end{equation}
which is the traditionally studied NLS. For a sake of convenience, in this article we will restrict to \eqref{NLScubic} but all results we obtain can be easily rephrased in the context of \eqref{NLScubic_pak}  thanks to the straightforward link between the two equations. 
\medskip

We denote by  $\lg f|g\rg:=\int_{\mathbb{S}^2}f\overline{g}$ the scalar product in $L^2(\S^2)$. For $n\geq 0$, we denote by $\pi_n$ the orthogonal projection of $L^2(\S^2)$ on $E_{n}$.  More precisely, if $(\mathbf{b}_{n,k})_{|k|\leq n}$ is an orthonormal basis of $E_{n}$ then
	\[ \pi_n(f)=\sum_{|k|\leq n}\lg f\;|
\mathbf{b}_{n,k}
\rg \mathbf{b}_{n,k},\quad \forall\, f\in L^2(\S^2)\,.
	\]
The $L^p$ mapping properties of $\pi_n$ and the precise concentration properties of its kernel will play an important role in the analysis below.  Since $\pi_n$ is invariant under the action of the rotation group $SO(3)$, the kernel of $\pi_n$ is invariant under the rotation,  hence  $\sum_{|k|\leq n}|\mathbf{b}_{n,k}(x)|^2$ is independent of $x\in\mathbb{S}^2$ (see e.g. \cite{BL}). Therefore 
\begin{align}\label{Weyl}
\sum_{|k|\leq n}|\mathbf{b}_{n,k}(x)|^2=\sum_{|k|\leq n}\|\mathbf{b}_{n,k}\|_{L^2(\S^2)}^2= 2n+1.
\end{align} 
If $f$ is decomposed as
$
f=\sum_{n\geq 0} \pi_n(f)
$
then the Sobolev norm $H^s(\S^2)$  of $f$ is equivalent to
\begin{equation}\label{Sob}
\sum_{n\geq 0}  \lambda_n^{2s}\|\pi_n f\|_{L^2}^2\,. 
\end{equation}
We next define the random initial data we will consider.  Fix a probability space $(\Omega,\mathcal{F},\mathbb{P})$ and a sequence of i.i.d. complex standard Gaussian random variables $(g_{n,k}(\omega))_{n\in\N,|k|\leq n}$ on it, i.e.
	\[ 
g_{n,k}(\omega)=\frac{\Re(g_{n,k})(\omega)+i\,\Im(g_{n,k})(\omega) }{\sqrt{2}}\,,
	\]
where $\Re(g_{n,k}),\Im(g_{n,k})$ are independent real-valued normal random variables on $(\Omega,\mathcal{F},\mathbb{P})$.
\medskip

Let again $(\mathbf{b}_{n,k})_{|k|\leq n}$ be an orthonormal basis of $E_{n}$. For $\alpha\in\R$, we set
\begin{equation}\label{phi_alpha}
 \phi_\alpha(x,\omega)=\sum_{n\geq 0}\frac{1}{\lambda_n^\alpha}\,\sum_{|k|\leq n}g_{n,k}(\omega)\mathbf{b}_{n,k}(x).
\end{equation}
Thanks to \eqref{Weyl} and \eqref{Sob}, we can easily compute the typical Sobolev regularity of $ \phi_\alpha(x,\omega)$. Namely
	\[
\E
\|
\phi_\alpha(\cdot,\omega)
\|_{H^s}^2
=
\sum_{n\geq 0}\lambda_n^{2s-2\alpha}(2n+1)\,,
	\]
which is finite if and only if $s<\alpha-1$. Therefore $\phi_\alpha(\cdot,\omega)$ belongs almost surely to the Sobolev space $H^s(\S^2)$ for $s<\alpha-1$. It can also be shown (see e.g. \cite[Lemma B.1]{BT1/2})  that   $\phi_\alpha(\cdot,\omega)$  almost surely does not belong  to $H^{\alpha-1}(\S^2)$.  We also observe that thanks to the  invariance of the Gaussians under rotations, we have that the law of  $\omega\mapsto  \phi_\alpha(\cdot,\omega)$ (as a random variable on $H^s(\S^2)$ for some $s<\alpha-1$) is independent of the choice of the basis $(\mathbf{b}_{n,k})_{|k|\leq n}$, and is denoted by $\mu_{\alpha}$.
\medskip

As detailed in \cite[Section 3.1]{BCLST} we can re-organize the sum \eqref{phi_alpha}:
\[
\phi_{\alpha}^{\omega}(x) = \sum_{n\geq0} \frac{1}{\widetilde{\lambda}_{n}^{(\alpha-\frac{1}{2})}}e_{n}^{\omega}(x)\,,
\]
where 
\[
e_{n}^{\omega}(x) = \frac{1}{\sqrt{2n+1}}\sum_{|k|\leq n}g_{n,k}(\omega)\mathbf{b}_{n,k}(x)\,,\quad \widetilde{\lambda}_{n} = \lambda_{n}\big(\frac{2n+1}{\lambda_{n}}\big)^{\frac{1}{2}}\,.
\]
Up to a factor 2, $\lambda_{n}$ and $\widetilde{\lambda}_{n}$ have the same asymptotic so, abusing notations, we keep writing 
\begin{equation}
\label{eq:en}
\phi_{\alpha}^{\omega}(x) = \sum_{n\geq0}\frac{1}{\lambda_{n}^{\alpha-\frac{1}{2}}}e_{n}^{\omega}(x)\,.
\end{equation}
Note that $e_{n}^{\omega}$ is a Gaussian vector on $E_{n}$ with $\mathbb{E}[e_{n}^{\omega}]=0$ and $\mathrm{Cov}(e_{n}^{\omega})=\frac{1}{2n+1}\mathrm{Id}_{E_{n}}$. 
\medskip

For $N\geq 0$, we define the projectors $\Pi_N$ and $\Pi_N^{\perp}$ by
	\[
\Pi_N=\sum_{n\leq N}\pi_n,\quad \Pi_N^{\perp}:=\mathrm{Id}-\Pi_N\,.
	\]
We denote by $v_N$ the solution of \eqref{NLScubic} with truncated initial data $v_N|_{t=0}=\Pi_N\phi_{\alpha}$:
\begin{equation}\label{trun_trun}
i\partial_t v_{N}-(-\Delta+1) v_{N}=|v_{N}|^2v_{N},\quad v_{N}\vert_{t=0}=\Pi_N \phi_\alpha(\cdot,\omega).
\end{equation}
The initial data in \eqref{trun_trun} is smooth and therefore, thanks to \cite{BGT},  $v_{N}$ is a (unique) globally defined smooth function on $\R\times\S^2$. We can now formulate our main result concerning the probabilistic well-posedness of \eqref{NLScubic} with data \eqref{phi_alpha}. 
\begin{theorem}\label{main:local} 
Let $\alpha>1$. There exists a set $\Sigma$ of full probability such that for every~$\omega~\in~\Sigma$ there exists $T_\omega>0$ such that  the sequence of smooth solutions $(v_{N})_{N\geq 1}$ in~{\eqref{trun_trun}} with the initial data $\Pi_N\phi_{\alpha}$, converges in $L^{\infty}([-T_\omega,T_\omega];H^s(\S^2))$ for $s<\alpha-1$ to a limit that satisfies the cubic NLS in a distributional sense on  $(-T_\omega,T_\omega)\times \S^2$.
\end{theorem}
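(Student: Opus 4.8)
The plan is to combine the decomposition of the solution into a rough random linear evolution plus a smoother nonlinear remainder, in the spirit of \cite{BT1/2}, with the deterministic trilinear machinery for cubic NLS on $\S^2$ from \cite{BGT2}. First I would observe that since $\alpha>1$ the series $\sum_{n\geq0}\lambda_n^{-2\alpha}(2n+1)$ converges, so $\phi_\alpha\in L^2(\S^2)$ almost surely; when $\alpha>\tfrac54$ the data even lies a.s.\ in $H^{s}(\S^2)$ for some $s>\tfrac14$ and the statement follows directly from the deterministic local theory and stability in $H^s$, $s>\tfrac14$, of \cite{BGT2}, so I may assume $1<\alpha\leq\tfrac54$ (in particular $\phi_\alpha\notin H^{1/4}(\S^2)$ a.s.). Since the mass $\|v_N(t)\|_{L^2}^2=\|\Pi_N\phi_\alpha\|_{L^2}^2$ is conserved, $\wt v_N(t):=\e^{2it\|\Pi_N\phi_\alpha\|_{L^2}^2}v_N(t)$ solves the Wick-ordered equation \eqref{NLScubic} with the same truncated data, and the a.s.\ convergent (for $\alpha>1$) gauge factor is peeled off at the end; I therefore work with \eqref{NLScubic}. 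Writing $f_N(t):=\e^{-it(-\Delta+1)}\Pi_N\phi_\alpha$ for the free evolution of the truncated random data and looking for $\wt v_N=f_N+w_N$, one gets $w_N|_{t=0}=0$ and
\begin{equation*}
i\partial_t w_N-(-\Delta+1)w_N=\big(|f_N+w_N|^2-2\|\Pi_N\phi_\alpha\|_{L^2}^2\big)(f_N+w_N),
\end{equation*}
whose right-hand side expands into the purely random term $\mathcal{N}(f_N):=\big(|f_N|^2-2\|f_N\|_{L^2}^2\big)f_N$, mixed terms carrying one or two copies of $w_N$, and the purely deterministic cubic term in $w_N$.

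Next I would solve for $w_N$ by a fixed-point argument in a ball of $X^{\sigma,b}_T$, the Bourgain space associated with $-\Delta+1$ on $\S^2$ restricted to $[-T,T]$ (as in \cite{BGT2}, with $b$ slightly above $\tfrac12$ or its $\ell^1$-modulation variant), for a fixed $\sigma>\tfrac14$. The purely deterministic cubic term is handled by the trilinear estimate of \cite{BGT2} --- built on the bilinear Strichartz inequality $\|\pi_{n}u\,\pi_{m}v\|_{L^2(\S^2)}\lesssim(\min(n,m))^{1/4}\|u\|_{L^2}\|v\|_{L^2}$ and its $X^{\sigma,b}$ upgrade --- with a factor $T^{\theta}$, $\theta>0$. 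The point of the whole construction is that the forcing $\mathcal{N}(f_N)$ and the mixed terms must still land in $X^{\sigma,b}_T$ with $\sigma>\tfrac14$ despite containing factors of the rough $f_N\notin H^{1/4}$, and this is exactly where the Wick ordering is used: $|f_N|^2f_N$ contains a ``resonant'' self-interaction which is a bounded random multiple of $f_N$ alone, hence only of regularity $H^{s}$ with $s<\alpha-1\leq\tfrac14$, and subtracting $2\|f_N\|_{L^2}^2 f_N$ removes its $x$-independent part, leaving in $\mathcal{N}(f_N)$ a genuinely non-resonant trilinear expression together with the product of a mean-zero random field with $f_N$, both of which are smoother. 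For the mixed terms I would use probabilistic bilinear estimates of the form ``multiplication by $f_N$ followed by the Duhamel operator maps $X^{\sigma,b}_T$ to itself almost surely, with operator norm $\leq C(\omega)T^{\theta}$'', applied to $g=w_N(\omega)$.

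All the required random estimates reduce, via Gaussian hypercontractivity on Wiener chaoses ($\|F\|_{L^p(\Omega)}\lesssim (p-1)^{k/2}\|F\|_{L^2(\Omega)}$ for $F$ in the chaos of order $k$) together with a Borel--Cantelli / large-deviation argument, to computations of second moments, and there two features special to $\S^2$ come into play. First, by \eqref{Weyl} the quantity $\E|\pi_n f_N(t,x)|^2=\lambda_n^{-2\alpha}(2n+1)$ is independent of $x$, so Gaussian superpositions of spherical harmonics do not concentrate: $f_N$ obeys Strichartz-type bounds $\|f_N\|_{L^p([-T,T]\times\S^2)}\lesssim \|\phi_\alpha\|_{H^{s}}$ for every finite $p$ and some $s<\alpha-1$, far beyond the deterministic range, and its products against fixed spherical harmonics likewise have improved $L^p$ norms. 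Second, after expanding $\mathcal{N}(f_N)$ in the basis $(\mathbf{b}_{n,k})$, taking the expectation of $\|\cdot\|_{X^{\sigma,b}}^2$ and using the independence of the $g_{n,k}$, the surviving frequency sum is controlled by the $L^p$ mapping properties of $\pi_n$ and the concentration of its kernel (Sogge-type bounds such as $\|\pi_n u\|_{L^4(\S^2)}\lesssim n^{1/8}\|\pi_n u\|_{L^2}$ and their bilinear refinements, as emphasised in the text above) together with the size of the resonant sets $\{(n_1,n_2,n_3,n_4):\ \lambda_{n_1}^2-\lambda_{n_2}^2+\lambda_{n_3}^2-\lambda_{n_4}^2=\text{prescribed}\}$, where the large consecutive gaps $\lambda_{n+1}^2-\lambda_n^2\asymp n$ work in our favour but the large multiplicities $2n+1$ of $E_n$ work against us.

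The main difficulty will be precisely this resonance analysis for the genuinely non-resonant trilinear part of $\mathcal{N}(f_N)$: unlike the torus $\T^2$ treated in \cite{B1}, there is no arithmetic lattice-point count, so one must instead exploit the geometry of spherical harmonics --- their concentration on great circles and caps and their Gaussian-beam profiles --- through sharp $L^p$ and bilinear estimates, in order to overcome the combined effect of highly degenerate eigenspaces and concentration; this is where the improvement past $s=\tfrac14$ is won and where the threshold $\alpha>1$ (equivalently $\phi_\alpha\in L^2$) is used. Granting these estimates, the contraction for $w_N$ runs on $[-T_\omega,T_\omega]$ with $T_\omega$ depending only on the a.s.\ finite random norms above. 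Since all the estimates are stable under the truncation $\Pi_N$ and, applied to the differences $f_N-f_M$ and $w_N-w_M$ (which involve only frequencies $>\min(N,M)$), yield Cauchy-ness, $(w_N)$ converges a.s.\ in $X^{\sigma,b}_{T_\omega}\hookrightarrow L^\infty([-T_\omega,T_\omega];H^\sigma)\subset L^\infty([-T_\omega,T_\omega];H^s)$; and since $f_N\to\e^{-it(-\Delta+1)}\phi_\alpha$ in $C([-T_\omega,T_\omega];H^s)$ a.s., the sequence $(v_N)$ converges in $L^\infty([-T_\omega,T_\omega];H^s)$ for all $s<\alpha-1$ and all $\omega$ in a set $\Sigma$ of full probability. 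Passing to the limit in the Duhamel formulation, each expanded term converging in its own space, shows that the limit satisfies $i\partial_t v-(-\Delta+1)v=|v|^2v$ in the distributional sense on $(-T_\omega,T_\omega)\times\S^2$, which is the cubic NLS \eqref{NLScubic} up to the explicit, conserved-mass gauge $v=\e^{-2it\|\phi_\alpha\|_{L^2}^2}\wt v$.
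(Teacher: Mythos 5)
Your proposal is exactly Bourgain's re-centering ansatz $\wt v_N=f_N+w_N$ with the remainder $w_N$ solved by contraction in $X^{\sigma,b}$, $\sigma>\frac14$, and on $\S^2$ this scheme breaks down for $1<\alpha\le\frac54$: this is the central obstruction that the paper is built to circumvent. The step that fails is your claim that after Wick ordering, $\mathcal{N}(f_N)$ and the mixed terms are ``smoother'' and land in $X^{\sigma,b}_T$ with $\sigma>\frac14$. The Wick subtraction $-2\|f_N\|_{L^2}^2f_N$ only removes the pairings between a conjugated and a non-conjugated \emph{input} frequency (the $(1,2)$ and $(2,3)$ pairings); it does not touch the high--low--low interaction $\mathcal{N}_{(0,1)}(f_{\mathrm{high}},f_{\mathrm{low}},f_{\mathrm{low}})=\sum_n\pi_n\big(\pi_nf_{\mathrm{high}}\cdot|f_{\mathrm{low}}|^{\diamond2}\big)$, in which the \emph{output} frequency is paired with the high input frequency. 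On $\T^2$ this interaction is killed by the zero-momentum condition of plane waves combined with Wick ordering, but on $\S^2$ there is no momentum conservation, and (as shown in \cite{BCLST}, quoted in the introduction) the Duhamel integration of this term gains no derivatives: the first Picard iterate with data $\phi_\alpha$ diverges in $H^{\alpha-1}$ rather than enjoying the almost $\frac12$-derivative smoothing available on flat tori. Consequently $w_N$ cannot be placed in any space smoother than the data itself, the ball in $X^{\sigma,b}_T$ with $\sigma>\frac14>\alpha-1$ is not preserved, and the contraction does not close. No refinement of the resonance analysis via Sogge or bilinear eigenfunction estimates can repair this, since the obstruction is an identified divergent term, not a lossy estimate.

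The paper's actual proof is quasilinear (non-perturbative), in the spirit of Bringmann and Deng--Nahmod--Yue: the singular interaction is \emph{kept on the left-hand side}. One introduces, for each dyadic block, the colored object $\psi_N$ solving the \emph{linear} equation $(i\partial_t+\Delta-1)\psi_N=2\Pi_N\mathcal{N}_{(0,1)}(\psi_N,u_{N/2},u_{N/2})$, encoded by the unitary random averaging operators $\mathcal{H}_n^N(t)$ acting on each eigenspace $E_n$; the probabilistic structure of $\psi_N$ (law invariance of $e_n^N(t)$, Wick cancellations exploiting unitarity, large-deviation bounds in vector-valued Fourier--Lebesgue spaces) replaces the explicit Gaussianity of $\e^{it\Delta}\phi_N^\omega$ that your scheme relies on. Only after subtracting $\psi_N$ does the remainder $w_N$ satisfy an equation free of the singular high--low--low interaction, and only then does one recover the $H^{\frac12+}$ smoothing and run a fixed point. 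If you want to see why your ansatz cannot be saved, the precise divergence computation is in \cite{BCLST}; the replacement ansatz is \eqref{eq:psiN}--\eqref{eq:wN} and Section \ref{sec:5} of the paper.
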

We give a more precise version in Theorem \ref{main:rigorous}.
For $\alpha>\frac{5}{4}$, the statement of Theorem \ref{main:local} is implied by the local well-posedness of  \eqref{NLScubic} in $H^s(\S^2)$, $s>\frac{1}{4}$, obtained in \cite{BGT2}.
For  $\alpha\in (1,\frac{5}{4}]$, Theorem \ref{main:local}  requires a proof that goes far beyond the analysis developed in \cite{BGT2}. In particular, it requires a proof of the result  of  \cite{BGT2} which is less dependent on the bilinear Strichartz estimates for the free evolution and which is sensitive to the smoothing effect coming from the modulations in the Bourgain spaces associated with the Schr\"odinger equation on the sphere.  These smoothing effects are not exploited in  \cite{BGT2} while they play a key role in the probabilistic well-posedness of  \eqref{NLScubic} when estimating the regularity of the stochastic objects involved in the construction of the solutions. 
\begin{remarque}
By considering Fourier-Lebesgue spaces,~\cite{GH08} proved (deterministic) local well-posedness for the derivative-NLS equation on $\R$ beyond the optimal regularity threshold. It seems that a similar semi-linear approach is unsuccessful on the sphere. One obstruction is that that the operator norm on $L^{q}(\S^{2})$ of the spectral projector $(\pi_{n})_{n}$ grows with $n$ when $q\neq2$.
 \end{remarque}

The proof of Theorem \ref{main:local} is based on a probabilistic resolution scheme originally developed by Bourgain in \cite{B3}. This scheme was further refined in recent years in \cite{Bringmann,CGI_JDE,GKO,DNY1,DNY2,DNY3,LW-preprint,Sun-Tz}. In the present work we will greatly benefit from these developments, especially \cite{Bringmann,DNY1}.
\medskip

The problem is more challenging on the sphere than on the torus due to the larger regularity gap between the Gibbs measure support and the optimal deterministic results.

Moreover, as demonstrated in~\cite{BCLST}, the analysis on $\S^{2}$ presents a significant challenge due to the divergence of the first nonlinear Picard iteration with data $\phi_{\alpha}$
in $H^{\alpha-1}(\S^{2})$. In the case of the flat tori, however, the first nonlinear Picard iteration nearly gains~$\frac{1}{2}$ derivative, as demonstrated in the pioneering work \cite{B3}. All subsequent probabilistic well-posedness results  on flat tori, or more generally in Euclidean spaces, rely on this smoothing property. \medskip

We point out that in our analysis of NLS with data  \eqref{phi_alpha} we are strongly inspired by \cite{BL}.  This allows us to use Weyl asymptotic ideas in the context of Khinchin and Wiener chaos type estimates. In contrast with the case of flat tori, a significant part of the analysis on $\S^2$ is performed in the physical space, requiring in particular the introduction of vector valued Fourier-Lebesgue spaces. We also point out that even in the local in time analysis we use propagation of measure global arguments on the high dimensional space $E_n$, $n\gg 1$. Arguments of this spirit are not presented in the analysis on flat tori.  
\medskip
 
The globalization of the solutions obtained in Theorem \ref{main:local}  is a challenging open problem.  It would also be interesting to decide whether Theorem \ref{main:local}  holds for more general randomized functions, in the spirit of \cite{BT1/2,BT_JEMS}. 
\medskip

In the forthcoming work \cite{BCST2}, we will consider the case $\alpha=1$. In the case $\alpha=1$ the statement of  Theorem \ref{main:local}  should be properly modified because the sequence $(v_N)_{N\geq 1}$ is not expected to converge. However, after introducing some new ideas, we will show that it does converge after a suitable renormalization.
Moreover, thanks to invariant measure considerations, we will show that the limit solutions are globally defined.
\subsection{Structure of the nonlinearity }
	
 Denote the wick cubic power 
 	\[
	\mathcal{N}(u):=|u|^2u-2\|u\|_{L^2}^2u\,.
	\]
 By abusing the notation, we denote $\mathcal{N}(\cdot,\cdot,\cdot)$ the canonical trilinear form such that $\mathcal{N}(u,u,u)=\mathcal{N}(u)$.
	\medskip

	We isolate the \emph{singular trilinear form}:
	\begin{align} \mathcal{N}_{(0,1)}(f,g,h):=&\sum_{n,n_2,n_3}\pi_{n}(\pi_{n}f\cdot (\pi_{n_2}g\diamond \pi_{n_3}h ) ), 
	\label{Ns}
	\end{align}
	where
	\begin{align}\label{defwicksquare}
		(g\diamond h)(x):=\ov{g}(x)h(x)-\lg h|g\rg
	\end{align}
	is the wick-product of functions $g$ and $h$. In particular,  $|g|^{\diamond 2}:=g\diamond g = |g|^2-\|g\|_{L^2}^2$. 
\medskip

	It was shown in \cite{BCLST}
	that, on $\mathbb{S}^2$, contrarily to the case of $\mathbb{T}^2$, Bourgain's re-centering ansatz
	\[
	u(t)=\mathrm{e}^{it(\Delta-1)}\phi_{\alpha}^{\omega}(x)+r(t)
	\]
	fails to solve NLS on $\mathbb{S}^2$, as $r(t)$ cannot be viewed as a remainder living in more regular spaces. Roughly speaking, the Duhamel integration of the high-low-low type resonant interaction
	\begin{equation}
	\label{eq:sing}
	 \mathcal{N}_{(0,1)}(u_{\mathrm{high}},u_{\mathrm{low}},u_{\mathrm{low}})
	\end{equation}
	does not gain any regularity. This compromise the standard semi-linear iteration scheme based on Picard's iterations. Note that in the flat case, the zero momentum condition (a property inherited from the structure of the plane waves) and the Wick ordering remove these interactions from the nonlinearity.
	\medskip
	
Inspired by \cite{DNY1}, our resolution is based on the random averaging operator ansatz.
	\subsection{Random averaging operator (RAO) ansatz} We now prepare and state in Theorem \ref{main:rigorous} the more precise statement of Theorem \ref{main:local}, in which we describe the precise structure of the limit solutions. For a dyadic integer $N\in 2^{\N}$, denote by
	\begin{align}\label{LittlewoodPaley}
	 \mathbf{P}_N=\Pi_N-\Pi_{\frac{N}{2}}
	\end{align}
the sharp dyadic projection.
	Let $u_N$ be the (smooth) solution of the Wick-ordered cubic Schrödinger equation:
	\begin{align}\label{NLS-N}
		i\partial_tu_N+(\Delta-1) u_N=\mathcal{N}(u_N),\quad  u_N|_{t=0}=\Pi_N\phi_{\alpha}^{\omega}\,.
	\end{align}
	Let $\psi_{N}$ be the solution of the linear equation
	\begin{align}\label{eq:psiN}
		(i\partial_t+\Delta-1)\psi_{N}=2\Pi_N\mathcal{N}_{(0,1)}(\psi_{ N},u_{\frac{N}{2}},u_{\frac{N}{2}}),\quad \psi_{ N}|_{t=0}=\phi_N^{\omega}:=\mathbf{P}_N\phi_{\alpha}^{\omega}.
	\end{align} 	
	For each $N$, with the knowledge of $u_{\frac{N}{2}}$, the properties of $\psi_{N}$ are encoded in the random averaging operator (RAO) denoted $\mathcal{H}_n^{N}(t)$ on $E_{n}$ and implicitly defined by
	\begin{equation}\label{Hndef}
		f_n\mapsto \mathcal{H}_n^{N}(t)(f_n)=\e^{-it\lambda_n^2}f_n -2i\int_0^t\e^{-i(t-t')\lambda_n^2}\pi_n(\mathcal{H}_n^{N}(t')(f_n)\cdot|u_{\frac{N}{2}}(t')|^{\diamond 2})dt'.
	\end{equation}
	Given an orthonormal basis $(\mathbf{b}_{n,k})_{|k|\leq n}$ of $E_{n}$, we can also define the random operator $\mathcal{H}_n^{N}(t)$ by its matrix elements 
	\begin{align}\label{matrixHnkp}
		H_{n;\ell,k}^{N}(t):=\lg\mathcal{H}_n^{N}(t)(\mathbf{b}_{n,k}) | \mathbf{b}_{n,\ell}\rg=\lg \mathbf{b}^{N}_{n,k}(t) |\mathbf{b}_{n,\ell} \rg,
	\end{align}
	where $\mathbf{b}_{n,k}^{N}(t,x)$ is the solution of 
	\begin{align}\label{ODE:rotation}
		(i\partial_t-\lambda_n^2)\mathbf{b}_{n,k}^{N}(t):=2\pi_n(\mathbf{b}_{n,k}^{N}(t)\cdot |u_{\frac{N}{2}}(t)|^{\diamond 2} ),\quad \mathbf{b}_{n,k}^{N}|_{t=0}=\mathbf{b}_{n,k}.
	\end{align}
	In other words,
	\[
	\mathcal{H}_n^{N}(t)(\mathbf{b}_{n,k})=\mathbf{b}_{n,k}^{N}(t)=\sum_{|\ell|\leq n}H_{n;\ell,k}^{N}(t)\mathbf{b}_{n,\ell}\,.
	\]
	It turns out that for all~$t~\in~\R$ the operator $\mathcal{H}_n^{N}(t)$ is an $L^2$-isometry of $E_{n}$.
	\medskip
	
	We can now express the \emph{colored} Gaussian variables $\psi_{ N}^{\omega}$, referred to as {\it terms of type $\mathrm{(C)}$}, as follows:
\begin{align}\label{psiNbyHnN} \psi_{ N}^{\omega}(t,x)
=\sum_{\frac{N}{2}< n\leq N}\pi_n\psi_{N}^{\omega}(t,x)
=\sum_{\frac{N}{2}< n\leq N}\frac{1}{\lambda_n^{\alpha-\frac{1}{2} } }e_n^{N}(t,x),
\end{align}
where given $\mathcal{H}_n^{N}$ the random averaging operator (RAO) defined in \eqref{Hndef}, we let
\begin{equation}
\label{def:eNn}
e_n^{N}(t,x):=\mathcal{H}_n^{N}(t)(e_n^{\omega}).
\end{equation}
	With these notations, for $x\in\S^{2}$,
	\[
	e_{n}^{N}(t,x)  = \frac{1}{\sqrt{2n+1}} \sum_{|k|\leq n} g_{n,k}(\omega)\mathbf{b}_{n,k}^{N}(t,x)\,, \quad e^{N}_n(0,x)=e_n^\omega(x)\,.
	\]
	When the context is clear we drop the dependence on $\omega$ from the notations. 
	\medskip
	
	Heuristically, the colored object $\psi_{N}$ captures the singular interaction and it can be viewed as a correction of the Bourgain's re-centering ansatz:
	\begin{equation}
	\label{eq:stru}
	\psi_{ N}(t)= \e^{it(\Delta-1)}\phi_N^{\omega}+\e^{it(\Delta-1)}\Theta_{N}(t)(\phi_N^{\omega})+\mathrm{remainder},
	\end{equation}
	where $\Theta_{N}(t)$ is the linear operator (with the knowledge of $u_{\frac{N}{2}}$) defined by
	\[ \phi\mapsto 
	-i\Pi_N\int_0^t\e^{i(t-t')(\Delta-1)}\mathcal{N}_{(0,1)}(\mathbf{P}_N\phi,u_{\frac{N}{2}},u_{\frac{N}{2}})(t')dt'\,.
	\]
To describe the smooth solution $u_{N}$ we decompose
	\[
	u_N=\sum_{M\leq N}v_M,\; v_M:=u_M-u_{\frac{M}{2}}\,,
	\] 
	and we impose
	\begin{equation}
	\label{eq:ans}
	 v_M:=\psi_M+w_M\,.
	\end{equation}
	The remainders $w_M$, referred to as {\it terms of type $\mathrm{(D)}$}, will be estimated in a functional space that can be embedded into $C_{\mathrm{loc}}(\R_t,H^{\frac{1}{2}+}(\mathbb{S}^2))$.
	Since 
	\[
	\Pi_N\mathcal{N}_{(0,1)}(\psi_N,u_{\frac{N}{2}},u_{\frac{N}{2}})=\mathcal{N}_{(0,1)}(\psi_N,u_{\frac{N}{2}},u_{\frac{N}{2}})\,,
	\]
	the equation for $w_N$ reads:
	\begin{align}\label{eq:wN}
		(i\partial_t+\Delta-1)w_N=&\mathcal{N}(u_{\frac{N}{2}}+v_N)-\mathcal{N}(u_{\frac{N}{2}})-2\Pi_N\mathcal{N}_{(0,1)}(\psi_{ N},u_{\frac{N}{2}},u_{\frac{N}{2}}) \\
		=&2\big(\mathcal{N}(v_N,u_{\frac{N}{2}},u_{\frac{N}{2}})-\mathcal{N}_{(0,1)}(\psi_{ N},u_{\frac{N}{2}},u_{\frac{N}{2}})\big)
		+\mathcal{N}(u_{\frac{N}{2}},v_N,u_{\frac{N}{2}})
		\notag \\
		\notag
		+&2\mathcal{N}(v_N,v_N,u_{\frac{N}{2}})+\mathcal{N}(v_N,u_{\frac{N}{2}},v_N)+\mathcal{N}(v_N).
	\end{align}
	We see from the equation above that the ansatz \eqref{eq:ans} removes the most singular high-low-low type interaction from the equation solved by the remainder $w_{N}$.
	
	\medskip
	
	
	After solving \eqref{eq:wN}, the full ansatz can be written as
	\begin{equation}
	\label{fullansatz}
	u_N =\psi_{\leq N} + w_{\leq N}\,,\quad 
	\psi_{\leq N}:=\sum_{L\leq N}\psi_L,\quad w_{\leq N}=\sum_{L\leq N}w_L.
	\end{equation}
	Formally taking $N\rightarrow\infty$, the local structure of \eqref{NLScubic} is given by
	\[
	u=\psi_{\leq \infty}+w_{\leq\infty}\,.
	\]
	We are now ready to formulate the more precise statement of the local well-posedness Theorem \ref{main:local}.
	\begin{theorem}\label{main:rigorous} 
		Let $\alpha>1$. There exist absolute constants $C_1>c_1>0$ and $\delta_0>0$,  such that the following statements hold. For $R\geq 1$, let $\tau_{R}:=R^{-C_{1}}$. There exists a measurable set $\Sigma_{R}\subset H^{\alpha-1-}(\mathbb{S}^2)$ with 
	\[
	\mu_{\alpha}(\Sigma_R^c)<C_1\e^{-c_1R^{\delta_0}}\,,
	\] 
	such that for all $\phi\in\Sigma_{R}$, the sequence of smooth solutions $u_N$ of \eqref{NLS-N} with initial data $\Pi_{N}\phi$ converges to $u$ in $C([-\tau_R,\tau_R];H^{\alpha-1-}(\mathbb{S}^2))$. 
	\medskip
	
	Moreover, the limit $u$ is solution to the Wick-ordered cubic Schrödinger equation and can be written
			\[
			u(t)=\psi_{\leq\infty}(t)+w_{\leq\infty}(t)\,,
			\]
			where, for some $s_0>\frac{1}{2}$,
			\[
			\psi_{\leq\infty}\in C([-\tau_R,\tau_R];\mathcal{C}^{(\alpha-1)-}(\mathbb{S}^2))\quad, w_{\leq\infty}\in C([-\tau_R,\tau_R];H^{s_0}(\S^2))\,.
			\]
	\end{theorem}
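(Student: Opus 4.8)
The plan is to build the limiting object $u = \psi_{\le\infty} + w_{\le\infty}$ directly from its dyadic constituents $(\psi_L, w_L)_L$, by running a single fixed-point argument over all dyadic scales $L \le N$ with constants uniform in $N$, and then letting $N \to \infty$. The two families are placed in different scales of spaces: the colored Gaussians $\psi_L$ of type $(\mathrm C)$ --- equivalently the rotated profiles $e_n^L = \mathcal H_n^L(e_n^\omega)$ of \eqref{def:eNn} --- are measured in a Bourgain-type norm at regularity $\alpha - 1-$ which also records the random-averaging-operator structure \eqref{Hndef}, while the remainders $w_L$ of type $(\mathrm D)$ live in a Bourgain space $X^{s_0,b}(\R\times\S^2)$ adapted to $i\partial_t + \Delta - 1$, with $s_0 > \tfrac12$ and $b$ slightly above $\tfrac12$, so that $X^{s_0,b}\hookrightarrow C_t H^{s_0}$. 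The point of the ansatz \eqref{eq:ans}--\eqref{fullansatz} is that the system \eqref{eq:psiN}, \eqref{eq:wN} is essentially \emph{triangular} in the dyadic parameter: with $u_{N/2}$ known, \eqref{eq:psiN} is linear in $\psi_N$ and is solved by the isometries $\mathcal H_n^N$, after which \eqref{eq:wN} determines $w_N$; the only coupling is through $u_{N/2} = \sum_{M\le N/2}(\psi_M + w_M)$, so the estimates close by induction/bootstrap on $N$. Shrinking the time to $\tau_R = R^{-C_1}$ is the standard device to absorb the powers of $R$ produced by the probabilistic bounds.

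The first main ingredient is the set of probabilistic estimates defining $\Sigma_R$. On an event of probability $\ge 1 - C_1\e^{-c_1 R^{\delta_0}}$ one needs, uniformly in $N$: bounds on the $\mathcal C^{0-}(\S^2)$ norm of $\psi_N$ and of the operator $\Theta_N(t)$ of \eqref{eq:stru}, and bounds on the various multilinear stochastic quantities built from $\psi_N$, $u_{N/2}$ that force \eqref{eq:wN}. The mechanism, following \cite{BL}, is that $\mathcal H_n^N(t)$ being an $L^2$-isometry of $E_n$, the rotated family $(\mathbf b_{n,k}^N(t))_{|k|\le n}$ is again an orthonormal basis of $E_n$, so the diagonal kernel $\sum_{|k|\le n}|\mathbf b_{n,k}^N(t,x)|^2$ still equals $2n+1$, exactly as in \eqref{Weyl}; hence the Weyl-asymptotics and Khinchin input that governs the regularity of the free Gaussian $\phi_\alpha$ transfers essentially verbatim to $\psi_N$ once $u_{N/2}$ is controlled by the next step. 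What is genuinely new is the conditional structure exploited in \cite{DNY1}: $\mathcal H_n^N$ depends only on the randomness $(g_{m,k})_{m\le N/2}$, while $e_n^\omega$ uses the independent block $(g_{n,k})_{N/2<n\le N}$, so all of these estimates are run conditionally on the low-frequency $\sigma$-algebra, using independence. Wiener-chaos estimates then promote the conditional moment bounds to the stretched-exponential tail, yielding $\mu_\alpha(\Sigma_R^c) < C_1\e^{-c_1 R^{\delta_0}}$.

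The second ingredient is deterministic: the refinement of \cite{BGT2} announced after Theorem \ref{main:local}. One needs bilinear and trilinear estimates for products of spherical-harmonic packets in the Bourgain space $X^{s,b}$ which, unlike those of \cite{BGT2}, are \emph{sensitive to the modulation smoothing} --- roughly, a gain of a fixed positive power of the reciprocal of the largest modulation in the non-resonant frequency interactions of $\mathcal N$ --- so that the quadratic, cubic and paired contributions to \eqref{eq:wN} can be placed at regularity $s_0>\tfrac12$. The decisive structural cancellation is that the ansatz \eqref{eq:ans} removes exactly the resonant high-low-low interaction \eqref{eq:sing}: writing $v_N = \psi_N + w_N$ in the first line of \eqref{eq:wN}, the difference $\mathcal N(v_N,u_{N/2},u_{N/2}) - \mathcal N_{(0,1)}(\psi_N,u_{N/2},u_{N/2})$ leaves only $\mathcal N_{(0,1)}$ with the smoother $w_N$ in the high slot, the non-resonant remainder $(\mathcal N - \mathcal N_{(0,1)})(v_N,u_{N/2},u_{N/2})$, and lower-order pieces, each of which is controlled either by the modulation gain or by the probabilistic gain of the previous step. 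A portion of this analysis, in particular the handling of $\mathcal N_{(0,1)}$ and of the kernel of $\pi_n$, is performed in physical space via vector-valued Fourier--Lebesgue spaces.

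With these two ingredients, one defines, for each $N$, the solution map $(\psi_L,w_L)_{L\le N}\mapsto(\widetilde\psi_L,\widetilde w_L)_{L\le N}$, where $\widetilde\psi_L$ solves \eqref{eq:psiN} for the given $u_{L/2}$ and $\widetilde w_L$ solves \eqref{eq:wN}, and checks that on $\Sigma_R$ and for $|t|\le\tau_R$ it contracts a ball of radius $\sim R^{C}$ in the colored space times $X^{s_0,b}$. Summing the geometric dyadic series gives $\psi_{\le\infty}\in C_t\mathcal C^{0-}(\S^2)$ and $w_{\le\infty}\in C_t H^{s_0}(\S^2)$, hence $u\in C_tH^{\alpha-1-}(\S^2)$; uniformity in $N$ gives the convergence $u_N\to u$, and passing to the limit in \eqref{NLS-N} identifies $u$ as a distributional solution of the Wick-ordered cubic NLS. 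The main obstacle is exactly the phenomenon isolated in \cite{BCLST}: the Duhamel integral of \eqref{eq:sing} gains \emph{no} regularity, so --- contrary to $\T^2$, where one leans on the $\tfrac12$-derivative smoothing of the first Picard iterate --- there is nothing to exploit, which forces the RAO correction and, with it, the delicate simultaneous bookkeeping of the conditional Wiener-chaos bounds and the modulation estimates: keeping $w_N$ strictly above $H^{1/2}$ using only the crude isometry information available on $\mathcal H_n^N$, and controlling the propagation of the Gaussian measure under the rotations $\mathcal H_n^N$ globally on the high-dimensional spaces $E_n$, $n\gg1$.
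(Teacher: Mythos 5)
Your proposal is correct and follows essentially the same route as the paper: a triangular induction over dyadic blocks in which the unitarity and $\mathcal{B}_{\leq N/2}$-measurability of the random averaging operators let the Weyl/Khinchin and conditional Wiener-chaos bounds propagate to the colored terms $\psi_N$, while modulation-sensitive trilinear estimates close the fixed point for $w_N$ in $X^{s_0,b}$, $s_0>\frac12$, on a time interval $\tau_R=R^{-C_1}$. The only place your sketch is a little optimistic is the claim that the Weyl/Khinchin input transfers ``essentially verbatim'': for fixed $t$ this is exactly the law-invariance argument, but for the time-averaged Fourier--Lebesgue norms the law invariance is unavailable and one must instead run the large-deviation estimates through operator norms of the (extended, adjoint) RAOs, which is where the $S_n^{q,\gamma}$/$S_n^{q,\gamma,*}$ framework and the dagger extensions of Section \ref{sec:5} become essential.
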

In the above statement we denoted $\mathcal{C}^{(\alpha-1)-}(\S^{2})=\bigcap_{\beta<\alpha-1}\mathcal{C}^{\beta}(\S^{2})$, where $\mathcal{C}^{\beta}(\S^{2})$ is the space of $\beta$-Hölder continuous functions.
\medskip

\subsection{Main ideas of the proof} With the objects introduced earlier we can now explain in more details the ingredients of our proofs. 
\medskip

Having identified the singular interactions in \cite{BCLST} we now have to construct an adapted refined ansatz in order to show the strong convergence of the regularized solutions $(u_{N})_{N}$, and to prove the decomposition claimed in Theorem \ref{main:rigorous}. As mentioned earlier the overall strategy is inspired from both \cite{Bringmann} and \cite{DNY1}.

\medskip

\noindent {$-$ \it Non-perturbative resolution scheme.} 
For the derivative nonlinear wave equation considered in \cite{Bringmann}, Picard's iterations are not regularizing and this compromises the standard semi-linear approach of~\cite{B3,BT1/2}. Instead, \cite{Bringmann} developed a probabilistic quasi-linear (or non-perturbative) resolution scheme.  It has been known since \cite{BGT_MRL} that solving deterministically NLS on $\S^{2}$ when $s\leq\frac{1}{4}$
would require a quasi-linear ansatz too. We proved in~\cite{BCLST} that it is also the case when the initial data are randomly distributed, thus confirming that a resolution scheme like in \cite{Bringmann} was needed to go beyond the Cauchy theory achieved in \cite{BGT2} twenty years ago. 
\medskip

\noindent{$-$ \it Ansatz.} To address the Gibbs measure problem with general power type nonlinearities for NLS on $\T^{2}$, \cite{DNY1} identified the structure of the probabilistic objects in the resolution scheme of \cite{Bringmann}. This formulation, particularly the random averaging operators (RAO) as expressed in \eqref{fullansatz} in our context, provides the foundations for our ansatz. Leveraging  our understanding of the singular interactions \cite{BCLST}, we arrive at the precise description \eqref{fullansatz} of the limit solutions. A notable aspect of our work is that the RAO are not regularizing: in the limit $N\to\infty$,
\[
\psi_{N}(t) - \e^{it(\Delta-1)}\phi_{N}^{\omega}
\] 
does not exhibit smoother properties than the initial data. This is in contrast with the previous works \cite{DNY1,DNY2,DNY3} and \cite{LW-preprint,Sun-Tz}. In particular, the random tensor theory introduced in \cite{DNY2} to capture the smoothing effect in the Picard iterations does not seem to improve the (RAO) method for NLS on $\S^{2}$.  
\medskip

\noindent {$-$ \it Random objects of type $\mathrm{(C)}$.} These terms, which are not smoother but display a probabilistic structure, encode the singular interactions that contribute to the main part of the effective nonlinear dynamics. In our context, they are a superposition of time-dependent eigenfunctions $e_{n}^{N}(t)$ obtained by applying the operator $\mathcal{H}_{n}^{N}(t)$ (unitary on $E_{n}$) to the initial Gaussian spherical harmonics $e_{n}^{\omega}$, as expressed in \eqref{psiNbyHnN}.

In the present work, the rigorous construction of the random averaging operators necessitates the functional framework established in Section \ref{sec:4} and the multilinear estimates from Section \ref{sec:6}.

Another new aspect of our analysis is the use of the law invariance stated in Lemma \ref{lemme:law}. This property comes from the unitary structure of the RAO \cite{DNY3} and the probabilistic independence between the RAO and the high-frequency component of the initial data, first observed in \cite{Bringmann}.

We use the law-invariance to prove pointwise-in-time estimates for resonant interactions as well as $L^{\infty}(\mathbb{S}^2)$-bounds for time-dependent eigenfunctions $e_n^N(t)$. However, when we use  Fourier-restriction norms to exploit the time-oscillations, we have to consider averaged in time quantities for which we cannot use the law-invariance. Instead, we prove large deviation bounds in Section \ref{sec:large-dev} that allow to capture the~$L^{\infty}(\S^{2})$-bounds in the Fourier-Lebesgue norm of $e_{n}^{N}(t)$.

 \medskip
 
 \noindent{$-$ \it Remainder terms of type $\mathrm{(D)}$. } These smoother terms denoted $w_{N}$ solve the equation \eqref{eq:wN}.  We prove semi-classical energy estimates stated in \eqref{eq:bo-r}, which imply the convergence of $(w_{N})_{N}$ at regularity $H^{s_{0}}(\S^{2})$ for some $s_{0}>\frac{1}{2}$. In particular, we recover the gain of almost $\frac{1}{2}$ derivatives obtained by Bourgain on $\T^{2}$.  
 \medskip
 
Since some parts of the trilinear estimates could not be performed using general  linear or bilinear Strichartz estimates, 
we must carefully analyze the nonlinear interactions in Fourier-restriction spaces in order to exploit the time oscillations. This required the development of a more flexible analysis going much beyond the methods used in \cite{BGT2}.
\medskip
  
In Section \ref{sec:CCC} we derive the trilinear probabilistic estimates for the  (C)(C)(C) terms, and the deterministic estimates for the interactions with at least one term of type~(D) in Section \ref{sec:det}, respectively. For the (C)(C)(C) interactions we also cover the case $\alpha=1$ (corresponding to the Gaussian free field on $\S^{2}$), which explains the length of Section~\ref{sec:CCC}. 
 \medskip

Finally, we note that our proof of Theorem \ref{main:rigorous} can be strengthened to get a stability result in the spirit of \cite[Proposition 5.5]{DNY1}, which is useful in order to iterate in time the local well-posedness result.

\subsection{Remark on a completely resonant system} For the following completely resonant system
\begin{align}\label{eq:res}
i\partial_tu+\Delta u=\sum_{n,m}\pi_n(\pi_nu|\pi_mu|^2),\quad (t,x)\in\mathbb{R}\times\mathbb{S}^2,
\end{align}
one easily verifies that for every $s\in\mathbb{R}$, the $H^s(\S^{2})$-norm of the smooth solution $u(t)$ to~\eqref{eq:res} is conserved. Such conservation laws lead to invariant measures of the formal form 
\[
\frac{1}{Z}\exp\Big(-\frac{1}{2}\|u\|_{H^\alpha(\S^{2})}^2\Big)du\,.
\]
As a by-product of our analysis for proving Theorem \ref{main:rigorous} (as well as the stability statement), together with Bourgain's invariant measure argument, we have the following almost sure global well-posedness result for the completely resonant system of the sphere, at positive regularity:
\begin{theorem}\label{thm:resonant}
Let $\alpha>1$. There is a set $\Sigma$ of full probability such that for every $T>0$, $\omega\in \Sigma$, the sequence of smooth solution $(u_N)_{N\geq 1}$ of \eqref{eq:res} with initial data $\Pi_N\phi_{\alpha}^{\omega}$ converges in $L^{\infty}([-T,T];H^{\beta}(\mathbb{S}^2)), \beta<\alpha-1$ to a limit which satisfies \eqref{eq:res} in a distributional sense on $[-T,T]\times\mathbb{S}^2$.
\end{theorem}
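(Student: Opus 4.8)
\textbf{Proof proposal for Theorem \ref{thm:resonant}.}

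The plan is to combine the local-in-time machinery already developed for Theorem \ref{main:rigorous} with Bourgain's invariant measure argument, exploiting the special structure of \eqref{eq:res}. First I would observe that the completely resonant nonlinearity $\sum_{n,m}\pi_n(\pi_n u\,|\pi_m u|^2)$ has the same block structure as the singular trilinear form $\mathcal{N}_{(0,1)}$ singled out in \eqref{Ns}: it only couples a frequency block $E_n$ to itself through the (rotation invariant) potential $\sum_m|\pi_m u|^2$. Consequently the RAO ansatz \eqref{fullansatz}–\eqref{Hndef} applies essentially verbatim, with the operator $\mathcal{H}_n^N(t)$ now built from the full resonant potential rather than $|u_{N/2}|^{\diamond 2}$; in particular $\mathcal{H}_n^N(t)$ is still an $L^2$-isometry of $E_n$, so the law-invariance Lemma \ref{lemme:law} and the type (C)/type (D) decomposition carry over. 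This yields, by the same Sections \ref{sec:4}–\ref{sec:det} estimates, a local well-posedness statement for \eqref{eq:res} on a random time interval $[-\tau_R,\tau_R]$ with $\tau_R=R^{-C_1}$, together with the quantitative large-deviation bound $\mu_\alpha(\Sigma_R^c)<C_1\e^{-c_1R^{\delta_0}}$ and the accompanying stability statement in the spirit of \cite[Proposition 5.5]{DNY1}.

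Next I would record the key structural fact specific to \eqref{eq:res}: for every $s\in\R$ the quantity $\sum_n\langle\lambda_n\rangle^{2s}\|\pi_n u_N(t)\|_{L^2}^2$ is conserved by the truncated flow \eqref{eq:res}. Indeed, testing the equation against $\langle\lambda_n\rangle^{2s}\pi_n u_N$ and summing, the contribution of the nonlinearity is $\sum_n\langle\lambda_n\rangle^{2s}\,2\Re\,i\langle\pi_n u_N\,|\pi_m u_N|^2\,,\,\pi_n u_N\rangle=0$ since $\langle f\cdot V\,|\,f\rangle\in\R$ when $V=\sum_m|\pi_m u_N|^2$ is real. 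In particular the Gaussian measure $\mu_\alpha$ — which at the level of the truncated system $\Pi_N$ is (up to normalization) $\frac1Z\exp(-\tfrac12\|\Pi_N u\|_{H^\alpha}^2)\,du$ restricted to $E_{\le N}$ — is invariant under the finite-dimensional flow of $\Pi_N\eqref{eq:res}\Pi_N$. (Liouville's theorem applies because the truncated vector field is divergence-free, again using the skew-Hermitian structure of the nonlinearity on each block.)

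With invariance in hand I would run Bourgain's globalization scheme. Fix $T>0$. Using the invariance of $\mu_\alpha$ under the truncated flow together with the local theory, one controls the measure of the set of data for which $\|u_N(t)\|_{H^{\alpha-1-}}$ stays bounded by $R$ on $[-T,T]$: iterating the local step $\sim T/\tau_R=TR^{C_1}$ times and summing the exceptional probabilities $C_1\e^{-c_1R^{\delta_0}}$ gives a set $\Sigma_{R,T}$ with $\mu_\alpha(\Sigma_{R,T}^c)\lesssim TR^{C_1}\e^{-c_1R^{\delta_0}}\to0$ as $R\to\infty$. Here the stability statement is what allows the local solutions on consecutive intervals to be glued and the convergence of $(u_N)_N$ to be propagated; the conservation law above is what keeps the relevant norm from growing along the iteration, so no logarithmic loss accumulates. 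Setting $\Sigma=\bigcup_{T\in\N}\bigcup_{R\in\N}\bigcap_{R'\ge R}\Sigma_{R',T}$ (a full-measure set) and intersecting with the full-measure set from Theorem \ref{main:rigorous}, for every $\omega\in\Sigma$ and every $T>0$ the sequence $u_N$ converges in $C([-T,T];H^\beta(\S^2))$, $\beta<\alpha-1$, to a limit solving \eqref{eq:res} distributionally.

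The main obstacle I anticipate is not the measure-theoretic globalization, which is by now standard once invariance and a good local theory with quantitative exceptional probabilities are available, but rather verifying that the entire RAO apparatus of Sections \ref{sec:4}–\ref{sec:CCC} — the Fourier-restriction and vector-valued Fourier-Lebesgue estimates, the large-deviation bounds for $e_n^N(t)$, and above all the semiclassical energy estimates \eqref{eq:bo-r} for the type (D) remainder — goes through for the resonant potential $V=\sum_m|\pi_m u|^2$ in place of $|u_{N/2}|^{\diamond 2}$. The resonant nonlinearity is in fact \emph{more} favorable in one respect (it is exactly of the high-low-low block form, so there are no genuinely off-diagonal interactions to estimate in $w_N$), but one must check that the absence of Wick ordering on the diagonal term does not spoil the probabilistic regularity count for the type (C) objects; this is exactly the point flagged in the paper's footnote, and it is why the case $\alpha=1$ is excluded here and why $\alpha>1$ suffices. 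I would therefore carry out the proof by first isolating, in a short lemma, the precise list of properties of the potential $V$ used in Sections \ref{sec:4}–\ref{sec:det} (reality, rotation invariance of each $\pi_m$, the $L^p$–$L^2$ mapping bounds for $\pi_n$ from \eqref{Weyl}), then checking that $V=\sum_m|\pi_m u_N|^2$ satisfies all of them with constants uniform in $N$, and finally invoking the already-proven estimates as black boxes.
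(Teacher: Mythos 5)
Your proposal follows exactly the route the paper intends (and only sketches): the resonant nonlinearity is precisely the high-low-low block form handled by the RAO machinery, the blockwise conservation of $\|\pi_n u\|_{L^2}^2$ gives invariance of the truncated Gaussian measures, and Bourgain's globalization argument does the rest. Your identification of the role of $\alpha>1$ in absorbing the non-Wick-ordered diagonal (whose divergence from the Wick-ordered one is just the conserved constant $\|u\|_{L^2}^2$ times a linear term, removable by a gauge) is also consistent with the paper's remarks.
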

We stress out that for $\alpha\leq\frac{5}{4}$, we do not know how to prove Theorem \ref{thm:resonant} without using the machinery elaborated in the proof of Theorem \ref{main:rigorous}.
	
	\subsection*{Acknowledgments}
	This research was supported by the European research Council (ERC) under the European Union’s Horizon 2020 research and innovation programme (Grant agreement 101097172 - GEOEDP). C.S and N.T. were partially supported by the ANR project Smooth ANR-22-CE40-0017.
	N.C. benefited from the support of the Centre Henri Lebesgue ANR-11-LABX-0020-0, the region Pays de la Loire through the project MasCan and the ANR project KEN ANR-22-CE40-0016.

	\section{Dictionary of global notations}
	
	We summarize generic notations and some notations for important objects.

	\begin{itemize}
		
		\item {\it Estimates:} We will adopt the generic PDE notations $f\lesssim g, f\gtrsim g$, and $ f\sim g$ if $f\lesssim g$ and $f\gtrsim g$. If we want to emphasize the dependence of a parameter $A$ in the implicit constants, we will write as $f\lesssim_A g$. These notations will be often used in the proof, but we will also use $C, C_A$.
		\medskip

		\item {\it Probability}: Given a fixed probability space $(\Omega,\mathcal{F},\mathbb{P})$, $\mathcal{B}_{\leq N}$ denotes the $\sigma$-algebra generated by Gaussians $(g_{n,k})_{|k|\leq n, n\leq N}$, and $\mathcal{B}_N$ denotes the $\sigma$-algebra generated by Gaussians $(g_{n,k})_{|k|\leq n, \frac{N}{2}<n\leq N}$. Given a sub-$\sigma$ algebra $\mathcal{B}$ of $\mathcal{F}$ and $p\geq 1,$  denote by
		\[
		\|X(\omega)\|_{L_{\omega}^p|\mathcal{B}}:= \big(\mathbb{E}[|X(\omega)|^p|\mathcal{B}]\big)^{\frac{1}{p}}
		\]
		the conditional $L^p$-moment of a random variable $X(\omega)$ with respect to $\mathcal{B}$.
		\medskip
		
		\item {\it Operators:} For Banach spaces $\mathcal{X},\mathcal{Y}$, we denote by $\mathcal{L}(\mathcal{X},\mathcal{Y})$ the space of bounded linear operators from $\mathcal{X}$ to $\mathcal{Y}$. If $\mathcal{X}=\mathcal{Y}$, we denote  it simply  by $\mathcal{L}(\mathcal{X})$.
		
		\medskip
			
		\item {\it Dyadic frequencies:}  Capital numbers $N,M,L,N_j,\cdots$
		always refer to dyadic integers, when they appear as subscript/superscript of functions/operators as well as the summation. Given a set of dyadic numbers $N_1,N_2,N_3,\cdots$, we denote by $N_{(1)},N_{(2)}, N_{(3)},\cdots$ the non-increasing ordering among them. In particular, $N_{(1)}\geq N_{(2)}\geq N_{(3)}\geq\cdots$. 
		\medskip
		
		\item {\it Projections:} For $n\in\N$, $\pi_n: L^2(\S^2)\rightarrow E_n$ is the orthonogonal projection on the eigenspace $E_n$ of the eigenvalue $\lambda_n^2=n^2+n+1$. The space $E_n$ is identified with the $L^2$ topology. Given $f\in E_n$ and $1\leq r\le\infty$ we denote by $\|f\|_{L^r(E_n)}:=\|f\|_{L^r(\S^2)}$.
		For dyadic integers $N$, $\Pi_N,\mathbf{P}_N$ defined in \eqref{LittlewoodPaley} are the (sharp) Littlewood-Paley projectors.
				
		\medskip

		\item{\it Important objects:} $u_N$ is the solution of the truncated NLS equation \eqref{NLS-N} with initial data $\Pi_N\phi^{\omega}$, and $\psi_{N}$ is the solution of \eqref{eq:psiN} with initial data $\mathbf{P}_N\phi^{\omega}$.  $\mathcal{H}_n^N(t)$ is the random averaging operator (RAO). 
		
		\medskip

		\item {\it Time cut-off functions}: $\chi,\chi_1,\varphi, \eta\cdots$. For $T>0$, we set 		\[
		\chi_T(\cdot):=\chi(T^{-1}\cdot)\,.
		\]		
		
		\medskip

		\item For Lebesgue spaces $L^q$, we will use the conjugate exponent $q'=\frac{q}{q-1}$ (with the convention that $q'=1$ if $q=\infty$ and $q'=\infty$ if $q=1$).  
		
		\medskip

		\item {\it Generic small parameters:} We reserve a very small parameter $\sigma\in(0,2^{-100})$ throughout this article and define several related large/small parameters $q_{\sigma}, \gamma_{\sigma},\delta_{\sigma},\theta_{\sigma},b_{\sigma}$ around some specific numbers such that: 
		\begin{align}
		\label{smallparameters} 
		\frac{1}{q_{\sigma}}&=\sigma\,,\quad  
		1-\gamma_{\sigma}=\sigma-\sigma^{10}\,,\quad
		1-\gamma_{1,\sigma}=\sigma-\sigma^{15}\,,\\ 
		\label{smallparameters1} 
		\delta_{\sigma}&=\sigma^{20}\,, 
		\ 
		s_{\sigma}=\alpha-\frac{1}{2}-100\sigma, \quad \theta_{\sigma}=\sigma^5\,.
		\intertext{The parameter $b_{\sigma}$ is restricted to the interval}
		\label{smallparameters2} 
		\frac{1}{2}&<b_{\sigma}<\frac{1}{2}+\theta_{\sigma}\,.
		\intertext{When $\alpha>1$ we suppose that $\sigma$ is small enough (depending on $\alpha-1$) so that}
		\label{smallparameters3} 
		s_{\sigma} &= \alpha-\frac{1}{2} -\frac{2}{q_{\sigma}} -\delta_{\sigma} - 100\sigma > \frac{1}{2} + 100\sigma\,,
		\quad
		\end{align}
		Note that $1<\gamma_{1,\sigma}q_{\sigma}'<\gamma_{\sigma}q_{\sigma}'$. The hierarchy of the smallness is needed to estimate deterministic objects into the Fourier-restriction type spaces, for technical reason. In summary, we have the following hierarchy of parameters:
		\[
		0<\delta\ll \gamma_{1,\sigma}-\frac{1}{q_{\sigma}'}\ll \gamma-\frac{1}{q_{\sigma}'}\ll \theta\ll \frac{1}{q_{\sigma}}\ll s_{\sigma}-\frac{1}{2}\ll 1\,.
		\]
			
		\item {\it Special symbols:}
		\medskip
		
		\begin{itemize}
			\item[(a)]
			For simplicity, later we will write 
			\begin{align}\label{defapproxi}
				n\approx L\quad \iff\quad \frac{L}{2}<n\leq L\,.
			\end{align}
			
			\item[(b)] The wick square $|f|^{\diamond 2}$ is defined in \eqref{defwicksquare} and the exotic products $f\hexagonal{=} g$ and $f\hexagonal{\neq }g$ are defined in \eqref{def:hexagonal} 
			
		\end{itemize}
		
	\end{itemize}
Notation for the pairing and non-pairing products:  we decompose
\begin{align}\label{WickNonlinearity} 
	\mathcal{N}(u,u,u)=\mathcal{N}^{(1)}(u,u,u)+\mathcal{N}^{(2)}(u,u,u)+\mathcal{N}^{(3)}(u,u,u),
\end{align}
where
\begin{align}
	&\mathcal{N}^{(1)}(f_1,f_2,f_3)=\sum_{\substack{n_1,n_2,n_3\\
			n_2\neq n_1,n_2\neq n_3 }}\pi_{n_1}f_1\ov{\pi_{n_2}f_2}\pi_{n_3}f_3, \notag  \\
	&\mathcal{N}^{(2)}(f_1,f_2,f_3)=\sum_{\substack{n_1,n_2\\ n_1\neq n_2
	}}\big[\pi_{n_2}f_3\ov{\pi_{n_2}f_2}-\lg\pi_{n_2}f_3|\pi_{n_2}f_2\rg\big]\pi_{n_1}f_1 \notag \\&\hspace{2.5cm}+
	\sum_{\substack{n_2,n_3\\ n_2\neq n_3
	}}\big[\pi_{n_2}f_1\ov{\pi_{n_2}f_2}-\lg\pi_{n_2}f_1|\pi_{n_2}f_2\rg\big]\pi_{n_3}f_3,\notag \\
	&\mathcal{N}^{(3)}(f_1,f_2,f_3)=\sum_{n}\big[\pi_{n}f_1\ov{\pi_{n}f_2}\pi_{n}f_3-\lg \pi_{n}f_1|\pi_{n}f_2\rg\pi_nf_3-\lg\pi_nf_3|\pi_nf_2\rg\pi_nf_1 \big]. \notag
\end{align}
As the wick product is not always needed, we will also use the notation
\[
\mathcal{N}^{(0)}(f_1,f_2,f_3):=f_{1}\overline{f_{2}}f_{3} = \sum_{n_0,n_1,n_2,n_3}\pi_{n_0}(\pi_{n_1}f_1\pi_{n_2}\ov{f}_2\pi_{n_3}f_3) \,.
\]
We will add $(\,,\,)$ to present paired, and $[\,,\,]$ to present non-paired  constraints that appear as subscripts of $\mathcal{N}^{(j)}(f_1,f_2,f_3),\ j=0,1,2,3$. For example,
for $j,k~\in~\{0,1,2,3\}$,
\[
\mathcal{N}^{(0)}_{(j,k)}(f_1,f_2,f_3)
	=
	\sum_{n_0,n_1,n_2,n_3}\mathbf{1}_{n_j=n_k}\pi_{n_0}(\pi_{n_1}f_1\pi_{n_2}\ov{f}_2\pi_{n_3}f_3)\,.
\]
If several pairing conditions are satisfied simultaneously, we write them as accumulated subscripts. For example,
\[ \mathcal{N}^{(0)}_{(0,1)(2,3)}(f_1,f_2,f_3)=\sum_{n_0,n_1,n_2,n_3}\mathbf{1}_{n_0=n_1}\mathbf{1}_{n_2=n_3}\pi_{n_0}(\pi_{n_1}f_1\pi_{n_2}\ov{f}_2\pi_{n_3}f_3).
\]
For non-pairing constraints, we express the condition $n_j\neq n_k$ as subscript $[j,k]$. For example, 
\begin{align} \mathcal{N}^{(0)}_{[j,k]}(f_1,f_2,f_3)=\sum_{n_0,n_1,n_2,n_3}\mathbf{1}_{n_j\neq n_k}\pi_{n_0}(\pi_{n_1}f_1\pi_{n_2}\ov{f}_2\pi_{n_3}f_3).
\end{align}
The completely non-paired product can be represented by the subscript $[0,1,2,3]$:
\[ \mathcal{N}^{(0)}_{[0,1,2,3]}(f_1,f_2,f_3):=\sum_{\substack{n_0,n_1,n_2,n_3\\\mathrm{distinct}} } \pi_{n_0}(\pi_{n_1}f_1\pi_{n_2}\ov{f}_2\pi_{n_3}f_3).
\]
Similarly, if a pairing condition $(j,k)$ and a non-pairing condition $[l,i]$ simultaneously appears, we will use the accumulate subscript $(j,k)[l,i]$:
\[ \mathcal{N}^{(0)}_{(j,k)[l,i]}(f_1,f_2,f_3)=
\sum_{\substack{n_0,n_1,n_2,n_3\\
		n_j= n_k,n_l\neq n_i
} } \pi_{n_0}(\pi_{n_1}f_1\pi_{n_2}\ov{f}_2\pi_{n_3}f_3).
\]

	\section{Eigenfunction estimates and probabilistic Sobolev embedding}
	\label{sec:3}
	
	\subsection{Eigenfunction estimates on \texorpdfstring{$\S^2$}{S2}}
	
	The next two propositions are true on any Riemannian compact surface. 
	
	\begin{proposition}[Eigenfunction estimate, \cite{sogge0}]\label{Sogge} There exists $C>0$ such that for all $n\geq1$ and $f\in L^{2}(\S^{2})$,  
		\[ \|\pi_nf\|_{L^p(\mathbb{S}^2)}
		\leq C 
			\begin{cases}
			& n^{\frac{1}{2}(\frac{1}{2}-\frac{1}{p})}\|\pi_{n}f\|_{L^{2}(\S^{2})}
			\,,\quad 2\leq p\leq 6\,,   \\
			& n^{\frac{1}{2}-\frac{2}{p}}
			\|\pi_{n}f\|_{L^{2}(\S^{2})}
			\,,\quad 6\leq p\leq\infty\,.
		\end{cases}
		\]
	\end{proposition}
	
	\begin{proposition}[Bilinear eigenfunction estimate, \cite{BGT2}]\label{bilinearBGT} 
		There exists $C>0$ such that for all $n_1\geq  n_2\geq1$, every $f,g\in L^2(\mathbb{S}^2)$,
		\[ \|\pi_{n_1}f\cdot \pi_{n_2}g\|_{L^2(\mathbb{S}^2)}\leq  Cn_2^{\frac{1}{4}}
		\|\pi_{n_{1}}f\|_{L^2(\mathbb{S}^2)}
		\|\pi_{n_{2}}g\|_{L^2(\S^2)}\,.
		\]
	\end{proposition}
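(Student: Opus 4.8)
The plan is to reduce the bilinear estimate to the known $L^4$ eigenfunction bound on $\mathbb{S}^2$ together with an orthogonality (almost-orthogonality) argument in frequency. First I would recall that by Proposition \ref{Sogge} with $p=4$ one has $\|\pi_n f\|_{L^4(\mathbb{S}^2)} \leq C n^{1/8}\|f\|_{L^2}$. A direct application of Cauchy--Schwarz would give $\|\pi_{n_1}f\cdot\pi_{n_2}g\|_{L^2} \leq \|\pi_{n_1}f\|_{L^4}\|\pi_{n_2}g\|_{L^4} \leq C n_1^{1/8}n_2^{1/8}\|f\|_{L^2}\|g\|_{L^2}$, which is worse than the claimed bound (it has $n_1^{1/8}$ instead of being independent of the high frequency $n_1$). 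So the point of the proposition is precisely to remove the dependence on the larger index $n_1$ — this is where genuine bilinear interaction must be exploited.

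The key step is a $TT^*$/duality argument: expanding $\|\pi_{n_1}f\cdot\pi_{n_2}g\|_{L^2}^2$ against a test function $h\in L^2$, one writes $\langle \pi_{n_1}f\cdot\pi_{n_2}g, h\rangle = \langle \pi_{n_1}f, \overline{\pi_{n_2}g}\cdot h\rangle$, so everything is controlled once one bounds $\|\pi_{n_1}(\overline{\pi_{n_2}g}\cdot h)\|_{L^2}$. Since $\pi_{n_2}g$ lives in the eigenspace $E_{n_2}$ of dimension $2n_2+1$, multiplication by it spreads frequencies only by $O(n_2)$ (the product of a degree-$n_2$ spherical harmonic with a degree-$m$ one decomposes into degrees $|m-n_2|,\dots,m+n_2$ by the Clebsch--Gordan rule). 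Therefore $\pi_{n_1}(\overline{\pi_{n_2}g}\cdot h)$ only sees the part of $h$ in the frequency window $[n_1-n_2, n_1+n_2]$, a band of width $O(n_2)$. Combining this localization with the $L^3$--$L^{3/2}$ version of Sogge's estimate (or an $L^4\times L^4\to L^2$ Hölder estimate applied carefully on each dyadic-in-$n_2$ block) and summing the $O(n_2/1)$ worth of contributing frequencies $n$ with Cauchy--Schwarz in $n$ produces the gain: instead of $n_1^{1/8}$ one collects a factor controlled by $n_2$ raised to a power, and optimizing the exponents yields $n_2^{1/4}$.

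The main obstacle I anticipate is making the frequency-band/almost-orthogonality argument quantitatively sharp enough to land exactly on the exponent $\frac14$ rather than something weaker: one must track the Clebsch--Gordan support precisely, use the correct Sogge exponent on $\mathbb{S}^2$ (the $L^3$ estimate $\|\pi_n f\|_{L^3}\leq C n^{1/12}\|f\|_{L^2}$ together with a dual $L^{3/2}$ bound is the natural pairing since $\frac13+\frac13+\frac13=1$), and control the summation over the $O(n_2)$ admissible output frequencies without losing powers. An alternative, and probably cleaner, route — which is in fact the original approach of \cite{BGT2} — is to exploit the explicit structure of spherical harmonics via the representation of $\pi_n$ as an oscillatory integral operator with kernel expressed through Legendre functions, and to prove the bilinear bound by stationary phase, using the dispersive decay of the Legendre kernel; this avoids the delicate orthogonality bookkeeping at the cost of a more computational argument. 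I would present the $TT^*$-plus-Sogge argument as the conceptual proof and refer to \cite{BGT2} for the oscillatory-integral details if the exponent proves stubborn.
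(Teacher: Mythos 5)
The paper offers no proof of this proposition: it is quoted verbatim from \cite{BGT2}, so your fallback (reproduce or cite the oscillatory--integral argument there) is exactly what the authors do. The problem is with the route you present as the ``conceptual proof''. The duality--plus--Clebsch--Gordan--plus--Sogge scheme cannot reach the exponent $n_2^{1/4}$; the arithmetic does not merely become ``stubborn'', it fails by a definite margin. Concretely, after writing $\langle \pi_{n_1}f\cdot\pi_{n_2}g,h\rangle=\langle \pi_{n_1}f,\overline{\pi_{n_2}g}\,h\rangle$ and decomposing $h=\sum_m\pi_m h$, only the $O(n_2)$ frequencies $m\in[n_1-n_2,\,n_1+n_2]$ contribute, and you must bound $\sum_{|m-n_1|\le n_2}\|\pi_{n_2}g\cdot\pi_m h\|_{L^2}$. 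Any H\"older splitting $\|\pi_{n_2}g\|_{L^p}\|\pi_m h\|_{L^q}$ with $\frac1p+\frac1q=\frac12$, followed by Sogge's bound and Cauchy--Schwarz over the $O(n_2)$ values of $m$, yields $n_1^{\sigma(q)}n_2^{\sigma(p)+\frac12}$, where $\sigma$ denotes the Sogge exponent: the only way to remove the power of the \emph{high} frequency is $q=2$, $p=\infty$, which gives $n_2^{\frac12+\frac12}=n_2$, while $p=q=4$ gives $n_1^{1/8}n_2^{5/8}$. The dual variant you mention ($\|\pi_{n_1}\|_{L^{3/2}\to L^2}\lesssim n_1^{1/12}$ applied to $\overline{\pi_{n_2}g}\,h$ with $h$ kept in $L^2$) gives $n_1^{1/12}n_2^{1/6}$ and again retains a positive power of $n_1$.

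The underlying reason is that the bilinear estimate encodes genuine transversality and curvature information that is invisible to any combination of linear spectral-projector bounds with almost-orthogonality in the degree; the estimate is moreover sharp (saturated, e.g., by a zonal function paired with a highest-weight harmonic), so there is no slack to recover by ``optimizing exponents''. This is precisely why \cite{BGT2} constructs a parametrix for the approximate projector $\chi(\sqrt{-\Delta}-n)$ and proves a bilinear oscillatory-integral estimate directly. The correct write-up here is therefore the citation (or a reproduction of that oscillatory-integral proof), not the orthogonality argument.
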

We will need a probabilistic estimate for the random spherical harmonics for which the proof is standard:
\begin{lemme}\label{concentration}
There exist $C>0$ such that for all $2\leq p<\infty$, $n\in\N$ and $x\in\S^{2}$, 
\[
\|e_{n}^{\omega}(x)\|_{L_{\omega}^{p}} \leq C\sqrt{p}. 
\] 
Moreover, there exists $C_0>c_0>0$, such that for every $R\geq 1$,
\[ \mathbb{P}\big[\|e_n^{\omega}\|_{L^p(\mathbb{S}^2)}>R \big]\leq C_0\e^{-c_0R^2}.
\]
\end{lemme}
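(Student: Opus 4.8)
\textbf{Proof plan for Lemma \ref{concentration}.}

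The plan is to reduce everything to Gaussian hypercontractivity (Khinchin's inequality for Wiener chaos of order one) combined with the Weyl law \eqref{Weyl}. First I would fix $x\in\S^2$ and observe that, since $e_n^\omega(x)=\frac{1}{\sqrt{2n+1}}\sum_{|k|\le n}g_{n,k}(\omega)\mathbf{b}_{n,k}(x)$ is a (complex) Gaussian as a finite linear combination of i.i.d.\ complex Gaussians, its variance is exactly
\[
\mathbb{E}\big[|e_n^\omega(x)|^2\big]=\frac{1}{2n+1}\sum_{|k|\le n}|\mathbf{b}_{n,k}(x)|^2=1
\]
by \eqref{Weyl}; this is the crucial point where the normalization by $\sqrt{2n+1}$ and the rotation invariance of the kernel enter. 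Hence $e_n^\omega(x)$ is a standard complex Gaussian \emph{independently of $n$ and of $x$}, and the $L^q_\omega$ bound $\|e_n^\omega(x)\|_{L^q_\omega}\le C\sqrt{q}$ for $2\le q<\infty$ is just the standard moment bound for a Gaussian (equivalently, the Wiener chaos estimate $\|X\|_{L^q_\omega}\le (q-1)^{1/2}\|X\|_{L^2_\omega}$ for a first-order chaos). (I read the displayed $\sqrt{r}$ in the statement as $\sqrt{q}$, a typo; the conclusion is pointwise in $x$, uniform in $n$.)

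Next I would pass from the pointwise-in-$x$ bound to the $L^q(\S^2)$ bound on $\|e_n^\omega\|_{L^q(\S^2)}$. For $q\ge 2$, by Minkowski's integral inequality (interchanging $L^q_\omega$ and $L^q_x$, which is legitimate since $q\ge 2$ so that $L^q_\omega$-norm dominates after the $x$-integration in the appropriate order, or more simply by Fubini since everything is a nonnegative integrand and the bound is uniform in $x$),
\[
\mathbb{E}\big[\|e_n^\omega\|_{L^q(\S^2)}^q\big]=\int_{\S^2}\mathbb{E}\big[|e_n^\omega(x)|^q\big]\,dx\le (C\sqrt{q})^q,
\]
using $\int_{\S^2}1=1$. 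Taking $q$-th roots gives $\|\,\|e_n^\omega\|_{L^q(\S^2)}\|_{L^q_\omega}\le C\sqrt{q}$, which in particular yields the first claimed inequality (with $L^q_\omega$ of the scalar random variable $\|e_n^\omega\|_{L^q(\S^2)}$). Here one should be slightly careful about which $L^q_\omega$ is meant in the statement; I would state and prove the clean version $\big\|\,\|e_n^\omega\|_{L^q(\S^2)}\big\|_{L^p_\omega}\lesssim_{p}\sqrt{q}$ for all $p<\infty$ as well, again by interchanging norms and using the scalar Gaussian moment bound, since that is what is actually used downstream.

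For the tail bound, I would take the previous moment estimate with $p=q$ (or any fixed large $p$ growing with $R$) and optimize. Concretely, from $\mathbb{E}\big[\|e_n^\omega\|_{L^q(\S^2)}^p\big]\le (C\sqrt{p})^p$ (for a fixed $q$, all $p\ge q$) and Chebyshev,
\[
\mathbb{P}\big[\|e_n^\omega\|_{L^q(\S^2)}>R\big]\le R^{-p}(C\sqrt{p})^p=\Big(\tfrac{C\sqrt{p}}{R}\Big)^p;
\]
choosing $p\sim (R/(C\e))^2$ makes the right-hand side $\le \e^{-c_0 R^2}$ for $R$ large, and one absorbs small $R$ into the constant $C_0$. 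The main (and only real) obstacle is bookkeeping: making sure the constants are genuinely uniform in $n$ — which they are, precisely because of the exact identity $\mathbb{E}|e_n^\omega(x)|^2=1$ coming from \eqref{Weyl} — and stating the conclusion with the right $L^q_\omega$ norm. There is no hard analysis here; the content is the Weyl identity plus Gaussian hypercontractivity, exactly in the spirit of the reference \cite{BL} mentioned in the introduction.
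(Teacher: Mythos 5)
Your proposal is correct and follows essentially the same route as the paper, which only sketches the argument: the first bound is Khintchine's inequality (Gaussian moment bounds) combined with the exact Weyl law \eqref{Weyl}, which makes $e_n^\omega(x)$ a standard complex Gaussian uniformly in $n$ and $x$, and the tail bound is the standard Chebyshev--moment-optimization argument behind the probabilistic Sobolev embedding of Burq--Lebeau. Your reading of $\sqrt{r}$ as a typo for $\sqrt{q}$, and your care with Minkowski's inequality when $p\geq q$, are both consistent with what the paper intends.
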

Lemma \ref{concentration} is sufficient for our needs. We refer to \cite{BL2} for stronger concentration inequalities.  

\begin{proof}
The first part of Lemma \ref{concentration} is a consequence of Khinchin's inequality and the exact Weyl's law \eqref{Weyl}, from which the large deviation bound easily follows. 
\end{proof}


	\subsection{Elementary properties of the random averaging operators}
	We first establish the simple but crucial property that $\mathcal{H}_n^N(t)$ is unitary on $E_n$:
	\begin{lemme}\label{unitary}For fixed $t\in[0,T]$, the operator $\mathcal{H}_n^N(t)$ is unitary on $E_n$. In particular, the matrix element $(H_{n;k,k'}^N(t))$ expressed in a given orthonormal basis $(\mathbf{b}_{n,k})_{|k|\leq n}$ verifies
		\begin{equation}\label{id:unitary}
			\sum_{|k|\leq n }
			H_{n;k,\ell}^N(t)\ov{H}_{n;k,\ell'}^N(t)=\delta_{\ell\ell'},\quad
			\sum_{|\ell|\leq n}H^N_{n;k,\ell}(t)\ov{H}_{n;k',\ell}^N(t)=\delta_{kk'}\,.
		\end{equation}
	\end{lemme}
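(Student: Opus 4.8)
The statement to prove is that the random averaging operator $\mathcal{H}_n^N(t)$ is unitary on $E_n$ for every fixed $t$, equivalently that the matrix $(H_{n;k,\ell}^N(t))$ is unitary, i.e.\ that \eqref{id:unitary} holds. The natural approach is to differentiate the relevant inner products in time and show they are constant. Concretely, I would work directly with the time-dependent functions $\mathbf{b}_{n,k}^N(t)$ solving the ODE system \eqref{ODE:rotation}, namely $(i\partial_t - \lambda_n^2)\mathbf{b}_{n,k}^N(t) = 2\pi_n(\mathbf{b}_{n,k}^N(t)\cdot |u_{N/2}(t)|^{\diamond 2})$ with $\mathbf{b}_{n,k}^N(0) = \mathbf{b}_{n,k}$. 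Since $\mathcal{H}_n^N(t)$ maps $E_n$ to $E_n$ and $(\mathbf{b}_{n,k})_{|k|\le n}$ is an orthonormal basis, it suffices to show that $\langle \mathbf{b}_{n,k}^N(t) \,|\, \mathbf{b}_{n,\ell}^N(t)\rangle = \delta_{k\ell}$ for all $t$; this is exactly the second identity in \eqref{id:unitary} once one expands $\mathbf{b}_{n,k}^N(t) = \sum_\ell H_{n;\ell,k}^N(t)\mathbf{b}_{n,\ell}$, and the first identity then follows since a square matrix with orthonormal columns has orthonormal rows.

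The key computation is the following. Set $F_{k\ell}(t) := \langle \mathbf{b}_{n,k}^N(t) \,|\, \mathbf{b}_{n,\ell}^N(t)\rangle$. Using the equation \eqref{ODE:rotation} in the form $\partial_t \mathbf{b}_{n,k}^N = -i\lambda_n^2 \mathbf{b}_{n,k}^N - 2i\,\pi_n(\mathbf{b}_{n,k}^N \cdot |u_{N/2}|^{\diamond 2})$, one computes
\[
\frac{d}{dt} F_{k\ell}(t) = \langle \partial_t\mathbf{b}_{n,k}^N \,|\, \mathbf{b}_{n,\ell}^N\rangle + \langle \mathbf{b}_{n,k}^N \,|\, \partial_t\mathbf{b}_{n,\ell}^N\rangle.
\]
The two $-i\lambda_n^2$ terms cancel (one gives $-i\lambda_n^2$, the other $+i\lambda_n^2$ after conjugation). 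For the remaining terms, since $\mathbf{b}_{n,\ell}^N \in E_n$ we have $\langle \pi_n(\mathbf{b}_{n,k}^N \cdot |u_{N/2}|^{\diamond 2}) \,|\, \mathbf{b}_{n,\ell}^N\rangle = \langle \mathbf{b}_{n,k}^N \cdot |u_{N/2}|^{\diamond 2} \,|\, \mathbf{b}_{n,\ell}^N\rangle$, and the crucial point is that $|u_{N/2}|^{\diamond 2} = |u_{N/2}|^2 - \|u_{N/2}\|_{L^2}^2$ is a \emph{real-valued} function (being the real quantity $|u_{N/2}|^2$ minus a real constant). Hence
\[
\langle \mathbf{b}_{n,k}^N \cdot |u_{N/2}|^{\diamond 2} \,|\, \mathbf{b}_{n,\ell}^N\rangle = \langle \mathbf{b}_{n,k}^N \,|\, \mathbf{b}_{n,\ell}^N \cdot |u_{N/2}|^{\diamond 2}\rangle = \langle \mathbf{b}_{n,k}^N \,|\, \pi_n(\mathbf{b}_{n,\ell}^N \cdot |u_{N/2}|^{\diamond 2})\rangle,
\]
so the contribution of the nonlinear term to $\frac{d}{dt}F_{k\ell}$ is $-2i\langle\pi_n(\mathbf{b}_{n,k}^N\cdot|u_{N/2}|^{\diamond2})\,|\,\mathbf{b}_{n,\ell}^N\rangle + \overline{(-2i)}\langle\mathbf{b}_{n,k}^N\,|\,\pi_n(\mathbf{b}_{n,\ell}^N\cdot|u_{N/2}|^{\diamond2})\rangle = -2i\,A + 2i\,A = 0$ where $A$ is the common real-bilinear pairing above; more carefully one keeps track that the second inner product is the conjugate of a quantity equal to $A$. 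Thus $F_{k\ell}$ is constant in $t$, and $F_{k\ell}(0) = \langle\mathbf{b}_{n,k}\,|\,\mathbf{b}_{n,\ell}\rangle = \delta_{k\ell}$, proving the claim. Equivalently and more conceptually, one can phrase this as: the "Hamiltonian" $\mathbf{b}_{n,k}^N \mapsto 2\pi_n(\mathbf{b}_{n,k}^N\cdot|u_{N/2}|^{\diamond2})$ is a self-adjoint operator on $E_n$ for each fixed $t$ (because multiplication by the real function $|u_{N/2}|^{\diamond2}$ followed by $\pi_n$ is self-adjoint on $E_n$), so $\mathcal{H}_n^N(t)$ is the flow of a time-dependent linear Schr\"odinger equation $i\partial_t = \lambda_n^2 + $ self-adjoint, hence unitary.

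The only genuinely delicate points, rather than true obstacles, are: (i) justifying that the ODE system \eqref{ODE:rotation} has a well-defined global solution on $[0,T]$ so that $F_{k\ell}(t)$ is differentiable — this follows from linear ODE theory on the finite-dimensional space $E_n$ together with the fact that $u_{N/2}$ is a smooth (hence, on compact time intervals, bounded in $L^\infty(\mathbb{S}^2)$) function, so the coefficient $|u_{N/2}(t)|^{\diamond 2}$ is continuous in $t$ with values in bounded multipliers on $E_n$; and (ii) being careful with the complex conjugation bookkeeping, since the factor $-2i$ in \eqref{ODE:rotation} is purely imaginary, which is precisely what makes the two nonlinear contributions cancel rather than add. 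I expect no conceptual difficulty here; the argument is a finite-dimensional instance of the standard fact that a Schr\"odinger evolution with self-adjoint (here real-potential) generator preserves the $L^2$ inner product.
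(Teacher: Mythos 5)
Your proof is correct and follows essentially the same route as the paper: the paper reduces the claim to the self-adjointness of the potential operator $f_n\mapsto\pi_n(f_n\cdot|u_{N/2}(t)|^{\diamond 2})$ (which holds because $|u_{N/2}|^{\diamond 2}$ is real-valued) and then invokes an abstract lemma (Lemma \ref{lem:unitary}) stating that the flow of $i\partial_t\mathcal{H}=\mathcal{A}(t)\mathcal{H}$ with $\mathcal{A}(t)$ self-adjoint is unitary, proved exactly by the constancy-of-inner-products computation you carry out. Your version just does this computation in the polarized form $\langle\mathbf{b}_{n,k}^N(t)|\mathbf{b}_{n,\ell}^N(t)\rangle$ rather than for $\|\mathcal{H}(t)f\|^2$, which is an immaterial difference in finite dimensions.
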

	This is the consequence of Lemma \ref{lem:unitary} and the fact that the potential operator
	\[ f_n\mapsto \pi_n(f_n\cdot |u_{\frac{N}{2}}(t)|^{\diamond 2})
	\]
	is self-adjoint. This type of unitary property for random averaging operators was first observed in \cite{DNY3}. As in \cite{DNY3}, we will use it to achieve some cancellation for Wick's cubic power of colored Gaussians. In addition, we use this to prove that the law of the colored terms is invariant (see Proposition \ref{cor:law}). 
	
	\medskip
	
	A key ingredient is the following cancellation from Wick-ordering. This is a global aspect of the RAO theory, first appeared in \cite{DNY3}.
	\begin{lemme}\label{Wickcancellation} For all $N$ and $n\approx N$, we have that point-wise in $\omega$, $x$, t 
		\begin{align}
		\nonumber
			|e_n^{N}(t,x)|^2-\|e_n^{N}(t,\cdot)\|_{L_x^2}^2
			=&\sum_{\substack{|k|,|k'|,|\ell|,|\ell'|\leq n \\
					\ell\ne \ell'
				}
			}H_{n;k,\ell}^{N}(t)\ov{H}_{n;k',\ell'}^{N}(t)
			\frac{g_{n,\ell}^{\omega}(\omega)\ov{g}_{n,\ell'}(\omega)}{2n+1 }\mathbf{b}_{n,k}(x)\ov{\mathbf{b}}_{n,k'}(x) 
			\\
			\nonumber
			&+\sum_{|k|,|k'|,|\ell|\leq n	
			}H_{n;k,\ell}^{N}(t)\ov{H}_{n;k',\ell}^{N}(t)\frac{|g_{n,\ell}(\omega)|^2-1 }{2n+1}\mathbf{b}_{n,k}(x)\ov{\mathbf{b}}_{n,k'}(x)
			\\
			&-\sum_{|\ell|\leq n}\frac{|g_{n,\ell}(\omega)|^2-1 }{2n+1}.
		\end{align}
	\end{lemme}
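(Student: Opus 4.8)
The plan is to expand the Wick square $|e_n^N(t,x)|^{\diamond 2} := |e_n^N(t,x)|^2 - \|e_n^N(t,\cdot)\|_{L^2}^2$ directly from the definition \eqref{def:eNn}, using the matrix representation \eqref{matrixHnkp} of the RAO, and then to reorganize the resulting quadruple sum according to whether the ``output'' indices $\ell,\ell'$ attached to the Gaussians coincide or not. First I would write
\[
e_n^N(t,x) = \mathcal{H}_n^N(t)(e_n^\omega) = \frac{1}{\sqrt{2n+1}}\sum_{|k|\leq n} g_{n,k}(\omega)\,\mathbf{b}_{n,k}^N(t,x) = \frac{1}{\sqrt{2n+1}}\sum_{|k|,|\ell|\leq n} g_{n,k}(\omega)\,H_{n;\ell,k}^N(t)\,\mathbf{b}_{n,\ell}(x),
\]
and likewise $\overline{e_n^N(t,x)} = \frac{1}{\sqrt{2n+1}}\sum_{|k'|,|\ell'|\leq n} \overline{g}_{n,k'}(\omega)\,\overline{H}_{n;\ell',k'}^N(t)\,\overline{\mathbf{b}}_{n,\ell'}(x)$. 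Multiplying gives
\[
|e_n^N(t,x)|^2 = \frac{1}{2n+1}\sum_{|k|,|k'|,|\ell|,|\ell'|\leq n} H_{n;\ell,k}^N(t)\,\overline{H}_{n;\ell',k'}^N(t)\, g_{n,k}\overline{g}_{n,k'}\,\mathbf{b}_{n,\ell}(x)\overline{\mathbf{b}}_{n,\ell'}(x).
\]
(Note that this is the same structure as the claimed formula after relabelling $(k,\ell)\leftrightarrow(\ell,k)$; the statement uses the convention $H_{n;k,\ell}$ with the first index being the ``$\mathbf{b}$-index'' — I would just match indices carefully with \eqref{matrixHnkp}.)

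The second step is to compute $\|e_n^N(t,\cdot)\|_{L^2}^2$. Since $\mathcal{H}_n^N(t)$ is unitary on $E_n$ by Lemma \ref{unitary}, we have $\|e_n^N(t,\cdot)\|_{L^2}^2 = \|e_n^\omega\|_{L^2}^2 = \frac{1}{2n+1}\sum_{|\ell|\leq n}|g_{n,\ell}|^2$. Alternatively, and this is the route that makes the algebra line up with the stated right-hand side, I would integrate the expansion of $|e_n^N(t,x)|^2$ over $x\in\S^2$ using $\langle \mathbf{b}_{n,\ell}|\mathbf{b}_{n,\ell'}\rangle = \delta_{\ell\ell'}$, collapsing $\ell'=\ell$, and then use the unitarity identities \eqref{id:unitary}, namely $\sum_{|\ell|\leq n} H_{n;\ell,k}^N(t)\overline{H}_{n;\ell,k'}^N(t) = \delta_{kk'}$, to get $\|e_n^N(t,\cdot)\|_{L^2}^2 = \frac{1}{2n+1}\sum_{|k|\leq n}|g_{n,k}|^2$. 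Subtracting, the ``diagonal in $k$, diagonal in $\ell$'' contributions to $|e_n^N|^2$ that would otherwise produce $\frac{1}{2n+1}\sum|g_{n,k}|^2$-type terms get partially cancelled, leaving exactly the $|g_{n,\ell}|^2-1$ combinations and the constant $-\sum_{|\ell|\leq n}\frac{|g_{n,\ell}|^2-1}{2n+1}$ in the statement.

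The third step is the bookkeeping: split the quadruple sum for $|e_n^N(t,x)|^2$ into the part with $\ell\neq\ell'$ (the first line of the claim) and the part with $\ell=\ell'$. In the $\ell=\ell'$ part, further split the Gaussian factor $g_{n,k}\overline{g}_{n,k'}$ — actually, following the stated formula, one keeps all $k,k'$ but writes $|g_{n,\ell}|^2 = (|g_{n,\ell}|^2-1)+1$; the ``$+1$'' piece is then $\frac{1}{2n+1}\sum_{|k|,|k'|,|\ell|\leq n}H_{n;k,\ell}^N\overline{H}_{n;k',\ell}^N \mathbf{b}_{n,k}\overline{\mathbf{b}}_{n,k'}$, which by \eqref{id:unitary} (sum over $\ell$) equals $\frac{1}{2n+1}\sum_{|k|\leq n}|\mathbf{b}_{n,k}(x)|^2 = 1$ by the Weyl identity \eqref{Weyl}; this ``$1$'' is precisely $\|e_n^N\|_{L^2}^2$ minus the $-\sum\frac{|g_{n,\ell}|^2-1}{2n+1}$ correction, so it cancels against the subtracted $L^2$-norm and produces the last line. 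This is really the only delicate point: making sure the unitarity of $\mathcal{H}_n^N(t)$ and the rotation-invariant Weyl identity \eqref{Weyl} are invoked at the right places so that the deterministic constant ``$1$'' appears and cancels. I do not expect any genuine obstacle here — the lemma is a bona fide identity whose proof is a careful but routine expansion; the main care is purely notational, keeping the index conventions of \eqref{matrixHnkp} and \eqref{id:unitary} consistent throughout.
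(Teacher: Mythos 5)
Your proposal is correct and follows essentially the same route as the paper's proof: expand $|e_n^N(t,x)|^2$ and $\|e_n^N(t)\|_{L^2}^2$ via the matrix elements of $\mathcal{H}_n^N(t)$, split the quadruple sum into the $\ell\neq\ell'$ and $\ell=\ell'$ parts, write $|g_{n,\ell}|^2=(|g_{n,\ell}|^2-1)+1$, and use the unitarity identities \eqref{id:unitary} together with the Weyl identity \eqref{Weyl} to convert the ``$+1$'' piece into the constant that cancels against the subtracted $L^2$-norm. Your handling of the index convention of \eqref{matrixHnkp} and the final bookkeeping are both accurate, so there is nothing to add.
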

	\begin{proof} For simplicity we drop the dependence on $\omega$ and $t$ from the notation. Expanding $|e_n^{N}(t,x)|^2-\|e_n^{N}(t,\cdot)\|_{L_x^2}^2$ gives
		\begin{align*}
			&\sum_{k,k',\ell,\ell'}H_{n;k,\ell}^N\ov{H}_{n;k',\ell'}^N\frac{g_{n,\ell}\ov{g}_{n,\ell'}}{2n+1}\mathbf{b}_{n,k}(x)\ov{\mathbf{b}}_{n,k'}(x)-\sum_{k,\ell,\ell' }
			H_{n;k,\ell}^N\ov{H}_{n;k,\ell'}^N\frac{g_{n,\ell}\ov{g}_{n,\ell'}}{2n+1}
			\\
			=&\sum_{\substack{k,k',\ell,\ell'\\
					\ell \neq \ell'} }
			H_{n;k,\ell}^N\ov{H}_{n;k',\ell'}^N\frac{g_{n,\ell}\ov{g}_{n,\ell'}}{2n+1}\mathbf{b}_{n,k}(x)\ov{\mathbf{b}}_{n,k'}(x)\\
			+&\sum_{k,k',\ell} H_{n;k,\ell}^N\ov{H}_{n;k',\ell}^N\frac{|g_{n,\ell}|^2 }{2n+1}\mathbf{b}_{n,k}(x)\ov{\mathbf{b}}_{n,k'}(x)
			-\sum_{\ell}\frac{|g_{n,\ell}|^2}{2n+1},
		\end{align*}
		where we used the unitary property (Lemma \ref{unitary}). Using again Lemma \ref{unitary}, the second line on the right hand side can be computed as
		\[ 
		\sum_{k,k',\ell}H_{n;k,\ell}^N\ov{H}_{n;k',\ell}^N\frac{|g_{n,\ell}|^2-1}{2n+1}\mathbf{b}_{n,k}(x)\ov{\mathbf{b}}_{n,k'}(x)
		+\sum_{k}\frac{|\mathbf{b}_{n,k}(x)|^2}{2n+1}
		-\sum_{\ell}\frac{|g_{n,\ell}|^2}{2n+1}.
		\]
		By Weyl, we have 
		\[
		\sum_{|k|\leq n}\frac{|\mathbf{b}_{n,k}(x)|^2}{2n+1}
		= 1
		=\sum_{|\ell|\leq n}\frac{1}{2n+1}
		\,.
		\]
		Regrouping terms, the proof is complete.
	\end{proof}

	We conclude this section with a new property  for colored-Gaussians on $E_n$ defined
		\[
		e_n^{N}(t)~=~\mathcal{H}_n^{N}(t)~(e_n^{\omega})\,,
		\] 
		which is helpful in the estimate for the resonant interactions for which we cannot use time-modulation. 
		\begin{proposition}\label{cor:law}
		For fixed $n>N/2$ and $t\in\R$, the random field $e_n^{N}(t)$ has the same law as $e_n^\omega$:
		\[
		\mathscr{L}(e^{N}_n(t)) = \mathscr{L}(e_n^\omega)=\mathscr{N}_{E_n}(0 ; \operatorname{Id})\,.
		\]
	\end{proposition}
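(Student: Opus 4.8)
The plan is to combine the two structural facts already at our disposal: the unitarity of $\mathcal{H}_n^N(t)$ on $E_n$ from Lemma~\ref{unitary} (in the spirit of \cite{DNY3}), and the probabilistic independence between $\mathcal{H}_n^N(t)$ and the high-frequency Gaussian vector $(g_{n,k})_{|k|\leq n}$ (in the spirit of \cite{Bringmann}). The statement is then an instance of the rotational invariance of the standard complex Gaussian on the finite-dimensional space $E_n$.

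First I would record the independence. The only datum entering the defining ODE \eqref{ODE:rotation} for $\mathbf{b}_{n,k}^N(t)$, hence for the matrix $(H_{n;\ell,k}^N(t))$ and the operator $\mathcal{H}_n^N(t)$, is the smooth solution $u_{N/2}$ of \eqref{NLS-N} with initial data $\Pi_{N/2}\phi^\omega$; and by deterministic well-posedness this $u_{N/2}$ is the unique such solution, a measurable function of the Gaussians $(g_{m,k})_{|k|\leq m,\ m\leq N/2}$ only. Consequently $\mathcal{H}_n^N(t)$ is $\mathcal{B}_{\leq N/2}$-measurable. Since $n>N/2$, the vector $(g_{n,k})_{|k|\leq n}$ is independent of $\mathcal{B}_{\leq N/2}$; equivalently $e_n^\omega=\frac{1}{\sqrt{2n+1}}\sum_{|k|\leq n}g_{n,k}\mathbf{b}_{n,k}$, whose law is $\mathscr{N}_{E_n}(0;\mathrm{Id})$, is independent of $\mathcal{H}_n^N(t)$. (For fixed $\omega$, \eqref{ODE:rotation} is a linear ODE with smooth coefficients on the finite-dimensional space $E_n$, so at this level there is no functional-analytic subtlety in the construction of $\mathcal{H}_n^N(t)$.)

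Then I would condition on $\mathcal{B}_{\leq N/2}$. After conditioning, $\mathcal{H}_n^N(t)$ is a deterministic unitary operator $U$ on $E_n$ by Lemma~\ref{unitary}, while by the independence just established $e_n^\omega$ is still $\mathscr{N}_{E_n}(0;\mathrm{Id})$. Using $\lg UX|f\rg=\lg X|U^{\ast}f\rg$ and $U^{\ast}U=\mathrm{Id}$, for $X\sim\mathscr{N}_{E_n}(0;\mathrm{Id})$ and all $f,g\in E_n$ one has
\[
\E\big[\lg UX|f\rg\,\ov{\lg UX|g\rg}\big]=\lg U^{\ast}f|U^{\ast}g\rg=\lg f|g\rg,\qquad \E\big[\lg UX|f\rg\,\lg UX|g\rg\big]=0,
\]
and since a centered complex Gaussian vector on $E_n$ is determined by its Hermitian and pseudo-covariances, $UX\sim\mathscr{N}_{E_n}(0;\mathrm{Id})$. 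Hence the conditional law of $e_n^N(t)=\mathcal{H}_n^N(t)(e_n^\omega)$ given $\mathcal{B}_{\leq N/2}$ equals $\mathscr{N}_{E_n}(0;\mathrm{Id})$ almost surely, in particular it does not depend on the conditioning, and integrating over $\mathcal{B}_{\leq N/2}$ yields $\mathscr{L}(e_n^N(t))=\mathscr{N}_{E_n}(0;\mathrm{Id})=\mathscr{L}(e_n^\omega)$.

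The only genuinely delicate point is the measurability bookkeeping of the first step: one must be sure that $u_{N/2}$, and therefore $\mathcal{H}_n^N(t)$, depends on the Gaussians indexed by $m\leq N/2$ alone — this is where the deterministic well-posedness of the truncated flow is used, and where the hypothesis $n>N/2$ is essential. Everything else reduces to the elementary unitary invariance of Gaussian measure on a finite-dimensional complex space. I would also stress that the assertion is \emph{pointwise in $t$}: $\mathcal{H}_n^N(t)$ is unitary for each fixed $t$, so the argument gives equality of laws of the random variable $e_n^N(t)$ for every fixed $t$, not of the process $t\mapsto e_n^N(t)$.
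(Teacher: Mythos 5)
Your proof is correct and follows essentially the same route as the paper: one observes that $\mathcal{H}_n^N(t)$ is $\mathcal{B}_{\leq N/2}$-measurable and hence independent of $e_n^\omega$ (since $n>N/2$), conditions on $\mathcal{B}_{\leq N/2}$ to freeze the operator into a deterministic unitary, and invokes the invariance of the standard Gaussian on $E_n$ under unitaries (the paper's Lemma~\ref{lemme:law}). The only cosmetic difference is that you verify the invariance via the Hermitian and pseudo-covariances of the complex Gaussian, whereas the paper computes the characteristic function; both are equivalent elementary checks.
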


	The proof follows from the elementary probability Lemma \ref{lemme:law} and the probabilistic structure of the ansatz. 
	
	\begin{proof}
		For fixed $n>N/2$, the vector $e_n^{N}(t)=\mathcal{H}_n^{N}(t)(e_n^{\omega})$ is the image of the normal Gaussian variable $e_n^{\omega}$ on $E_n$ under the random rotation $\mathcal{H}_n^{N}(t)$. Note that $\mathcal{H}_n^{N}(t)$ depends on the $\sigma$-algebra that is $\mathcal{F}_{N/2}$-measurable, hence independent of $e_n^{\omega}$ since $n\leq N/2$. 
		It follows from Lemma~\ref{lemme:law} that for every $t\in[0,T]$,
		\begin{equation}
			\label{same:law}
			\mathscr{L}(e_n^{N}(t)) = \mathscr{L}(e_n^{\omega})\,. 
		\end{equation}
		This completes the proof of Proposition \ref{cor:law}.
		\end{proof}

	\section{Functional spaces for functions and operators}
	\label{sec:4}
	\subsection{Fourier-Lebesgue restriction spaces for functions}

	Recall that~$E_n
	$ is a closed subspace of $L^2(\mathbb{S}^2)$ of dimension $2n+1$, endowed with the $L^2(\mathbb{S}^2)$ norm. For a space-time Schwartz distribution  $F\in\mathcal{S}'(\mathbb{R}\times\mathbb{S}^2)$, $\pi_nF$ can be viewed as a Schwartz distribution with values in $E_n$, namely $\pi_nF\in \mathcal{S}'(\R;E_n)$.
	We denote by $\widehat{F}(\tau,\cdot)$ its time Fourier transform. By decomposing into eigenspaces $E_n$, we have 
	\[  \widehat{F}(\tau,x)=\sum_{n=0}^{\infty}\widehat{\pi_nF}(\tau,x).
	\]
	For $p,q,r\in[1,\infty]$, consider the restriction norm:
	\begin{align*}
		\|F\|_{X_{p,q,r}^{s,\gamma}}:=\|\lambda_{n}^s\langle\tau+\lambda_n^2\rangle^{\gamma}\widehat{\pi_n F}(\tau,\cdot)\|_{\ell_n^pL_{\tau}^qL_{x}^r}
		=
		\|\lambda_{n}^s\lg \kappa\rg^{\gamma}\pi_n\widehat{ \e^{-it(\Delta-1)}F}(\kappa,\cdot) \|_{\ell_n^pL_{\kappa}^qL_x^r}.
	\end{align*}
	It will be more convenient to introduce the notation of  twisted time-Fourier transform.  For a function $F_n\in \mathcal{S}'(\mathbb{R};E_n)$, we denote by 
	\[ \widetilde{F}_n(\kappa,x):=\widehat{F}_n(\kappa-\lambda_n^2,x)=\widehat{\e^{-it(\Delta-1)}F_n}(\kappa,x).
	\]
	For a general distribution $F\in\mathcal{S}'(\R\times\mathbb{S}^2)$, we denote by 
	\[  \widetilde{F}(\kappa,\cdot):=\widehat{\e^{-it(\Delta-1)}F}(\kappa,\cdot) =\sum_{n=1}^{\infty}\widetilde{\pi_nF}(\kappa,\cdot).
	\]
	Though $E_n$ is finite-dimensional space, different norms are not asymptotically equivalent as $n\rightarrow\infty$. By abusing the notation, we denote by $L^r(E_n)$ the space $E_n$ endowed with the $L^r(\S^2)$ norm, with the convention $L^2(E_n)=E_n$. Set
	\[ \|F_n\|_{X_{q,r}^{s,\gamma}(E_n)}
		:=
		\lambda_{n}^{s}
		\|\lg\kappa\rg^{\gamma}\pi_n\widehat{\e^{-it(\Delta-1)} F_n}(\kappa,x)
	\|_{L_{\kappa}^qL_x^r}=\lambda_{n}^{s}
	\|\lg\kappa\rg^{\gamma}
	\widetilde{\pi_nF_n}(\kappa,x) 
	\|_{L_{\kappa}^qL_x^r}\,.
	\]
	With these notations, we have
	\[ \|F\|_{X_{p,q,r}^{s,\gamma}}=\|\pi_nF\|_{\ell_n^pX_{q,r}^{s,\gamma}(E_n) }.
	\]
	Note that $X_{2,2,2}^{0,b}$ is just the usual $X^{0,b}$ space in the literature. One keeps in mind that in our trilinear estimates, we will put remainders in $X_{2,2,2}^{0,\frac{1}{2}+}$ and terms like linear evolutions in $X_{\infty,\infty,\infty}^{\alpha-\frac{1}{2},1+}$ essentially. Given an interval $I\subset\R$, we define the norm $X_{p,q,r}^{s,b}(I)$ by
	\[ \|F\|_{X_{p,q,r}^{s,b}(I)}=\inf\{\|F_1\|_{X_{p,q,r}^{s,b}}:\; F_1|_I=F \}.
	\]
	Let $\chi\in C_c^{\infty}((-1,1))$ and $\chi_T(t):=\chi(T^{-1})$ be a time cutoff function, we have the following localization property:
	\begin{proposition}\label{timecutoff} 
		Let $1\leq p,r\leq \infty,$ and $1\leq q<\infty$, $0<\gamma\leq \gamma_1<1$. For any $u\in X_{p,q,r}^{s,\gamma_1}$ such that $u(t=0,\cdot)=0$, we have
		\[ \|\chi_T(t)u\|_{X_{p,q,r}^{s,\gamma}}\lesssim T^{\gamma_1-\gamma}\|u\|_{X_{p,q,r}^{s,\gamma_1}}
		\]
		for all $0<T\leq 1$.
	\end{proposition}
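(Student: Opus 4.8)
The plan is to reduce the statement to a one-dimensional estimate for the twisted time-Fourier transform on each eigenspace $E_n$, since the norms $X_{p,q,r}^{s,\gamma}$ are defined as $\ell^p_n L^q_\kappa L^r_x$-norms of $\langle n\rangle^s\langle\kappa\rangle^\gamma\widetilde{\pi_nF}(\kappa,x)$. Because the weight $\langle n\rangle^s$ and the spatial integration in $L^r_x$ are untouched by multiplication with $\chi_T(t)$, and because taking the $\ell^p_n$-norm commutes with everything, it suffices to prove the bound for each fixed $n$ (and fixed $x$, after pulling $L^r_x$ outside via Minkowski, or simply working with $E_n$-valued functions throughout): for $u_n:=\pi_nu\in X_{q,r}^{\gamma_1}(E_n)$ with $u_n(0)=0$,
\[
\big\|\langle\kappa\rangle^{\gamma}\widetilde{(\chi_Tu_n)}(\kappa)\big\|_{L^q_\kappa}\lesssim T^{\gamma_1-\gamma}\big\|\langle\kappa\rangle^{\gamma_1}\widetilde{u_n}(\kappa)\big\|_{L^q_\kappa}.
\]
Here $\widetilde{u_n}(\kappa)=\widehat{\mathrm{e}^{-it\Delta}u_n}(\kappa)$, and since $\mathrm{e}^{-it\Delta}$ acts as the scalar $\mathrm{e}^{-it\lambda_n^2}$ on $E_n$ while $\chi_T$ is scalar, we have $\widetilde{(\chi_Tu_n)}(\kappa)=(\widehat{\chi_T}\ast\widetilde{u_n})(\kappa)$. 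So the whole matter is a convolution estimate in the single variable $\kappa$, with $\widehat{\chi_T}(\kappa)=T\,\widehat{\chi}(T\kappa)$.

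Next I would carry out the convolution estimate. Write $v(\kappa):=\langle\kappa\rangle^{\gamma_1}\widetilde{u_n}(\kappa)$, so the hypothesis is $v\in L^q_\kappa$ and the claim is $\|\langle\kappa\rangle^{\gamma}(\widehat{\chi_T}\ast(\langle\cdot\rangle^{-\gamma_1}v))(\kappa)\|_{L^q}\lesssim T^{\gamma_1-\gamma}\|v\|_{L^q}$. The kernel of the operator $v\mapsto \langle\kappa\rangle^\gamma(\widehat{\chi_T}\ast(\langle\cdot\rangle^{-\gamma_1}v))$ is $K_T(\kappa,\kappa')=\langle\kappa\rangle^\gamma\langle\kappa'\rangle^{-\gamma_1}\widehat{\chi_T}(\kappa-\kappa')$, and by Schur's test it is enough to show $\sup_\kappa\int|K_T(\kappa,\kappa')|\,d\kappa'$ and $\sup_{\kappa'}\int|K_T(\kappa,\kappa')|\,d\kappa\lesssim T^{\gamma_1-\gamma}$. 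For this I would split the $\kappa-\kappa'$ integration into the regime $|\kappa-\kappa'|\lesssim T^{-1}$ and $|\kappa-\kappa'|\gtrsim T^{-1}$; in the first regime $\langle\kappa\rangle\sim\langle\kappa'\rangle$ up to harmless factors when $\langle\kappa'\rangle\gtrsim T^{-1}$ and $\langle\kappa\rangle^\gamma\langle\kappa'\rangle^{-\gamma_1}\lesssim\langle\kappa'\rangle^{\gamma-\gamma_1}\lesssim 1$ plus a piece near the origin of size $T^{\gamma_1-\gamma}$ coming from $\langle\kappa\rangle^\gamma$ small; using $\int|\widehat{\chi_T}|=\|\widehat{\chi}\|_{L^1}$ (here one needs $\widehat{\chi}\in L^1$, which holds since $\chi\in C_c^\infty$) absorbs the convolution. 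In the second regime the rapid decay of $\widehat{\chi}$ gives $|\widehat{\chi_T}(\kappa-\kappa')|\lesssim_M T(T|\kappa-\kappa'|)^{-M}$, which more than compensates the polynomial weight $\langle\kappa\rangle^\gamma$ and integrates to a gain of a power of $T$; choosing $M$ large and carefully tracking the weights yields the $T^{\gamma_1-\gamma}$ factor. The hypothesis $u_n(0)=0$ is what rescues the estimate when $\gamma<\gamma_1<1$: without it there would be an obstruction at frequency zero (the constant mode $\widetilde{u_n}$ need not decay), and $u_n(0)=0$ means $\int\widetilde{u_n}(\kappa)\,d\kappa=0$, which one uses to subtract a constant and gain an extra factor of $|\kappa-\kappa'|$, i.e.\ effectively one derivative's worth, near the origin; alternatively, and more cleanly, one writes $\chi_T(t)u_n(t)=\chi_T(t)\int_0^t\partial_s u_n(s)\,ds$ and estimates via the Fourier side, where the $\frac1\tau$ from integration in time is what produces the positive power of $T$.

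The main obstacle is the borderline nature of the exponents: since $\gamma,\gamma_1\in(0,1)$ and we are on an unbounded frequency line, naive term-by-term bounds lose the needed power of $T$ at low frequencies, and one must genuinely use the vanishing $u_n(0)=0$ together with the constraint $q<\infty$ (so that the $L^q_\kappa$ norm controls things without the endpoint pathology at $q=\infty$). I expect the delicate bookkeeping to be in the low-frequency region $|\kappa|,|\kappa'|\lesssim T^{-1}$, where the weights $\langle\kappa\rangle^\gamma$ and $\langle\kappa'\rangle^{-\gamma_1}$ are both $\sim 1$ and the gain must come entirely from the extra cancellation; the cleanest route is probably to first prove the estimate for $u_n$ replaced by $\int_0^t (\cdot)$ of a function in $X^{\gamma_1-1}$ and interpolate, or to invoke the standard Bourgain-space time-localization lemma in the scalar variable $\kappa$ (whose proof is exactly this computation) and transfer it to the $E_n$-valued, $L^r_x$-outer setting by Minkowski's inequality and the fact that $\mathrm{e}^{-it\Delta}$ is a multiplication operator on each $E_n$. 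Everything else — the reduction to a single $n$, the $\ell^p_n$ and $L^r_x$ structure, the passage through $\mathrm{e}^{-it\Delta}$ — is routine.
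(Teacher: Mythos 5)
Your proposal takes essentially the same route as the paper's proof (Proposition \ref{timelocalization} together with Corollary \ref{timecutoffappendix}): reduce to an $E_n$-valued one-dimensional Fourier--Lebesgue estimate for the twisted transform, split the frequency line at $|\kappa'|\sim T^{-1}$, run a Schur test on the high-frequency piece, and use $\int\widetilde{u_n}\,d\kappa=0$ (equivalent to $u_n(0)=0$) to subtract the constant $\widehat{\chi_T}(\kappa)$ from the kernel on the low-frequency piece. The only caveat is that your opening claim that both Schur sums of the \emph{full} kernel are $\lesssim T^{\gamma_1-\gamma}$ is false near $\kappa'=0$ (they are of size $T^{-\gamma}$ there) --- which you yourself then correct by routing the low frequencies through the cancellation, exactly as the paper does, where the final H\"older step $\int_{|\eta|\geq T^{-1}}\|\widehat{\mathbf F}(\eta)\|\,d\eta\lesssim T^{\gamma_1-\frac{1}{q'}}\|\mathbf F\|_{\mathcal{F}L_q^{\gamma_1}}$ is what forces the hypothesis $\gamma_1>\frac{1}{q'}$ appearing in Corollary \ref{timecutoffappendix}.
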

	We provide the proof in Appendix (see Corollary \ref{timecutoffappendix}). We have the embedding property:
	\begin{lemme}\label{Embedding}
		Let $1<q<\infty, 1\leq r\leq \infty$ and $\gamma\in\big(\frac{1}{q'},1\big)$. Then, for any $\alpha~\in~(0,\gamma-~\frac{1}{q'}]$,
		\[
		X_{q,r}^{0,\gamma}(E_n)\hookrightarrow C^{\alpha}(\R;L^r(E_n))\,.
		\]
	\end{lemme}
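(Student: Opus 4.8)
\textbf{Proof plan for Lemma \ref{Embedding}.} The statement is a standard Sobolev-type embedding in the time variable, adapted to the $E_n$-valued setting, so the plan is to reduce it to the scalar case by working componentwise (or, more efficiently, directly with the $L^r(E_n)$-valued function). Given $F_n\in X_{q,r}^{0,\gamma}(E_n)$, write $G_n(t):=\e^{-it\Delta}F_n(t)$, which amounts to absorbing the modulation $\lambda_n^2$; then $\widehat{G_n}(\kappa,\cdot)=\widetilde{\pi_nF_n}(\kappa,\cdot)$ and by definition $\|\langle\kappa\rangle^\gamma\widehat{G_n}(\kappa,\cdot)\|_{L_\kappa^q L_x^r}=\|F_n\|_{X_{q,r}^{0,\gamma}(E_n)}$. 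Since the map $F_n\mapsto\e^{-it\Delta}F_n$ is, for each fixed $t$, an $L^r(E_n)$-isometry only when $r=2$, one must be slightly careful: instead I would phrase the whole argument for $G_n$ and note that $F_n(t)=\e^{it\Delta}G_n(t)=\e^{it\lambda_n^2}G_n(t)$ on $E_n$, so $\|F_n(t)-F_n(t')\|_{L^r(E_n)}\le \|G_n(t)-G_n(t')\|_{L^r(E_n)}+|\e^{it\lambda_n^2}-\e^{it'\lambda_n^2}|\,\|G_n(t')\|_{L^r(E_n)}$, and the second term is controlled by $\langle\lambda_n^2\rangle|t-t'|$ times an $L^\infty_t$-bound, which in turn follows from the same Fourier estimate — though this introduces an $n$-dependent constant, which is acceptable here since the lemma is stated for fixed $n$. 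Alternatively, and more cleanly, one applies the argument below directly to $G_n$ and transfers back.

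First I would establish the $L^\infty_t L^r_x$ bound: by Fourier inversion in time, $G_n(t,x)=\int_{\R}\e^{it\kappa}\widehat{G_n}(\kappa,x)\,d\kappa$, so by Minkowski's integral inequality (moving the $L^r_x$ norm inside the $\kappa$-integral) and Hölder in $\kappa$ with exponents $q$ and $q'$,
\[
\|G_n(t)\|_{L^r(E_n)}\le\int_\R\|\widehat{G_n}(\kappa,\cdot)\|_{L^r(E_n)}\,d\kappa\le\Big\|\langle\kappa\rangle^{-\gamma}\Big\|_{L^{q'}_\kappa}\,\big\|\langle\kappa\rangle^\gamma\widehat{G_n}(\kappa,\cdot)\big\|_{L^q_\kappa L^r(E_n)},
\]
and $\langle\kappa\rangle^{-\gamma}\in L^{q'}(\R)$ precisely because $\gamma q'>1$, i.e. $\gamma>\frac1{q'}$. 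This gives $G_n\in C_b(\R;L^r(E_n))$ after checking continuity (dominated convergence in the Fourier integral). For the Hölder modulus of continuity with exponent $\alpha\in(0,\gamma-\frac1{q'}]$, I would write $G_n(t)-G_n(t')=\int_\R(\e^{it\kappa}-\e^{it'\kappa})\widehat{G_n}(\kappa,\cdot)\,d\kappa$ and use the elementary bound $|\e^{it\kappa}-\e^{it'\kappa}|\le 2^{1-\alpha}|\kappa|^\alpha|t-t'|^\alpha\le C\langle\kappa\rangle^\alpha|t-t'|^\alpha$ (from $|\e^{ix}-1|\le\min(2,|x|)\le 2^{1-\alpha}|x|^\alpha$). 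Then the same Minkowski–Hölder computation gives
\[
\|G_n(t)-G_n(t')\|_{L^r(E_n)}\le C|t-t'|^\alpha\big\|\langle\kappa\rangle^{\alpha-\gamma}\big\|_{L^{q'}_\kappa}\,\|F_n\|_{X_{q,r}^{0,\gamma}(E_n)},
\]
and the $\kappa$-integral $\|\langle\kappa\rangle^{\alpha-\gamma}\|_{L^{q'}}$ is finite exactly when $(\gamma-\alpha)q'>1$, i.e. $\alpha<\gamma-\frac1{q'}$; the endpoint $\alpha=\gamma-\frac1{q'}$ requires the usual observation that one has a tiny bit of room, or can be absorbed by interpolating with the $L^\infty_t$-bound, or by noting that $C^\alpha$ at the endpoint follows from the slightly stronger estimate on dyadic pieces — in any case I would handle the endpoint by a standard Littlewood–Paley-in-time refinement if one insists on the closed interval, otherwise simply note $(0,\gamma-\frac1{q'})\subset(0,\gamma-\frac1{q'}]$ suffices for the applications. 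Finally, transferring from $G_n$ back to $F_n=\e^{it\lambda_n^2}G_n$ as indicated above completes the proof.

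The only mild subtlety — hardly an obstacle — is the interplay between the twist $\e^{it\lambda_n^2}$ and the $L^r$ (rather than $L^2$) norm on $E_n$, which I would dispose of by working with $G_n$ throughout; the endpoint exponent $\alpha=\gamma-\frac1{q'}$ is the only place where one needs a small extra argument, and since the paper only ever uses $\alpha$ strictly below this threshold, it is harmless. Everything else is Minkowski's inequality plus Hölder against $\langle\kappa\rangle^{-\gamma}$, exactly as in the classical $X^{s,b}\hookrightarrow C_t H^s$ embedding for $b>\frac12$, here with $L^q_\tau$ in place of $L^2_\tau$ so that the threshold moves from $\frac12$ to $\frac1{q'}$.
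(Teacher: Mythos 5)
Your proof follows the same route as the paper: Lemma \ref{Embedding} is deduced there from the abstract Lemma \ref{FLtoHolder} (Fourier inversion in time, Minkowski, then H\"older against $\langle\kappa\rangle^{-\gamma}$, with $\gamma q'>1$ giving integrability), which is exactly your computation. Two remarks. First, your worry about the twist is unfounded: on the single eigenspace $E_n$ the propagator $\e^{it\Delta}$ acts as multiplication by a unimodular scalar $\e^{it\mu_n}$, hence is an isometry of $L^r(E_n)$ for \emph{every} $r$, not only $r=2$; your workaround via the triangle inequality is unnecessary and would introduce an $n$-dependent H\"older constant, which is undesirable since the paper uses these embeddings with constants uniform in $n$. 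Second, the statement includes the endpoint $\alpha=\gamma-\tfrac{1}{q'}$, which your uniform bound $|\e^{it\kappa}-\e^{it'\kappa}|\lesssim\langle\kappa\rangle^{\alpha}|t-t'|^{\alpha}$ misses; the paper's proof of Lemma \ref{FLtoHolder} gets it directly by splitting the $\kappa$-integral at $|\kappa|=|t-t'|^{-1}$, using $|\e^{i\theta}-1|\le|\theta|$ on the low frequencies and $|\e^{i\theta}-1|\le2$ on the high ones, each piece then contributing $|t-t'|^{\gamma-\frac{1}{q'}}$ after H\"older. This is the ``dyadic'' fix you gesture at, and you should carry it out rather than invoke interpolation with the $L^{\infty}_t$ bound, which does not by itself recover the endpoint H\"older exponent. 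With these two adjustments your argument coincides with the paper's.
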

	The proof is a direct consequence of Lemma \ref{FLtoHolder}. Denote by
	\[ \mathcal{I}F(t):=\int_0^t\e^{i(t-t')(\Delta-1)}F(t')dt'
	\]
	the Duhamel operator. We have the following linear inhomogeneous estimate:
	\begin{proposition}\label{inhomogeneous}
		Let $\chi\in C_{c}^{\infty}(\R)$, $T>0$ and $\chi_{T}:=\chi(T^{-1}\cdot)$. For any $s\in\R$, for $q\in(1,\infty)$ and $\gamma\in(\frac{1}{q'},1)$, for any $\theta\in(0,1-\gamma)$, we have that for all $n$,
		\begin{align*}
			\|\chi_T(t)\mathcal{I}\pi_nF\|_{X_{q,r}^{s,\gamma}(E_n)}
			&\lesssim
			T^{\theta}\|\pi_nF\|_{X_{q,r}^{s,\gamma-1+\theta}(E_n)}\,.
			\intertext{In particular, taking the $\ell_{n}^{p}$-norm on both sides gives} 
			\|\chi_T(t)\mathcal{I}F\|_{X_{p,q,r}^{s,\gamma}}
			&\lesssim
			T^{\theta}\|F\|_{X_{p,q,r}^{s,\gamma-1+\theta}}\,.
		\end{align*}
	\end{proposition}
	\begin{proof}
	The proof is a direct consequence of Corollary~\ref{timecutoffappendix} and Lemma \ref{inhomogenousabstract}.
	\end{proof}
	 We end this Section by recalling the semi-classical Strichartz estimate from~\cite{BGT} and its formulation in Fourier restriction spaces. We emphasize that this estimate holds on any compact surface. 
	
	\begin{proposition}[Semi-classical Strichartz estimate, \cite{BGT}]\label{prop:str}
	For all $2\leq p,q\leq+\infty$ with $q\neq+\infty$ admissible
	\[
	\frac{1}{p}+ \frac{1}{q} = \frac{1}{2}\,,
	\]
	and for all compact interval $I\subset\R$, there exists $C(I,p,q)>0$ such that for all~$u\in~H^{\frac{1}{p}}(\S^{2})$\,,
	\[
	\|\e^{it(\Delta-1)}\phi\|_{L^{p}L^{q}(I\times\S^{2})} \leq C(I,p,q) \|\phi\|_{H^{\frac{1}{p}}(\S^{2})}\,.
	\]
	Moreover, if $\chi$ is a test function on $\R$, then for all $b>\frac{1}{2}$ and $u\in X^{\frac{1}{p},b}$, 
	\[
	\|\chi(t)u\|_{L^{p}L^{q}(\R\times\S^{2})}\leq C(\chi,p,q)\|u\|_{X^{\frac{1}{p},b}}\,.
	\]
	\end{proposition}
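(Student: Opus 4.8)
The statement to prove is Proposition \ref{prop:str}, the semi-classical Strichartz estimate from Burq--Gérard--Tzvetkov, together with its reformulation in Fourier-restriction spaces. Since the result is attributed to \cite{BGT}, the plan is to recall the standard proof scheme and then package it into the $X^{s,b}$ framework via the usual transfer principle.

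\medskip

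\emph{Plan.} The proof of the fixed-time/space-time estimate $\|\e^{it\Delta}\phi\|_{L^pL^q(I\times\S^2)}\leq C\|\phi\|_{H^{1/p}}$ proceeds by a frequency-localized semi-classical dispersive estimate followed by a $TT^\ast$ argument. First I would perform a Littlewood--Paley decomposition $\phi=\sum_h \Delta_h\phi$, where $h=2^{-j}$ is the semi-classical parameter, and reduce to estimating $\|\e^{it\Delta}\Delta_h\phi\|_{L^pL^q}$ at a single dyadic scale. At this scale one rescales time by $h$ (setting $s=t/h$) to pass to the semi-classical Schr\"odinger propagator $\e^{ish^2\Delta}$ acting on $h$-frequency-localized data, and one invokes the key fact from \cite{BGT}: on a compact Riemannian surface the semi-classical propagator satisfies the dispersive bound $\|\e^{ish^2\Delta}\Delta_h\|_{L^1\to L^\infty}\lesssim |s|^{-1}$ for $|s|\leq h$ (i.e. on a time interval of semi-classical size), built from a parametrix/stationary-phase analysis of the wave or Schr\"odinger kernel. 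Interpolating this dispersive bound with the trivial $L^2\to L^2$ bound and applying the Keel--Tao $TT^\ast$/Hardy--Littlewood--Sobolev machinery on each $h$-sized subinterval gives $\|\e^{it\Delta}\Delta_h\phi\|_{L^pL^q(J\times\S^2)}\lesssim h^{-1/p}\|\Delta_h\phi\|_{L^2}$ for each interval $J$ of length $\sim h$; summing over the $O(|I|/h)$ such intervals covering $I$ costs a further factor $(|I|/h)^{1/p}$, and then summing the square-function over the dyadic scales $h$ via Minkowski (using $p,q\geq 2$) and the Littlewood--Paley characterization of $H^{1/p}$ yields the claimed estimate with constant depending on $I$, $p$, $q$.

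\medskip

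\emph{From space-time Strichartz to Fourier-restriction spaces.} For the second assertion I would use the standard transfer principle: if an operator estimate of the form $\|\e^{it\Delta}\phi\|_{L^pL^q(\R\times\S^2)}\lesssim\|\phi\|_{H^{1/p}}$ holds after inserting a temporal cutoff, then for any $b>1/2$ (here we are given the more-than-sufficient $b>1$) one has $\|\chi(t)u\|_{L^pL^q}\lesssim\|u\|_{X^{1/p,b}}$. Concretely, write $u(t)=\frac{1}{2\pi}\int_\R \e^{it\tau}\e^{it\Delta}\bigl(\e^{-it'\Delta}\widehat{u}(\tau)\bigr)\,d\tau$ using the twisted Fourier transform $\widetilde{u}(\tau)$ introduced in Section \ref{sec:4}; apply the inhomogeneous space-time estimate to each frequency slice $\e^{it\Delta}\widetilde{u}(\tau,\cdot)$, pull the $\tau$-integral outside by Minkowski's inequality in $L^pL^q$, and close with Cauchy--Schwarz in $\tau$ against the weight $\langle\tau\rangle^{b}$, which is integrable precisely because $b>1/2$. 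The localization in time by $\chi$ is harmless since $\chi u$ restricted to a bounded interval is covered by the $I$-dependent constant from the first part; one may alternatively absorb it using Proposition \ref{timecutoff}. This produces $\|\chi(t)u\|_{L^pL^q}\leq C(\chi,p,q)\|u\|_{X^{1/p,b}}$ as stated.

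\medskip

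\emph{Main obstacle.} The genuinely nontrivial input is the semi-classical dispersive estimate for $\e^{ish^2\Delta}$ on the compact surface, valid only on the semi-classical time window $|s|\lesssim h$; away from that window curvature and the compactness of the manifold destroy dispersion, which is exactly why one must chop $I$ into $O(1/h)$ pieces and pay the $h^{-1/p}$ (equivalently, $\langle n\rangle^{1/p}$) loss of derivatives — this is the source of the $H^{1/p}$ regularity rather than $L^2$. However, for the purposes of this paper this estimate is cited verbatim from \cite{BGT}, so the remaining work is purely the bookkeeping of the Littlewood--Paley summation and the routine transfer to $X^{s,b}$; there is no new difficulty to overcome here, and the proposition is recorded only to fix notation and the precise form (in particular the Fourier-restriction reformulation) that will be used in later sections.
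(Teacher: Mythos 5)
Your overall strategy matches what the paper does, which is very little: the space--time estimate is cited verbatim from \cite{BGT}, and the $X^{s,b}$ version is obtained by the transference principle, exactly as you describe. Your reconstruction of the \cite{BGT} argument (semi-classical Littlewood--Paley localization, dispersive estimate for the propagator on a time window of length comparable to the wavelength $h$, Keel--Tao, gluing of intervals) is the correct scheme, and your transference step --- writing $u(t)=\frac{1}{2\pi}\int_{\R}\e^{it\kappa}\e^{it\Delta}\widetilde{u}(\kappa)\,d\kappa$, applying the localized Strichartz estimate at each fixed $\kappa$, then Minkowski and Cauchy--Schwarz against $\langle\kappa\rangle^{-b}$ with $b>\frac{1}{2}$ --- is the standard and correct proof of the second assertion.

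One quantitative slip in your bookkeeping for the first part: you assert that on each subinterval $J$ of length $\sim h$ one gets $\|\e^{it\Delta}\Delta_{h}\phi\|_{L^{p}L^{q}(J\times\S^{2})}\lesssim h^{-1/p}\|\Delta_{h}\phi\|_{L^{2}}$, and that summing over the $O(|I|/h)$ subintervals costs a further factor $(|I|/h)^{1/p}$. Compounding these two factors would give a total loss of $h^{-2/p}$, i.e. an $H^{2/p}$ estimate rather than $H^{1/p}$. The correct accounting in \cite{BGT} is that the estimate on each window of length $h$ is \emph{lossless} (it is the scale-invariant Strichartz estimate, obtained after rescaling $t=hs$ from the semi-classical dispersive bound and Keel--Tao), and the entire $h^{-1/p}$ loss comes from the $\ell^{p}$-summation over the $O(h^{-1})$ windows combined with $L^{2}$-conservation:
\[
\|\e^{it\Delta}\Delta_{h}\phi\|_{L^{p}(I;L^{q})}^{p}=\sum_{J}\|\e^{it\Delta}\Delta_{h}\phi\|_{L^{p}(J;L^{q})}^{p}\lesssim \frac{|I|}{h}\,\|\Delta_{h}\phi\|_{L^{2}}^{p}\,.
\]
With that correction your argument closes; since the paper takes this proposition as a black box from \cite{BGT}, nothing in the rest of the analysis is affected.
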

	The bound in Fourier restriction spaces follows from the transference principle~\cite{Gin}.  Semi-classical Strichartz estimates are useful in the deterministic analysis of Section~\ref{sec:det}.

	\subsection{Fourier restriction norms for operators}

	Let $\mathcal{T}_n(t):E_n\rightarrow E_n$ be a time-dependent linear operator on the eigenspace $E_{n}$. Denote by
	$(T_{n;k,k'}(t))_{|k|,|k'|\leq n}$ the matrix element of $\mathcal{T}_n$ under a given orthonormal basis $(\mathbf{b}_{n,k})_{|k|\leq n}$ of $E_n$:
	\[ \mathcal{T}_n(t)(\mathbf{b}_{n,k})=\sum_{|\ell|\leq n}T_{n;\ell,k}(t)\mathbf{b}_{n,\ell}\,.
	\]
	Denote by
	\[ \widehat{\mathcal{T}}_n(\tau):=\int_{\R}\mathcal{T}_n(t)\e^{-it\tau}dt
	\]
	the time Fourier transform~\footnote{we recall that \(\mathcal{T}_{n}(t)\) is a finite dimensional object so the integral makes sense.} of the linear operator $\mathcal{T}_n(t)$, and set	
	\[\widetilde{\mathcal{T}}_n(\kappa):=\widehat{\e^{it\lambda_n^2}\mathcal{T}_n(t)}(\kappa)
	\]
	the twisted time Fourier transform of the operator $\mathcal{T}_n(t)$. We  define the spaces $S_n^{q,\gamma}$ and $S_n^{q,\gamma,\ast}$ of  linear operators $\mathcal{T}_n(t),\mathcal{L}_n(t)\in\mathcal{S}'(\R;\mathcal{L}(E_n))$ associated to the norms:
	\begin{align}\label{Snnorm} \|\mathcal{T}_n\|_{S_n^{q,\gamma}}:=\|\langle\tau+\lambda_n^2\rangle^{\gamma}\widehat{\mathcal{T}}_n(\tau)\|_{E_n\rightarrow L_{\tau}^qE_n},
	\end{align}
	\begin{align}\label{Sn*norm}
		\|\mathcal{L}_n\|_{S_n^{q,\gamma,*}}:=\|\lg\tau-\lambda_n^2\rg^{\gamma}\widehat{\mathcal{L}}_n(\tau) \|_{E_n\rightarrow L_{\tau}^qE_n} \,.
	\end{align}
In terms of twisted Fourier transform,
	\begin{align}
&	\|\mathcal{T}_n\|_{S_n^{q,\gamma}}=\|\lg\kappa\rg^{\gamma} \mathcal{F}_t(\e^{i\lambda_n^2 t}\mathcal{T}_n(t) )(\kappa) \|_{E_n\rightarrow L_{\kappa}^qE_n}, \label{Snnorm'} \\ &\|\mathcal{L}_n\|_{S_n^{q,\gamma,*}}=\|\lg\kappa\rg^{\gamma}
	\mathcal{F}_t(\e^{-i\lambda_n^2 t}\mathcal{L}_n(t)  )(\kappa) \|_{E_n\rightarrow L_{\kappa}^qE_n}.	 \label{Sn*norm'}
	\end{align}
	In terms of matrix element,
	\[ \|\mathcal{T}_n\|_{S_n^{q,\gamma}}=\|\lg\kappa\rg^{\gamma}\widehat{T}_{n,\ell,k}(\kappa-\lambda_n^2) \|_{l_k^2\rightarrow L_{\kappa}^ql_{\ell}^2},\quad\|\mathcal{L}_n\|_{S_n^{q,\gamma,*}}=
	\|\lg\kappa\rg^{\gamma}\widehat{L}_{n,\ell,k}(\kappa+\lambda_n^2) \|_{l_{k}^2\rightarrow L_{\kappa}^ql_{\ell}^2}.
	\]
	In the applications, given $\mathcal{H}_n(t)\in \mathcal{S}'(\R;\mathcal{L}(E_n))$, we will control the $S_n^{q,\gamma}$-norm for $\mathcal{H}_n(t)$ and the $S_n^{q,\gamma,*}$-norm for its adjoint $(\mathcal{H}_n(t))^*$, respectively.
	Note that
	\[ \widetilde{\mathcal{H}}_n(\kappa)=\mathcal{F}_t(\e^{i\lambda_n^2t}\mathcal{H}_n(t))(\kappa) =\int_{\R}\e^{i\lambda_n^2t-it\kappa}\mathcal{H}_n(t)dt.
	\]
	For fixed $\kappa\in\R$, as an operator acting on $E_n$, the adjoint of $\widetilde{\mathcal{H}}_n(\kappa)$ is given by
	\[ (\widetilde{\mathcal{H}}_n(\kappa))^*=\mathcal{F}_t(\e^{-i\lambda_n^2 t}(\mathcal{H}_n(t))^*)(-\kappa) =\int_{\R}\e^{-i\lambda_n^2t+it\kappa}(\mathcal{H}_n(t))^*dt.
	\]
	The matrix elements of $\widetilde{\mathcal{H}}_n(\kappa)$ is
	$ \widetilde{H}_{n;\ell,k}(\kappa)=\widehat{H}_{n;\ell,k}(\kappa-\lambda_n^2),
	$ and the matrix element of $(\widetilde{\mathcal{H}}_n(\kappa))^*$ is
	\[ (\widetilde{\mathcal{H}}_n(\kappa))^*_{\ell,k}= \widehat{\ov{H}}_{n;k,\ell}(\lambda_n^2-\kappa)=\ov{\widehat{H}}_{n;k,\ell}(\kappa-\lambda_n^2)=\ov{\widetilde{H}}_{n;k,\ell}(\kappa).
	\]	
	It follows from the definitions that
	\[ \|\mathcal{H}_n\|_{S_n^{q,\gamma}}=\|\lg\kappa\rg^{\gamma}\widetilde{\mathcal{H}}_n(\kappa)\|_{E_n\rightarrow L_{\kappa}^qE_n }=\|\mathcal{H}_n\|_{E_n\rightarrow X_{q,2}^{0,\gamma}(E_n) }=\|\lg\kappa\rg^{\gamma} \widetilde{H}_{n;\ell,k}(\kappa)\|_{l_{k}^2\rightarrow L_{\kappa}^ql_{\ell}^2 }\,.
	\]
	Since the matrix element satisfies 
	\[ \mathcal{F}_t(\e^{-it\lambda_n^2}(\mathcal{H}_n(t))^* )_{\ell,k}(-\kappa)=\widehat{\ov{H}}_{n;k,l}(-\kappa+\lambda_n^2)=\ov{\widehat{H}}_{n;k,\ell}(\kappa-\lambda_n^2)\,.
	\]
	we have
	\begin{align}
	 \|\mathcal{H}_n^*\|_{S_n^{q,\gamma,*}}=&\|\lg\kappa\rg^{\gamma}\mathcal{F}_t(\e^{-it\lambda_n^2}(\mathcal{H}_n(t))^*)(\kappa)\|_{E_n\rightarrow L_{\kappa}^qE_n}\notag \\
	 =& \|\lg\kappa\rg^{\gamma} 
	 \mathcal{F}_t(\e^{-it\lambda_n^2}(\mathcal{H}_n(t) )^* )(-\kappa)
	 \|_{E_n\rightarrow L_{\kappa}^qE_n}
	  \\ =&\|\lg\kappa\rg^{\gamma}\ov{\widetilde{H}}_{n;\ell,k}(\kappa) \|_{l_{\ell}^2\rightarrow L_{\kappa}^ql_k^2}. \label{Hn*norm}
	\end{align}
	
The $S_n^{q,\gamma}$ norm allows us to control the $X_{q,\infty}^{0,\gamma}(E_n)$ norm of $e_n^{N,\dag}$, while the $S_n^{q,\gamma,*}$ norm will be used in the probabilistic trilinear estimate for purely colored-Gaussian terms (see  Section \ref{sec:CCC}). In particular, we need the following type of pointwise bound:
	\begin{lemme}\label{typicalS_nnormbound}
		Assume that $\mathcal{T}_n(t)\in S_n^{q,\gamma,*}$, then for any  $f_n\in E_n$,
		\[ \sup_{x\in\mathbb{S}^2}\|\lg\kappa\rg^{\gamma}\widetilde{\mathcal{T}_n(t)(f_n)}(\kappa,x)\|_{L_{\kappa}^q}\leq \sqrt{2n+1}\|\mathcal{T}_n\|_{S_n^{q,\gamma,*}}\|f_n\|_{E_n}. 
		\]
	\end{lemme}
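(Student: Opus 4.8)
The plan is to reduce everything to a single elementary pointwise estimate on the eigenspace $E_n$: the $L^\infty$–$L^2$ bound $\|h\|_{L^\infty(\S^2)}\leq\sqrt{2n+1}\,\|h\|_{E_n}$ valid for every $h\in E_n$. This is immediate from the pointwise Weyl identity \eqref{Weyl}: writing $h=\sum_{|\ell|\leq n}\lg h\,|\,\mathbf{b}_{n,\ell}\rg\,\mathbf{b}_{n,\ell}$ and applying the Cauchy--Schwarz inequality in the index $\ell$ gives $|h(x)|\leq\big(\sum_{|\ell|\leq n}|\lg h|\mathbf{b}_{n,\ell}\rg|^2\big)^{1/2}\big(\sum_{|\ell|\leq n}|\mathbf{b}_{n,\ell}(x)|^2\big)^{1/2}=\sqrt{2n+1}\,\|h\|_{E_n}$ for every $x\in\S^2$, with no dependence on $x$ on the right-hand side.

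Next I would observe that, since $\mathcal{T}_n(t)$ is applied to the \emph{fixed}, time-independent vector $\mathbf{f}_n$, the (operator-valued, Bochner) time-Fourier transform commutes with the evaluation at $\mathbf{f}_n$. Consequently, for each frequency $\kappa$ the function $\widetilde{\mathcal{T}_n(t)(\mathbf{f}_n)}(\kappa,\cdot)$ is nothing but the twisted time-Fourier transform of $\mathcal{T}_n$, viewed as an operator on $E_n$, applied to $\mathbf{f}_n$; in particular it lies in $E_n$ for every $\kappa$. Moreover, by the very definition \eqref{Sn*norm} of the norm (equivalently, by its matrix-element description, with $\mathbf{f}_n=\sum_{|k|\leq n}c_k\mathbf{b}_{n,k}$ and $\sum_k|c_k|^2=\|\mathbf{f}_n\|_{E_n}^2$, writing $\lg\kappa\rg^{\gamma}\widetilde{\mathcal{T}_n(t)(\mathbf{f}_n)}(\kappa,x)=\sum_{\ell}\big(\sum_k\lg\kappa\rg^{\gamma}\widetilde{T}_{n;\ell,k}(\kappa)c_k\big)\mathbf{b}_{n,\ell}(x)$), the norm $\|\mathcal{T}_n\|_{S_n^{q,\gamma,*}}$ is exactly the operator norm from $E_n$ to $L^q_\kappa E_n$ of the weighted transform $\kappa\mapsto\lg\kappa\rg^{\gamma}\widetilde{\mathcal{T}}_n(\kappa)$.

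Combining the two ingredients then finishes the proof. For each fixed $\kappa$, apply the $L^\infty$–$L^2$ bound of the first step to $h=\lg\kappa\rg^{\gamma}\widetilde{\mathcal{T}}_n(\kappa)(\mathbf{f}_n)\in E_n$ to get $\sup_{x\in\S^2}\big|\lg\kappa\rg^{\gamma}\widetilde{\mathcal{T}_n(t)(\mathbf{f}_n)}(\kappa,x)\big|\leq\sqrt{2n+1}\,\|\lg\kappa\rg^{\gamma}\widetilde{\mathcal{T}}_n(\kappa)(\mathbf{f}_n)\|_{E_n}$, whose right-hand side no longer depends on $x$. Taking the $L^q_\kappa$ norm and invoking the definition of the operator norm $E_n\to L^q_\kappa E_n$ yields $\sup_x\|\lg\kappa\rg^{\gamma}\widetilde{\mathcal{T}_n(t)(\mathbf{f}_n)}(\kappa,x)\|_{L^q_\kappa}\leq\sqrt{2n+1}\,\|\mathcal{T}_n\|_{S_n^{q,\gamma,*}}\|\mathbf{f}_n\|_{E_n}$. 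I do not expect a genuine obstacle here: the content is just Cauchy--Schwarz, the exact Weyl law, and the definition of the $S_n^{q,\gamma,*}$ norm. The only point that demands a little care is bookkeeping the various twisted time-Fourier conventions consistently, so that $\widetilde{\mathcal{T}_n(t)(\mathbf{f}_n)}(\kappa,\cdot)$ really is the weighted twisted transform of $\mathcal{T}_n$ evaluated at $\mathbf{f}_n$ and the weight $\lg\kappa\rg^{\gamma}$ matches the one built into $\|\mathcal{T}_n\|_{S_n^{q,\gamma,*}}$.
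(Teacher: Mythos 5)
Your first and third steps are fine, but the second step contains a misidentification of the norm that changes the statement being proved. By the paper's definitions, the operator norm of $\kappa\mapsto\lg\kappa\rg^{\gamma}\widetilde{\mathcal{T}}_n(\kappa)$ from $E_n$ to $L_\kappa^qE_n$ is exactly $\|\mathcal{T}_n\|_{S_n^{q,\gamma}}$ (the \emph{unstarred} norm; see the displayed identity $\|\mathcal{H}_n\|_{S_n^{q,\gamma}}=\|\lg\kappa\rg^{\gamma}\widetilde{\mathcal{H}}_n(\kappa)\|_{E_n\rightarrow L_{\kappa}^qE_n}$), not $\|\mathcal{T}_n\|_{S_n^{q,\gamma,*}}$. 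The starred norm carries the opposite frequency twist ($\lg\tau-\lambda_n^2\rg^{\gamma}$ in place of $\lg\tau+\lambda_n^2\rg^{\gamma}$) and, in the form actually used in the paper, is the mixed norm of the \emph{adjoint} matrix $\ov{\widetilde{T}}_{n;k,\ell}(\kappa)$ acting $l_\ell^2\to L_\kappa^ql_k^2$ (cf.\ \eqref{Sn*norm'} and \eqref{Hn*norm}). Because the $L_\kappa^q$ norm sits outside the $E_n$ norm and the input vector is fixed (independent of $\kappa$), the mixed operator norms of $\widetilde{\mathcal{T}}_n(\kappa)$ and of $(\widetilde{\mathcal{T}}_n(\kappa))^{*}$ are in general \emph{not} equal, so your argument proves the true but different inequality $\sup_x\|\lg\kappa\rg^{\gamma}\widetilde{\mathcal{T}_n(t)(\mathbf{f}_n)}(\kappa,x)\|_{L_\kappa^q}\leq\sqrt{2n+1}\,\|\mathcal{T}_n\|_{S_n^{q,\gamma}}\|\mathbf{f}_n\|_{E_n}$, not the stated one.

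The stated bound requires a dual mechanism, which is what the paper does: expand $\mathbf{f}_n=\sum_k a_k\mathbf{b}_{n,k}$ and apply Cauchy--Schwarz in the \emph{input} index $k$ (rather than taking $L^\infty$ of the output), which leaves the quantity $\big(\sum_k\big|\sum_\ell\lg\kappa\rg^{\gamma}\widetilde{T}_{n;\ell,k}(\kappa)\mathbf{b}_{n,\ell}(x)\big|^2\big)^{1/2}$. This is the adjoint matrix applied to the vector $(\ov{\mathbf{b}_{n,\ell}(x)})_\ell$, whose $l^2$ norm is $\sqrt{2n+1}$ by the exact Weyl law \eqref{Weyl}; hence the operator norm that appears is $\|\lg\kappa\rg^{\gamma}(\widetilde{\mathcal{T}}_n(\kappa))^{*}\|_{E_n\to L_\kappa^qE_n}$, i.e.\ the $S_n^{q,\gamma,*}$ norm of the adjoint family. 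This distinction is not cosmetic: in the applications (Lemma \ref{bd:colored}, Lemma \ref{lem:multi-C}) the quantity controlled by the induction hypothesis is precisely the adjoint norm $\|\mathbf{g}_n^{N,\dag}\|_{S_n^{q,\gamma,*}}=\|(\mathbf{h}_n^{N,\dag})^{*}\|_{S_n^{q,\gamma,*}}$, so the lemma must produce that norm on the right-hand side. You should therefore replace your step 2 by the correct identification and run the Cauchy--Schwarz in the coefficient index of $\mathbf{f}_n$, pairing the kernel $\sum_\ell\widetilde{T}_{n;\ell,k}(\kappa)\mathbf{b}_{n,\ell}(x)$ against the reproducing-kernel vector at $x$.
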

	\begin{proof}
		Given $f_n=\sum_{|k|\leq n}f_{n,k}\mathbf{b}_{n,k} \in E_n$ written in the basis $(\mathbf{b}_{n,k})$, we have
		\[ \mathcal{T}_n(t)(f_n)(x)=\sum_{|\ell|,|k|\leq n} T_{n;\ell,k}(t)\cdot f_{n,k}\mathbf{b}_{n,\ell}(x).
		\]
		Hence by Cauchy-Schwarz, for fixed $x\in\mathbb{S}^2$,
		\begin{align*}
			\|\lg\kappa\rg^{\gamma}\widetilde{\mathcal{T}_n(t)(f_n)}(\kappa,x)\|_{L_{\kappa}^q}=&\Big\|\sum_{|\ell|,|k|\leq n }\lg\kappa\rg^{\gamma}\cdot {\widetilde{T}}_{n;\ell,k}(\kappa)\cdot {f}_{n,k} {\mathbf{b}}_{n,\ell}(x)\Big\|_{L_{\kappa}^q}\\
			\leq & \|f_{n,k}\|_{l_k^2}\Big\|
			\Big(\sum_{|k|\leq n}
			\Big|\sum_{|\ell|\leq n }\lg\kappa\rg^{\gamma}\widetilde{T}_{n;\ell,k}(\kappa)\mathbf{b}_{n,\ell}(x)
			\Big|^2
			\Big)^{\frac{1}{2}}
			\Big\|_{L_{\kappa}^q}\\
			=& \|f_n\|_{E_n} 
			\Big\|
			\Big(\sum_{|k|\leq n}
			\Big|\sum_{|\ell|\leq n }\lg\kappa\rg^{\gamma}\overline{\widetilde{T}_{n;\ell,k}}(\kappa)\overline{\mathbf{b}_{n,\ell}}(x)
			\Big|^2
			\Big)^{\frac{1}{2}}
			\Big\|_{L_{\kappa}^q}.
		\end{align*}
We conclude by the following pointwise bound: 
		\begin{equation}
		\label{pointwiseproba}
		\Big\|
		\Big(\sum_{|k|\leq n}
		\Big|\sum_{|\ell|\leq n }\lg\kappa\rg^{\gamma}\overline{\widetilde{T}_{n;\ell,k}}(\kappa)\overline{\mathbf{b}_{n,\ell}}(x)
		\Big|^2
		\Big)^{\frac{1}{2}}
		\Big\|_{L_{\kappa}^q}\leq 
		\sqrt{2n+1}\|\lg\kappa\rg^{\gamma} (\widetilde{\mathcal{T}}_n(\kappa))^{*} \|_{E_n\rightarrow L_{\kappa}^qE_n}.
	\end{equation}
Indeed, by Weyl \eqref{Weyl}, the left hand side can be bounded by
\begin{align*}
 \|\lg\kappa\rg^{\gamma}\ov{\widetilde{T}}_{n;\ell,k}(\kappa)\|_{l_{\ell}^2\rightarrow L_{\kappa}^ql_k^2}\Big(\sum_{|\ell|\leq n}|\ov{\mathbf{b}_{n,\ell}}(x)|^2 \Big)^{\frac{1}{2}}= \sqrt{2n+1}\|\lg\kappa\rg^{\gamma}(\widetilde{\mathcal{T}}_n(\kappa))^{*} \|_{E_n\rightarrow L_{\kappa}^qE_n}\,.
\end{align*}
 This completes the proof.
	\end{proof}

	\section{Key induction steps}\label{sec:5}
	
	In this section, we present the precise statement of Theorem \ref{main:rigorous} and the induction steps. For the reason of clarity, we will only prove the convergence for dyadic sequences $(u_N)_{N\in 2^{\mathbb{N}}}$. Slight modification leads to the convergence of the full sequence, see \cite{DNY1} or \cite{Sun-Tz} for details.
	
	\subsection{Equations for the RAO and remainders}\label{formula}
	
	The Cauchy problem \eqref{ODE:rotation} can be reformulated as a linear equation for $\mathcal{H}_n^{N}(t)$:
	\begin{align}\label{Duhamel:Hn^N}
		\mathcal{H}_n^{N}(t)=\e^{-it\lambda_n^2}\mathrm{Id}_{E_n}+\frac{2}{i}\int_0^t\e^{-i(t-t')\lambda_n^2}\pi_n(\mathcal{H}_n^{N}(t')(\cdot)\cdot |u_{\frac{N}{2}}(t')|^{\diamond 2} )\d t'.
	\end{align}
	To solve the fixed-point problem for the norm $S_n^{q,\gamma}$, we need also to estimate the adjoint of the twisted-Fourier transform of $\mathcal{H}_n^{N}(t)$. Define the potential operator $\mathcal{P}^{N}_n$
	\begin{equation}
	\label{def:Pn} \mathcal{P}^{N}_n(t):F(t)\mapsto \pi_n(\pi_nF(t)\cdot |u_\frac{N}{2}(t)|^{\diamond 2})\,.
	\end{equation}
	The Duhamel formula for the adjoint operator reads:
	\begin{align}\label{adjointformula} 
		(\mathcal{H}^N_n(t))^*
		=&\e^{i\lambda_n^2 t}\mathrm{Id}_{E_n}-\frac{2}{i}\int_0^t \e^{i\lambda_n^2t'} \mathcal{P}^N_n(t')\circ \mathcal{H}_n^N(t')\circ(\mathcal{H}_n^N(t))^*\d t'.
	\end{align}
	To see this, using the unitary property of $\mathcal{H}_n^N(t)$, we write
	\begin{align*}
		(\mathcal{H}_n^{N}(t))^*=&\e^{i\lambda_n^2t}\mathrm{Id}_{E_n}+(\mathcal{H}_n^N(t))^*-\e^{i\lambda_n^2t}\mathcal{H}_n^N(t)(\mathcal{H}_n^N(t))^*\\
		=& \e^{i\lambda_n^2t}
		\mathrm{Id}_{E_n}+(\mathcal{H}_n^N(t))^*-\Big(\mathrm{Id}_{E_n}+\frac{2}{i}\int_0^t\e^{i\lambda_n^2t'}\mathcal{P}_n^N(t')\circ\mathcal{H}_n^N(t')\d t' \Big)(\mathcal{H}_n^N(t))^*\\
		=&\e^{i\lambda_n^2 t}\mathrm{Id}_{E_n}-\frac{2}{i}\int_0^t \e^{i\lambda_n^2t'} \mathcal{P}^N_n(t')\circ \mathcal{H}_n^N(t')\circ(\mathcal{H}_n^N(t))^*\d t'.
	\end{align*}
	
	\medskip

	Recalling \eqref{eq:wN}, the equation verified by $w_N$ reads
	\begin{align}\label{fixed-pointwN}
		w_N(t)=&2\mathcal{I}\mathcal{N}_{(0,1)}(w_N,u_{\frac{N}{2}},u_{\frac{N}{2}})+2\mathcal{I}\mathcal{N}_{[0,1]}(\psi_{ N}+w_N,u_{\frac{N}{2}},u_{\frac{N}{2}})\notag  \\+&\mathcal{I}\mathcal{N}(u_{\frac{N}{2}},\psi_{ N}+w_N,u_{\frac{N}{2}}) 
		+2\mathcal{I}\mathcal{N}(\psi_{ N}+w_N,\psi_{N}+w_N,u_{\frac{N}{2}})\notag \\+&\mathcal{I}\mathcal{N}(\psi_{ N}+w_N,u_{\frac{N}{2}},\psi_{N}+w_N)   +\mathcal{I}\mathcal{N}(\psi_{ N}+w_N),
	\end{align} 
where we recall that
\begin{align}\label{Nnr}
	\mathcal{N}_{[0,1]}(f,g,h):=\sum_{\substack{
		n,n_1,n_2,n_3\\
	n\neq n_1} }  \pi_n(\pi_{n_1}f\cdot \pi_{n_2}\ov{g}\diamond \pi_{n_3}h ).
\end{align}
	Let us analyze the structure of equation \eqref{fixed-pointwN}. 
	\begin{itemize}
	\item The source terms (e.g. with no term \(w_{N}\)) gain small factor \(N^{-s}\) in \(L^{2}(\S^{2})\)-type norms, where $s=\alpha-\frac{1}{2}-\frac{2}{q}-\delta-100\sigma$ was defined in \eqref{smallparameters}. There are several reasons for this: 
	\begin{enumerate}
	\item First, they contain at least one \(\psi_{N}\), which is localized at frequency \(\sim N\).
	\item When there is only one term \(\psi_{N}\), the interaction is always non-resonant (namely of type \(\mathcal{N}_{[0,1]}(\psi_{N},\ast,\ast)\) or \(\mathcal{N}(\ast,\psi_{N},\ast)\)). 
	\item Otherwise, there are at least two \(\psi_{N}\) in which case we leverage the probabilistic structure via conditional Wiener chaos estimates. 
	\end{enumerate}
		
	\item The other contributions contain a term \(w_{N}\). As usual in this type of perturbative analysis, the most relevant contributions are the linear ones. There are several ingredients used to control these interactions. First, the linear and bilinear stochastic objects belong to \(L^{\infty}(\S^{2})\), possibly after Wick ordering. Second, if deterministic components (e.g. \(w_{M}\) for some \(M\leq\frac{N}{2}\)) are involved, we use dispersive considerations as detailed in Section \ref{sec:det}.
	\end{itemize}
	
	\subsection{The extensions and local well-posedness statements}
	
	In practice, we have to restrict the random averaging operators and remainders on some short time interval of size $O(T)$ and extend these restrictions that are defined for all time. As our analysis involves the adjoint of RAO, the extension procedure becomes slightly more delicate than in \cite{DNY1}. These extensions will be defined inductively as follows:
	
	\medskip

	\begin{flushleft}
		
		\textbf{A. Initialization:} We fix a small number $T\in(0,\frac{1}{2})$ and a bump function $\chi\in C_c^{\infty}((-1,1))$ such that $\chi(t)\equiv 1$ for $|t|\leq \frac{1}{2}$.
		First for the given initial frequency $N_0=1$, we define 
		\[
		u_{N_0}^{\dag}(t)=\chi_T(t)u_{N_0}(t)\,,
		\]
		where $u_{N_0}$ is the solution of \eqref{NLS-N} with $N=N_{0}$. 
		For $0\leq n\leq N_0$, we define
			\[ \mathcal{H}_{n}^{N_0,\dag}(t)=\chi(t)\e^{-it(\lambda_{n}^{2}+1)}\mathrm{Id}_{E_{n}}\,,\quad \mathcal{G}_n^{N_0,\dag}(t):=\chi(t)\e^{it(\lambda_{n}^{2}+1)}\mathrm{Id}_{E_{n}}\,,
			\]
			and we set simply 
			\[
			\psi_{ N_{0}}^{\dag}(t):=\chi(t)\e^{it(\Delta-1)}\Pi_{N_{0}}\phi,\quad w_{N_{0}}^{\dag} = u_{N_{0}}^{\dag} - \psi_{N_{0}}^{\dag}.
			\]	
\end{flushleft}

\begin{flushleft}
			\textbf{B. Heredity:} Assuming that for some $N\geq N_0$, the objects  $\mathcal{H}_{m}^{M,\dag}(t),$ $\mathcal{G}_m^{M,\dag}(t)$,$w_M^{\dag}(t)$ and $\psi_M^{\dag}(t)$ are defined for $N_0\leq M\leq N$, compactly supported in $(-1,1)$ and they  coincide with $\mathcal{H}_m^M(t),(\mathcal{H}_m^M(t))^*, w_M(t), \psi_M(t)$ on $\big[-\frac{T}{2},\frac{T}{2}\big]$, respectively.

\medskip

	\textbf{Step 1: The potential operator:} For $n\approx2N$ (meaning that $N<n\leq2N$) we define the extension of the linear operator $\mathcal{P}_n^{2N,\dag}(t)$ via
	\begin{align}\label{def:P2Nextension} \mathcal{P}_n^{2N,\dag}(t)(F(t)):=\pi_n(\pi_nF(t)\cdot |u_{N}^{\dag}(t)|^{\diamond 2} ).
	\end{align}
	We have $\mathcal{P}_n^{2N,\dag}(t)=\mathcal{P}_n^{2N}(t)$ for $|t|\leq \frac{T}{2}$, and for all $t\in\R$, the operator $\mathcal{P}_n^{2N,\dag}(t)$ is self-adjoint on $E_n$. 
	\\
	\medskip

	\textbf{Step 2: Random averaging operators and colored Gaussians:} For $n\approx 2N$, we define~$\mathcal{H}_n^{2N,\dag}$  by the equation
	\begin{align}\label{def:H2Nextension} \mathcal{H}_n^{2N,\dag}(t)=\chi(t)\e^{-it\lambda_n^2}\mathrm{Id}_{E_n}+\frac{2}{i}\chi_{2T}(t)\int_0^t\e^{-i\lambda_n^2(t-t')}\mathcal{P}_n^{2N,\dag}(t')\circ\mathcal{H}_n^{2N,\dag}(t')dt'.
	\end{align}
	Note that the existence and uniqueness of \(\mathcal{H}_{n}^{2N,\dag}\) follows from the linear nature of the problem \eqref{def:H2Nextension}. In addition, for all $n\approx2N$ and $t\in\R$, the operator $\mathcal{H}_n^{2N,\dag}$ is $\mathcal{B}_{\leq N}$-measurable. Moreover, when $|t|\leq \frac{T}{2}$, we have
	\[
	\mathcal{H}_n^{2N,\dag}(t)=\mathcal{H}_n^{2N}(t).
	\]
	For the extension of the adjoint operator, instead of taking directly the adjoint, we introduce the operator $\mathcal{G}_n^{2N,\dag}(t)$ as the solution of the linear equation
	\begin{align}\label{def:G2N}
		\mathcal{G}_n^{2N,\dag}(t)
		=&\chi(t)\e^{i\lambda_n^2 t}\mathrm{Id}_{E_n}-\frac{2}{i}\chi_{2T}(t)\int_0^t \e^{i\lambda_n^2t'} \mathcal{P}^{2N,\dag}_n(t')\circ \mathcal{H}_n^{2N,\dag}(t')\circ\mathcal{G}_n^{2N,\dag}(t)dt'.
	\end{align}
	We observe that for $|t|\leq \frac{T}{2}$, 
	\[
	\mathcal{G}_n^{2N,\dag}(t)=(\mathcal{H}^{2N}_n(t))^*,
	\]
	but $\mathcal{H}_n^{2N,\dag}(t),\mathcal{G}_n^{2N,\dag}(t)$ differ from $\mathcal{H}_n^{2N}(t),(\mathcal{H}_n^{2N}(t))^*$ on $[-T,T]$. Nevertheless we deduce from Lemma \ref{lem:unitaryextend} that $\mathcal{H}_n^{2N,\dag}(t)$ is still unitary \footnote{Here we only need the fact that $\mathcal{P}_n^{2N,\dag}(t)$ is self-adjoint for all $t\in\R$. } on $[-T,T]$, and that
	\[
	\mathcal{G}_n^{2N,\dag}(t)=(\mathcal{H}_n^{2N,\dag}(t))^*\,.
	\]
	We split the perturbative components of $\mathcal{H}_n^{N,\dag}(t),\mathcal{G}_n^{N,\dag}(t)$ into
	\begin{equation}
	\label{def:h2Nextension} 
	\mathbf{h}_n^{2N,\dag}(t)
	:=\mathcal{H}_n^{2N,\dag}(t)
	-\chi(t)\e^{-i\lambda_n^2t}\mathrm{Id}_{E_n},\quad
	 \mathbf{g}_n^{2N,\dag}(t)
	 :=\mathcal{G}_n^{2N,\dag}(t)
	 -\chi(t)\e^{i\lambda_n^2t}\mathrm{Id}_{E_n}.
	\end{equation}
	For $|t|\leq T$, 
	\[
	\mathbf{g}_n^{2N,\dag}(t)=(\mathbf{h}_n^{2N,\dag}(t))^*
	\]
	and the operators $\mathbf{h}_n^{2N,\dag}(t),\mathbf{g}_n^{2N,\dag}(t)$ are compactly supported on $|t|\leq 2T$. Next, we define 
	\begin{equation}
	\label{def:e2Nextension} 
	e_n^{2N,\dag}(t):=
		\mathcal{H}_n^{2N,\dag}(t)(e_n^{\omega})
		,\quad 
	\psi_{2N}^{\dag}(t):=
		\chi_T(t)\sum_{n\approx2N}\frac{e_n^{2N,\dag}(t)}{\lambda_n^{\alpha-\frac{1}{2}}}.
	\end{equation}
We have from the above notations that for $n\approx2N$,
\begin{equation}
	\label{def:pinpsi2N}
	\pi_n\psi_{2N}^{\dag}(t)
	=\frac{\chi_T(t)\e^{-it\lambda_n^2}e_n^{\omega}
	+\mathbf{f}_n^{2N,\dag}(t) }{\lambda_n^{\alpha-\frac{1}{2}}}\,,
\end{equation}
where 
	\[
	\mathbf{f}_n^{2N,\dag}(t)=\chi_T(t)\mathbf{h}_n^{2N,\dag}(t)(e_n^{\omega})\,.
	\]
	We also define 
	\[
	\mathbf{f}_{2N}^{\dag}(t):=\sum_{N<n\leq 2N}\frac{\mathbf{f}_n^{2N,\dag}(t)}{\lambda_n^{\alpha-\frac{1}{2}}}\,.
	\]
	Note that $\mathbf{f}_{2N}^{\dag}(t)$ and $\psi_{2N}^{\dag}(t)$ are compactly supported on $|t|\leq T$ and coincide on $|t|\leq \frac{T}{2}$ with $\psi_{2N}(t)-\e^{it(\Delta-1)}\mathbf{P}_N\phi$ and $\psi_{2N}(t)$ respectively. We stress out that $\psi_{2N}^{\dag}(t)$ solves the equation
	\[ 
	(i\partial_t+\Delta-1)\psi_{2N}^{\dag}
	=
	2\Pi_{2N}\mathcal{N}_{(0,1)}(\psi_{2N}^{\dag},u_{N}^{\dag},u_N^{\dag})
	\]
	only on $[-T/2,T/2]$.
	\medskip

The advantage of defining $\psi_{2N}^{\dag}(t)$ only on $[-T,T]$ is that $\pi_n\psi_{2N}^{\dag}(t)=\chi_T(t)e_n^{2N,\dag}(t)$ and we could apply the invariance property (which is a global information) for $e_n^{2N,\dag}$ when solving the operator equation \eqref{def:H2Nextension} and the equation \eqref{eq:wNdag} for the remainder.   
   The motivation of separating the operator $\mathbf{h}_n^{2N,\dag}(t)$ from $\mathcal{H}_n^{2N,\dag}(t)$ is that, when global information of the colored Gaussian $\pi_n\psi_{2N}^{\dag}(t)$ is not needed, we decompose it as the sum of  
   	\[
	\lambda_n^{-\frac{1}{2}}\chi_T(t)\mathcal{T}_n^{2N,\dag}(t)(e_n^{\omega}), \quad 
 \mathcal{T}_n^{2N,\dag}\in \{\chi(t)\e^{-it\lambda_n^2}\mathrm{Id}_{E_n}, \; \varphi_T(t)\mathbf{h}_n^{2N,\dag}(t) \}\,,   
    	\]
where $\varphi_T(t)\chi_T(t)\mathbf{h}_n^{2N}(t)=\chi_T(t)\mathbf{h}_n^{2N,\dag}(t)$ and supp$(\varphi_T)\subset [-T,T]$. In practice, the output of our estimate for operators $\mathcal{T}_n^{2N}(t)$ will only depend on $\|\mathcal{T}_n^{2N}\|_{S_n^{q,\gamma}}$ and $\|(\mathcal{T}_n^{2N})^*\|_{S_n^{q,\gamma,*}}$ norms.

	\medskip
	
	We provide some large deviation bounds on the stochastic objects in Section \ref{sec:large-dev}. 
	
	\medskip

	\textbf{Step 3: Remainder: }
	Finally we define $w_{2N}^{\dag}$ by solving the integral equation\footnote{Here we truncate the Duhamel formula by $\chi_T(t)$ instead of $\chi_{2T}(t)$. The reason is that on supp$(\chi_T)$, the extended operators $(\mathcal{H}_n^{2N,\dag},\mathcal{G}_n^{2N,\dag})$ form an unitary pair.}
	\begin{align}
		w_{2N}^{\dag}(t)=&2\chi_T(t)\big[\mathcal{I}\mathcal{N}_{\mathrm{(0,1)}}(w^{\dag}_{2N},u_{N}^{\dag},u_{N}^{\dag})+\mathcal{I}\mathcal{N}_{[0,1]}(\psi_{2N}^{\dag}+w^{\dag}_{2N},u_{N}^{\dag},u_{N}^{\dag}) \big]  \label{eq:wNdag} \\+&\chi_T(t)\big[\mathcal{I}\mathcal{N}(u_N^{\dag},\psi_{2N}^{\dag}+w_{2N}^{\dag},u_N^{\dag}) 
		+2\mathcal{I}\mathcal{N}(\psi^{\dag}_{2N}+w^{\dag}_{2N},\psi^{\dag}_{2N}+w^{\dag}_{2N},u^{\dag}_{N})\big]\notag 
		\\+&\chi_T(t)\big[\mathcal{I}\mathcal{N}(\psi_{2N}^{\dag}+w_{2N}^{\dag},u^{\dag}_{N},\psi_{2N}^{\dag}+w_{2N}^{\dag})   +\mathcal{I}\mathcal{N}(\psi_{2N}^{\dag}+w^{\dag}_{2N})\big]. \notag  
	\end{align} 
 Note that \eqref{eq:wNdag} coincides with $\eqref{eq:wN}$ only on the interval $[-\frac{T}{2},\frac{T}{2}]$. At the end of this step, we define
	\[ u_{2N}^{\dag}(t):=u_{N}^{\dag}(t)+\psi_{2N}^{\dag}(t)+w_{2N}^{\dag}(t)\,,
	\]
\end{flushleft}
	and we denote 
	\[
	 \psi_{\leq 2N}^{\dag}:=\sum_{M\leq 2N}\psi_{M}^{\dag},\quad w_{\leq 2N}^{\dag}:=\sum_{M\leq 2N}w_M^{\dag}.
	\]
	We summarize the iterative scheme:
	\begin{itemize}
		\item The extensions of $u_N,\psi_{N}$,$w_N, \mathcal{H}_n^{2N}(t)$ and $(\mathcal{H}^{2N}_n(t))^*$ are denoted by
		\[
		u_{N}^{\dag},\,\psi_{N}^{\dag},\,
		w_{N}^{\dag},\mathcal{H}_n^{2N,\dag}(t),\,
		\mathcal{G}_n^{2N,\dag}(t)\,.
		\]
		\item The functions $u_N^{\dag},\psi_{N}^{\dag},w_N^{\dag}$ are supported in $(-T,T)$, while $\mathcal{H}_n^{2N,\dag}(t),\mathcal{G}_n^{2N}(t)$ are supported on $(-2T,2T)$. They coincide with the corresponding objects without $\dag$ on $[-\frac{T}{2},\frac{T}{2}]$.\\
		\item  The operators $(\mathcal{H}_n^{2N,\dag}(t),\mathcal{G}_n^{2N,\dag}(t))$ form a unitary pair when $|t|\leq T$.\\
		\item The random operators $\mathcal{H}_n^{N,\dag}(t),\mathcal{G}_n^{N,\dag}(t)$  are $\mathcal{B}_{\leq \frac{N}{2}}$ measurable, and they preserve the law of the Gaussian spherical harmonic $e_{n}^{\omega}$ for $|t|\leq T$ and 
		\(n\approx N\) (see Lemma~\ref{loiequiavlent}).
		\\
		\item The random functions $w_{\leq N}^{\dag},u_N^{\dag},\psi_{\leq N}^{\dag}$ are $\mathcal{B}_{\leq N}$ measurable.
	\end{itemize}
	\medskip
	We define	
\begin{equation}
\label{def:hexagonal}
 f\hexagonal{\neq}g:=\sum_{n_2\neq n_3}\pi_{n_2}f\cdot\pi_{n_3}g,\quad f\hexagonal{=}g:=\sum_{n}\big(\pi_{n}f\cdot\pi_ng-\lg\pi_n f|\pi_n g\rg \big)\,.
	\end{equation}
We have that
	\begin{multline}
	\label{def:hexagon'}
	\psi^{\dag}_{\leq N}\hexagonal{=}\overline{\psi^{\dag}_{\leq N}}(t,x) = \sum_{1\leq L \leq N}\psi^{\dag}_L\hexagonal{=}\overline{\psi^{\dag}_L}(t,x)
	= \sum_{1\leq L\leq N}\sum_{\frac{L}{2}\leq n< L}(|\pi_n\psi^{\dag}_L(t,x)|^2 - \|\pi_n\psi^{\dag}_L(t,\cdot)\|_{L^2}^2)
	\\
	=\chi_T(t)^2\sum_{1\leq L\leq N}\sum_{\frac{L}{2}\leq n < L}\lambda_n^{-2(\alpha-\frac{1}{2})}(|e^{L,\dag}_n(t,x)|^2-\|e_n^{L,\dag}(t)\|_{L^2}^2)\,.
	\end{multline}

\begin{remarque}
The contribution $\psi_{L}^{\dag}\hexagonal{=}\ov{\psi_L^{\dag}}$ will be roughly estimated in $L_{t,x}^{\infty}$ spaces (see Lemma \ref{lem:wick}) where we will make use of the fact that for \(n\approx L\), we have 
\[
\pi_n\psi_L^{\dag}=\lambda_n^{-(\alpha-\frac{1}{2})}\chi_T(t)\mathcal{H}_n^{L,\dag}(t)(e_n^{\omega})
\] 
and that $\mathcal{H}_n^{L,\dag}(t)$ is unitary when \(t\in \operatorname{supp}\chi_{T}\). 
\end{remarque}
	We now state an a priori parameter-dependent local bounds statement:
	\medskip
	
	\begin{definition}[$\mathrm{Loc}(N)$~\footnote{The statement Loc(\(N\)) depends on \(R\) and on the small parameter \(\sigma\). The parameter \(T=R^{-10}\) is only related to~\(R\).} ]\label{loc(N)}
		Let $N\geq 2$ be a dyadic number, $0<T\ll 1$, $R\geq 1$ and $0<\sigma\ll 1$, $(q,\gamma,\gamma_1,\delta,s,\theta,b)=(q_{\sigma},\gamma_{\sigma},\gamma_{1,\sigma},\delta_{\sigma},s_{\sigma},\theta_{\sigma},b_{\sigma})$ the $\sigma$-dependent parameters given by~\eqref{smallparameters}. The statement $\mathrm{Loc}(N)$ holds when for all $2\leq M\leq N$ the following properties are satisfied:
		\medskip

		\noindent If $N=2$, then with $T=\frac{1}{2}$,
		\[
		\|\psi_{\leq 2}^{\dag}\|=\|u_{2}^{\dag}\|_{L_t^{\infty}H_x^{10}}\leq R\,.
		\]
		If $N>2$, then $T= R^{-\frac{10}{\theta}}$ and:
		\begin{enumerate}
		\item For any $\frac{M}{2}<n\leq M$,
			\begin{align*}
			\|\mathbf{h}_n^{M,\dag}\|_{S_n^{q,\gamma}}+\|\mathbf{g}_n^{M,\dag}\|_{S_n^{q,\gamma,*}}
			&\leq R^{-1}\,,
			\intertext{and}
	 \|\mathcal{H}_n^{M,\dag}\|_{S_n^{q,\gamma}}+\|\mathcal{G}_{n}^{M,\dag}\|_{S_n^{q,\gamma,*}}
	 	&\leq R\,.
			\end{align*}
		\item Recalling the definition \eqref{def:hexagonal} of the symbol $\hexagonal{=}$,  $\psi_{M}^{\dag}$ satisfies 
		\begin{equation}
		\label{eq:p-psi}
		\|\psi_{M}^{\dag}\|_{L_{t}^{q}L_{x}^{\infty}} \leq RT^{\frac{1}{q}}M^{-(\alpha-1)+\frac{1}{q}+\delta}\,,
		\quad \|\psi_{M}^{\dag}\hexagonal{=}\overline{\psi_{M}^{\dag}}\|_{L_{t}^{q}L_{x}^{\infty}} 
		\leq R^{2}T^{\frac{1}{q}}M^{-2(\alpha-\frac{1}{2})+\frac{1}{2}+\frac{1}{q}+\delta}.
		\end{equation}
		In Fourier restriction spaces, 
		\begin{equation}
		\label{eq:p-psi-}
		\|\psi_{M}^{\dag}\|_{X_{q,q,\infty}^{0,\gamma}}
		\leq RT^{-\gamma+\frac{1}{q'}}
		M^{-(\alpha-\frac{1}{2})+\frac{2}{q}+\delta}\,.
		\end{equation}
		\item For all $M\leq N$ and $L\geq 2M$,
		\begin{equation}
		\label{eq:bo-r}
		 \|w_M^{\dag}\|_{X^{0,b}}\leq M^{-s}R^{-1},\quad \|\Pi_L^{\perp}w_M^{\dag} \|_{X^{0,b}}\leq \big(\frac{M}{L}\big)^{10}M^{-s}R^{-1}. 
		\end{equation}
		\end{enumerate}
	\end{definition}
\begin{remarque} Note that when $\mathrm{Loc}(N)$ holds with parameters $(T,R,\sigma)$, then we also have that for all $2\leq M\leq N$, 
\begin{equation}
\label{eq:chi-H}
\|(\chi_{T}\mathcal{H}_{n}^{M,\dag})^{\ast}\|_{S_{n}^{q,\gamma,\ast}} = \|\chi_{T}\mathcal{G}_{n}^{M,\dag}\|_{S_{n}^{q,\gamma,\ast}} \lesssim T^{\frac{1}{q'}-\gamma}\|\mathcal{G}_{n}^{M,\dag}\|_{S_{n}^{q,\gamma,\ast}} \lesssim T^{\frac{1}{q'}-\gamma}R\,.
\end{equation}
We used Proposition \ref{timelocalization} to obtain the first inequality. 
\end{remarque}

\begin{remarque}
Estimate \eqref{eq:bo-r} implies that the sequence \((w_{N}^{\dag})\) is uniformly bounded in~$X^{s,b}$. It is reminiscent of semi-classical energy estimates employed in \cite[Appendix A]{BGT_ENS} to show ill-posedness.  
\end{remarque}

\begin{remarque} Note that even if $T= R^{-\frac{10}{\theta}}$, we keep writing the $T$-dependence in the estimates. 
\end{remarque}

	We now state the key induction Proposition. 
	\begin{proposition}
		[Key induction Proposition]\label{keyinduction}
	Assume that $\alpha>1$. There exist a sufficiently small $\sigma\in(0,1)$ and accordingly the parameters $(q,\gamma,\delta,s,\theta,b)=(q_{\sigma},\gamma_{\sigma},\delta_{\sigma},s_{\sigma},\theta_{\sigma},b)$ given by~\eqref{smallparameters},  $C_0>1>c_0>0$, depending on parameters $\sigma,\alpha$, such that for any $R\geq1$ sufficiently large and $T=R^{-\frac{10}{\theta}}$, the following statement holds:
\medskip

\begin{adjustwidth}{0.5cm}{0.5cm}	 If $\; \mathrm{Loc}(N)$ holds for any $\omega$ in a $\mathcal{B}_{\leq N}$-measurable set $\Omega_{N}$, then $\mathrm{Loc}(2N)$ holds for $\omega\in \Omega_{2N}$, a $\mathcal{B}_{\leq 2N}$-measurable set such that 
	\[
	\mathbb{P}(\Omega_N\setminus\Omega_{2N})<C_0\e^{-N^{c_0}R^{\delta}}\,.
	\]
\end{adjustwidth}
	\end{proposition}
	Note that the parameters $(\sigma,q,\gamma,\delta,s,\theta,b,C_0,c_0)$ are indeed absolute constants, and the key induction property does not require any constraint on $R$ which measures the large deviation of the desired events.  Consequently, we have:
	\begin{corollaire}\label{local well-posedness}
		Let $(\sigma,q,\gamma,\delta,s,\theta, b,C_0,c_0)$ be the same set of parameter in Proposition~\ref{keyinduction}.
	Then for any sufficiently large $R\geq 1$, $\mathrm{Loc}(N)$ holds for all $N\in 2^{\mathbb{N}}$, outside a set of probability measure $C_0\e^{-cR^{\delta_0} }$, where $C_0,c,\delta_0>0$ are absolute constants depending only on the single parameter $\sigma$.
	\end{corollaire}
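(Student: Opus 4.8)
The plan is to derive Corollary~\ref{local well-posedness} from the Key Induction Proposition (Lemma~\ref{keyinduction}) by a simple bootstrap in the dyadic parameter $N$, combined with a summation of the exceptional probabilities; essentially no new analysis is involved beyond Lemma~\ref{keyinduction} itself.

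\emph{Base case.} I would first fix $\sigma$ and the associated parameters $(q,\delta,\gamma,s,\theta,C_0,c_0)$ as in Lemma~\ref{keyinduction}, set $T=R^{-10/\theta}$, and establish $\mathrm{Loc}(2)$ on a $\mathcal{B}_{\leq2}$-measurable event $\Omega_2$. By Definition~\ref{loc(N)}, $\mathrm{Loc}(2)$ is merely the bound $\|u_2^{\dag}\|_{L^\infty_tH^{10}_x}\leq R$ with the special value $T=\tfrac12$, while for the purposes of $\mathrm{Loc}(N)$, $N\geq4$, one also needs the companion $M=2$ bounds on $\mathbf{h}_n^{2,\dag}$, $\mathbf{g}_n^{2,\dag}$, $\psi_2^{\dag}$ (with $w_2^{\dag}\equiv0$) at the small time scale $T=R^{-10/\theta}$. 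Since all these objects are built out of the smooth solutions $u_1,u_2$ of \eqref{NLS-N} with data $\Pi_1\phi^\omega$ and $\Pi_2\phi^\omega$ living in the finite-dimensional Gaussian spaces $E_0\oplus E_1$ and $E_0\oplus E_1\oplus E_2$, I would invoke the smooth local well-posedness of \cite{BGT} together with the deterministic bounds of Section~\ref{sec:4} (the time-localization estimate Proposition~\ref{timecutoff} and the inhomogeneous estimate Proposition~\ref{inhomogeneous}), and absorb the remaining randomness into a finite-dimensional Gaussian concentration bound. This produces $\Omega_2\in\mathcal{B}_{\leq2}$ with $\mathbb{P}(\Omega_2^c)\leq C_0\,\e^{-c_0R^{\delta_0}}$ for some small absolute $\delta_0>0$ and all $R\geq1$ large enough.

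\emph{Iteration and summation.} Feeding $\Omega_2$ into Lemma~\ref{keyinduction} and iterating, I obtain for every $j\geq1$ a $\mathcal{B}_{\leq2^{j+1}}$-measurable set $\Omega_{2^{j+1}}\subseteq\Omega_{2^j}$ (we may always intersect with $\Omega_{2^j}$) on which $\mathrm{Loc}(2^{j+1})$ holds, with
\[
\mathbb{P}\big(\Omega_{2^j}\setminus\Omega_{2^{j+1}}\big)<C_0\,\e^{-2^{jc_0}R^{\delta}}.
\]
Setting $\Sigma_R:=\bigcap_{j\geq1}\Omega_{2^j}$, the statement $\mathrm{Loc}(N)$ holds on $\Sigma_R$ for every dyadic $N\in2^{\mathbb{N}}$, and telescoping gives
\[
\mathbb{P}(\Sigma_R^c)\leq\mathbb{P}(\Omega_2^c)+\sum_{j\geq1}\mathbb{P}\big(\Omega_{2^j}\setminus\Omega_{2^{j+1}}\big)\leq C_0\,\e^{-c_0R^{\delta_0}}+C_0\sum_{j\geq1}\e^{-2^{jc_0}R^{\delta}}.
\]
Since $R\geq1$ forces $R^{\delta}\geq1$ and $j\mapsto2^{jc_0}$ is convex, one has $2^{jc_0}R^{\delta}\geq2^{c_0}R^{\delta}+(j-1)c_0\log 2$, so the series is dominated by a convergent geometric one and $\sum_{j\geq1}\e^{-2^{jc_0}R^{\delta}}\lesssim\e^{-R^{\delta}}$. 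After relabelling the absolute constants (replacing $\delta_0$ by $\min(\delta_0,\delta)$ and adjusting $C_0,c_0$), this yields $\mathbb{P}(\Sigma_R^c)\leq C_0\,\e^{-c_0R^{\delta_0}}$ for all sufficiently large $R$, which is the claim.

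\emph{Main obstacle.} The genuine difficulty lies entirely in Lemma~\ref{keyinduction}, which is taken for granted here; given it, the corollary is a soft consequence. The points that require some care are: (i) quantifying the base-case large deviation, i.e.\ controlling $\|u_1\|_{L^\infty_tH^{10}_x}$ and $\|u_2\|_{L^\infty_tH^{10}_x}$ over the fixed interval $[-\tfrac12,\tfrac12]$ in terms of the size of the finite-dimensional Gaussian data well enough to beat the target rate $\e^{-c_0R^{\delta_0}}$ --- since $\delta_0$ is a free small absolute constant this should be comfortable, but it is where the growth of the smooth flow must be examined; (ii) reconciling the two time scales $T=\tfrac12$ and $T=R^{-10/\theta}$ appearing in Definition~\ref{loc(N)} across the first induction step, which is precisely where the time-localization estimates of Section~\ref{sec:4} enter; and (iii) checking that $\mathcal{B}_{\leq N}$-measurability propagates, which is already built into the statement of Lemma~\ref{keyinduction}. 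The hypothesis ``$R$ sufficiently large'' is used only to absorb the implicit constants coming from the base-case smooth theory and from the geometric summation.
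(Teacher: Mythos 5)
The paper offers no written proof of this corollary (it is stated as an immediate consequence of Lemma \ref{keyinduction}), and your argument — base case, iteration of the key induction, telescoping and geometric summation of the exceptional probabilities — is exactly the intended one. The summation step is handled correctly: since $\Omega_{2^{j+1}}\subseteq\Omega_{2^j}$ after intersecting, $\Sigma_R^c=\Omega_2^c\cup\bigcup_{j\geq1}(\Omega_{2^j}\setminus\Omega_{2^{j+1}})$, and $\sum_j\e^{-2^{jc_0}R^{\delta}}\lesssim\e^{-cR^{\delta}}$ as you argue, so after relabelling constants the claimed rate follows.

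The one place where your write-up is too optimistic is point (i) of your ``main obstacle'' paragraph. On the fixed interval $[-\tfrac12,\tfrac12]$ the map $\Pi_2\phi^{\omega}\mapsto u_2$ from $H^{10}$ data to $L_t^{\infty}H_x^{10}$ is, a priori, only exponentially bounded: iterating the local Cauchy theory over $\mathcal{O}(\|u_0\|_{H^1}^{\kappa})$ subintervals (using the conserved energy to control $H^1$) yields $\|u_2\|_{L^{\infty}H^{10}}\leq C^{\,|g|^{2\kappa}}|g|$ where $|g|$ is the Gaussian vector of the $\leq9$ coefficients, and the resulting tail for the event $\{\|u_2\|_{L^{\infty}H^{10}}>R\}$ is only $\exp(-c(\log R)^{\beta})$ — not $\exp(-c_0R^{\delta_0})$ for any fixed $\delta_0>0$, no matter how small. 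So ``$\delta_0$ is a free small constant'' does not rescue a fixed-time-interval base case. The correct resolution is the one implicit in the initialization step of Section \ref{sec:5}: $u_2^{\dag}=\chi_T u_2$ is supported in the \emph{short} window $|t|\leq T=R^{-10/\theta}$, so a single application of the local Cauchy theory (valid as long as $T\lesssim\|\Pi_2\phi\|_{H^{10}}^{-\kappa}$) gives $\|u_2^{\dag}\|_{L_t^{\infty}H_x^{10}}\leq2\|\Pi_2\phi^{\omega}\|_{H^{10}}$ on the event $\{\|\Pi_2\phi^{\omega}\|_{H^{10}}\leq R/2\}$, whose complement has genuinely Gaussian probability $\leq C\e^{-cR^{2}}$; the companion $M=2$ bounds on $\mathbf{h}_n^{2,\dag},\mathbf{g}_n^{2,\dag},\psi_2^{\dag}$ are then obtained on the same short window exactly as you indicate. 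With the base case set up on the short time scale, your proof is complete.
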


		
	\begin{figure}
	\begin{center}
		\begin{tikzpicture}[node distance=2cm and 2cm]
			\node (start) [startstop] {Start:  Loc($N$)};
			\node (in1) [io, below=1cm of start, text width=5.5cm, align=left] {
  \textbf{Inputs:}\\ \medskip
  
--  \(\quad u_N^{\dag} = \psi_{\leq N}^{\dag} + w_{\leq N}^{\dag}\)\\ \medskip

--  \(\quad \mathcal{H}_n^{N,\dag}:\, \eqref{def:H2Nextension}, \quad \mathcal{G}_n^{N,\dag}:\, \eqref{def:G2N}\)
};			\node (pro1) [process, below=1cm of in1] {$\mathcal{P}_{2N}^{\dag}$: \eqref{def:P2Nextension}};
			\node (pro2) [process, left=.7cm of pro1, yshift=-1cm] {
					\(\mathcal{H}_n^{2N,\dag}:\,\eqref{def:H2Nextension},\quad \mathcal{G}_n^{2N,\dag}:\, \eqref{def:G2N}\)
			};
			\node (pro3) [process, below=1cm of pro1] {$\psi_{2N}^{\dag}$: \eqref{def:e2Nextension}};
			\node (pro4) [process, below=1cm of pro3] {
				$w_{2N}^{\dag}$: \eqref{eq:wNdag}
			};
			\node (stop)
			[startstop, below=1cm of pro4]
			{ Renew $N=2N$
			};
			
			\draw [arrow] (start) -- (in1);
			\draw [arrow] (in1) -- (pro1);
			\draw [arrow] (pro1) -| (pro2);
			\draw [arrow] (pro2) |- (pro3);
			\draw [arrow] (pro3) -- (pro4);
			\draw [arrow] (pro4) -- (stop);
			\draw [arrow] (stop) to [out=1, in=-1] (start);
		\end{tikzpicture}
	\end{center}
\caption{Induction scheme from scale \(N\) to \(2N\)..}
	\end{figure}
	
	\subsection{Key estimates for random averaging operators (RAO)} 
	
	The proof of Proposition \ref{keyinduction} relies on a set of multi-linear estimates for functions and operators, both of deterministic type and stochastic type. We begin with deterministic estimates for the Duhamel integral of related RAOs that cover the range $\alpha\geq 1$. 
	Recall that in the RAO ansatz,
		\[
		 u^{\dag}_N(t)=\psi^{\dag}_{\leq N}(t)+w^{\dag}_{\leq N}(t),\quad w^{\dag}_{\leq N}(t)\in X^{s_0,b},\; \psi_{\leq N}^{\dag}\in  X_{q,q,\infty}^{\alpha-\frac{1}{2}-\frac{1}{q}-3\delta,\gamma_1}
		\]
		for $s_0=s_{\sigma}-\sigma,\ \delta=\delta_{\sigma}$ and $\gamma_1=\gamma_{1,\sigma}\in(\frac{1}{q'},1)$.
		To estimate the norm of $\mathcal{P}^{2N,\dag}_n(t)$, we expand $|u^{\dag}_N(t)|^{\diamond 2}$, the wick square of $u^{\dag}_N$ by
		\begin{multline}\label{Wicksquare} 
		|u_{N}(t)|^{\diamond2} := |u^{\dag}_N(t)|^2-\|u^{\dag}_N(t)\|_{L_x^2}^2=|\psi^{\dag}_{\leq N}(t)|^{\diamond 2}+|w^{\dag}_{\leq N}(t)|^{\diamond 2} \\
			+2\Re(\psi^{\dag}_{\leq N}(t)\ov{w}^{\dag}_{\leq N}(t))-2\Re\lg\psi^{\dag}_{\leq N}(t) |w_{\leq N}^{\dag}(t)\rg,
		\end{multline}
		where 
		\[
		 |\psi^{\dag}_{\leq N}(t)|^{\diamond 2}:=|\psi^{\dag}_{\leq N}(t)|^2-\|\psi^{\dag}_{\leq N}(t)\|_{L_x^2}^2, \quad 
		|w_{\leq N}^{\dag}(t)|^{\diamond 2}=|w_{\leq N}^{\dag}(t)|^2-\|w_{\leq N}^{\dag}(t)\|_{L^2}^2.
		\]
		We further split
		\begin{align}\label{splitingwicksquare} 
			|\psi^{\dag}_{\leq N}(t)|^{\diamond 2}=\ov{\psi^{\dag}}_{\leq N}(t)\hexagonal{\neq} \psi^{\dag}_{\leq N}(t)+\ov{\psi^{\dag}}_{\leq N}(t)\hexagonal{=}\psi^{\dag}_{\leq N}(t).
		\end{align}
	We note that 
	\[
	\Pi_N\psi^{\dag}_{\leq N}=\psi_{\leq N}^{\dag},\quad  \Pi_Nw_{\leq N}^{\dag}\neq w_{\leq N}^{\dag}\,.
	\]
	Let us start with the estimates for $S_n^{q,\gamma}$ norm of the RAOs $\mathbf{h}_n^{2N,\dag}$ and $\mathbf{g}_n^{2N,\dag}$ in the fixed-point problems:
	\begin{proposition}\label{multilinear:op1}
		There exists a sufficiently small $\sigma\in(0,1)$, such that if the statement $\mathrm{Loc}(N)$ holds with respect to arguments $(T,R,\sigma)$, then 
		for any $N<n\leq 2N$ and  $\mathcal{T}_n\in S_n^{q,\gamma}$, 
		\begin{align*}
			\Big\|\chi_{2T}(t)\int_0^t\e^{-i\lambda_n^2(t-t')}\mathcal{P}_n^{2N,\dag}(t')\circ\mathcal{T}_n(t')\d t' \Big\|_{S_n^{q,\gamma}}
			\lesssim CT^{\theta}\|\mathcal{T}_n\|_{S_n^{q,\gamma}}
			\cdot 
					T^{-2(\gamma-\frac{1}{q'})}\			R^2.
		\end{align*}
	\end{proposition}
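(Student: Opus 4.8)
\textbf{Proof proposal for Proposition \ref{multilinear:op1}.}
The plan is to reduce the operator bound to a scalar multilinear estimate on the matrix elements, then invoke the already-established inhomogeneous linear estimate (Proposition \ref{inhomogeneous}) together with a product estimate for the potential $|u_{N}^{\dag}|^{\diamond 2}$. First I would recall that, by \eqref{Snnorm'} and the matrix-element characterisation of the $S_{n}^{q,\gamma}$ norm, controlling $\|\chi_{2T}\int_{0}^{t}\e^{-i\lambda_{n}^{2}(t-t')}\mathcal{P}_{n}^{2N,\dag}(t')\mathcal{T}_{n}(t')\d t'\|_{S_{n}^{q,\gamma}}$ is the same as controlling $\|\chi_{2T}\,\mathcal{I}(\pi_{n}(\mathcal{T}_{n}(\cdot)\cdot |u_{N}^{\dag}|^{\diamond 2}))\|_{X_{q,2}^{0,\gamma}(E_{n})}$ applied to a unit vector of $E_{n}$; since the Duhamel operator $\mathcal{I}$ here is exactly the one appearing in Proposition \ref{inhomogeneous} (the twist $\e^{-i\lambda_{n}^{2}(t-t')}$ is built into the twisted Fourier transform defining the $X$-norm), Proposition \ref{inhomogeneous} gives, for $0<\theta<1-\gamma$,
\[
\Big\|\chi_{2T}\,\mathcal{I}\big(\pi_{n}(\mathcal{T}_{n}(\cdot)\cdot |u_{N}^{\dag}|^{\diamond 2})\big)\Big\|_{X_{q,2}^{0,\gamma}(E_{n})} \lesssim T^{\theta}\big\|\pi_{n}(\mathcal{T}_{n}(\cdot)\cdot |u_{N}^{\dag}|^{\diamond 2})\big\|_{X_{q,2}^{0,\gamma-1+\theta}(E_{n})}.
\]
So the whole proposition reduces to the bilinear-in-time, operator-valued estimate
\[
\big\|\pi_{n}(\mathcal{T}_{n}(\cdot)\cdot |u_{N}^{\dag}|^{\diamond 2})\big\|_{X_{q,2}^{0,\gamma-1+\theta}(E_{n})} \lesssim T^{-2\gamma+\frac{2}{q'}}R^{2}\,\|\mathcal{T}_{n}\|_{S_{n}^{q,\gamma}},
\]
where $\mathcal{T}_{n}(\cdot)$ is applied to an arbitrary unit vector.

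The second step is to estimate the potential. Using the decomposition \eqref{Wicksquare}--\eqref{splitingwicksquare}, I would write $|u_{N}^{\dag}|^{\diamond 2}$ as a sum of: the non-pairing piece $\overline{\psi_{\leq N}^{\dag}}\hexagonal{\neq}\psi_{\leq N}^{\dag}$, the pairing piece $\overline{\psi_{\leq N}^{\dag}}\hexagonal{=}\psi_{\leq N}^{\dag}$, the remainder square $|w_{\leq N}^{\dag}|^{\diamond 2}$, and the cross terms $2\Re(\psi_{\leq N}^{\dag}\overline{w}_{\leq N}^{\dag})-2\Re\langle\psi_{\leq N}^{\dag}|w_{\leq N}^{\dag}\rangle$. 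For the dangerous $(\psi,\psi)$ contributions I would use the hypotheses of $\mathrm{Loc}(N)$: the pairing term is controlled in $L_{t}^{q}L_{x}^{\infty}$ by $CR^{2}N^{-2(\alpha-\frac12)+\frac12+\frac1q+2\delta}$ from \eqref{eq:p-psi} (summed dyadically over $L\leq N$, which converges since $\alpha>1$), while the non-pairing term is estimated from $\|\psi_{\leq N}^{\dag}\|_{X_{q,q,\infty}^{\alpha-\frac12-\frac1q-3\delta,\gamma_{1}}}$ together with an algebra-type / bilinear Strichartz bound on $\S^{2}$. For the cross terms I would pair the $X^{0,b}$ control of $w_{\leq N}^{\dag}$ (from \eqref{eq:bo-r}) with the $\psi$-bounds via Strichartz (Proposition \ref{prop:str}), and for $|w_{\leq N}^{\dag}|^{\diamond 2}$ I would use Proposition \ref{prop:str} twice. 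The key point in each case is that multiplying $\mathcal{T}_{n}f_{n}\in E_{n}$ by the potential and reprojecting onto $E_{n}$ costs, by the bilinear eigenfunction estimate (Proposition \ref{bilinearBGT}) or Sogge's estimate (Proposition \ref{Sogge}), at most a polynomial factor in $n$ that is absorbed by the $N^{-2(\alpha-\frac12)+\cdots}$ gain, because $n\approx 2N$ and $\alpha>1$.

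The third step handles the $X_{q,2}^{0,\gamma-1+\theta}(E_{n})$ norm of the product. Since $\gamma-1+\theta<0$ (as $\theta<1-\gamma$), the temporal weight is a negative power of $\langle\kappa\rangle$, so it suffices to bound the product in $L_{t}^{q}L_{x}^{2}$ (on the support of $\chi_{2T}$), which via Hölder in time ($L_{t}^{q}\subset L_{t}^{1}$ on an interval of length $O(T)$, giving the $T$-powers) reduces to an $L_{t,x}$ estimate. Writing $\mathcal{T}_{n}(\cdot)f_{n}$ via its twisted Fourier transform and using that $\|\mathcal{T}_{n}\|_{S_{n}^{q,\gamma}}$ controls $\|\langle\kappa\rangle^{\gamma}\widetilde{\mathcal{T}_{n}(\cdot)f_{n}}\|_{L_{\kappa}^{q}L_{x}^{2}}$ with $\gamma>\frac{1}{q'}$, the embedding Lemma \ref{Embedding} gives $\mathcal{T}_{n}(\cdot)f_{n}\in C_{t}^{0}L_{x}^{2}(E_{n})$ with norm $\lesssim\|\mathcal{T}_{n}\|_{S_{n}^{q,\gamma}}$ up to a time-cutoff loss $T^{\frac{1}{q'}-\gamma}$ from Proposition \ref{timecutoff}; one such loss $T^{\frac{1}{q'}-\gamma}$ comes from the operator and a second $T^{\frac{1}{q'}-\gamma}$ (or rather $T^{\frac1q}$-type) factor from placing the potential's $L_{t}^{q}L_{x}^{\infty}$ norm in $L_{t}^{1}$, which together produce the claimed $T^{-2\gamma+\frac{2}{q'}}$.

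\textbf{Main obstacle.} The delicate point is the non-pairing contribution $\overline{\psi_{\leq N}^{\dag}}\hexagonal{\neq}\psi_{\leq N}^{\dag}$ to the potential: here there is no $L_{x}^{\infty}$ bound with a good power of $N$ available directly, and one must genuinely exploit the bilinear eigenfunction estimate on $\S^{2}$ (Proposition \ref{bilinearBGT}) together with the time-oscillation encoded in the Fourier-restriction structure of $X_{q,q,\infty}^{\cdots,\gamma_{1}}$ — i.e. one cannot simply take absolute values. Balancing the $n^{1/4}$-type loss from the bilinear estimate against the regularity gain $-2(\alpha-\frac12)$ at $n\approx 2N$, and tracking the dyadic summation over the frequencies of $\psi_{\leq N}^{\dag}$ so that it converges for all $\alpha>1$ (not just $\alpha>\frac{5}{4}$), is the heart of the argument and is where the full strength of the functional framework of Section \ref{sec:4} is needed. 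The remaining cross and remainder-square terms are comparatively soft, being handled by Strichartz estimates and the smallness $M^{-s}R^{-1}$ built into $\mathrm{Loc}(N)$.
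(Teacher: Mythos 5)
Your overall architecture coincides with the paper's: reduce via Proposition \ref{inhomogeneous} to a bound on $\|\mathcal{P}_{n}^{2N,\dag}(t)(F_{n}(t))\|_{X_{q,2}^{0,\gamma-1+\theta}(E_{n})}$ for $F_{n}\in X_{q,2}^{0,\gamma}(E_{n})$, then split $|u_{N}^{\dag}|^{\diamond2}$ via \eqref{Wicksquare}--\eqref{splitingwicksquare} and treat the pairing, non-pairing, cross, and remainder-square pieces separately, with the pairing piece absorbed through the $L_{t}^{q}L_{x}^{\infty}$ hypothesis in $\mathrm{Loc}(N)$ exactly as in the paper. Where you diverge is in the sub-pieces. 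For the cross terms and $|w_{\leq N}^{\dag}|^{\diamond2}$ you invoke the semiclassical Strichartz estimate; the paper instead runs these through Propositions \ref{lem:resonanttrilinear} and \ref{lem:resonanttrilinear'}, which rest on the bilinear eigenfunction estimate plus an interpolation in the modulation weight and only require $s>\frac14+\frac2q$. Your Strichartz route does close here because $s>\frac12$ (choosing the admissible exponent $p$ slightly above $2$ so that $1-\frac1p<s$ and the dyadic sum over the generations of $w$ converges), but it would not survive the lower-regularity regime the authors are preparing for. For the non-pairing piece $\overline{\psi_{\leq N}^{\dag}}\hexagonal{\neq}\psi_{\leq N}^{\dag}$, you correctly single it out as the delicate term where absolute values cannot be taken, but the decisive mechanism in the paper (Lemma \ref{lem:weaker''}) is Cauchy--Schwarz in $x$ combined with the divisor bound $\sup_{l}\#\{n_{2}\neq n_{3}:\lambda_{n_{2}}^{2}-\lambda_{n_{3}}^{2}=l\}\lesssim(N_{2}N_{3})^{\epsilon}$ applied to $\widehat{\varphi}(\widetilde{\kappa}-\Omega(\vec{n}))$, rather than the bilinear eigenfunction estimate; you would need to supply that counting argument to make your sketch complete.

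Two small bookkeeping points. First, the reduction of the $X_{q,2}^{0,\gamma-1+\theta}(E_{n})$ norm to a Lebesgue norm in time goes through $X_{q,2}^{0,0}(E_{n})$ and the vector-valued Hausdorff--Young inequality, which lands you in $L_{t}^{q'}L_{x}^{2}$ (dual exponent), not $L_{t}^{q}L_{x}^{2}$; your stronger claim still suffices on a compact time interval but the stated exponent is off. Second, the factor $T^{-2\gamma+\frac{2}{q'}}$ in the conclusion comes entirely from squaring the hypothesis $\|\psi_{\leq N}^{\dag}\|_{X_{q,q,\infty}^{0,\gamma}}\lesssim T^{-\gamma+\frac{1}{q'}}RN^{\cdots}$ in \eqref{eq:p-psi}, not partly from a time-cutoff loss on the operator $\mathcal{T}_{n}$; your accounting would produce an extra harmless power $T^{-(\gamma-\frac{1}{q'})}$, which is still dominated by $T^{\theta}$ in the parameter hierarchy, so this does not break the argument.
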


\begin{proof}
Thanks to Proposition \ref{inhomogeneous}, the definition of the norm $S_n^{q,\gamma}$ as well as Loc($N$), it suffices to prove
\begin{multline}\label{eq:multilinear:op1-1}
\|\mathcal{P}_n^{2N,\dag}(t)(F_n(t))\|_{X_{q,2}^{0,\gamma-1+\theta}(E_n)}
	\\
	\lesssim 
	\|F_n\|_{X_{q,2}^{0,\gamma}(E_n)}
	\big(\| \ov{\psi}^{\dag}_{\leq N}\hexagonal{=} \psi^{\dag}_{\leq N}\|_{L_{t}^{q'}L_{x}^{\infty} }  +\|\psi^{\dag}_{\leq N}\|_{X_{q,q,\infty}^{\beta,\gamma}}^2+\|w^{\dag}_{\leq N}\|_{X^{s,b} }^2
	\big),
\end{multline}
where $\beta=\frac{1}{2}-\frac{2}{q}-3\delta$. Indeed, for any $L^2$-normalized function $f_n\in E_n$ and $F_n=\mathcal{T}_n(t)(f_n)$ by the bounds \eqref{eq:p-psi}, \eqref{eq:p-psi-}, the right hand side of \eqref{eq:multilinear:op1-1} is bounded by
\begin{align*}
\|\mathcal{T}_n\|_{S_{n}^{q,\gamma}} R^2\sum_{M\leq N}\big(M^{-2(\alpha-\frac{1}{2})+\frac{1}{2}+\frac{1}{q}+\delta} +T^{-2(\gamma-\frac{1}{q'})}\cdot M^{-2(\alpha-\frac{1}{2}-\frac{2}{q}-\delta)}\cdot M^{2\beta}+M^{-2s} \big),
\end{align*}
which is bounded by $T^{-2(\gamma-\frac{1}{q'})}R^2\|\mathcal{T}_n\|_{S_n^{q,\gamma}}$ as desired.

To prove \eqref{eq:multilinear:op1-1}, we decompose $\mathcal{P}_n^{2N}(t)(F_n)$ into $\mathrm{I}+\mathrm{II}+\mathrm{III}+\mathrm{IV}$, where
\begin{align*} 
	&\mathrm{I}:=
	\e^{-i\lambda_n^2t}\pi_n(\e^{i\lambda_n^2t}\pi_nF_n(t)\ov{\psi}^{\dag}_{\leq N}\hexagonal{=} \psi^{\dag}_{\leq N} ),
	\\ 
	&\mathrm{II}:=
	\e^{-i\lambda_n^2t}\pi_n(\e^{i\lambda_n^2t}\pi_nF_n(t)\Re(\psi^{\dag}_{\leq N}(t)\ov{w^{\dag}}_{\leq N}(t) ))
	-\Re
	\lg \psi_{\leq N}^{\dag}(t) | w_{\leq N}^{\dag}(t) \rg
	\cdot  \pi_nF_n(t) ,
	\\
	&\mathrm{III}:=
	\e^{-i\lambda_n^2t}\pi_n(\e^{i\lambda_n^2t}\pi_nF_n(t)|w^{\dag}_{\leq N}(t)|^{2} )-\|w^{\dag}_{\leq N}(t)\|_{L^2}^2\cdot \pi_nF_n(t) ,\\
	&\mathrm{IV}:=
	\e^{-i\lambda_n^2t}\pi_n(\e^{i\lambda_n^2t}\pi_nF_n(t)\ov{\psi}^{\dag}_{\leq N}\hexagonal{\ne}\psi^{\dag}_{\leq N} ).
\end{align*}
Using that $\gamma-1+\theta<0$ (see \eqref{smallparameters}) and then Lemma \ref{vectorvaluedlemma} with the Hilbert space $E_n$, we get
	\begin{align*}
	\|\mathrm{I}\|_{X_{q,2}^{0,\gamma-1+\theta}(E_n)}
	\leq 
	&\|\mathrm{I}\|_{X_{q,2}^{0,0}(E_n)}=\big\|\mathcal{F}_t\big(\pi_n(\e^{i\lambda_n^2t}\pi_nF_n(t)\ov{\psi^{\dag}}_{\leq N}\hexagonal{=} \psi^{\dag}_{\leq N} )\big)(\lambda) \big\|_{L_{\lambda}^qE_n}
	\\
	\lesssim
	&
	\big\|
	\e^{i\lambda_n^2t}\pi_n(\pi_nF_n(t)\ov{\psi^{\dag}}_{\leq N}\hexagonal{=} \psi^{\dag}_{\leq N} )
	\big\|_{L_{t}^{q'}E_n}
	\\
	\lesssim
	&\|F_n\|_{L_t^{\infty}L_x^2}\|\ov{\psi^{\dag}}_{\leq N}\hexagonal{=} \psi^{\dag}_{\leq N} \|_{L_t^{q'}L_x^{\infty}}
	\\
	\lesssim
	&\|F_n\|_{X_{q,2}^{0,\gamma}(E_n)}\|\ov{\psi^{\dag}}_{\leq N}\hexagonal{=} \psi^{\dag}_{\leq N} \|_{L_t^{q'}L_x^{\infty}},
\end{align*}
where in the last step, we used Lemma \ref{Embedding}, thanks to the fact that $\gamma\in\big(\frac{1}{q'},1\big)$.
\medskip

The estimate of the first contribution in II is a direct consequence of Proposition~\ref{lem:resonanttrilinear}. Moreover, for fixed $t$, using the embedding property 
\[
X^{s,b}\hookrightarrow C_tH_x^s\,,\quad X_{q,\infty}^{0,\gamma}(E_n)\hookrightarrow C_tL^{\infty}(E_n)\,,
\]
we obtain
\begin{align*}
 \big|\big\lg \psi_{\leq N}^{\dag}(t) | w_{\leq N}^{\dag} (t)
 \big\rg\big|\leq  &
 \sum_{n\leq N} \|\pi_n\psi_{\leq N}^{\dag}\|_{L_t^{\infty}L_x^{\infty}(E_n)}\|\pi_nw_{\leq N}^{\dag}\|_{L_t^{\infty}L_x^2}\\
 \leq & \sum_{n\leq N} \langle n\rangle^{-(s+\beta )}
 \|\langle n\rangle^{\beta} \pi_n\psi_{\leq N}^{\dag}\|_{X_{q,\infty}^{0,\gamma}(E_n)} \|\langle n\rangle^s\pi_nw_{\leq N}^{\dag}\|_{X^{0,b}}\\
 \lesssim & \|\psi_{\leq N}^{\dag}\|_{X_{q,q,\infty}^{\beta,\gamma}}\|w_{\leq N}^{\dag}\|_{X^{s,b}}.
\end{align*}
Hence for the second term of II, its $X_{q,2}^{0,\gamma-1+\theta}(E_n)$ norm can be bounded by its $L_{t,x}^2$ norm, leading to the bound 
	\[
	\|F_n\|_{X_{q,2}^{0,\gamma}(E_n)}\|\psi_{\leq N}^{\dag}\|_{X_{q,q,\infty}^{\beta,\gamma}}\|  w_{\leq N}^{\dag}
\|_{X^{s,b}}\,.
	\]
The estimate of the first term in III is implied by Proposition \ref{lem:resonanttrilinear'}. For the second term in III, we first control it by the norm $X_{q,2}^{0,0}(E_n)$ and then by Hausdorff-Young (Corollary \ref{Hausdorff-Young}), we have
	\begin{align*}
	\big\| \|w^{\dag}_{\leq N}(t)\|_{L_x^2}^2 \pi_nF(t)
	\big\|_{X_{q,2}^{0,0}(E_n)}\lesssim & 
	\big\|  \|w^{\dag}_{\leq N}(t)\|_{L_x^2}^2\cdot \|F_n(t)\|_{L_x^2}
	\big\|_{L_t^{q'}}\\
	\leq & \|F_n\|_{L_t^2L_x^2}
	\|w^{\dag}_{\leq N}  \|_{L^{\frac{4q}{q-2}	
	  }_tL_x^2}^2\\
  \lesssim & \|\pi_nF_n\|_{X_{q,2}^{0,\gamma}(E_n)}\|w^{\dag}_{\leq N}\|_{X^{0,b}}^2,
	\end{align*}
since $w_{\leq N}^{\dag}$ is compactly supported in time.
	Finally, the estimate for IV is implied by Proposition \ref{lem:resonanttrilinear''}. This proves \eqref{eq:multilinear:op1-1} and completes the proof of  Proposition \ref{multilinear:op1}.
\end{proof}

	A similar proposition claims that the Duhamel operator above can be extended as a bounded operator on $X_{q,2}^{q,\gamma}(E_n)$. This will allow us to control the equation for the operator $\mathcal{G}_n^{2N,\dag}$.

	\begin{proposition}\label{multilinear:op2}
		There exists a sufficiently small $\sigma\in(0,1)$, such that if the statement $\mathrm{Loc}(N)$ holds with respect to arguments $(T,R,\sigma)$, then for any $N<n\leq 2N$, $\mathcal{T}_n\in S_n^{q,\gamma}$ and $\mathcal{S}_n\in S_n^{q,\gamma,\ast}$, 
		\begin{align*}
			\Big\|\chi_{2T}(t)\Big(\int_0^t\e^{i\lambda_n^2t'}\mathcal{P}_n^{2N,\dag}(t')\circ&\mathcal{T}_n(t')\d t'\Big)\circ \mathcal{S}_n(t) \Big\|_{S_n^{q,\gamma,*}}\\
			\lesssim & T^{\theta-\gamma+\frac{1}{q'}}\|\mathcal{T}_n\|_{S_n^{q,\gamma}}
			\|\mathcal{S}_n\|_{S_n^{q,\gamma,*}}\cdot 
					T^{-2\gamma+\frac{2}{q'}}
			R^2.
		\end{align*}
	\end{proposition}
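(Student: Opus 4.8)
The plan is to follow the same strategy as in the proof of Proposition \ref{multilinear:op1}, namely to reduce the operator estimate to a trilinear estimate involving the wick square $|u_{N}^{\dag}|^{\diamond 2}$ and then invoke the resonant trilinear estimates (Propositions \ref{lem:resonanttrilinear}, \ref{lem:resonanttrilinear'}, \ref{lem:resonanttrilinear''}) together with the $\hexagonal{=}$-estimate built into $\mathrm{Loc}(N)$ (the second bound in \eqref{eq:p-psi}). The new feature here, compared to Proposition \ref{multilinear:op1}, is the presence of the extra factor $\mathcal{S}_n(t)$ composed on the right and the fact that the target norm is the starred norm $S_n^{q,\gamma,\ast}$ rather than $S_n^{q,\gamma}$; these two changes are intertwined, because the Duhamel formula \eqref{def:G2N} for the adjoint side naturally produces such a composition.

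First I would use Proposition \ref{inhomogeneous} (or rather its variant adapted to the starred norm, since the phase $\e^{i\lambda_n^2 t'}$ instead of $\e^{-i\lambda_n^2(t-t')}$ corresponds precisely to the $\ast$-twisting) to reduce matters to a pointwise-in-$t$ bound: it suffices to show that for any $\mathcal{S}_n\in S_n^{q,\gamma,\ast}$ and $F_n\in X_{q,2}^{0,\gamma}(E_n)$,
\begin{equation}\label{eq:op2-red}
\|\mathcal{P}_n^{2N,\dag}(t)\circ\mathcal{S}_n(t)\|_{\mathcal{L}(E_n)\text{-valued }X_{q,2}^{0,\gamma-1+\theta}}
\lesssim \|\mathcal{S}_n\|_{S_n^{q,\gamma,\ast}}\,\big(\|\ov{\psi}^{\dag}_{\leq N}\hexagonal{=}\psi^{\dag}_{\leq N}\|_{L_t^{q'}L_x^\infty}+\|\psi^{\dag}_{\leq N}\|_{X_{q,q,\infty}^{\beta,\gamma}}^2+\|w^{\dag}_{\leq N}\|_{X^{s,b}}^2\big),
\end{equation}
where $\beta=\alpha-\frac12-\frac1q-3\delta$; the extra $T^{-\gamma+\frac{1}{q'}}$ factor in the final statement comes, exactly as in \eqref{eq:chi-H}, from the time truncation $\chi_{2T}$ applied after composing with $\mathcal{S}_n(t)$ (Proposition \ref{timecutoff}/\ref{timelocalization}), and the $T^{-2\gamma+\frac{2}{q'}}R^2$ factor then comes from substituting the $\mathrm{Loc}(N)$ bounds \eqref{eq:p-psi}--\eqref{eq:bo-r} into the right-hand side of \eqref{eq:op2-red} just as in the proof of Proposition \ref{multilinear:op1}.

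To prove \eqref{eq:op2-red} I would expand $|u_N^{\dag}|^{\diamond 2}$ via \eqref{Wicksquare}--\eqref{splitingwicksquare} into the four pieces $\mathrm{I}$ (the $\hexagonal{=}$ diagonal part of $\psi$), $\mathrm{II}$ (the cross term $\psi\ov{w}$ minus its $L^2$-pairing), $\mathrm{III}$ (the $|w|^{\diamond 2}$ part), and $\mathrm{IV}$ (the $\hexagonal{\ne}$ off-diagonal part of $\psi$), and treat $\mathcal{P}_n^{2N,\dag}(t)(\mathcal{S}_n(t)f)$ for $f\in E_n$ exactly as $\mathcal{P}_n^{2N,\dag}(t)(F_n(t))$ was treated there, now with the role of $F_n(t)$ played by $\mathcal{S}_n(t)f$. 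The only thing to check carefully is that the operator norm $\|\mathcal{P}_n^{2N,\dag}(t)\circ\mathcal{S}_n(t)\|$ in the relevant space is controlled by $\|\mathcal{S}_n\|_{S_n^{q,\gamma,\ast}}$: for the diagonal pieces $\mathrm{I}$, $\mathrm{III}$ (second term of $\mathrm{II}$, second term of $\mathrm{III}$) one simply uses that $\mathcal{P}_n^{2N,\dag}(t)$ multiplies by an $L^\infty$ (in space) scalar, so these are immediate from the definition of the starred norm; for $\mathrm{I}$ one uses the $\hexagonal{=}$-bound and the embedding $X_{q,2}^{0,\gamma}(E_n)\hookrightarrow C_t L^2(E_n)$ of Lemma \ref{Embedding} (valid since $\gamma\in(\frac1{q'},1)$), applied to the operator-valued object via Lemma \ref{vectorvaluedlemma}; and for $\mathrm{II}$, $\mathrm{IV}$ one invokes Propositions \ref{lem:resonanttrilinear}, \ref{lem:resonanttrilinear'}, \ref{lem:resonanttrilinear''} with $\mathcal{S}_n(t)f$ in place of the generic function, using Lemma \ref{typicalS_nnormbound} to pass from the $S_n^{q,\gamma,\ast}$-norm of $\mathcal{S}_n$ to a pointwise-in-$x$, $L_\kappa^q$-bound on $\widetilde{\mathcal{S}_n(t)f}$. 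Summing the four contributions and absorbing the time-localization factors yields the claim.

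The main obstacle I anticipate is bookkeeping the composition structure correctly when passing to Fourier-restriction norms: the object $\mathcal{P}_n^{2N,\dag}(t')\circ\mathcal{T}_n(t')\circ\mathcal{S}_n(t)$ carries the variable $t$ (through $\mathcal{S}_n(t)$) separately from the integration variable $t'$, so one cannot directly treat it as a function of a single time variable, and the reduction via Proposition \ref{inhomogeneous} must be set up so that $\mathcal{S}_n(t)$ is held fixed while the Duhamel-in-$t'$ estimate is performed, then the $\chi_{2T}(t)$-truncation and the $S_n^{q,\gamma,\ast}$-norm in $t$ are applied afterwards — this is exactly the point where the extra $T^{\theta-\gamma+\frac{1}{q'}}$ (rather than $T^{\theta}$) appears. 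Once the order of operations is pinned down, the trilinear inputs are the same as in Proposition \ref{multilinear:op1} and the rest is routine.
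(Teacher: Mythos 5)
Your strategy coincides with the paper's: the proof there sets $\mathbf{A}(t')=\e^{i\lambda_n^2t'}\mathcal{P}_n^{2N,\dag}(t')\circ\mathcal{T}_n(t')$ and $\mathbf{F}(t)=\e^{-i\lambda_n^2t}\mathcal{S}_n(t)(\mathbf{f}_n)$, applies the abstract composition estimate of Lemma \ref{inhomogeneousabstract2} to produce the factor $T^{\theta-\gamma+\frac{1}{q'}}\|\mathbf{A}\|_{E_n\to\mathcal{F}L_q^{\gamma-1+\theta}(E_n)}\|\mathbf{F}\|_{\mathcal{F}L_q^{\gamma}(E_n)}$, and then bounds $\|\mathbf{A}\|$ by the estimate \eqref{eq:multilinear:op1-1} already established in the proof of Proposition \ref{multilinear:op1}. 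This is precisely the two-time-variable mechanism you describe in your final paragraph, and the source of the loss $T^{\theta-\gamma+\frac{1}{q'}}$ is as you say. Two corrections to the middle of your write-up, however. First, your displayed reduction is mis-stated: the operator whose $E_n\to \mathcal{F}L_q^{\gamma-1+\theta}(E_n)$ norm must be controlled is $\mathcal{P}_n^{2N,\dag}\circ\mathcal{T}_n$ (both evaluated at the inner time $t'$), not $\mathcal{P}_n^{2N,\dag}(t)\circ\mathcal{S}_n(t)$; as written, your reduction drops $\mathcal{T}_n$ entirely and conflates the two time variables, which is inconsistent with the (correct) description you give afterwards. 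Second, $\mathcal{S}_n$ should not be fed into the trilinear estimates at all: it enters only through $\|\mathbf{F}\|_{\mathcal{F}L_q^{\gamma}(E_n)}\leq\|\mathcal{S}_n\|_{S_n^{q,\gamma,*}}\|\mathbf{f}_n\|_{E_n}$, which is immediate from the definition \eqref{Sn*norm'}, so Lemma \ref{typicalS_nnormbound} plays no role here. With these repairs your argument is the paper's.
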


\begin{proof}
By definition \eqref{Sn*norm'}, we need to show that for any $f_n\in E_n$,
\begin{align*}
&	\Big\|\chi_{2T}(t)\Big(\int_0^t\mathcal{P}_n^{2N,\dag}(t')\circ \e^{i\lambda_n^2t'}\mathcal{T}_n(t')\d t'\Big)\circ \e^{-i\lambda_n^2t}\mathcal{S}_n(t)(f_n) \Big\|_{\mathcal{F}L_q^{\gamma}(E_n)  }\\
\lesssim 
&T^{\theta-\gamma+\frac{1}{q'}}
\|\mathcal{T}_n\|_{S_n^{q,\gamma}}
\|\e^{-i\lambda_n^2t}\mathcal{S}_n(t)(f_n)\|_{\mathcal{F}L_q^{\gamma}(E_n) }
(
		T^{-2\gamma+\frac{2}{q'}}
R^2+\|\ov{\psi^{\dag}_{\leq N}}\hexagonal{=}\psi^{\dag}_{\leq N}\|_{L_t^qL_{x}^{\infty}}),
\end{align*}
where we refer to \eqref{def:FLabstract} for the definition of Fourier-Lebesgue spaces.
Set 
	\[
	\mathbf{A}(t')=\e^{i\lambda_n^2t'}\mathcal{P}_n^{2N,\dag}(t')\circ\mathcal{T}_n(t'), \quad \mathbf{F}(t)=\e^{-i\lambda_n^2t}\mathcal{S}_n(t)(f_n).
	\]
By Lemma \ref{inhomogeneousabstract2},
\begin{align*}
\Big\|\chi_{2T}(t)\int_0^t\mathbf{A}(t')(\mathbf{F}(t))\d t'\Big\|_{\mathcal{F}L_q^{\gamma}(E_n)}\lesssim T^{\theta-\gamma+\frac{1}{q'}}\|\mathbf{A}(t) \|_{E_n\rightarrow \mathcal{F}L_q^{\gamma-1+\theta}(E_n)}\|\mathbf{F}(t)\|_{\mathcal{F}L_q^{\gamma}(E_n)}.
\end{align*}
As a consequence of \eqref{eq:multilinear:op1-1},
we have
\begin{align*} \|\mathbf{A}(t)\|_{E_n\rightarrow \mathcal{F}L_q^{\gamma-1+\theta}(E_n)}
\lesssim  &\|\mathcal{T}_n\|_{S_n^{q,\gamma}}\big(  \| \ov{\psi}^{\dag}_{\leq N}\hexagonal{=} \psi^{\dag}_{\leq N}\|_{L_{t}^{q'}L_{x}^{\infty} }  +\|\psi^{\dag}_{\leq N}\|_{X_{q,q,\infty}^{\beta,\gamma}}^2+\|w^{\dag}_{\leq N}\|_{X^{s,b} }^2 \big)\\
\lesssim &
\|\mathcal{T}_n\|_{S_n^{q,\gamma}}(
		T^{-2\gamma+\frac{2}{q'}}
R^2+  \| \ov{\psi}^{\dag}_{\leq N}\hexagonal{=} \psi^{\dag}_{\leq N}\|_{L_{t}^{q}L_{x}^{\infty} } ).
\end{align*}
This completes the proof of Proposition \ref{multilinear:op2}.
\end{proof}

\subsection{Key trilinear estimates}	
	Next we state multi-linear estimates for remainders. We separate them into different classes. 
	\begin{definition}\label{Type}
		We say that a space-time function\footnote{Here the notation $v_{2N}$ should not be confused with the $u_{2N}-u_{N}$ in the previous section. } $v_{2N}$ is of type $\mathrm{(C)}$ if 
		\[
		v_{2N}=\chi_{T}(t)\sum_{N<n\leq 2N}\frac{\mathcal{T}_n^{2N}(t)(e_n^{\omega}) }{\lambda_{n}^{\alpha-\frac{1}{2}}}
		=:\chi_{T}(t)\mathcal{T}^{2N}(t)(\phi_{2N}^{\omega})\,,
		\]
		where, for all \(N<n\leq 2N\), the operator $\mathcal{T}_{n}^{2N}\in S_n^{q,\gamma}$ is $\mathcal{B}_{\leq N}$-measurable and satisfies
		\[
		\| \mathcal{T}_n^{2N}\|_{S_n^{q,\gamma}}\leq R\,,
		\]
		and if $\mathcal{T}^{2N}:=\sum_{N<n\leq2N}\mathcal{T}_{n}^{2N}$ satisfies
		\[
		\|\mathcal{T}^{2N}(t)(\phi_{2N}^{\omega}) \|_{X_{q,q,\infty}^{0,\gamma}}\leq (2N)^{-\alpha+\frac{1}{2}+\frac{2}{q}+\delta}
		R.
		\]
		It is of type $\mathrm{(D)}$ if $v_{2N}=z_{2N}\in X^{0,b}$, if
		\begin{equation}
		\label{eq:tD}
		\|z_{2N}\|_{X^{0,b}}\leq (2N)^{-s}R^{-1}\,,
		\end{equation}
		and if for all $L\geq4N$, 
		\[
		\|\Pi_{L}^{\perp}z_{2N}\|_{X^{0,b}}\leq (\frac{2N}{L})^{10}(2N)^{-s}R^{-1}\,.
		\]
	\end{definition}
	
	\begin{remarque}
	In the rest of the text, the functions of type \(\mathrm(C)\) or \(\mathrm(D)\) only satisfy the bounds of Definition \ref{Type} when restricted to some sets of large probability measure, precisely quantified with respect to the parameter \(R\), \(N\). The intersection of these sets with \(\Omega_{N}\) constitute the set \(\Omega_{2N}\) appearing in Proposition~\ref{keyinduction}. 
	
	\end{remarque}

A typical type (C) term is $\psi_{2N}^{\dag}$ for which the corresponding operator is $\mathcal{H}_n^{2N,\dag}$. We enlarge the definition of type (C) term to include RAO $\mathbf{h}_n^{2N,\dag}$ and $\chi(t)\e^{-it\lambda_n^2}\mathrm{Id}_{E_n}$ appearing in the decomposition of $\mathcal{H}_n^{2N,\dag}$. 
A typical type (D) term is the remainder~$w_{2N}^{\dag}$, solution to \eqref{eq:wNdag}. However, we introduce the notation to distinguish $w_{2N}^{\dag}$. In practice, knowing $\mathrm{Loc}(N)$, we will solve the fixed-point problem for $w_{2N}^{\dag}$ in the ball $\{w:\; \|w\|_{X^{0,b}}\leq (2N)^{-s} \}$. For this reason, it is more convenient to establish estimates for general type (D) terms.

\medskip

		We begin with some deterministic estimates that allow to control the resonant high-low-low component:
	
	\begin{proposition}\label{multilinear:det-1}
		There exists a sufficiently small $\sigma\in(0,1)$, such that if the statement $\mathrm{Loc}(N)$ holds with arguments $(T,R,\sigma)$, then for any $N<n\leq 2N$,
		\begin{align*}
		\|\chi_T(t)\mathcal{I}\mathcal{P}_n^{2N,\dag}\|_{\mathcal{L}(X^{0,b})}
		&\lesssim T^{\theta-2(\gamma-\frac{1}{q'}) }
		R^2\,.
		\intertext{ 
		Namely, for any $z_{2N}\in X^{0,b}$,}
		 \|\chi_T(t)\mathcal{I}\mathcal{N}_{(0,1)}(z_{2N},u_N^{\dag},u_N^{\dag})\|_{X^{0,b}}
		 &\lesssim T^{\theta-2(\gamma-\frac{1}{q'})}\|z_{2N}\|_{X^{0,b}}R^2\,.
		\end{align*}
	\end{proposition}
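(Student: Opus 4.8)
\emph{Step 1: reduction to a single frequency.} The plan is first to observe that $\mathcal{P}_n^{2N,\dag}(F)=\pi_n\big(\pi_nF\cdot|u_N^{\dag}|^{\diamond 2}\big)$ depends only on $\pi_nF$, takes values in $E_n$, and is supported in $\{|t|<T\}$ because $u_N^{\dag}$ is. Hence $\chi_T\mathcal{I}\mathcal{P}_n^{2N,\dag}(F)\in\mathcal{S}'(\R;E_n)$, and since $\|\cdot\|_{X^{0,b}}$ is by definition the $\ell^2_n$-sum of the norms $\|\pi_n\cdot\|_{X_{2,2}^{0,b}(E_n)}$, the claimed operator bound is equivalent to the single-frequency estimate
\[
\|\chi_T\mathcal{I}\mathcal{P}_n^{2N,\dag}(F_n)\|_{X_{2,2}^{0,b}(E_n)}\le C\,T^{\,\theta-2(\gamma-\frac1{q'})}R^{2}\,\|F_n\|_{X_{2,2}^{0,b}(E_n)},\qquad F_n\in\mathcal{S}'(\R;E_n),
\]
for every $n$; the same inequality then also holds, with the same proof, for the formal expression $\pi_n(\pi_nF\cdot|u_N^{\dag}|^{\diamond2})$ for all $n\in\N$, which is what Step 3 needs.

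\emph{Step 2: the single-frequency estimate.} I would apply Proposition \ref{inhomogeneous} (first form, with $q=r=2$, $s=0$, $\gamma=b$ and $\theta=\theta_\sigma$ in the notation of that statement; admissible since $b>\tfrac12$ and $\theta_\sigma<1-b$) to obtain
\[
\|\chi_T\mathcal{I}\mathcal{P}_n^{2N,\dag}(F_n)\|_{X_{2,2}^{0,b}(E_n)}\le C\,T^{\theta}\,\|\mathcal{P}_n^{2N,\dag}(F_n)\|_{X_{2,2}^{0,\,b-1+\theta}(E_n)},
\]
and then bound the remaining factor by $C\,T^{-2(\gamma-\frac1{q'})}R^2\|F_n\|_{X_{2,2}^{0,b}(E_n)}$ by reusing the inequality \eqref{eq:multilinear:op1-1} established inside the proof of Proposition \ref{multilinear:op1}, now read with the Fourier--restriction exponents of input and output equal to $(2,b)$ instead of $(q_\sigma,\gamma_\sigma)$. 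The argument there carries over verbatim: it used only that the exponent pair $(q,\gamma)$ satisfies $\gamma\in(\tfrac1{q'},1)$ -- for $(2,b)$ this reads $b\in(\tfrac12,1)$, which also makes Lemma \ref{Embedding} give $\|\pi_nF_n\|_{L^\infty_tL^2_x}\lesssim\|F_n\|_{X_{2,2}^{0,b}(E_n)}$ -- together with the trivial inclusion $L^2_tL^2_x=X_{2,2}^{0,0}(E_n)\hookrightarrow X_{2,2}^{0,\,b-1+\theta}(E_n)$ (valid because $b-1+\theta<0$ whenever $\theta<1-b$) for the diagonal $\hexagonal{=}$-piece and the $|w|^{\diamond2}$-piece of $|u_N^{\dag}|^{\diamond2}$, and the trilinear Propositions \ref{lem:resonanttrilinear}, \ref{lem:resonanttrilinear'}, \ref{lem:resonanttrilinear''} for the $\psi\overline{w}$-cross-piece and the non-resonant $\hexagonal{\neq}$-piece, all stated for general exponents. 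I would then insert the $\mathrm{Loc}(N)$ bounds \eqref{eq:p-psi}--\eqref{eq:bo-r} into the right-hand side of \eqref{eq:multilinear:op1-1}: summing the dyadic blocks of $\psi^{\dag}_{\leq N}$ (the $\beta$-weighted $\ell^q$-sum converges because $\beta=\alpha-\tfrac12-\tfrac1q-3\delta$ leaves a geometric factor $L^{-\delta}$) yields $\|\psi^{\dag}_{\leq N}\|_{X_{q,q,\infty}^{\beta,\gamma}}^2\lesssim R^2T^{-2(\gamma-\frac1{q'})}$; the $w$-blocks summed with the $s_0=s-\sigma<s$ margin yield $\|w^{\dag}_{\leq N}\|_{X^{s,b}}^2\lesssim R^{-2}$; and the second estimate of \eqref{eq:p-psi}, whose dyadic exponent is negative for $\alpha>1$, together with the time-localization and $q'<q$, yields $\|\overline{\psi}^{\dag}_{\leq N}\hexagonal{=}\psi^{\dag}_{\leq N}\|_{L^{q'}_tL^\infty_x}\lesssim T^{1-\frac2q}R^2$. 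The first of the three is dominant and is $\le T^{-2(\gamma-\frac1{q'})}R^2$, so the required bound follows.

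\emph{Step 3: the statement for $\mathcal{N}_{(0,1)}$, and the main difficulty.} Using $\lg\pi_{n_3}u_N^{\dag}\,|\,\pi_{n_2}u_N^{\dag}\rg=0$ for $n_2\ne n_3$, one has $\mathcal{N}_{(0,1)}(z_{2N},u_N^{\dag},u_N^{\dag})=\sum_n\pi_n\big(\pi_nz_{2N}\cdot|u_N^{\dag}|^{\diamond2}\big)=\sum_n\mathcal{P}_n^{2N,\dag}(z_{2N})$; since the summands $\chi_T\mathcal{I}\mathcal{P}_n^{2N,\dag}(z_{2N})$ lie in the mutually orthogonal subspaces $E_n$, the $X^{0,b}$-norms square-add and Step 2 applied to each $F_n=\pi_nz_{2N}$ gives
\[
\|\chi_T\mathcal{I}\mathcal{N}_{(0,1)}(z_{2N},u_N^{\dag},u_N^{\dag})\|_{X^{0,b}}^2=\sum_n\|\chi_T\mathcal{I}\mathcal{P}_n^{2N,\dag}(z_{2N})\|_{X_{2,2}^{0,b}(E_n)}^2\le\big(C\,T^{\,\theta-2(\gamma-\frac1{q'})}R^2\big)^2\|z_{2N}\|_{X^{0,b}}^2 .
\]
The step I expect to be delicate is the transfer of \eqref{eq:multilinear:op1-1} to the exponents $(2,b)$, specifically checking that the non-resonant trilinear estimates of Propositions \ref{lem:resonanttrilinear}--\ref{lem:resonanttrilinear''} remain valid when the high factor $F_n$ is controlled only in $L^\infty_tL^2_x$ and the output is demanded in $X_{2,2}^{0,b-1+\theta}(E_n)$, i.e.\ that the modulation gain beyond $\tfrac12$ produced by the non-resonance $|\lambda_{n_2}^2-\lambda_{n_3}^2|\gtrsim|n_2-n_3|$ is sufficient. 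By contrast the resonant high-low-low piece $\overline{\psi}^{\dag}_{\leq N}\hexagonal{=}\psi^{\dag}_{\leq N}$, which is the structurally dangerous interaction on $\S^2$, is absorbed here by the crude Hölder bound $\|\pi_n(\pi_nF_n\cdot\overline{\psi}^{\dag}_{\leq N}\hexagonal{=}\psi^{\dag}_{\leq N})\|_{L^2_{t,x}}\le\|\pi_nF_n\|_{L^\infty_tL^2_x}\,\|\overline{\psi}^{\dag}_{\leq N}\hexagonal{=}\psi^{\dag}_{\leq N}\|_{L^2_tL^\infty_x}$, using only the $L^q_tL^\infty_x$-bound from $\mathrm{Loc}(N)$.
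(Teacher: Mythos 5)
Your proposal is exactly the argument the paper intends: the paper omits the proof of this proposition, stating only that it ``follows in the same way as for Proposition \ref{multilinear:op1}'', and your Steps 1--3 reproduce that proof with the first-slot/output exponent pair $(q,\gamma)$ replaced by $(2,b)$ and with the $\mathrm{Loc}(N)$ bounds summed over dyadic blocks as you describe. The point you flag as delicate is the right one and it does go through: Lemmas \ref{lem:weaker}--\ref{lem:weaker''} are already stated for $q_1=2$, $\gamma_1=b\in(\tfrac12,1)$, $\gamma_0\in(\tfrac12,1)$, and the interpolation producing the output weight $b-1+\theta$ still yields negative dyadic exponents provided $b-\tfrac12$ and $\gamma_0-\tfrac12$ are taken much smaller than $\tfrac1q$ (so that the interpolation parameter $\tfrac{1-b-\theta}{\gamma_0}$ stays within $O(\tfrac1q)$ of $1$), a choice consistent with the paper's parameter hierarchy.
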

	Proposition \ref{multilinear:det-1} allows to control the first term of \eqref{eq:wNdag}, and the proof follows in the same way as for Proposition \ref{multilinear:op1}, hence we omit it. 
	\medskip

	Let us analyze the remaining type of nonlinearities in the equation of $w_{2N}^{\dag}$. In what follows $N_{(1)}=2N$. 
	\begin{itemize}
		\item (C)(C)(C) type $\mathcal{N}(\psi_{N_1}^\dag,\psi_{N_2}^{\dag},\psi_{N_3}^{\dag})$: 
		\medskip
		
		\begin{itemize}
			\item If $N_1=2N, N_2,N_3\leq N$, only $\mathcal{N}_{[0,1]}(\psi_{N_1}^{\dag},\psi_{N_2}^{\dag},\psi_{N_3}^{\dag})$ are presented.
			
			\item Otherwise $N_2=2N$ or $N_{(1)}=N_{(2)}=2N$, all possible combinations.
		\end{itemize}
		\medskip
		
		\item (C)(D)(D) types  $\mathcal{N}(\psi_{N_1}^{\dag},z_{N_2}^{\dag},z_{N_3}^{\dag}),
		\mathcal{N}(z_{N_1}^{\dag},\psi_{N_2}^{\dag},z_{N_3}^{\dag})$ and $\mathcal{N}(z_{N_1}^{\dag},z_{N_2}^{\dag},\psi_{N_3}^{\dag})$:
		\medskip
		
		\begin{itemize}
			\item If $N_1=2N, N_2,N_3\leq N$, only   $\mathcal{N}_{[0,1]}(\psi^{\dag}_{N_1},z^{\dag}_{N_2},z^{\dag}_{N_3})$ 
			\item Otherwise $N_2=2N$ or $N_{(1)}=N_{(2)}=2N$, all possible combinations.
		\end{itemize}
		\medskip
		
		\item (C)(C)(D) types $\mathcal{N}(\psi_{N_1}^{\dag},\psi_{N_2}^{\dag},z_{N_3}^{\dag})$, $\mathcal{N}(\psi_{N_1}^{\dag},z_{N_2}^{\dag},\psi_{N_3}^{\dag})$ and $\mathcal{N}(z_{N_1}^{\dag},\psi_{N_2}^{\dag},\psi_{N_3}^{\dag})$:
		\medskip
		
		\begin{itemize}
			\item If $N_1=2N, N_2,N_3\leq N$, only $\mathcal{N}_{[0,1]}(z_{N_1},z_{N_2},z_{N_3})$.
			\item Otherwise $N_2=2N$ or $N_{(1)}=N_{(2)}=2N$, all possible combinations. 
		\end{itemize}
		\medskip
		
		\item (D)(D)(D) types $\mathcal{N}(z_{N_1}^{\dag},z_{N_2}^{\dag},z_{N_3}^{\dag})$: all possible combinations.
	\end{itemize}
	\medskip
	
	We collect the deterministic trilinear estimates in the following proposition, which treats (D)(D)(C), (D)(C)(C) and (D)(D)(D) terms together. Note that we do not cover the case of the Gibbs measure $(\alpha=1)$ here, and refined probabilistic estimates together with a modified resolution scheme are needed. These will be developed in our forthcoming work \cite{BCST2}.  
	\begin{proposition}\label{multilinear:det-2}
	Let $\alpha>1$,  \(\sigma\) small to ensure the bound~\eqref{smallparameters3}, and
	\[
	0<\epsilon_{0}<\min(\frac{1}{2}(s-\frac{1}{2}),\alpha-1-\frac{2}{q}-\delta, 100\sigma)\,.
	\]
	We suppose that $\mathrm{Loc}(N)$ is true with parameters $(T,R,\sigma)$ and that $\psi_{2N}^{\dag}$ also satisfies the bound \eqref{eq:p-psi}. Let $N_{1},N_{2},N_{3}$ be dyadic integers such that $2N=N_{(1)}\geq N_{(2)}\geq N_{(3)}$, and $v_{N_{j}}^{\dag}$ is either of type $\mathrm{(C)}$ or of type $\mathrm{(D)}$, with at least one of type $\mathrm{(D)}$. We have:
	\begin{itemize}
	\item If there is at least one term $z_{2N}^{\dag}$ then 
	\[
	\|\chi_{T}\mathcal{I}\mathcal{N}(z_{2N}^{\dag},v_{N_{2}}^{\dag},v_{N_{3}}^{\dag})\|_{X^{0,b}} 
	+
	\|\chi_{T}\mathcal{I}\mathcal{N}(v_{N_{1}}^{\dag},z_{2N}^{\dag},v_{N_{3}}^{\dag})\|_{X^{0,b}}
	\leq 
	CT^{\frac{\theta}{2}}R
	\|z_{2N}^{\dag}\|_{X^{0,b}}N_{(2)}^{-\epsilon_{0}}\,.
	\]
	\item Otherwise there is at least one term $\psi_{2N}^{\dag}$, and
	\begin{multline*}
	\|\chi_{T}\mathcal{I}\mathcal{N}(v_{N_{1}}^{\dag},\psi_{2N}^{\dag},v_{N_{3}}^{\dag})\|_{X^{0,b}}
	+
	\|
	\chi_{T}\mathcal{I}\mathcal{N}_{[0,1]}(\psi_{2N}^{\dag},v_{N_{2}}^{\dag},
	v_{N_{3}}^{\dag})
	\|_{X^{0,b}}
	\leq CT^{\frac{\theta}{2}}RN^{-s}N_{(2)}^{-\epsilon_{0}}\,.
	\end{multline*}
 	\item For the interactions $\mathcal{N}_{(0,1)}(\psi_{2N}^{\dag},\cdot,\cdot)$ we suppose that $N_{(2)}=N_{(1)}=2N$, and we get  
  \begin{align*}
  	\|\chi_{T}\mathcal{I}\mathcal{N}_{(0,1)}(\psi_{2N}^{\dag},v_{N_{2}}^{\dag},v_{N_{3}}^{\dag})\|_{X^{0,b}} \leq CT^{\frac{\theta}{2}}RN^{-s-\epsilon_{0}}.
  \end{align*}
	\end{itemize}
 
	\end{proposition}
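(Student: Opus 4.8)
The proof of Proposition \ref{multilinear:det-2} will be a case analysis organized by the frequency configuration $N_{(1)}=2N\geq N_{(2)}\geq N_{(3)}$ and by the type (C) or (D) of each factor. In all cases, the starting point is the inhomogeneous estimate (Proposition \ref{inhomogeneous}), which reduces the bound on $\chi_{T}\mathcal{I}\mathcal{N}(\cdots)$ in $X^{0,b}$ to a bound on the nonlinearity $\mathcal{N}(\cdots)$ in $X^{0,b-1+\theta}$ with a gain of $T^{\theta}$ (up to relabeling $b-1+\theta=\gamma-1+\theta$ as in Proposition \ref{inhomogeneous}; I would fix $\theta$ small against $\sigma$ so that $T^{\theta}$ absorbs into $T^{\theta/2}$ after the various time-localization losses $T^{1/q'-\gamma}$ attached to the type (C) factors). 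Since $X^{0,b-1+\theta}$ with $b-1+\theta<0$ is weaker than $L^{2}_{t,x}$, it will in fact suffice in most subcases to estimate $\mathcal{N}(\cdots)$ in $L^{1}_{t}L^{2}_{x}$ or even $L^{1}_{t,x}$ by Hölder in time, and then to close the estimate in $L^{2}_{x}$ at fixed time using the deterministic eigenfunction and bilinear eigenfunction estimates of Propositions \ref{Sogge} and \ref{bilinearBGT}, together with the semi-classical Strichartz estimate of Proposition \ref{prop:str} to handle the $X^{0,b}$ factors of type (D).

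\textbf{Item (1): the term with a $z_{2N}^{\dag}$ factor.} Here $z_{2N}^{\dag}\in X^{0,b}$ carries the top frequency $N_{(1)}=2N$. The mechanism is that one puts $z_{2N}^{\dag}$ and one other factor into a bilinear Strichartz-type product (Proposition \ref{bilinearBGT}, which costs $N_{(2)}^{1/4}$ but gains relative to placing both in $L^{4}$), and estimates the remaining factor in $L^{\infty}_{t}L^{\infty}_{x}$ — using $\|v_{N}^{\dag}\|_{L^{\infty}_{t,x}}\lesssim N^{1/q+O(\delta)}R$ for type (C) (from the embedding $X^{0,\gamma}_{q,\infty}(E_{n})\hookrightarrow C_{t}L^{\infty}$ of Lemma \ref{Embedding} summed dyadically via \eqref{eq:p-psi}), or $\|z_{N}^{\dag}\|_{L^{\infty}_{t,x}}\lesssim N^{1+}\|z_{N}^{\dag}\|_{X^{0,b}}$ for type (D) (Sobolev embedding on $\S^{2}$, since $H^{1+}\hookrightarrow L^{\infty}$). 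The hypothesis $\alpha>1$ and $s>\tfrac12$ with a small $\epsilon_{0}<\min(\tfrac12(s-\tfrac12),\alpha-1)$ gives exactly the room to absorb the $N_{(2)}^{1/4}$ loss and the $L^{\infty}$-losses into the gains $N_{(1)}^{-s}$ (if the remaining factor is $z$) or $N_{(j)}^{-(\alpha-1)}$ (if it is $\psi$, using the extra $\lambda_{n}^{-(\alpha-1/2)}$ in \eqref{psiNbyHnN}/\eqref{eq:p-psi}) and leave the declared $N_{(2)}^{-\epsilon_{0}}$. When the second and third factors are both of type (C) one must be slightly more careful: one places the two lowest-frequency $\psi^{\dag}$'s in $L^{\infty}_{t,x}$ via \eqref{eq:p-psi} and keeps $z_{2N}^{\dag}$ in $L^{2}_{t,x}\subset X^{0,0}$; the summability over the dyadic blocks of the $\psi$'s is ensured by the strictly positive exponents coming from $\alpha>1$.

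\textbf{Items (2) and (3): no $z_{2N}^{\dag}$, a $\psi_{2N}^{\dag}$ at top frequency.} Now the high factor $\psi_{2N}^{\dag}$ is of type (C), so we cannot use Strichartz on it directly; instead, since $\pi_{n}\psi_{2N}^{\dag}(t)=\chi_{T}(t)e_{n}^{2N,\dag}(t)/\lambda_{n}^{\alpha-1/2}$ and the bilinear eigenfunction estimate (Proposition \ref{bilinearBGT}) is purely deterministic, we pair $\psi_{2N}^{\dag}$ against the next factor in $L^{2}_{x}$: for a single dyadic block $n\approx 2N$ and any $g$, $\|\pi_{n}\psi_{2N}^{\dag}\cdot \pi_{m}g\|_{L^{2}_{x}}\lesssim m^{1/4}\lambda_{n}^{-(\alpha-1/2)}\|e_{n}^{2N,\dag}\|_{L^{2}_{x}}\|g\|_{L^{2}_{x}}$, then use the $X^{0,\gamma}_{q,q,\infty}$ control of $\psi_{2N}^{\dag}$ from \eqref{eq:p-psi} to pass to the time norm (accepting the $T^{1/q'-\gamma}$ loss, harmless since it comes with a positive power $N^{-(\alpha-1/2)+1/q+2\delta}$ from \eqref{eq:p-psi}), and place the third factor in $L^{\infty}_{t,x}$ as above; the structural restriction to $\mathcal{N}_{[0,1]}$ resp. $\mathcal{N}_{(0,1)}$ in items (2)–(3) is used only to know which frequency is forced to match which (so that one genuinely has the high-low-low configuration and the extra smallness from $N_{(2)}$). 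In item (3) with $N_{(1)}=N_{(2)}=2N$ and $v_{N_{3}}^{\dag}$ (or $v_{N_{2}}^{\dag}$) of type (D), the pairing $(0,1)$ forces the two top frequencies to be equal, so the product $\pi_{n}\psi_{2N}^{\dag}\cdot \pi_{n}v^{\dag}$ with $n\approx 2N$ is handled again by Proposition \ref{bilinearBGT} (this time with $n^{1/4}=N^{1/4+}$, the worst loss), and the remaining (D) factor is placed in $L^{\infty}$ via Sobolev; the two gains $N_{(1)}^{-s}=N^{-s}$ and $\lambda_{n}^{-(\alpha-1/2)}$ beat $N^{1/4+1+}$ because $s>\tfrac12$ and $\alpha>1$, leaving $N^{-s}N^{-\epsilon_{0}}$.

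\textbf{Main obstacle.} The genuinely delicate point — the one that forces the restriction-space machinery and is \emph{not} covered by the crude Hölder-plus-eigenfunction scheme above — is precisely the high-low-low interaction $\mathcal{N}_{(0,1)}$ in item (3) when the top-frequency factor is the singular colored object $\psi_{2N}^{\dag}$: as emphasized in the introduction (the divergence in \cite{BCLST}), Duhamel integration of this resonant piece gains \emph{no} regularity from eigenfunction bounds alone, so one must exploit the time-modulation $\langle \tau+\lambda_{n}^{2}\rangle$ by working in $X^{0,b}$ honestly and using the smoothing from the modulation in the Bourgain space — i.e. distinguishing between the regime where the output modulation is large (where $\langle\tau+\lambda_{n}^{2}\rangle^{b-1}$ itself gives the gain) and where it is small (where one of the inputs must then have large modulation, against which one uses the $X^{0,b}$-norm of a (D) factor, or, for the $\psi$ factor, the $X^{0,\gamma}_{q,q,\infty}$ bound \eqref{eq:p-psi} which already encodes a $\langle\tau+\lambda_{n}^{2}\rangle^{\gamma}$ weight). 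I would isolate this subcase, run the standard $X^{s,b}$ Littlewood–Paley / modulation decomposition on it, and feed the bilinear eigenfunction estimate only on the two inputs whose modulations are both small; the bookkeeping of the $T$-powers and the $\delta,\sigma$ hierarchy in \eqref{smallparameters} is what makes this the longest part of the argument, but no new idea beyond those already isolated in Sections \ref{sec:4}–\ref{sec:5} is needed.
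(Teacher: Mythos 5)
Your overall architecture (case analysis by type and frequency, reduction via the inhomogeneous estimate, isolating the resonant $\mathcal{N}_{(0,1)}$ piece as the delicate one) matches the paper's, but the concrete scheme you propose for item (1) quantitatively fails, and two essential mechanisms for items (2)--(3) are missing.

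For item (1) you propose to pair $z_{2N}^{\dag}$ with one factor via the bilinear eigenfunction estimate and to place the remaining type~(D) factor in $L_{t,x}^{\infty}$ through $H^{1+}\hookrightarrow L^{\infty}$, i.e.\ $\|z_{M}^{\dag}\|_{L_{t,x}^{\infty}}\lesssim M^{1+}\|z_{M}^{\dag}\|_{X^{0,b}}$. This loses a full derivative against a gain of only $M^{-s}$ with $s=\frac{1}{2}+100\sigma$, and the bilinear gain cannot compensate: in the worst configuration $\mathcal{N}(z_{2N}^{\dag},z_{N_{2}}^{\dag},z_{N_{3}}^{\dag})$ with $N_{2}=N_{3}=2N$ your scheme yields
\[
N^{\frac{1}{4}}\,\|z_{2N}^{\dag}\|_{X^{0,b}}\,N^{-s}R^{-1}\cdot N^{1+ -s}R^{-1}
= N^{\frac{5}{4}-2s+}\|z_{2N}^{\dag}\|_{X^{0,b}}R^{-2}\sim N^{\frac{1}{4}+}\|z_{2N}^{\dag}\|_{X^{0,b}}R^{-2},
\]
a \emph{growing} power of $N$ where the statement requires $N_{(2)}^{-\epsilon_{0}}$ decay. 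The paper's Lemma \ref{lem:Dmax} avoids this entirely: it uses no bilinear eigenfunction estimate for this group, but instead puts $z_{2N}^{\dag}$ and the dual function in $L_{t}^{\infty}L_{x}^{2}$ and each remaining factor in $L_{t}^{p_{i}}L_{x}^{\infty}$ with $p_{i}$ close to $2$, controlled by the semiclassical Strichartz estimate of Proposition \ref{prop:str} at regularity $1-\frac{1}{p_{i}}<s-2\epsilon_{0}$ — a cost of only $\frac{1}{2}+$ derivatives — combined with Lemma \ref{lem:locN-c} (which converts the high-frequency tail bound in $\mathrm{Loc}(N)$ into $\|z_{M}^{\dag}\|_{X^{s-\epsilon_{0},b}}\lesssim R^{-1}M^{-\epsilon_{0}}$). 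You cite Proposition \ref{prop:str} in your preamble but then do not deploy it at the one place where it is indispensable.

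For items (2)--(3) you correctly flag the resonant high-low-low interaction as the obstacle, but your sketch (large/small output-modulation dichotomy) does not identify the mechanism that actually closes the estimate in the paper. Lemmas \ref{lem:no-pairing} and \ref{lem:psi2} proceed by duality in the twisted Fourier variables and control the sum over the two high frequencies $n_{0},n_{1}$ (or $n_{0},n_{2}$) by a Schur test based on the divisor bound $\sup_{m}\#\{n_{0}\neq n_{1}\lesssim N:\lambda_{n_{1}}^{2}-\lambda_{n_{0}}^{2}=m\}\lesssim_{\epsilon}N^{\epsilon}$ applied to the resonance function $\Omega(\vec{n})$; this is where the non-pairing conditions $[0,1]$, $[1,2]$ enter, and it is what converts the double high-frequency sum into an $N^{\epsilon}$ loss rather than an $N^{\frac{1}{2}}$ one. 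Separately, the fully paired configuration $\mathcal{N}_{(1,2)}(\psi_{2N}^{\dag},\psi_{2N}^{\dag},v_{N_{3}}^{\dag})$ admits no time oscillation at all and is only saved by the Wick ordering, via the bound $\|\psi_{2N}^{\dag}\hexagonal{=}\overline{\psi_{2N}^{\dag}}\|_{L_{t}^{q}L_{x}^{\infty}}\lesssim R^{2}N^{-2(\alpha-\frac{1}{2})+\frac{1}{2}+\frac{1}{q}+2\delta}$ from Lemma \ref{lem:wick} (this is Lemma \ref{lem:wick-b}); your proposal never invokes the Wick cancellation, without which this subcase cannot be closed.
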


We prove Proposition \ref{multilinear:det-2} in Section \ref{sec:det}. Let us now state the trilinear probabilistic estimates, which we prove in Section \ref{sec:CCC}. In contrast to the above deterministic estimate, we also cover the Gibbs case corresponding to $\alpha=1$.

	\begin{proposition}[(C)(C)(C) type interactions]\label{multilinear:random} 
	Let $\alpha\geq 1$.	Suppose that $\mathrm{Loc}(N)$ holds on $\Omega_{N}$ with parameters $(T,R,\sigma)$, for some sufficiently small $\sigma\in(0,1)$, and that $\psi_{2N}^{\dag}$ also satisfies the bound \eqref{eq:p-psi}. Let $N_1,N_2,N_3$ be dyadic integers such that $2N=N_{(1)}\geq N_{(2)}\geq N_{(3)}$. Let $\psi_{N_j}^{\dag}$ be of type $\mathrm{(C)}$. Then there exits a $\mathcal{B}_{\leq 2N}$-measurable set $\Xi$ with $\mathbb{P}(\Omega\setminus \Xi)<C_0\e^{-N^{c_0}R^{\delta_0}}$, such that the following estimates hold on $\Omega_{N}\cap\Xi$: 
	\begin{itemize}
	\item If $N_1=2N$ and $N_2,N_3\leq N$, then 
		\[
		  \|
		  \chi_T(t)\mathcal{I}\mathcal{N}_{[0,1]}(\psi_{N_1}^{\dag},\psi_{N_2}^{\dag},\psi_{N_3}^{\dag})
		  \|_{X^{0,b}}
		  \leq 
		  T^{\frac{\theta}{2}}
		  R^4N^{-(\alpha-\frac{1}{2})+\frac{1}{q}+2\delta+100\theta}.
		\]
	\item If $N_{2}=2N$ or $N_{(1)}=N_{(2)}=2N$, then
		\[
		\|
		\chi_T(t)\mathcal{I}\mathcal{N}(\psi_{N_1}^{\dag},\psi_{N_2}^{\dag},\psi_{N_3}^{\dag})
		\|_{X^{0,b}}
		\leq 
		T^{\frac{\theta}{2}}R^4
		N^{-(\alpha-\frac{1}{2})+\frac{1}{q}+2\delta+100\theta}\,.
		\]
	\end{itemize}		

	\end{proposition}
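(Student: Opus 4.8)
The plan is to reduce the bound to a deterministic multilinear estimate for the base tensor obtained after expanding the colored Gaussians into Wiener chaos, and then to promote it to a high-probability statement by Gaussian hypercontractivity. By the inhomogeneous linear estimate (Proposition \ref{inhomogeneous}), it suffices to bound the nonlinearity itself in $X^{0,b-1+\theta}$ at the cost of a favourable power $T^{\theta/2}$; writing $\mathcal{N}^{\sharp}$ for $\mathcal{N}_{[0,1]}$ in the high-low-low case and for $\mathcal{N}$ otherwise, the target becomes $\|\mathcal{N}^{\sharp}(\psi_{N_1}^{\dag},\psi_{N_2}^{\dag},\psi_{N_3}^{\dag})\|_{X^{0,b-1+\theta}}\lesssim R^{4}N_{(1)}^{-s-\epsilon_0}$ on a set of the claimed measure. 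One then decomposes dyadically in the modulation variables $\kappa_j=\tau_j+\lambda_{n_j}^2$ of the three inputs and the output, recording that the output modulation dominates $|\kappa_0|$, which is in turn controlled by $\kappa_1+\kappa_2+\kappa_3$ together with the resonance function $\Omega=(\lambda_{n_0}^2-\lambda_{n_1}^2)+(\lambda_{n_2}^2-\lambda_{n_3}^2)$ (up to the sign pattern of the paired/non-paired component being treated).

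Next I would expand each factor $\psi_{N_j}^{\dag}=\chi_T(t)\sum_{n_j\approx N_j}\lambda_{n_j}^{-(\alpha-\frac12)}\mathcal{T}_{n_j}^{N_j}(t)(e_{n_j}^{\omega})$ with $e_{n_j}^{\omega}=(2n_j+1)^{-1/2}\sum_{|k|\le n_j}g_{n_j,k}\mathbf{b}_{n_j,k}$, so that the product is a degree-three polynomial in the i.i.d. Gaussians. The Wick ordering built into $\mathcal{N}$ kills the constant ($0$-th chaos) term and the diagonal self-contractions inside a single $\pi_n$-block, leaving the purely cubic part (third chaos) and off-diagonal single contractions (first chaos). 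For the third-chaos part I would use hypercontractivity, $\|X\|_{L^p_\omega}\lesssim p^{3/2}\|X\|_{L^2_\omega}$, together with Chebyshev; optimizing $p$ yields a stretched-exponential tail and hence an exceptional set of measure $\lesssim C_0\e^{-(\log N)^{c_0}R^{\delta_0}}$ provided the second moment is $\lesssim R^{4}N_{(1)}^{-s-\epsilon_0}$ up to a logarithm. The second moment reduces to a sum of squares of the deterministic coefficients, which is controlled by the $S_n^{q,\gamma}$ and $S_n^{q,\gamma,\ast}$ bounds (both $\le R$ under $\mathrm{Loc}(N)$), the Weyl law \eqref{Weyl}, Sogge's estimate (Proposition \ref{Sogge}) and the bilinear eigenfunction estimate (Proposition \ref{bilinearBGT}). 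For the first-chaos (resonant) contractions I would use crucially that the singular pairing $n_0=n_1$ (the divergent interaction of \cite{BCLST}) is absent from $\mathcal{N}_{[0,1]}$, and dispatch the remaining pairings via the Wick cancellation of Lemma \ref{Wickcancellation} together with the law invariance of Proposition \ref{cor:law} and the large-deviation bounds of Section \ref{sec:large-dev}.

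It then remains to distinguish the two geometries. When $N_2=2N$ or $N_{(1)}=N_{(3)}=2N$, at least two eigenfunction blocks lie at frequency $\sim N$, and applying Proposition \ref{bilinearBGT} to that pair gains $N_{(3)}^{1/4}$; combined with the decay $\lambda^{-(\alpha-\frac12)}$ of the remaining factor this closes the estimate in every modulation régime without structural input, so this case is comparatively soft. The delicate case is $N_1=2N$, $N_2,N_3\le N$, where only $\mathcal{N}_{[0,1]}$ survives and the constraint $n_0\ne n_1$ gives $|\lambda_{n_0}^2-\lambda_{n_1}^2|\sim N|n_0-n_1|\ge N$: in the subregion where this is not compensated by the low-frequency phases at least one modulation is $\gtrsim N$, producing a power gain after the modulation summation; in the complementary subregion I would exploit the independence of the top-block Gaussians $(g_{n,k})_{n\approx 2N}$ from the lower-frequency RAOs and from each other to obtain square-root cancellation, transferring the $L^{\infty}_x$ bound on $e_n^{2N,\dag}(t)$ into a time-averaged Fourier--Lebesgue norm via Lemma \ref{typicalS_nnormbound} and the $S_n^{q,\gamma,\ast}$ bound. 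Measurability of the exceptional set with respect to $\mathcal{B}_{\le 2N}$ is immediate since all objects involved are $\mathcal{B}_{\le 2N}$-measurable, and summing over the $O((\log N)^{2})$ dyadic subcases costs only a logarithm.

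I expect the main obstacle to be precisely this high-low-low non-resonant interaction $\mathcal{N}_{[0,1]}(\psi_{2N}^{\dag},\psi_{N_2}^{\dag},\psi_{N_3}^{\dag})$ when $\alpha$ is close to $1$ (the Gibbs régime): the factors $\lambda_n^{-(\alpha-\frac12)}$ then provide essentially no room, so the extra $\epsilon_0$ must be manufactured by combining the modulation lower bound with the probabilistic cancellation, and one must ensure the large-deviation argument delivers a bound uniform in $x\in\S^{2}$ inside a time-averaged norm — the pointwise-in-time law invariance being, by itself, insufficient. A secondary difficulty is bookkeeping: $\epsilon_0$, $C_0$ and $c_0$ have to come out independent of $\alpha\ge 1$ and of $R$, which forces one to always extract the $R$-dependence explicitly (here the power $R^4$) and the smallness in $N_{(1)}$ as a genuine power rather than, say, a logarithm.
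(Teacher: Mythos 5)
Your overall architecture (reduce to a moment bound via the Duhamel/interpolation step, expand in Wiener chaos, use hypercontractivity plus Chebyshev for the tail, treat the contracted terms via Wick cancellation and law invariance, and exploit the non-pairing condition $n_0\neq n_1$ in the high--low--low case) matches the paper's strategy, which proves a moment estimate in $X^{0,-b_1}$ (Proposition \ref{prop:CCC}) and then interpolates and applies Chebyshev. However, there is one genuine gap and one structural inaccuracy.

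The gap is your claim that the case $N_2=2N$ or $N_{(1)}=N_{(3)}=2N$ is ``comparatively soft'' and closes by applying the bilinear eigenfunction estimate to the two high-frequency blocks ``in every modulation régime without structural input.'' This is false, and it is precisely the regime the proposition is designed to handle down to $\alpha=1$. First, when the two high-frequency factors are fully paired ($n_2=n_3$, the interaction $\mathcal{N}_{(0,1)(2,3)}$ or $\mathcal{N}_{[0,1](2,3)}$ with $N_2=N_3=2N$), the resonance function vanishes, there is no modulation gain, and the bilinear estimate gives nothing useful: the only source of smallness is the Wick-ordering cancellation, quantified by the bound $\|\psi_{2N}^{\dag}\hexagonal{=}\overline{\psi_{2N}^{\dag}}\|_{L_t^qL_x^{\infty}}\lesssim R^2N^{-2(\alpha-\frac12)+\frac12+\frac1q+2\delta}$ of Lemma \ref{lem:wick}, which itself rests on the law invariance of the RAO and a second-chaos computation --- very much a ``structural input.'' Second, even in the non-paired configurations with two or three factors at frequency $\sim N$, a deterministic treatment fails: each type-(C) factor is a sum of $\sim N$ eigenspace blocks of $L^2$-size $\lambda_n^{-(\alpha-\frac12)}$, and an $\ell^1_n$ summation (which is all the bilinear eigenfunction estimate gives you) loses a factor $N^{1/2}$ per high-frequency factor that the bilinear gain $N_{(3)}^{1/4}$ cannot recover when $\alpha$ is close to $1$. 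The paper's proof of Proposition \ref{prop:CCC} in fact never invokes Proposition \ref{bilinearBGT} or Sogge's estimate: the $\ell^2_n$ square-root cancellation comes from the conditional Wiener-chaos estimate of Lemma \ref{lem:multi-C}, and the remaining summations are closed by divisor bounds such as \eqref{eq:small-div}.

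The structural inaccuracy is your reduction of the second moment to ``a sum of squares of the deterministic coefficients.'' When $N_1,N_2,N_3$ belong to different generations, the product is not a degree-three polynomial in i.i.d.\ Gaussians with deterministic coefficients: the coefficients involve the random operators $\mathcal{H}_n^{N_j,\dag}$, which are $\mathcal{B}_{\leq N_j/2}$-measurable and hence random relative to the lower generations. The paper handles this by applying the conditional chaos estimate iteratively, peeling off one generation at a time (conditioning on $\mathcal{B}_{\leq N}$, then $\mathcal{B}_{\leq N_2/2}$, etc.), and converting the resulting operator sums into $S_n^{q,\gamma,\ast}$ norms through the pointwise bound \eqref{pointwiseproba} and Weyl's law. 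You allude to this independence structure in the hard case, but your global moment computation needs to be organized around it from the start, otherwise the claimed reduction to deterministic coefficients does not make sense.
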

	\subsection{Proof of Proposition \ref{keyinduction} by assuming multi-linear estimates }
	\label{sec:s1}
	In this subsection, we show that Proposition \ref{multilinear:op1}, Proposition~\ref{multilinear:op2},  Proposition~\ref{multilinear:det-1}, Proposition~\ref{multilinear:det-2} and Proposition~\ref{multilinear:random} imply the key-induction stated in Proposition~\ref{keyinduction}.
	
Let $R\gg 1$ be a fixed parameter. Assuming $\mathrm{Loc}(N)$ (see Definition \ref{loc(N)}) holds for all $\omega\in \Omega_N$ which is $\mathcal{B}_{\leq N}$-measurable. 

 \medskip
 
\noi
$\bullet${\bf Step 1 : Uniform bounds for the RAOs }	

\medskip

Equation \eqref{def:H2Nextension} can be written in term of $\mathbf{h}_n^{2N,\dag}$:
\begin{align}\label{eq:hn2N}
\mathbf{h}_n^{2N,\dag}(t)=-2i\chi_{2T}(t)\int_0^t\e^{-i\lambda_n^2(t-t')}\mathcal{P}_n^{2N,\dag}(t')\circ\mathcal{H}_n^{2N,\dag}(t')\d t'\,.
\end{align}
Thanks to Proposition \ref{multilinear:op1}, 	the $S_n^{q,\gamma}$ norm of the right hand side is bounded by
	\[
	 CT^{\theta-2\gamma+\frac{2}{q'}}R^2\|\mathcal{H}_n^{2N,\dag}(t)\|_{S_n^{q,\gamma}}\leq CT^{\frac{\theta}{2}}R^2(1+\|\mathbf{h}_n^{2N,\dag}\|_{S_n^{q,\gamma}}),
	\]
where we used the fact that  $\theta-2\gamma+\frac{2}{q'}>\frac{\theta}{2}$, thanks to the choices of parameters. 

Similarly, the equation~\eqref{def:G2N} can be written in term of $\mathbf{g}_n^{2N,\dag}$:
\begin{align}\label{eq:gn2N}
\mathbf{g}_n^{2N,\dag}(t)=2i\chi_{2T}(t)\Big(\int_0^t\e^{i\lambda_n^2t'}\mathcal{P}_n^{2N,\dag}(t')\circ\mathcal{H}_n^{2N,\dag}(t')\d t'\Big)\circ\mathcal{G}_n^{2N,\dag}(t),
\end{align}
by Proposition \ref{multilinear:op2}, the $S_n^{q,\gamma,*}$-norm of the right hand side is bounded by
	\[
	 CT^{\frac{\theta}{2}}R^2(1+\|\mathbf{g}_{n}^{2N,\dag}\|_{S_n^{q,\gamma,*}} ),
	\]
provided that $\theta-3\gamma+\frac{3}{q'}>\frac{\theta}{2}$.

Consequently, for \(T\sim R^{-\frac{10}{\theta}}\), and for any $N<n\leq 2N$, we obtain that
	\[ \|\mathbf{h}_n^{2N,\dag}\|_{S_n^{q,\gamma}}+\|\mathbf{g}_n^{2N,\dag}\|_{S_n^{q,\gamma,*}} \leq R^{-1},\; 
	\]
and
	\[
	\|\mathcal{H}_n^{2N,\dag}\|_{S_n^{q,\gamma}}+\|\mathcal{G}_n^{2N,\dag}\|_{S_n^{q,\gamma,*}}\leq R\,.
	\]
		From Lemma \ref{bd:colored} and Lemma \ref{lem:wick}, by deleting a subset $\Xi_N$ from $\Omega_N$ of probability smaller than $C_0\e^{-c_0(2NR)^{\delta_0}}$, the colored term $\psi_{2N}^{\dag}$ satisfies the bounds \eqref{eq:p-psi} and~\eqref{eq:p-psi-}. 
\medskip

\noi
$\bullet${\bf Step 2: Fixed-point problem for the remainder.} We now solve the fixed-point problem \eqref{eq:wNdag} for $w_{2N}^{\dag}$ in the closed set
\[
\mathbf{B}_{2N} :=\big\{
	z :\; \|z\|_{X^{0,b}}\leq (2N)^{-s}R^{-1}, \sup_{L\geq 4N}\|\Pi_L^{\perp}z\|_{X^{0,b}}\leq (\frac{L}{2N})^{-10}(2N)^{-s}R^{-1}  
	\big\}\,,
\]	
where we recall from \eqref{smallparameters} that $s=\alpha-\frac{1}{2}-100\sigma>\frac{1}{2}+100\sigma$ (we supposed $\alpha>1$). 
Below we show that with the choice $T\sim R^{-\frac{10}{\theta}}$, the mapping
\begin{equation}
\label{fixed-pointz2N}
	\begin{split}
 \mathbf{\Phi}&(z_{2N})
 	:=2\chi_T(t)\big[\mathcal{I}\mathcal{N}_{(0,1)}(z_{2N},u_{N}^{\dag},u_{N}^{\dag})+2\mathcal{I}\mathcal{N}_{[0,1]}(\psi_{2N}^{\dag}+z_{2N},u_{N}^{\dag},u_{N}^{\dag}) \big]  \\
 	&+2\chi_T(t)\big[\mathcal{I}\mathcal{N}(u_N^{\dag},\psi_{2N}^{\dag}+z_{2N},u_N^{\dag}) 
	+\mathcal{I}\mathcal{N}(\psi^{\dag}_{2N}+z_{2N},\psi^{\dag}_{2N}+z_{2N},u^{\dag}_{N})\big]
	\\
	&+\chi_T(t)\big[\mathcal{I}\mathcal{N}(\psi_{2N}^{\dag}+z_{2N},u^{\dag}_{N},\psi_{2N}^{\dag}+z_{2N})   
	+\mathcal{I}\mathcal{N}(\psi_{2N}^{\dag}+z_{2N})\big]
	\end{split}
\end{equation}
 maps $\mathbf{B}_{2N}$ to itself. The contraction property can be proved in a similar way upon elementary algebraic manipulation and we do not detail this part. 
\medskip

First, from Proposition \ref{multilinear:det-1},  we have that for any type (D) term $z_{2N}\in \mathbf{B}_{2N}$
\begin{align}\label{remainder1} \|\chi_T(t)\mathcal{I}\mathcal{N}_{(0,1)}(z_{2N},u_N^{\dag},u_N^{\dag})\|_{X^{s_1,b}} \leq CT^{\theta-2\gamma+\frac{2}{q'}}R^2\|z_{2N}\|_{X^{0,b}}\leq CT^{\frac{\theta}{2}}R(2N)^{-s},
\end{align}
thanks to the condition $\theta-2\gamma+\frac{2}{q'}>\frac{\theta}{2}$.

For other contributions on the right hand side of \eqref{fixed-pointz2N}, we decompose $u_N^{\dag}=\sum_{L\leq N}(\psi_L^{\dag}+w_{L}^{\dag})$ and estimate each dyadic piece. The (C)(C)(C) type interactions contribute to the source term, and are linear combination of the following terms 
\begin{align*}
 	&\sum_{N_2,N_3\leq N}
	\chi_T(t)\mathcal{I}\mathcal{N}_{[0,1]}(\psi_{2N}^{\dag},\psi_{N_2}^{\dag},\psi_{N_3}^{\dag}),\; \sum_{N_1,N_3\leq N}\chi_T(t)\Pi_{2N}\mathcal{I}\mathcal{N}(\psi_{N_1}^{\dag},\psi_{2N}^{\dag},\psi_{N_3}^{\dag})\\ 
	&\sum_{N_2\leq 2N}\chi_T(t)\mathcal{I}\mathcal{N}(\psi_{2N}^{\dag},\psi_{N_2}^{\dag},\psi_{2N}^{\dag}),\; \sum_{N_3\leq N}\chi_T(t)\mathcal{I}\mathcal{N}(\psi_{2N}^{\dag},\psi_{2N}^{\dag},\psi_{N_3}^{\dag})\,.
\end{align*}
According to Proposition \ref{multilinear:random}, the $X^{0,b}$-norm of these terms can be bounded by
\[
CT^{\frac{\theta}{2}}R^4(2N)^{-(\alpha-\frac{1}{2})+\frac{1}{q}+3\delta+100\theta}(\log N)^3
	\leq CT^{\frac{\theta}{2}}R^{4}(2N)^{-s}\,,
\]
 outside a subset of probability smaller than $C_0\e^{-(\log N)^{c_0}R^{c_0}}$,
where the $\log N$ factor comes from the dyadic summation. We used that, according to the hierarchy of parameters (see \eqref{smallparameters}),
	\[
	s=\alpha-\frac{1}{2}-100\sigma < \alpha-\frac{1}{2}-\frac{1}{q}-3\delta-100\theta\,.
	\]
For other terms ((C)(C)(D), (C)(D)(D) and (D)(D)(D) interactions), we apply Proposition \ref{multilinear:det-2} to control the  $X^{0,b}-$norm of the sum of these dyadic pieces by
 \[
 	\sum_{N_{(3)}\leq N_{(2)}\leq 2N}CT^{\frac{\theta}{2}}
	R^2(2N)^{-s}N_{(2)}^{-\epsilon_0}
	\leq CT^{\frac{\theta}{2}}R^2(2N)^{-s}.
\]
For $T=R^{-\frac{10}{\theta}}$ and sufficiently large $R>1$, we deduce that
\[
CT^{\frac{\theta}{2}}R^4(2N)^{-s}\leq (2N)^{-s}R^{-1}.
\]

It remains to estimate $\|\Pi_L^{\perp}\mathbf{\Phi}(z_{2N})\|_{X^{0,b}}$ for all $L\geq 4N$. The proof is exactly the same as in the previous paragraphs. Here we sketch the main point to gain of the extra factor $(\frac{L}{2N})^{-10}$. Indeed, since $\psi_M^{\dag}$ is exactly supported on frequencies $m\in(\frac{M}{2},M]$, we have  $\Pi_L^{\perp}\mathcal{N}(\psi_{N_1}^{\dag},\psi_{N_2}^{\dag},\psi_{N_3}^{\dag})=0$ for $L\geq2^{4}N$.  
\medskip

For terms $\Pi_L^{\perp}\mathcal{N}(v_{N_1}^{\dag},v_{N_2}^{\dag},v_{N_3}^{\dag})$ with at least one $v_{N_j}^{\dag}$ of type $(\mathrm{D})$, with $\frac{L}{4}\geq 4N$, there is at least one factor $\Pi_{\frac{L}{4}}^{\perp}v_{N_j}^{\dag}$ present in the non-zero contributions. This term generates a factor $C(\frac{L}{4N})^{-10}\cdot (2N)^{-s}R^{-1}$. 
With these additional arguments, we conclude the proof of the key-induction Proposition~\ref{keyinduction}.



	\section{Some deterministic estimates for operators}\label{sec:6}
	
	Recall the notation
	\begin{align}\label{resonant:trilinear} 
		\mathcal{N}_{(0,1)}(f_1,f_2,f_3):=\sum_{n,n_2,n_3}\pi_{n}(\pi_{n}f_1\pi_{n_2}\ov{f}_2\pi_{n_3}f_3)
	\end{align}
	and the slightly different variant (in order to deal with the product $f_2\hexagonal{\neq}f_3$)
	\begin{align}\label{resonant:trilinear'} 
		\mathcal{N}_{(0,1)[2,3]}(f_1,f_2,f_3):=\sum_{\substack{n,n_2, n_3\\ n_2\neq n_3} }\pi_{n}(\pi_{n}f_1\pi_{n_2}\ov{f}_2\pi_{n_3}f_3).
	\end{align}
In this section, we provide some deterministic estimates that are needed for the key estimates for RAOs. Since we allow the terms of type $(\mathrm{C})$ in $X_{q,q,\infty}^{\frac{1}{2}-\frac{2}{q}-3\delta,\gamma}$, our estimates are also valid when $\alpha=1$ and will be useful to treat the Gibbs measure case in \cite{BCST2}. 
To state the following propositions, we assume that parameters $q,\gamma,\gamma_0,\theta,\delta$ satisfies the following constraint
\begin{align}\label{constraint:parametersq}
&1-\gamma=\frac{1}{q}-\frac{1}{q^{10}},\; \gamma_0=\frac{1}{q}+\frac{1}{q^{10}},\; \delta=\frac{1}{q^{20}},\;\theta=\frac{1}{q^5}.
\end{align}
	\begin{proposition}\label{lem:resonanttrilinear} 
		Let and $\varphi\in \mathcal{S}(\R;\R)$. There exist $q\in[2,\infty)$ large enough, and $\gamma,\gamma_0,\delta,\theta$ obeying \eqref{constraint:parametersq},  such that for any $s>\frac{1}{4}+\frac{3}{q}$, for $\frac{1}{2}<b<1$ and for any  $F_2\in X_{q,q,\infty}^{\frac{1}{2}-\frac{2}{q}-3\delta,\gamma}\,,\ F_3\in X^{s,b}$ and $n\in\N$ we have 
		\[
		\|
		\varphi(t)\pi_n\mathcal{N}_{(0,1)}(\,\cdot\,,F_{2},F_{3})\|_{X_{q,2}^{0,\gamma}(E_n) \to X_{q,2}^{0,\gamma-1+\theta}(E_n)}
		\lesssim \|F_2\|_{
			X_{q,q,\infty}^{\frac{1}{2}-\frac{2}{q}-3\delta,\gamma}
		}
		\|F_3\|_{X^{s,b}}\,,
		\]
 where the implicit constant is independent of $n$ and $0<T<1$.
	\end{proposition}
	Similarly, we have:
	\begin{proposition}\label{lem:resonanttrilinear'} 
		Let $\varphi\in \mathcal{S}(\R;\R)$. There exist $q\in[2,\infty)$ large enough, and $\gamma,\gamma_0,\delta,\theta$ obeying \eqref{constraint:parametersq},  such that for any $s>\frac{1}{4}+\frac{3}{q}$, for $\frac{1}{2}<b<1$, and for any  $F_2,F_3\in X^{s,b}$ and $n\in\N$, 
		\[
		\|\varphi(t)\pi_n\mathcal{N}_{(0,1)}(\,\cdot\,,F_{2},F_{3,})\|_{X_{q,2}^{0,\gamma}(E_n)\to X_{q,2}^{0,\gamma-1+\theta}(E_n)}
			\lesssim 
			\|F_2\|_{X^{s,b}}
			\|F_3\|_{X^{s,b}},
		\]
		where the implicit bound is independent of $n$ and $0<T<1$.
	\end{proposition}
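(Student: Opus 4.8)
The plan is to separate the contribution of the diagonal pair $n_{2}=n_{3}$ from the off‑diagonal one $n_{2}\ne n_{3}$. Writing
\[
\overline{F_{2}}F_{3}=\sum_{m}\pi_{m}\overline{F_{2}}\,\pi_{m}F_{3}\;+\;\sum_{n_{2}\ne n_{3}}\pi_{n_{2}}\overline{F_{2}}\,\pi_{n_{3}}F_{3}=:D(F_{2},F_{3})+R(F_{2},F_{3}),
\]
we get $\pi_{n}\mathcal{N}_{(0,1)}(F_{1},F_{2},F_{3})=\pi_{n}\big(\pi_{n}F_{1}\cdot D(F_{2},F_{3})\big)+\pi_{n}\big(\pi_{n}F_{1}\cdot R(F_{2},F_{3})\big)$, and the two pieces are handled by different mechanisms: a purely spatial argument for $D$, and a time‑oscillation (modulation) argument for $R$.

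For the diagonal piece, since $\gamma-1+\theta<0$ by \eqref{constraint:parametersq} and $q\ge2$, the weight $\langle\kappa\rangle^{\gamma-1+\theta}\le1$, so Hausdorff--Young in time (exactly as for the term ``$\mathrm{I}$'' in the proof of Proposition \ref{multilinear:op1}) gives
\[
\|\varphi\,\pi_{n}(\pi_{n}F_{1}\cdot D(F_{2},F_{3}))\|_{X_{q,2}^{0,\gamma-1+\theta}(E_{n})}\lesssim \|\varphi\,\pi_{n}(\pi_{n}F_{1}\cdot D(F_{2},F_{3}))\|_{L_{t}^{q'}L_{x}^{2}}.
\]
I would then prove the $n$‑uniform pointwise‑in‑time bound $\|\pi_{n}(\pi_{n}F_{1}\cdot D(F_{2},F_{3}))(t)\|_{L_{x}^{2}}\lesssim\|\pi_{n}F_{1}(t)\|_{L_{x}^{2}}\|F_{2}(t)\|_{H^{s}}\|F_{3}(t)\|_{H^{s}}$, valid for $s>\tfrac14$: dualizing against $H_{n}\in E_{n}$ and using $\langle\pi_{n}F_{1}\cdot\pi_{m}\overline{F_{2}}\pi_{m}F_{3},H_{n}\rangle=\langle\pi_{n}F_{1}\cdot\pi_{m}F_{3},\pi_{m}F_{2}\cdot H_{n}\rangle$, the bilinear eigenfunction estimate (Proposition \ref{bilinearBGT}) applied to each factor yields $\|\pi_{n}(\pi_{n}F_{1}\cdot\pi_{m}\overline{F_{2}}\pi_{m}F_{3})\|_{L_{x}^{2}}\lesssim\min(n,m)^{1/2}\|\pi_{n}F_{1}\|_{L^{2}}\|\pi_{m}F_{2}\|_{L^{2}}\|\pi_{m}F_{3}\|_{L^{2}}$; summing over $m$ after extracting $\langle m\rangle^{-2s}$ from the two $H^{s}$ norms, the whole gain is contained in $\sup_{m}\min(n,m)^{1/2}\langle m\rangle^{-2s}\lesssim1$ (here $s>\tfrac14$ is used). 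Integrating in $t$ and invoking $X_{q,2}^{0,\gamma}(E_{n})\hookrightarrow C(\mathbb{R};L^{2}(E_{n}))$ (Lemma \ref{Embedding}, since $\gamma>\tfrac1{q'}$), $X^{s,b}\hookrightarrow C(\mathbb{R};H^{s})$ ($b>\tfrac12$), together with the compact support of $\varphi$, finishes this piece.

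For the off‑diagonal piece one must exploit time oscillations. When $n_{2}\ne n_{3}$ the resonance function $\Omega:=\lambda_{n_{2}}^{2}-\lambda_{n_{3}}^{2}=(n_{2}-n_{3})(n_{2}+n_{3}+1)$ satisfies $|\Omega|\ge n_{2}+n_{3}\gtrsim\max(n_{2},n_{3})$, and the twisted time‑frequency of the output equals $\kappa_{0}=\kappa_{1}-\kappa_{2}+\kappa_{3}+\Omega$, where $\kappa_{1},\kappa_{2},\kappa_{3}$ are the twisted time‑frequencies of $\pi_{n}F_{1},\pi_{n_{2}}F_{2},\pi_{n_{3}}F_{3}$ relative to $E_{n},E_{n_{2}},E_{n_{3}}$; hence $\max(\langle\kappa_{0}\rangle,\langle\kappa_{1}\rangle,\langle\kappa_{2}\rangle,\langle\kappa_{3}\rangle)\gtrsim|\Omega|$. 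I would localize dyadically in all four modulations $L_{0},L_{1},L_{2},L_{3}$ and in the frequencies of $F_{2},F_{3}$, and split according to which modulation is largest: if $L_{0}\gtrsim|\Omega|$, the negative weight $\langle\kappa_{0}\rangle^{\gamma-1+\theta}$ of the output norm gains $L_{0}^{-(1-\gamma-\theta)}$; if $L_{1}\gtrsim|\Omega|$, the weight $\langle\kappa_{1}\rangle^{\gamma}$ in $\|\pi_{n}F_{1}\|_{X_{q,2}^{0,\gamma}(E_{n})}$ gains $L_{1}^{-\gamma}$; if $L_{2}\gtrsim|\Omega|$ or $L_{3}\gtrsim|\Omega|$, the weight $\langle\kappa_{j}\rangle^{b}$ in $\|F_{j}\|_{X^{s,b}}$ gains $L_{j}^{-b}$ with $b>\tfrac12$. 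In each case this produces a factor $|\Omega|^{-\varepsilon_{1}}\gtrsim\max(n_{2},n_{3})^{-\varepsilon_{1}}$ for any $\varepsilon_{1}<\min(1-\gamma-\theta,\gamma,b)$, the binding exponent being $1-\gamma-\theta\sim\tfrac1q$. Combining this with the bilinear eigenfunction estimate (Proposition \ref{bilinearBGT}) used as in the diagonal step, with the semiclassical Strichartz estimate (Proposition \ref{prop:str}) transferred to the Fourier–restriction spaces to perform the time integrations, and with the fact that a nonzero output forces $|n_{2}-n_{3}|\le 2n$, the summations over $n_{2},n_{3}$ and over the modulations converge thanks to $\max(n_{2},n_{3})^{-\varepsilon_{1}}$, the weights $\langle n_{j}\rangle^{-s}$ with $s>\tfrac14+\tfrac2q$, and the hierarchy \eqref{constraint:parametersq}, uniformly in $n$ and in $0<T<1$.

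Putting the two pieces together and carrying out the (convergent) dyadic summations proves the estimate. The main obstacle is the off‑diagonal piece in the high--high regime $n_{2},n_{3}\gg n$: there both bilinear pairings only save $\min(n,n_{j})^{1/4}=n^{1/4}$, a power of the \emph{small} frequency, so the resulting loss cannot be absorbed into the regularity of $F_{1}$ alone; one must extract extra decay in the large frequency $\max(n_{2},n_{3})$ by carefully combining the frequency‑localization constraint $|n_{2}-n_{3}|\le2n$ with the modulation gain, which in the worst case $L_{0}\gtrsim|\Omega|$ is only of size $\sim|\Omega|^{-1/q}$. This is precisely the point where the analysis uses the smoothing coming from the Schr\"odinger modulations rather than bilinear Strichartz estimates alone.
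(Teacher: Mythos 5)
Your diagonal piece ($n_{2}=n_{3}$) is correct and complete: dropping the negative output weight, applying vector-valued Hausdorff--Young, and running the pointwise-in-time double bilinear eigenfunction estimate with the gain $\sup_{m}\min(n,m)^{1/2}\langle m\rangle^{-2s}\lesssim1$ is a legitimate (and slightly different) route; the paper does not actually split diagonal from off-diagonal here. However, your off-diagonal piece has a genuine gap, and you half-acknowledge it yourself. The problem is not where you locate it (the bilinear estimates saving only powers of the small frequency $n$): since $n\leq\max(n_{2},n_{3})$, the loss $n^{1/2}$ is absorbed by $(n_{2}n_{3})^{-s}$ with $s>\tfrac14+\tfrac2q$ because $2s>\tfrac12$. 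The real obstruction is the summation over $n_{2},n_{3}$. The gains you list --- $\max(n_{2},n_{3})^{-\varepsilon_{1}}$ with $\varepsilon_{1}<\tfrac1q$, the weights $\langle n_{j}\rangle^{-s}$, and the constraint $|n_{2}-n_{3}|\leq2n$ --- are insufficient: a naive Cauchy--Schwarz over $n_{2}\sim M$ and the $\mathcal{O}(n)$ admissible $n_{3}$ costs a factor $\sim(Mn)^{1/2}$, so the total in the high--high regime is $n\,M^{\frac12-2s-\varepsilon_{1}}\sim n\,M^{-\frac4q-\varepsilon_{1}}$, which diverges when $n$ is a positive power of $M$. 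The missing ingredient is the Schur test on the kernel $\widehat{\varphi}(\widetilde{\kappa}-\Omega(\vec{n}))$: for fixed twisted frequencies the rapid decay of $\widehat{\varphi}$ pins $\Omega=\lambda_{n_{3}}^{2}-\lambda_{n_{2}}^{2}$ to an $\mathcal{O}(1)$ window, and since these values are $1$-separated in $n_{3}$ for fixed $n_{2}$, each $n_{2}$ admits only $\mathcal{O}(1)$ partners $n_{3}$; hence $\sup_{n_{2}}\sum_{n_{3}}|\widehat{\varphi}(\widetilde{\kappa}-\Omega)|+\sup_{n_{3}}\sum_{n_{2}}|\widehat{\varphi}(\widetilde{\kappa}-\Omega)|\lesssim1$ and the double sum costs nothing. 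This is exactly how the paper's Lemma \ref{lem:weaker'} proceeds, and note that this Schur bound holds with the diagonal included, which is why the paper needs no splitting.

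A second, more structural issue: you attack the $X_{q,2}^{0,\gamma-1+\theta}(E_{n})$ norm directly, but $1-\gamma-\theta<\tfrac1q$ by \eqref{constraint:parametersq}, so $\langle\kappa_{0}\rangle^{-(1-\gamma-\theta)}$ is not $q$-integrable and the H\"older step in the output modulation does not close; moreover in your case ``$L_{0}\gtrsim|\Omega|$'' you spend the entire output weight on the modulation gain and are left with nothing for the $\kappa_{0}$-integration. The paper sidesteps this by first proving the estimate with the stronger output weight $X^{0,-\gamma_{0}}$, $\gamma_{0}>\tfrac1q$ (where H\"older works), together with a crude $X^{0,0}$ bound losing $(N_{2}N_{3})^{2-s}$, and then interpolating (the exponent $\tfrac{1-\gamma-\theta}{\gamma_{0}}$ and condition \eqref{constraint:interpolation} guarantee net negative powers of $N_{2},N_{3}$ after dyadic localization). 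To repair your argument you would need either to adopt this interpolation device or to justify the $\kappa_{0}$-integration by Young's inequality using $\widehat{\varphi}\in L^{1}$; as written, the off-diagonal estimate is not established.
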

	For non-pairing part, we have similar estimate:
	\begin{proposition}\label{lem:resonanttrilinear''} 
	Let $\varphi\in \mathcal{S}(\R;\R)$. There exist $q\in[2,\infty)$ large enough, and $\gamma,\gamma_0,\delta,\theta$ obeying \eqref{constraint:parametersq},  such that for any  $F_2\,, F_3\ \in X_{q,q,\infty}^{\frac{1}{2}-\frac{2}{q}-3\delta,\gamma}
			$,
		\[
			\|\varphi(t)\mathcal{N}_{(0,1)[2,3]}(\,\cdot\,,F_{2},F_{3})\|_{X_{q,2}^{0,\gamma}(E_n)\to X_{q,2}^{0,\gamma-1+\theta}(E_n)} \lesssim 
			\|F_2\|_{X_{q,q,\infty}^{\frac{1}{2}-\frac{2}{q}-3\delta,\gamma}}
			\|F_3\|_{X_{q,q,\infty}^{\frac{1}{2}-\frac{2}{q}-3\delta,\gamma} },
		\]
		where the implicit bound is independent of $n$ and $0<T<1$.
	\end{proposition}
	Before proving the above propositions, we state an elementary Lemma useful in the analysis.

	\begin{lemme}\label{sum1} 
		Assume that 
		\[
		\alpha_2,\alpha_3\in(0,1),\; \alpha_2+\alpha_3>\frac{1}{2},
		\]
		then for any $\widetilde{\kappa}\in \R$, $1\leq r\leq 2$
		\[ \sum_{n_3\geq 1}\lambda_{n_3}^{-2\alpha_3}\Big(\sum_{n_2\geq 1}\lambda_{n_2}^{-r\alpha_2}\lg\widetilde{\kappa}+\lambda_{n_2}^2-\lambda_{n_3}^2\rg^{-10r}\Big)^{\frac{2}{r}}\lesssim 1,
		\] 
		where the implicit bound is independent of $\widetilde{\kappa}\in\R$.
	\end{lemme}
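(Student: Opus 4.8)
The plan is to estimate the double sum by first bounding, uniformly in $\widetilde{\kappa}$ and $n_3$, the inner sum over $n_2$, and then summing in $n_3$. Write $a:=\lambda_{n_3}^2-\widetilde{\kappa}$ and
$$S(n_3):=\sum_{n_2\ge1}\lambda_{n_2}^{-r\alpha_2}\langle\lambda_{n_2}^2-a\rangle^{-10r},$$
so that the claimed estimate is $\sum_{n_3\ge1}\lambda_{n_3}^{-2\alpha_3}S(n_3)^{2/r}\lesssim1$. The only structural facts I would use are $\lambda_n\sim n$ and that the eigenvalue gaps satisfy $\lambda_{n+1}^2-\lambda_n^2=2n+2\sim\lambda_n$; hence, for $M\ge1$, the values $\{\lambda_m^2\}_{m\sim M}$ inside a window of length $\sim M^2$ are $\sim M$-separated, so $\#\{m\sim M:\ \lambda_m^2\in J\}\lesssim1+|J|/M$ for any interval $J$. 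The pointwise target is $S(n_3)\lesssim\langle a\rangle^{-r\alpha_2/2}$, which then yields $S(n_3)^{2/r}\lesssim\langle\lambda_{n_3}^2-\widetilde{\kappa}\rangle^{-\alpha_2}$ because $2/r\in[1,2]$.

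For the inner bound, fix $n_3$. If $a\le1$ (in particular if $a\le0$) then $\lambda_{n_2}^2-a\gtrsim\lambda_{n_2}^2$ for every $n_2\ge1$, so $S(n_3)\lesssim\sum_{n_2}\lambda_{n_2}^{-r\alpha_2-20r}\lesssim1\sim\langle a\rangle^{-r\alpha_2/2}$. If $a>1$, I would split the $n_2$-sum into $\lambda_{n_2}^2\le a/2$, $\lambda_{n_2}^2\ge2a$, and $a/2<\lambda_{n_2}^2<2a$. On the first two ranges $\langle\lambda_{n_2}^2-a\rangle\gtrsim\max(\lambda_{n_2}^2,a)\ge(a\lambda_{n_2}^2)^{1/2}$, so their contribution is $\lesssim a^{-5r}\sum_{n_2}\lambda_{n_2}^{-r\alpha_2-10r}\lesssim a^{-5r}\lesssim\langle a\rangle^{-r\alpha_2/2}$ (here $\alpha_2<1$, so $r\alpha_2/2<5r$). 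On the near-resonant range one has $n_2\sim\sqrt a$, hence $\lambda_{n_2}^{-r\alpha_2}\sim a^{-r\alpha_2/2}$, while the $\sim\sqrt a$-separation of the values $\lambda_{n_2}^2-a$ gives $\sum_{a/2<\lambda_{n_2}^2<2a}\langle\lambda_{n_2}^2-a\rangle^{-10r}\lesssim\sum_{k\in\mathbb{Z}}\langle k\sqrt a\rangle^{-10r}\lesssim1$ since $10r\ge10$; this range therefore also contributes $\lesssim a^{-r\alpha_2/2}$, proving $S(n_3)\lesssim\langle a\rangle^{-r\alpha_2/2}$.

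It then remains to show $\sum_{n_3\ge1}\lambda_{n_3}^{-2\alpha_3}\langle\lambda_{n_3}^2-\widetilde{\kappa}\rangle^{-\alpha_2}\lesssim1$ uniformly in $\widetilde{\kappa}$. If $\widetilde{\kappa}\le1$ then $\langle\lambda_{n_3}^2-\widetilde{\kappa}\rangle\gtrsim\lambda_{n_3}^2$ and the sum is $\lesssim\sum_{n_3}\lambda_{n_3}^{-2(\alpha_2+\alpha_3)}\lesssim1$, since $2(\alpha_2+\alpha_3)>1$. If $\widetilde{\kappa}>1$, split the $n_3$-sum into $\lambda_{n_3}^2\le\widetilde{\kappa}/2$, $\lambda_{n_3}^2\ge2\widetilde{\kappa}$, and $\widetilde{\kappa}/2<\lambda_{n_3}^2<2\widetilde{\kappa}$; on the first two ranges $\langle\lambda_{n_3}^2-\widetilde{\kappa}\rangle\gtrsim\lambda_{n_3}^2$ and the previous estimate applies, while on the near-resonant range $n_3\sim\sqrt{\widetilde{\kappa}}=:B$, so $\lambda_{n_3}^{-2\alpha_3}\sim B^{-2\alpha_3}$ and a dyadic decomposition $|\lambda_{n_3}^2-\widetilde{\kappa}|\sim2^j$ together with the $\sim B$-separation gives $\sum_{\lambda_{n_3}^2\sim\widetilde{\kappa}}\langle\lambda_{n_3}^2-\widetilde{\kappa}\rangle^{-\alpha_2}\lesssim\sum_{0\le2^j\lesssim B^2}(1+2^j/B)\,2^{-j\alpha_2}\lesssim1+B^{1-2\alpha_2}$ (using $\alpha_2<1$ to sum the geometric series); hence this range contributes $\lesssim B^{-2\alpha_3}+B^{1-2\alpha_2-2\alpha_3}\lesssim1$. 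The hard part will be exactly this last near-resonant range in the outer sum — it is the only place the hypothesis $\alpha_2+\alpha_3>\tfrac12$ enters: the $\sim B=\sqrt{\widetilde{\kappa}}$ indices $n_3$ with $\lambda_{n_3}^2$ within $O(\sqrt{\widetilde{\kappa}})$ of $\widetilde{\kappa}$ produce a loss $\sim B^{1-2\alpha_2}$ that must be absorbed by the gain $B^{-2\alpha_3}$, which is possible precisely because $\alpha_3>0$ and $1-2\alpha_2-2\alpha_3<0$. Everything else reduces to crude summation built on the eigenvalue gap $\lambda_{n+1}^2-\lambda_n^2\sim\lambda_n$.
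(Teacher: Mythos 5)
Your proof is correct and follows essentially the same route as the paper's: both reduce the inner sum over $n_2$ to the pointwise bound $\lesssim\langle\widetilde{\kappa}-\lambda_{n_3}^2\rangle^{-r\alpha_2/2}$, raise it to the power $2/r$, and then sum over $n_3$ using $2(\alpha_2+\alpha_3)>1$. The only difference is bookkeeping: the paper changes variables to $m=|\lambda_{n_3}^2-\lambda_{n_2}^2|$ and estimates the resulting discrete convolution, whereas you split into near-/far-resonant ranges and invoke the $\sim\sqrt{a}$ separation of the values $\lambda_{n_2}^2$ directly, and you also supply the details of the outer sum that the paper leaves implicit.
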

	
	\begin{proof}
		We fix \(n_{3}\) and separate the sum over \(n_{2}\) into two parts depending on the relative size of \(n_{2}\) compared to \(n_{3}\). In each part, we make the change of variable $m=|\lambda_{n_3}^2-\lambda_{n_2}^2|$:
		\[
			\sum_{n_{2}=1}^{n_{3}}\lambda_{n_2}^{-r\alpha_2}\lg\widetilde{\kappa}+\lambda_{n_2}^2-\lambda_{n_3}^2\rg^{-10r}
			\leq\sum_{m=0}^{\lambda_{n_{3}}^{2}-1}\lg\widetilde{\kappa}-m\rg^{-10r}(\lambda_{n_3}^2-m)^{-\frac{r\alpha_2}{2}}\,.
		\]
		Then we split the sum into two contributions: on the one hand, 
		\[
		\sum_{m=0}^{{\lambda_{n_{3}}}^{2}-1}\mathbf{1}_{|\lambda_{n_{3}}^{2}-m|>\frac{1}{2}|\widetilde{\kappa}-\lambda_{n_{3}}^{2}|}
		\lg\widetilde{\kappa}-m\rg^{-10r}(\lambda_{n_3}^2-m)^{-\frac{r\alpha_2}{2}} 
		\lesssim \langle \widetilde{\kappa}-\lambda_{n_{3}}^{2}\rangle^{-\frac{r\alpha_{2}}{2}}\sum_{m=0}^{\lambda_{n_{3}}^{2}-1}\lg\widetilde{\kappa}-m\rg^{-10r}
		\lesssim \langle \widetilde{\kappa}-\lambda_{n_{3}}^{2}\rangle^{-\frac{r\alpha_{2}}{2}}\,.
		\] 
		On the other hand, when 
		\(|\lambda_{n_{3}}^{2}-m|>\frac{1}{2}|\widetilde{\kappa}-\lambda_{n_{3}}^{2}|\) 
		then 
		\(|\widetilde{\kappa}-m|\geq\frac{1}{2}|\widetilde{\kappa}-\lambda|\) and therefore 
		\[
		\sum_{m=0}^{{\lambda_{n_{3}}}^{2}-1}
		\mathbf{1}_{|\lambda_{n_{3}}^{2}-m|>\frac{1}{2}|\widetilde{\kappa}-\lambda_{n_{3}}^{2}|}
		\lg\widetilde{\kappa}-m\rg^{-10r}(\lambda_{n_3}^2-m)^{-\frac{r\alpha_2}{2}} 
		\lesssim 
		\langle \widetilde{\kappa}-\lambda_{n_{3}}^{2}\rangle^{-r}\sum_{m=0}^{\lambda_{n_{3}}^{2}-1}\lg\widetilde{\kappa}-m\rg^{-9r}
		\lesssim 
		\langle \widetilde{\kappa}-\lambda_{n_{3}}^{2}\rangle^{-\frac{r\alpha_{2}}{2}}\,,
		\] 
		where we used that \(\frac{\alpha}{2}\leq 1\).  This gives
		\[
		\sum_{n_{2}=1}^{n_{3}}\lambda_{n_2}^{-r\alpha_2}\lg\widetilde{\kappa}+\lambda_{n_2}^2-\lambda_{n_3}^2\rg^{-10r}
		\lesssim
		\lg\widetilde{\kappa}-\lambda_{n_3}^2\rg^{-\frac{r\alpha_2}{2}}\,.
		\]
		Similarly, we prove that 
		\[
		\sum_{n_{2}=n_{3}+1}^{+\infty}
		\lambda_{n_2}^{-r\alpha_2}\lg\widetilde{\kappa}+\lambda_{n_2}^2-\lambda_{n_3}^2\rg^{-10r}
			\leq
			\sum_{m\geq 0}\lg\widetilde{\kappa}+m\rg^{-10r}\lg m+\lambda_{n_3}^2\rg^{-\frac{r\alpha_2}{2}}
			\lesssim  \lg\widetilde{\kappa}-\lambda_{n_3}^2\rg^{-\frac{r\alpha_2}{2}}.
		\]
		Then, the sum over \(n_{3}\) can be estimated by
		\begin{align*}
			\sum_{n_3\geq 1}
			\lambda_{n_3}^{-2\alpha_3}\lg\widetilde{\kappa}-\lambda_{n_3}^2\rg^{-\alpha_2}\lesssim_{\kappa} 1\,,
		\end{align*}
		thanks to the fact that $2\alpha_2+2\alpha_3>1$.
	\end{proof}

\subsection{Multilinear estimates for $\mathcal{N}_{(0,1)}$ and $\mathcal{N}_{(0,1)[2,3]}$ }	
	  The proof will follow from an interpolation argument. First we prove some general estimates which leave some flexibility of indices.
	\begin{lemme}\label{lem:weaker}
		Assume that $q_1,q_2\in[2,\infty)$ and $\gamma_j\in\big(\frac{1}{q_j'},1\big)$. For any $\gamma_0\in\big(\frac{1}{q_1},1\big)$ and $\delta>0, b>\frac{1}{2}$ such that 
	$\frac{1}{2}-\frac{4}{q_2}-3\delta>0$,	
	uniformly in $n\in\mathbb{N}$ and $Z_{1}\in X_{q_{1},2}^{0,\gamma_{1}}(E_{n})$,
		\begin{align}\label{weaker1}
			\|\varphi(t)\pi_n\mathcal{N}_{(0,1)}&(Z_1,F_{2},F_{3})\|_{X_{q_1,2}^{0,-\gamma_0}(E_n)}\notag  \\ \lesssim_{q_j,\gamma_j,\delta} &
			\|Z_1\|_{X_{q_1,2}^{0,\gamma_1}(E_n)}
			\|F_2\|_{
				X_{q_2,q_2,\infty}^{\frac{1}{2}-\frac{4}{q_2}-3\delta,\gamma_2} 
			}
			\|F_3\|_{
				X^{\frac{1}{4}+\frac{4}{q_2}+4\delta,b}
			},
		\intertext{and}
		\label{weaker2} \|\varphi(t)\pi_n\mathcal{N}_{(0,1)}&(Z_{1},F_{2},F_{3})\|_{X_{q_1,2}^{0,0}(E_n)} \notag \\ \lesssim_{q_j,\gamma_j} &
		\|Z_1\|_{X_{q_1,2}^{0,\gamma_1}(E_n)}
		\|F_2\|_{ 
			X_{q_2,q_2,\infty}^{1,\gamma_2}}
		\|F_3\|_{X^{2,b}}.
		\end{align}
	\end{lemme}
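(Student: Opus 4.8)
The core of both estimates is a bilinear/trilinear decoupling in the twisted Fourier variable $\kappa$ together with the bilinear eigenfunction estimate (Proposition~\ref{bilinearBGT}) and the Weyl law \eqref{Weyl}. First I would pass to the twisted time-Fourier side, writing for $j=1,2,3$ the quantities $\widetilde{\pi_{n_j}F_j}(\kappa_j,x)$, so that the product structure of $\mathcal{N}_{(0,1)}$ becomes a convolution in $\kappa$ against the frequency localizations $\pi_n,\pi_{n_2},\pi_{n_3}$, with the constraint $n_1=n$ (the $(0,1)$-pairing) and the total twisted frequency of the output being $\kappa=\kappa_1-\kappa_2+\kappa_3+(\lambda_{n_2}^2-\lambda_{n_3}^2)$ — it is precisely the modulation gap $\lambda_{n_2}^2-\lambda_{n_3}^2$ that will be exploited through Lemma~\ref{sum1}. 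The factor $\varphi(t)$ only serves to make everything integrable in time and contributes a harmless rapidly-decaying convolution kernel in $\kappa$, which I would absorb at the end.

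\textbf{Key steps.} After the Fourier reduction, the plan is: (i) use H\"older in $x$ on $\S^2$ to put $\pi_nF_1$ in $L^2(E_n)$, $\pi_{n_2}F_2$ in $L^\infty(E_{n_2})$, and $\pi_{n_3}F_3$ in $L^2$; more precisely, pair $\pi_nF_1$ against $\pi_{n_3}F_3$ via Proposition~\ref{bilinearBGT} when $n\geq n_3$ (gaining $n_3^{1/4}$, which is why the regularity threshold $s>\tfrac14+\tfrac2{q}$ appears on $F_3$), and estimate $\pi_{n_2}F_2$ in $L^\infty(\S^2)$ — this is where the $L^\infty_x$ index in the space $X_{q_2,q_2,\infty}^{\cdot,\gamma_2}$ of $F_2$ is used. (ii) Then bound the resulting convolution in $\kappa$ by Young/H\"older, using the embedding Lemma~\ref{Embedding} and the integrability $\gamma_j>\tfrac1{q_j'}$ so that the $\langle\kappa_j\rangle^{\gamma_j}$ weights can be traded for $L^1_\kappa$-summability. (iii) Finally sum over $n_2,n_3$: after using $\langle\kappa\rangle^{-\gamma_0}$ on the left with $\gamma_0>\tfrac1{q_1}$ and the modulation identity, the $n_2,n_3$-sum is exactly of the form controlled by Lemma~\ref{sum1} with $\alpha_2=\tfrac12-\tfrac2{q_2}-3\delta+(\text{loss from }L^\infty\text{-embedding})$ and $\alpha_3$ coming from the excess regularity $\tfrac14+\tfrac2{q_2}+4\delta$ of $F_3$ beyond the $n_3^{1/4}$ gained in the bilinear estimate; one checks $2\alpha_2+2\alpha_3>1$, which closes \eqref{weaker1}. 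For \eqref{weaker2} the argument is cruder: no modulation gain is claimed (the target is $X^{0,0}_{q_1,2}$ with $\gamma_0=0$), so I would simply use Sobolev embedding $H^1(\S^2)\hookrightarrow$ (almost $L^\infty$) to put both $F_2,F_3$ in $L^\infty$ via their $X^{1,\cdot}$ and $X^{2,b}$ norms respectively, absorb the worst-case $n$-dependence, and sum trivially in $n_2,n_3$ using the two extra derivatives on $F_3$.

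\textbf{Main obstacle.} The delicate point is the interplay between the $L^\infty_x$ norm of $F_2$ and the summation in $n_2$: putting $\pi_{n_2}F_2$ in $L^\infty(\S^2)$ naively costs a power of $n_2$ (by Proposition~\ref{Sogge}, $\|\pi_{n_2}F_2\|_{L^\infty}\lesssim n_2^{1/2}\|\pi_{n_2}F_2\|_{L^2}$), so one must check that the regularity index $\tfrac12-\tfrac2{q_2}-3\delta$ carried by $F_2$ in the $X_{q_2,q_2,\infty}$ norm (which already encodes an $L^\infty_x$-in-space norm, hence no further Sogge loss) combined with the modulation summation of Lemma~\ref{sum1} leaves a strictly positive exponent $\alpha_2$. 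Tracking the $\delta$-losses and the conjugate-exponent bookkeeping (the appearance of $q_j'$ versus $q_j$, and the need $\gamma_j-\tfrac1{q_j'}>0$ to apply Lemma~\ref{Embedding}) is the part requiring care; conceptually, though, everything reduces to one application each of Proposition~\ref{bilinearBGT}, Lemma~\ref{Embedding}, and Lemma~\ref{sum1}. The case $\mathcal{N}_{(0,1)[2,3]}$ in Proposition~\ref{lem:resonanttrilinear''} is handled identically, the extra constraint $n_2\neq n_3$ only making the modulation gap $\lambda_{n_2}^2-\lambda_{n_3}^2$ nonzero, which if anything helps; I would simply remark that the proof of \eqref{weaker1} applies verbatim with $F_3$ placed in the same $X_{q_2,q_2,\infty}$-type space rather than $X^{s,b}$, using the bilinear estimate on the pair with the larger frequency.
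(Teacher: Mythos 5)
Your proposal follows essentially the same route as the paper's proof: passage to the twisted Fourier variables, the bilinear eigenfunction estimate on the $F_1$–$F_3$ pair with $F_2$ placed in $L_x^\infty$ (no Sogge loss, as you note), Hölder in the $\kappa_j$ using $\gamma_0q_1>1$ and $\gamma_jq_j'>1$, and Lemma~\ref{sum1} with $r=q_2'$, $\alpha_2=\tfrac12-\tfrac2{q_2}-3\delta$, $\alpha_3=\tfrac2{q_2}+4\delta$ for the $n_2,n_3$-sum, plus the crude $L_{t,x}^\infty$/Hausdorff--Young argument for \eqref{weaker2}. The only inaccuracy is tangential: the paper's treatment of $\mathcal{N}_{(0,1)[2,3]}$ with both factors in Fourier--Lebesgue spaces is not verbatim the same — it replaces the bilinear eigenfunction estimate by Cauchy--Schwarz and a divisor-counting bound exploiting $n_2\neq n_3$ — but that concerns a different lemma, not the statement at hand.
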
 
	\begin{proof} 
		First we prove \eqref{weaker1}. By duality, it suffices to show that for any $G_n\in X_{q_1',2}^{0,\gamma_0}(E_n)$ with $\|G_n\|_{X_{q_1',2}^{0,\gamma_0}(E_n) }\leq 1,$ there holds
		\begin{multline}
		\label{eq:resonanttrilinear1} 
			\big|\int_{\R}\int_{\S^2}\pi_n\,\mathcal{N}_{(0,1)}(Z_{1},F_{2},F_{3})\cdot  \varphi(t)\ov{G}_n(t,x) 
			\mathrm{d}x\mathrm{d}t
			\big| 
			\\
			\lesssim 
			\|Z_{1}\|_{X_{q_1,2}^{0,\gamma_1}(E_n)}
			\|F_2\|_{X_{q_2,q_2,\infty}^{\beta_2,\gamma_2}}
			\|F_3\|_{X^{\beta_3,b}}\,,
		\end{multline}
	where throughout the proof
	\[
	\beta_2=\frac{1}{2}-\frac{4}{q_2}-3\delta,
	\quad \beta_3=\frac{1}{4}+\frac{4}{q_2}+4\delta\,.
	\]
	By Plancherel in time, we have
		\begin{align*}
			&\int_{\R}\int_{\S^2}\pi_n\mathcal{N}_{(0,1)}(Z_{1},F_{2},F_{3})\cdot\varphi(t)\ov{G}_n(t,x)\d x \d t\\
			=&\int_{\R}\int_{\S^2}
			\sum_{n_2,n_3}Z_{1}(t)\pi_{n_2}\ov{F}_2(t)\pi_{n_3}F_3(t)\varphi(t)\ov{G}_n(t)\d t \d x\\
			=&\int_{\R^4}\d \vec{\tau} 
			\int_{\S^2}
			\sum_{n_2, n_3}
			\widehat{Z}_1(\tau_1,x)\pi_{n_2}\ov{\widehat{F}}_2(\tau_2,x)\pi_{n_3}\widehat{F}_{3}(\tau_3,x)
			\ov{\widehat{G}}_n(\tau_0,x)
			{\widehat{\varphi}}(\tau_1-\tau_2+\tau_3-\tau_0)\d x\\
			=&\int_{\R^4}d\vec{\kappa}
			\int_{\S^2}\sum_{n_2,n_3}
			\widetilde{Z}_1(\kappa_1,x)\pi_{n_2}\ov{\widetilde{F}}_2(\kappa_2,x)\pi_{n_3}\widetilde{F}_{3}(\kappa_3,x)\ov{\widetilde{G}}_n(\kappa_0,x)
			{\widehat{\varphi}}(\widetilde{\kappa}-\Omega(\vec{n}))\d x,
		\end{align*}
		where $\kappa_j=\tau_j+\lambda_{n_j}^2$, $\d \vec{\tau}=\d \tau_0 \d \tau_1 \d \tau_2 \d \tau_3$,  $\d\vec{\kappa}=\d\kappa_1\d\kappa_2\d\kappa_3\d\kappa_0$, $\widetilde{\kappa}=\kappa_1-\kappa_2+\kappa_3-\kappa_0,$ and $\Omega(\vec{n})=-\lambda_{n_2}^2+\lambda_{n_3}^2$.
		Set
		\begin{align*}
		\mathbf{a}^{(1)}_{n}(\kappa_1)
		&=\|\lg\kappa_1\rg^{\gamma_1}\widetilde{Z}_1(\kappa_1,\cdot)\|_{L_x^2}
		\,,\quad
		\mathbf{a}_{n_2}^{(2)}(\kappa_2)
		=\lambda_{n_2}^{\beta_2}\|\lg\kappa_{2}\rg^{\gamma_2}\pi_{n_2}\widetilde{F}_2(\kappa_2,\cdot)\|_{L_x^{\infty}},
		\\
		\mathbf{a}_{n_3}^{(3)}(\kappa_3)
		&=\lambda_{n_3}^{\beta_3}
		\|\lg\kappa_3\rg^{b}\pi_{n_3}\widetilde{F}_3(\kappa_3,\cdot) \|_{L_x^2}
		\,,\quad 
		\mathbf{a}_n^{(0)}(\kappa_0)
		=\|\lg\kappa_0\rg^{\gamma_0}\widetilde{G}_n(\kappa_0,\cdot) \|_{L_x^2}.
		\end{align*}
		Applying the bilinear eigenfunction estimate from Proposition \ref{bilinearBGT}, we obtain
		\begin{multline*}
		\big|\int_{\S^2}\widetilde{Z}_1(\kappa_1,x)\pi_{n_2}\ov{\widetilde{F}}_2(\kappa_2,x)\pi_{n_3}\widetilde{F}_{3}(\kappa_3,x)\ov{\widetilde{G}}_n(\kappa_0,x)\mathrm{d}x\big|
		\\ 
		\lesssim \lambda_{n_3}^{\frac{1}{4}}
		\lambda_{n_2}^{-\beta_2}
		\lambda_{n_3}^{-\beta_3} \frac{
				\mathbf{a}_n^{(0)}(\kappa_0)
				\mathbf{a}_{n}^{(1)}(\kappa_1)
				\mathbf{a}_{n_2}^{(2)}(\kappa_2)
				\mathbf{a}_{n_3}^{(3)}(\kappa_3)
			}{\lg\kappa_0\rg^{\gamma_0}
			\lg\kappa_1\rg^{\gamma_1}\lg\kappa_2\rg^{\gamma_2}\lg\kappa_3\rg^b}.
		\end{multline*}
	It remains to control the expression
		\begin{align}\label{ankappa1}
			\int_{\R^4}
			\mathrm{d} \vec{\kappa}
			\frac{\mathbf{a}_n^{(0)}(\kappa_0) \mathbf{a}_n^{(1)}(\kappa_1)}{
				\lg\kappa_0\rg^{\gamma_0}
				\lg \kappa_1\rg^{\gamma_1}\lg \kappa_2\rg^{\gamma_2}\lg\kappa_3\rg^{b} }\sum_{n_2,n_3}
			\mathbf{a}_{n_2}^{(2)}(\kappa_2)\mathbf{a}_{n_3}^{(3)}(\kappa_3)
			|\ov{\widehat{\varphi}}(\widetilde{\kappa}-\Omega(\vec{n}))|
			\lambda_{n_2}^{-\beta_2}
			\lambda_{n_3}^{-\beta_3+\frac{1}{4}}.
		\end{align}
		By H\"older, the sum in the integrand can be bounded by
		\[
		\|\mathbf{a}_{n_2}^{(2)}(\kappa_2)\|_{\ell_{n_2}^{q_2}}
		\|\mathbf{a}_{n_3}^{(3)}(\kappa_3)\|_{\ell_{n_3}^2}
		\big\|\ov{\widehat{\varphi}}(\widetilde{\kappa}-\Omega(\vec{n}))
		\lambda_{n_2}^{-\beta_2}
		\lambda_{n_3}^{-\beta_3+\frac{1}{4}} 
		\big\|_{\ell_{n_3}^2\ell_{n_2}^{q_2'}}
		\\
		\lesssim_{\delta,q_2} \|\mathbf{a}_{n_2}^{(2)}(\kappa_2)\|_{\ell_{n_2}^{q_2}}
		\|\mathbf{a}_{n_3}^{(3)}(\kappa_3)\|_{\ell_{n_3}^2},  
		\]
		where, to obtain the second estimate, we applied Lemma \ref{sum1} with parameters~$r=q_{2}'$ in $[1,2]$, $\alpha_{2}=\beta_{2}\in(0,1)$ and $\alpha_{3}=\beta_{3}-\frac{1}{4}\in(0,1)$, satisfying $\alpha_{2}+\alpha_{3}=\beta_2+\beta_3-\frac{1}{4}=\frac{1}{2}+\delta$.  Then, by H\"older and the fact 
that $\gamma_0q_{1}>1$ and $\gamma_jq_j'>1, j=1,2$, we obtain that
		\[
		\eqref{ankappa1}\lesssim
		\|\mathbf{a}_{n}^{(0)}(\kappa_0)\|_{L_{\kappa_0}^{q_1'}}\|\mathbf{a}_{n}^{(1)}(\kappa_1)\|_{L_{\kappa_1}^{q_1}}
		\|\mathbf{a}_{n_2}^{(2)}(\kappa_2)\|_{L_{\kappa_2}^{q_2}\ell_{n_2}^{q_2}}
		\|\mathbf{a}_{n_3}^{(3)}(\kappa_3)\|_{L_{\kappa_3}^2\ell_{n_3}^2\,.}
		\]
		This proves \eqref{weaker1}.
\medskip

		Next, to prove \eqref{weaker2}, by Corollary \ref{Hausdorff-Young}, 
		\begin{align*}
			\|\varphi(t)\pi_n\mathcal{N}_{(0,1)}(Z_{1},F_{2},F_{3})\|_{X_{q_1,2}^{0,0}(E_n)}\lesssim & \|\varphi(t)\pi_n\mathcal{N}_{(0,1)}(Z_{1},F_{2},F_{3})\|_{L_t^{q_1'}L_x^2}.
		\end{align*}
		The right hand side can be controlled by
		\[
		 \|Z_{1}\|_{L_t^{\infty}L_x^2}\|F_{2}\|_{L_t^{\infty}L_x^{\infty}}\|F_{3}\|_{L_t^{\infty}L_x^{\infty}}. 
		\]
		By Sobolev embedding and the embedding property stated in Lemma \ref{Embedding},
		\[ \|Z_{1}\|_{L_t^{\infty}L_x^2}\lesssim \|\pi_nZ_{1}\|_{X_{q_1,2}^{0,\gamma_1}(E_n)},\;\|F_{3}\|_{L_t^{\infty}L_x^{\infty}}\lesssim \|F_3\|_{L_t^{\infty}H_x^2}\lesssim \|F_3\|_{X^{2,b}}.
		\]
		Moreover,
		\[
		 \|F_{2}\|_{L_x^{\infty}}=
		\|\sum_{n_2}\pi_{n_2}F_2
		\|_{L_x^{\infty}}
		 \leq 
		\big(\sum_{n_2\geq0}\lambda_{n_{2}}^{q_2}\|\pi_{n_2}F_2\|_{L_x^{\infty}}^{q_2}
		\big)^{\frac{1}{q_2}}
		 \big(\sum_{n_2\geq0}\frac{1}{\lambda_{n_{2}}^{q_2'}}
		\big)^{\frac{1}{q_2'}}
		\lesssim_{q_2}
		\|\lambda_{n_2}\pi_{n_2}F_2
		\|_{\ell_{n_2}^{q_2}L_x^{\infty}}.
		\]
		Hence
		\[
		 \|F_{2}\|_{L_t^{\infty}L_x^{\infty}}
		 \lesssim \|\lambda_{n_2}\pi_{n_2}F_2 \|_{L_t^{\infty}\ell_{n_2}^{q_2}L_x^{\infty}}
		 \lesssim \|\lambda_{n_2}\pi_{n_2}F_2\|_{\ell_{n_2}^{q_2}L_t^{\infty}L_x^{\infty}}
		 \lesssim \|F_2\|_{X_{q_2,q_2,\infty}^{1,\gamma_2}},
		\]
		where we used Lemma \ref{Embedding} to the last inequality. The proof of Lemma \ref{lem:weaker} is now complete.
	\end{proof}
	Similarly, we have:
	\begin{lemme}\label{lem:weaker'}
		Assume that $q\in[2,\infty)$ and $\gamma\in\big(\frac{1}{q'},1\big)$. For any $\gamma_0\in\big(\frac{1}{q},1\big)$, for $\delta>0$ and $b>\frac{1}{2}$, uniformly in $n\in\mathbb{N}$ and $Z_{1}\in X_{q_{1},2}^{0,\gamma_{1}}(E_{n})$, we have
		\begin{multline}
		\label{weaker1'}
			\|\varphi(t)\pi_n\mathcal{N}_{(0,1)}(Z_{1},F_2,F_3)\|_{X_{q,2}^{0,-\gamma_0}(E_n)}
			\\
			\lesssim_{q,\gamma,\delta} \|Z_{1}\|_{X_{q,2}^{0,\gamma}(E_n)}\|F_2\|_{X^{\frac{1}{4}+\frac{2}{q}+4\delta,b}}\|F_3\|_{X^{\frac{1}{4}+\frac{2}{q}+4\delta,b}},
		\end{multline}
		and
		\begin{equation}
		\label{weaker2'} \|\varphi(t)\pi_n\mathcal{N}_{(0,1)}(Z_{1},F_2,F_3)\|_{X_{q,2}^{0,0}(E_n)}
		\lesssim_{q_j,\gamma_j} \|Z_{1}\|_{X_{q,2}^{0,\gamma}(E_n)}\|F_2\|_{X^{2,b}}\|F_3\|_{X^{2,b}}.
		\end{equation}
	\end{lemme}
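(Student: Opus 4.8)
The plan is to follow the proof of Lemma~\ref{lem:weaker} almost verbatim, the only structural change being that the middle factor $F_{2}$ now lives in the $L^{2}_{x}$-based space $X^{\frac14+\frac2q+4\delta,b}=X_{2,2,2}^{\frac14+\frac2q+4\delta,b}$ instead of an $L^{\infty}_{x}$-based Fourier--Lebesgue space, so it can no longer be placed in $L^{\infty}_{x}$ for free and must instead be handled through the bilinear eigenfunction estimate, using the $\frac14+\frac2q$ derivatives it now carries.

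For \eqref{weaker1'} I would argue by duality: it suffices to bound $\big|\int_{\R}\int_{\S^{2}}\pi_{n}\mathcal{N}_{(0,1)}(F_{1},F_{2},F_{3})\,\varphi(t)\,\overline{G_{n}}\big|$ for all $G_{n}\in X_{q',2}^{0,\gamma_{0}}(E_{n})$ with $\|G_{n}\|_{X_{q',2}^{0,\gamma_{0}}(E_{n})}\le 1$. After Plancherel in time, passing to the twisted time-Fourier transforms $\widetilde F_{j},\widetilde G_{n}$ and writing $\kappa_{j}=\tau_{j}+\lambda_{n_{j}}^{2}$, this becomes the estimate of $\int\mathrm{d}\vec\kappa\sum_{n_{2},n_{3}}\big(\int_{\S^{2}}\pi_{n}\widetilde F_{1}(\kappa_{1})\,\pi_{n_{2}}\overline{\widetilde F_{2}}(\kappa_{2})\,\pi_{n_{3}}\widetilde F_{3}(\kappa_{3})\,\overline{\widetilde G_{n}}(\kappa_{0})\big)\,\overline{\widehat\varphi}(\widetilde\kappa-\Omega(\vec n))$, with $\widetilde\kappa=\kappa_{1}-\kappa_{2}+\kappa_{3}-\kappa_{0}$ and $\Omega(\vec n)=\lambda_{n_{3}}^{2}-\lambda_{n_{2}}^{2}$. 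For the spatial integral of the four eigenfunctions I would distribute them into the bilinear eigenfunction estimate of Proposition~\ref{bilinearBGT} combined with a Cauchy--Schwarz in $x$ (keeping the two factors $\pi_{n}\widetilde F_{1}$, $\overline{\widetilde G_{n}}$ in $L^{2}_{x}$), arranging the pairing so that the resulting frequency gains fall only on the low-frequency factors $n_{2},n_{3}$ — where they can be absorbed by the weights $n_{2}^{-(\frac14+\frac2q+4\delta)}$, $n_{3}^{-(\frac14+\frac2q+4\delta)}$ coming from $\|F_{2}\|_{X^{s,b}}$, $\|F_{3}\|_{X^{s,b}}$ — and so that $\pi_{n}\widetilde F_{1}$ and $\overline{\widetilde G_{n}}$ are never paired together, which would cost a power of $n$.

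It then remains to carry out the $\vec\kappa$-integrations by Hölder against the modulation weights (legitimate since $\gamma q'>1$, $\gamma_{0}q>1$ and $b>\frac12$) and, crucially, the residual $n_{2},n_{3}$-summation against the rapidly decaying resonance factor $\overline{\widehat\varphi}(\widetilde\kappa-\Omega(\vec n))$; this last step is performed via Lemma~\ref{sum1}, applied with $r=2$ (the natural analogue of the choice $r=q_{2}'$ in Lemma~\ref{lem:weaker}, dictated by the fact that $F_{2}$ is now summed in $\ell^{2}_{n_{2}}$), with the two exponents fixed by the surviving powers of $n_{2},n_{3}$. The main obstacle is precisely this summation: one must split into the near-diagonal regime $n_{2}\sim n_{3}$, where $\Omega(\vec n)$ gives no gain and one instead relies on the surviving positive powers together with Cauchy--Schwarz in $n_{2},n_{3}$ against $\|F_{2}\|_{X^{s,b}},\|F_{3}\|_{X^{s,b}}$, and the off-diagonal regime, where $|\Omega(\vec n)|\gtrsim\max(n_{2},n_{3})^{2}$ and the decay of $\widehat\varphi$, aided by the modulation weights when $|\widetilde\kappa|$ is large, makes the sum converge — all bounds being kept uniform in $n$, in $\vec\kappa$, and in $T\in(0,1)$.

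Finally, \eqref{weaker2'} is elementary. Since $\langle\kappa\rangle^{-\gamma_{0}},\langle\kappa\rangle^{0}\le 1$, it suffices to control the $X_{q,2}^{0,0}(E_{n})$-norm, which by Corollary~\ref{Hausdorff-Young} is dominated by the $L^{q'}_{t}L^{2}_{x}$-norm. Using the identity $\pi_{n}\mathcal{N}_{(0,1)}(F_{1},F_{2},F_{3})=\pi_{n}\!\big(\pi_{n}F_{1}\cdot\overline{F_{2}}F_{3}\big)$ and $\|\pi_{n}(\cdot)\|_{L^{2}}\le\|\cdot\|_{L^{2}}$ gives the pointwise-in-time bound by $\|\pi_{n}F_{1}\|_{L^{2}_{x}}\|F_{2}\|_{L^{\infty}_{x}}\|F_{3}\|_{L^{\infty}_{x}}$; one then concludes with the two-dimensional Sobolev embedding $H^{2}(\S^{2})\hookrightarrow L^{\infty}(\S^{2})$, the embeddings $X^{2,b}\hookrightarrow C_{t}H^{2}_{x}$ (valid for $b>\frac12$) and $X_{q,2}^{0,\gamma}(E_{n})\hookrightarrow C_{t}L^{2}(E_{n})$ (Lemma~\ref{Embedding}, since $\gamma>\frac1{q'}$), and the fact that $\varphi\in\mathcal{S}$.
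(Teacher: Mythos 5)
Your overall structure is the right one and matches the paper: the proof of \eqref{weaker2'} (Hausdorff--Young, then $L^{\infty}_{t}L^{2}_{x}\times L^{\infty}_{t,x}\times L^{\infty}_{t,x}$ with $X^{2,b}\hookrightarrow C_{t}H^{2}_{x}\hookrightarrow C_{t}L^{\infty}_{x}$) is exactly the paper's, and for \eqref{weaker1'} the duality/Plancherel setup and the idea of applying the bilinear eigenfunction estimate twice after a Cauchy--Schwarz in $x$, pairing $\pi_{n}\widetilde F_{1}$ and $\overline{\widetilde G_{n}}$ each with one low-frequency factor so as to collect $(n_{2}n_{3})^{1/4}$ and never pay $n^{1/4}$, is precisely what the paper does (it modifies $\mathbf{a}_{n_{2}}^{(2)}$ to carry $\beta_{2}=\frac14+\frac2q+4\delta$ and an $L^{2}_{x}$-norm with weight $\langle\kappa_{2}\rangle^{b}$).

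The gap is in the $n_{2},n_{3}$-summation. After the bilinear estimate, the surviving powers are $n_{2}^{\frac14-\beta_{2}}n_{3}^{\frac14-\beta_{3}}=(n_{2}n_{3})^{-(\frac2q+4\delta)}$, so invoking Lemma \ref{sum1} with $r=2$ means taking $\alpha_{2}=\alpha_{3}=\frac2q+4\delta$; the hypothesis $\alpha_{2}+\alpha_{3}>\frac12$ of that lemma then reads $\frac4q+8\delta>\frac12$, which fails as soon as $q>8$ (and $q=q_{\sigma}$ is huge in the application — this is the whole point of stating the lemma for all $q\in[2,\infty)$). The condition in Lemma \ref{sum1} is not an artifact: joint Cauchy--Schwarz in $(n_{2},n_{3})$ genuinely requires square-summability of $(n_{2}n_{3})^{-\alpha}$ along the resonant set, which you do not have. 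The paper's fix is to abandon Lemma \ref{sum1} here and instead run Schur's test on the kernel $K(n_{2},n_{3})=(n_{2}n_{3})^{\frac14-\beta_{2}}|\widehat\varphi(\widetilde\kappa-\Omega(\vec n))|$: since $(n_{2}n_{3})^{\frac14-\beta_{2}}\leq1$ and the values $\lambda_{m}^{2}=m^{2}+m+1$ are distinct integers, one has $\sup_{n_{2}}\sum_{n_{3}}|\widehat\varphi(\widetilde\kappa-\Omega(\vec n))|+\sup_{n_{3}}\sum_{n_{2}}|\widehat\varphi(\widetilde\kappa-\Omega(\vec n))|\lesssim1$ uniformly in $\widetilde\kappa$, giving the $\ell^{2}_{n_{2}}\times\ell^{2}_{n_{3}}$ bound with no decay in $n_{2},n_{3}$ needed at all. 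Your fallback sketch (near-diagonal versus off-diagonal in $n_{2},n_{3}$) does not substitute for this: the set where $\widehat\varphi(\widetilde\kappa-\Omega(\vec n))$ is non-negligible is $\{\Omega(\vec n)\approx\widetilde\kappa\}$, which is not the diagonal for general $\widetilde\kappa$, and the claim that large $|\widetilde\kappa|$ is "aided by the modulation weights" is not something you can use at this stage, since the $\kappa_{j}$-integrations are performed afterwards by Hölder and must be decoupled from the $n$-sums. Replace that step by the Schur argument and the proof closes.
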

	\begin{proof}
		The proof follows from very similar argument as in the proof of Lemma \ref{lem:weaker}, and we only emphasize the adaptations needed. To prove~\eqref{weaker1'}, we modify the function $\mathbf{a}_{n_2}^{(2)}(\kappa_2)$ as
		\[
		\mathbf{a}_{n_2}^{(2)}(\kappa_2)=\lambda_{n_2}^{\beta_2}\|\lg\kappa\rg^b\pi_{n_2}\widetilde{F}_{2}(\kappa,\cdot)\|_{L_x^2}\,,\quad \text{ where }\quad \beta_2=\frac{1}{4}+\frac{2}{q}+4\delta.
		\]
		We also set $\beta_3=\beta_{2}$.
		Using the bilinear eigenfunction estimate (Proposition \ref{bilinearBGT}), the expression to be estimated becomes
		\begin{align*}
			\int_{\R^4}d\vec{\kappa}\sum_{n_2,n_3}(\lambda_{n_2}\lambda_{n_3})^{\frac{1}{4}-\beta_{2}}\,
		\frac{	\mathbf{a}_n^{(0)}(\kappa_0)
				\mathbf{a}_{n}^{(1)}(\kappa_1)
				\mathbf{a}_{n_2}^{(2)}(\kappa_2)
				\mathbf{a}_{n_3}^{(3)}(\kappa_3)
			}{\lg\kappa_0\rg^{\gamma_0}
			\lg\kappa_1\rg^{\gamma}\lg\kappa_2\rg^{b}\lg\kappa_3\rg^b}|\ov{\widehat{\varphi}}(\widetilde{\kappa}-\Omega(\vec{n}))|.
		\end{align*}
		Here to estimate the sum over $n_2,n_3$, we use Schur's test, by observing that		\[
		\sup_{n_2}\sum_{n_3}|\widehat{\varphi}(\widetilde{\kappa}-\Omega(\vec{n}))|+\sup_{n_3}\sum_{n_2 }|\widehat{\varphi}(\widetilde{\kappa}-\Omega(\vec{n}))|\lesssim 1.
		\]
		Thanks to the fact that $\beta_2=\beta_3>\frac{1}{4}$, we obtain the bound
		\[
		\sum_{n_2,n_3}
		(\lambda_{n_2}\lambda_{n_3})^{\frac{1}{4}-\beta_2}\mathbf{a}_{n_2}^{(2)}(\kappa_2)\mathbf{a}_{n_3}^{(3)}(\kappa_3)|\ov{\widehat{\varphi}}(\widetilde{\kappa}-\Omega(\vec{n}))|
		\lesssim \|\mathbf{a}_{n_2}^{(2)}(\kappa_2)\|_{\ell_{n_2}^2}
		\|\mathbf{a}_{n_3}^{(3)}(\kappa_2)\|_{\ell_{n_3}^2}.
		\]
		As in the previous case, we conclude from H\"older's inequality to control the integral over~$\kappa_j$.
		\medskip
		
		To prove \eqref{weaker2'}, the analysis is exactly the same as before, but we simply control $\|F_{2}\|_{L_t^{\infty}L_x^{\infty}}$ by $\|F_{2}\|_{X^{2,b}}$ using the Sobolev embedding. This completes the proof of Lemma~\ref{lem:weaker'}.
	\end{proof}
	Finally we establish an estimate to deal with the product $F_2\hexagonal{\neq}F_3$ when $F_2,F_3$ are both in the Fourier-Lebesgue space $ X_{q,q,\infty}^{\frac{1}{2}-\frac{4}{q}-3\delta,\gamma}
	$.
	\begin{lemme}\label{lem:weaker''}
		Assume that $q\in[2,\infty)$ and $\gamma\in\big(\frac{1}{q'},1\big)$. For any $\gamma_0\in\big(\frac{1}{q},1\big)$~$\delta~>~0,~b>~\frac{1}{2}$ such that $\frac{1}{2}-\frac{4}{q}-3\delta>0$,uniformly in $n$ and $Z_1\in X_{q,2}^{0,\gamma}(E_n)$, 
		\begin{multline}
		\label{weaker1''}
		\|\varphi(t)\pi_n\mathcal{N}_{(0,,1)[2,3]}(Z_{1},F_2,F_3)\|_{X_{q,2}^{0,-\gamma_0}(E_n)}
		\\
		\lesssim_{q,\gamma,\delta} 
		\|Z_{1}\|_{X_{q,2}^{0,\gamma}(E_n)}\|F_2\|
				_{
					X_{q,q,\infty}^{\frac{1}{2}-\frac{4}{q}-3\delta,\gamma}
				}			
		\|F_3\|_{
			X_{q,q,\infty}^{\frac{1}{2}-\frac{4}{q}-3\delta,\gamma} 
		},
		\end{multline}
		and
		\begin{multline}
		\label{weaker2''}
		 \|\varphi(t)\pi_n\mathcal{N}_{(0,1)[2,3]}(Z_{1},F_2,F_3)\|_{X_{q,2}^{0,0}(E_n)}
		\\
		\lesssim_{q_j,\gamma_j} \|Z_{1}\|_{X_{q,2}^{0,\gamma}(E_n)}\|F_2\|_{X_{q,q,\infty}^{1,\gamma}}\|F_3\|_{X_{q,q,\infty}^{1,\gamma}}.
		\end{multline}
	\end{lemme}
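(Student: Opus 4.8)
The plan is to follow the duality-and-bilinear-estimate scheme of Lemma \ref{lem:weaker}, the new features being that \emph{both} inputs $F_2,F_3$ are measured in the $L^\infty_x$-based Fourier--Lebesgue norm $X_{q,q,\infty}^{\frac12-\frac2q-3\delta,\gamma}$ and that the summation frequencies $n_2,n_3$ are forced to be distinct. For \eqref{weaker1''}, by duality it suffices to bound
\[
\Big|\int_{\R}\int_{\S^2}\varphi(t)\,\pi_n\mathcal{N}_{(0,1)[2,3]}(F_1,F_2,F_3)(t,x)\,\ov{G_n(t,x)}\,dx\,dt\Big|
\]
for $G_n\in X_{q',2}^{0,\gamma_0}(E_n)$ of norm $\le 1$. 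Passing to twisted time-Fourier variables $\kappa_j=\tau_j+\lambda_{n_j}^2$ exactly as in Lemma \ref{lem:weaker}, this becomes
\[
\int_{\R^4}d\vec\kappa\int_{\S^2}\sum_{n_2\ne n_3}\pi_n\wt F_1(\kappa_1,x)\,\pi_{n_2}\ov{\wt F_2}(\kappa_2,x)\,\pi_{n_3}\wt F_3(\kappa_3,x)\,\ov{\wt G_n}(\kappa_0,x)\,\ov{\wh\varphi}(\wt\kappa-\Omega(\vec n))\,dx\,,
\]
with $\wt\kappa=\kappa_1-\kappa_2+\kappa_3-\kappa_0$ and $\Omega(\vec n)=\lambda_{n_3}^2-\lambda_{n_2}^2=(n_3-n_2)(n_2+n_3+1)$. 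Two structural facts will be used throughout: first, the Clebsch--Gordan rule for products of spherical harmonics forces $|n_2-n_3|\lesssim n$ for the integrand to be non-zero; second, combined with $n_2\ne n_3$ this gives $|\Omega(\vec n)|\ge n_2+n_3+1\gtrsim\max(n_2,n_3)$, so that whenever $\max(n_2,n_3)\gg 1$ the factor $\wh\varphi(\wt\kappa-\Omega(\vec n))$ forces $|\wt\kappa|\gtrsim\max(n_2,n_3)$ and hence one of the time-frequencies $\kappa_0,\kappa_1,\kappa_2,\kappa_3$ to be $\gtrsim\max(n_2,n_3)$.

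The bulk of the argument is the part where the bound of Lemma \ref{lem:weaker} carries over almost verbatim. Here I would put one of $\pi_{n_2}\wt F_2,\pi_{n_3}\wt F_3$ in $L^\infty_x$, bound the other by its $L^\infty_x$-norm after $\|\cdot\|_{L^2(\S^2)}\le\|\cdot\|_{L^\infty(\S^2)}$ (normalised measure), apply the bilinear eigenfunction estimate (Proposition \ref{bilinearBGT}) to the pair formed by $\pi_n\wt F_1$ (or $\ov{\wt G_n}$) and the lower-frequency remaining factor to extract $\min(n,n_j)^{1/4}$, and introduce the normalised profiles $\mathbf a^{(0)}_n,\mathbf a^{(1)}_n,\mathbf a^{(2)}_{n_2},\mathbf a^{(3)}_{n_3}$ with weights $\gamma_0,\gamma,\gamma,\gamma$ and regularity exponent $\beta=\tfrac12-\tfrac2q-3\delta$. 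The $n$-summation is then closed by H\"older over $(n_2,n_3)$ followed by Lemma \ref{sum1} with $r=q'$, $\alpha_2=\beta$ and $\alpha_3=\beta-\tfrac14$; one checks $\alpha_2,\alpha_3\in(0,1)$ and that the exponent inequality needed (essentially $2\beta-\tfrac14>\tfrac1{q'}$, up to the extra decay of $\wh\varphi$) holds for $q$ large and $\delta$ small, consistently with \eqref{constraint:parametersq}. A final H\"older in the $\kappa_j$ (legitimate since $\gamma q'>1$ and $\gamma_0 q>1$) produces the claimed product of norms, uniformly in $n$.

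The genuinely delicate point --- and what I expect to be the main obstacle --- is the complementary regime in which the naive $n$-summation just fails to converge, which is exactly the high-modulation configuration $|\Omega(\vec n)|\approx|\wt\kappa|$ with $\max(n_2,n_3)$ large (and, by Clebsch--Gordan, $n_2$ within $O(n)$ of $n_3$). There the bilinear eigenfunction estimate gives only $\min(n,n_j)^{1/4}$, which is a loss in $n$, so one must instead exploit the smoothing coming from the modulation: on the support of $\wh\varphi(\wt\kappa-\Omega(\vec n))$ one of the time-frequencies carries a power $\gtrsim\max(n_2,n_3)$, and via the input weights $\langle\kappa_j\rangle^{-\gamma}$, respectively the target-space weight $\langle\kappa_0\rangle^{-\gamma_0}$ visible on the dual variable, this converts into the missing power of $\max(n_2,n_3)$. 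Implementing this carefully requires distinguishing whether the large time-frequency sits on an input (where $\gamma\approx1$ gives ample room) or only on the output (where only $\gamma_0$ is available and the accounting is tight, so that the frequency localisation of $\wh\varphi$ and the $O(n)$ admissible values of $n_2$ must be used with care). This tightness is also the reason Lemma \ref{lem:weaker''} is stated with the extra-low exponent $\tfrac12-\tfrac2q$ rather than the $\tfrac12-\tfrac1q$ of Proposition \ref{lem:resonanttrilinear''}, the gap being recovered afterwards by interpolation with \eqref{weaker2''}. Finally, \eqref{weaker2''} is soft and proceeds exactly as \eqref{weaker2}: by the Hausdorff--Young bound (Corollary \ref{Hausdorff-Young}) it reduces to $\|\varphi\,\pi_n\mathcal{N}_{(0,1)[2,3]}(F_1,F_2,F_3)\|_{L^{q'}_tL^2_x}\lesssim\|\pi_nF_1\|_{L^\infty_tL^2_x}\|F_2\|_{L^\infty_{t,x}}\|F_3\|_{L^\infty_{t,x}}$, after which Lemma \ref{Embedding}, Sobolev embedding and the $\ell^{q}$-versus-$\ell^{q'}$ splitting over the $n_j$ close the estimate.
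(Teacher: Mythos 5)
Your treatment of \eqref{weaker2''} is correct and coincides with the paper's (Hausdorff--Young, H\"older, and the $\ell^{q}$-versus-$\ell^{q'}$ splitting). For \eqref{weaker1''}, however, your route diverges from the paper's and has a genuine gap. Since \emph{both} $F_2$ and $F_3$ are measured in $L^\infty_x$-based norms, the paper simply applies Cauchy--Schwarz in $x$ (pairing $\pi_n\widetilde F_1$ with $\widetilde G_n$ in $L^2$ and putting $\pi_{n_2}\widetilde F_2,\pi_{n_3}\widetilde F_3$ in $L^\infty_x$). Invoking the bilinear eigenfunction estimate here is not only unnecessary but counterproductive: it introduces a spurious factor $\min(n,n_j)^{1/4}$ that you then have to absorb, and your proposed application of Lemma \ref{sum1} with $\alpha_3=\beta-\frac14$ does not match the norm structure in any case --- Lemma \ref{sum1} controls an $\ell^2_{n_3}\ell^{q'}_{n_2}$-type kernel (adapted to an $X^{s,b}$ third factor), whereas here both inputs are $\ell^{q}_{n_j}$-based, so after H\"older the kernel must be summed essentially in $\ell^1_{n_2,n_3}$.

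The crux is therefore the sum $\sum_{n_2\neq n_3}|\widehat\varphi(\widetilde\kappa-\Omega(\vec n))|(n_2n_3)^{-\beta}$ with $\beta=\frac12-\frac2q-3\delta<\frac12$. Your observation $|\Omega(\vec n)|\gtrsim\max(n_2,n_3)$ is correct but insufficient: a one-dimensional (Schur-type) count of $n_3$ for fixed $n_2$ still leaves $\min(N_2,N_3)\,(N_2N_3)^{-\beta}$ per dyadic block, which diverges when $N_2=N_3$. The paper's mechanism is the two-dimensional divisor bound $\sup_l\#\{n_2\neq n_3:\ \lambda_{n_2}^2-\lambda_{n_3}^2=l\}\lesssim_\epsilon(N_2N_3)^{\epsilon}$, which makes the double sum $O((N_2N_3)^{-\beta+\epsilon})$, hence summable; this is exactly where the non-pairing condition enters, and it is absent from your argument. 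Your fallback --- converting the forced large modulation into a negative power of $\max(n_2,n_3)$ via the weights --- also fails as described: after H\"older in $\kappa_j$ the slack left on an \emph{input} frequency is $\gamma-\frac1{q'}$, which with the paper's parameters is $O(\sigma^{10})$, i.e.\ essentially zero (you have the accounting backwards: the room is in $\gamma_0-\frac1q$ on the dual variable $\kappa_0$, not in $\gamma$ on the inputs), so when the large time-frequency falls on $\kappa_2$ or $\kappa_3$ no usable gain is available and the divergence $(N_2N_3)^{\frac12-\beta}$ cannot be beaten.
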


	\begin{proof}
		The proof is also very similar to the proof of Lemma \ref{lem:weaker}. To prove \eqref{weaker1''}, we adjust the function 
		\[ \mathbf{a}_{n_3}^{(3)}(\kappa_3):=\lambda_{n_3}^{\beta_3}\|\lg\kappa_3\rg^{\gamma}\pi_{n_3}\widetilde{F}_3(\kappa_3,\cdot) \|_{L_x^{\infty}},\quad \beta_3=\frac{1}{2}-\frac{4}{q}-3\delta\,,
		\]
		and $\beta_2=\beta_{3}$ is unchanged.
		By Cauchy-Schwarz, 
		\begin{multline*} 
		\big|\int_{\S^2}\widetilde{Z}_1(\kappa_1,x)\pi_{n_2}\ov{\widetilde{F}}_2(\kappa_2,x)\pi_{n_3}\widetilde{F}_{3}(\kappa_3,x)\ov{\widetilde{G}}_n(\kappa_0,x)\mathrm{d}x\big|
		\\ 
		\lesssim (\lambda_{n_2}\lambda_{n_3})^{-\beta_{2}} 
		\frac{
				\mathbf{a}_n^{(0)}(\kappa_0)
				\mathbf{a}_{n}^{(1)}(\kappa_1)
				\mathbf{a}_{n_2}^{(2)}(\kappa_2)
				\mathbf{a}_{n_3}^{(3)}(\kappa_3)
			}{\lg\kappa_0\rg^{\gamma_0}
			\lg\kappa_1\rg^{\gamma}\lg\kappa_2\rg^{\gamma}\lg\kappa_3\rg^{\gamma}},
		\end{multline*}
		and we need to control
		\begin{multline*}
			\int_{\R^4}\frac{\mathbf{a}_n^{(0)}(\kappa_0) \mathbf{a}_n^{(1)}(\kappa_1)d \vec{\kappa}}{
				\lg\kappa_0\rg^{\gamma_0}
				\lg \kappa_1\rg^{\gamma}\lg \kappa_2\rg^{\gamma}\lg\kappa_3\rg^{\gamma} }\sum_{\substack{n_2,n_3\\ n_2\neq n_3 } }
			\mathbf{a}_{n_2}^{(2)}(\kappa_2)\mathbf{a}_{n_3}^{(3)}(\kappa_3)
			|\ov{\widehat{\varphi}}(\widetilde{\kappa}-\Omega(\vec{n}))|
			(\lambda_{n_2}
			\lambda_{n_3})^{-\beta_3}\\
			\leq 
			\int_{\R^4}\frac{\mathbf{a}_n^{(0)}(\kappa_0) \mathbf{a}_n^{(1)}(\kappa_1)}{\lg\kappa_0\rg^{\gamma_0}
				\lg \kappa_1\rg^{\gamma}\lg \kappa_2\rg^{\gamma}\lg\kappa_3\rg^{\gamma}}\|\mathbf{a}_{n_2}^{(2)}(\kappa_2)\|_{\ell_{n_2}^q}
			\|\mathbf{a}_{n_3}^{(3)}(\kappa_3)\|_{\ell_{n_3}^q}
			\big(
			\sum_{\substack{n_2,n_3\leq n\\
					n_2\neq n_3 }}|\ov{\widehat{\varphi}}(\widetilde{\kappa}-\Omega(\vec{n}))|
					(\lambda_{n_2}
					\lambda_{n_3})^{-\beta_{3}}
					\big)
					\mathrm{d}\vec{\kappa}\,,
		\end{multline*}
	where we used $\ell^{q}\hookrightarrow \ell^{\infty}$.
		By the divisor bound (here we make use of the fact $n_2\neq n_3$):
		\[ \sup_{l}\#\{n_2\neq n_3: n_j\sim N_j, \lambda_{n_2}^2-\lambda_{n_3}^2=l \}\lesssim_{\epsilon} (N_2N_3)^{\epsilon},
		\]
		the sum in the second line is uniformly bounded. By H\"older, we obtain \eqref{weaker1''}.
		\medskip

		Next, to prove \eqref{weaker2}, by Lemma \ref{Hausdorff-Young}, 
		\begin{align*}
			\|\varphi(t)\pi_n\mathcal{N}_{(0,1)[2,3]}&(Z_{1},F_2,F_3)\|_{X_{q,2}^ {0,0}(E_n)}\\ \lesssim & \|\varphi(t)\pi_n\mathcal{N}_{(0,1)[2,3] }(Z_{1},F_2,F_3)\|_{L_t^{q'}L_x^2}\\
			\lesssim &\Big\|\sum_{n_2,n_3}\|Z_{1}(t)\|_{L_x^2}\|\pi_{n_2}F_2(t)\|_{L_{x}^{\infty}}\|\pi_{n_3}F_3(t)\|_{L_{x}^{\infty}}\Big\|_{L_t^{\infty}}\\
			\leq & \|Z_{1}(t)\|_{L_t^{\infty}L_x^2}
			\big\|  \|\lambda_{n_2}\pi_{n_2}F_2(t)\|_{\ell_{n_2}^{q}L_{x}^{\infty}} 
			\|\lambda_{n_3}\pi_{n_3}F_3(t)\|_{\ell_{n_3}^qL_x^{\infty}}  \big\|_{L_t^{\infty}},
		\end{align*}
		where to the last step, we used the H\"older inequality for the sum and the fact that 
		\[
		\|\lambda_{n_2}^{-1}\|_{\ell_{n_2}^{q'}}<\infty, \|\lambda_{n_3}^{-1}\|_{\ell_{n_3}^{q'}}<\infty\,.
		\]
		Finally, by Minkowski and the embedding property $X_{q,\infty}^{0,\gamma}(E_n)\hookrightarrow L_t^{\infty}L_x^{\infty}(E_n)$ (Lemma~\ref{Embedding}), we conclude the proof.
		\end{proof}

\subsection{Proof of main propositions} We are now ready to prove the main Propositions of this section, by using an interpolation argument.
	\begin{proof}[Proof of Proposition \ref{resonant:trilinear}, \ref{resonant:trilinear'} and \ref{lem:resonanttrilinear''}]
		It follows from our choice of parameters~\eqref{constraint:parametersq} that for $q$ sufficiently large we must have
		\begin{equation}
		\begin{split}
		\label{constraint:interpolation}
			\frac{1-\gamma-\theta}{\gamma_0}\cdot (s-\frac{1}{4}-\frac{4}{q}-4\delta) 
			&>
			(2-s)\cdot\frac{\gamma_0-(1-\gamma-\theta)}{\gamma_0},\\ 
			\frac{2}{q}
					\cdot\frac{1-\gamma-\theta}{\gamma_0}
			&>
			\big(\frac{1}{2}+\frac{2}{q}+3\delta\big)\cdot\frac{\gamma_0-(1-\gamma-\theta)}{\gamma_0}.
			\end{split}
		\end{equation}
		Indeed, in the above inequalities, the left-hand side are at least of order $O(q^{-2})$ while the right-hand side are of order $O(q^{-4})$.
		
		To prove the desired inequality in Proposition \ref{resonant:trilinear}, we will apply \eqref{weaker1},\eqref{weaker2} to frequency-localized $F_2,F_3$ and then interpolate. More precisely, we decompose
		\[
		 F_j=\sum_{N_j}\mathbf{P}_{N_j}F_j,\quad j=2,3.
		\]
	 For fixed $N_2,N_3$, applying \eqref{weaker1} with $q_1=q_2=q$ gives
		\begin{multline*}
			\|\varphi(t)\pi_n\mathcal{N}_{(0,1)}
			(Z_{1},\mathbf{P}_{N_2}F_2,\mathbf{P}_{N_3}F_3)\|_{X_{q,2}^{0,-\gamma_0}(E_n)}
			\\
			\lesssim 
			\|Z_{1}\|_{X_{q,2}^{0,\gamma}(E_n)}\|\mathbf{P}_{N_2}F_2\|_{
				X_{q,q,\infty}^{\frac{1}{2}-\frac{4}{q}-3\delta,\gamma}
			}\|\mathbf{P}_{N_3}F_3\|_{
			X^{\frac{1}{4}+\frac{4}{q}+4\delta,b}
		}\\
			\lesssim 
			N_2^{-\frac{2}{q}}
			N_3^{-(s-\frac{1}{4}-\frac{4}{q}-4\delta) }
			\|Z_{1}\|_{X_{q,2}^{0,\gamma}(E_n)}
			\|\mathbf{P}_{N_2}F_2\|_{
				X_{q,q,\infty}^{\frac{1}{2}-\frac{2}{q}-3\delta,\gamma}
			}
			\|\mathbf{P}_{N_3}F_3\|_{X^{s,b}}.
		\end{multline*}
		From \eqref{weaker2},
		\begin{multline*}
			\|\varphi(t)\pi_n\mathcal{N}_{(0,1)}(Z_{1},\mathbf{P}_{N_2}F_2,\mathbf{P}_{N_3}F_3)\|_{X_{q,2}^{0,0}(E_n)}
			\\ 
			\lesssim \|Z_{1}\|_{X_{q,2}^{0,\gamma}(E_n)}\|\mathbf{P}_{N_2}F_2\|_{X_{q,q,\infty}^{1,\gamma}}\|\mathbf{P}_{N_3}F_3\|_{X^{2,b}}
			\\
			\lesssim 
			N_2^{\frac{1}{2}+\frac{2}{q}+3\delta }
			N_3^{2-s}\|Z_{1}\|_{X_{q,2}^{0,\gamma}(E_n)}\|\mathbf{P}_{N_2}F_2\|_{
				X_{q,q,\infty}^{\frac{1}{2}-\frac{2}{q}-3\delta,\gamma}
			}\|\mathbf{P}_{N_3}F_3\|_{X^{s,b}}.
		\end{multline*}
		Interpolate the above inequalities (since $-\gamma_{0}<\gamma-1+\theta<0$), we have
		\begin{multline*}
			\|\varphi(t)\pi_n\mathcal{N}_{(0,1)}(Z_{1},\mathbf{P}_{N_2}F_2,\mathbf{P}_{N_3}F_3)\|_{X_{q,2}^{0,\gamma-1+\theta}(E_n)}
			\\
			\leq 
			\|\varphi(t)\pi_{n}\mathcal{N}_{(0,1)}(Z_{1},\mathbf{P}_{N_2}F_2,\mathbf{P}_{N_3}F_3)\|_{X_{q,2}^{0,-\gamma_0}(E_n)}^{\frac{1-\gamma-\theta}{\gamma_0}}
			\|\varphi(t)\pi_{n}\mathcal{N}_{(0,1)}(Z_{1},\mathbf{P}_{N_2}F_2,\mathbf{P}_{N_3}F_3)\|_{X_{q,2}^{0,0}(E_n)}^{1-\frac{1-\gamma-\theta}{\gamma_0}}.
		\end{multline*}
		Thanks to \eqref{constraint:interpolation}, we gain negative powers in $N_2,N_3$ for the right hand side, which is summable. This proves the first estimate of Proposition \ref{resonant:trilinear}. The second one is similar and we omit the detail.

		For Proposition \ref{resonant:trilinear'} and Proposition \ref{lem:resonanttrilinear''}, we only need to adjust the argument above. 	
		By Lemma \ref{lem:weaker'} and interpolation,
		\begin{multline*}
			\|\varphi(t)\pi_n\mathcal{N}_{(0,1)}(Z_{1},\mathbf{P}_{N_2}F_2,\mathbf{P}_{N_3}F_3)\|_{X_{q,2}^{0,\gamma-1+\theta}(E_n)}\\ 
			\lesssim  (N_2N_3)^{-(s-\frac{1}{4}-\frac{2}{q}-4\delta)\frac{1-\gamma-\theta}{\gamma_0}}(N_2N_3)^{(2-s)\frac{\gamma_0-(1-\gamma-\theta)}{\gamma_0}}
			\\
			\|Z_{1}\|_{X_{q,2}^{0,\gamma}(E_n)}
			\|\mathbf{P}_{N_2}F_2\|_{X^{s,b}}
			\|\mathbf{P}_{N_3}F_3\|_{X^{s,b}},
		\end{multline*}	
		where the powers of $N_2,N_3$ are negative. The same argument by Lemma \ref{lem:resonanttrilinear''}, we have
		\begin{multline*}
			\|\varphi(t)\pi_n\mathcal{N}_{(0,1)[2,3]}(Z_{1},\mathbf{P}_{N_2}F_2,\mathbf{P}_{N_3}F_3)\|_{X_{q,2}^{0,\gamma-1+\theta}(E_n)}
			\\
			\lesssim (N_2N_3)^{-\frac{2}{q}\cdot\frac{1-\gamma-\theta}{\gamma_0}}(N_2N_3)^{(\frac{1}{2}+\frac{2}{q}+3\delta)\frac{\gamma_0-(1-\gamma-\theta)}{\gamma_0}}
			\\
			\|Z_{1}\|_{X_{q,2}^{0,\gamma}(E_n)}
			\|\mathbf{P}_{N_2}F_2\|_{d}{
				X_{q,q,\infty}^{\frac{1}{2}-\frac{2}{q}-3\delta,\gamma }
			}
			\|\mathbf{P}_{N_3}F_3\|_{X_{q,q,\infty}^{ 
					\frac{1}{2}-\frac{2}{q}-3\delta,\gamma}
				}.
		\end{multline*}	
		These bounds are conclusive, thanks to \eqref{constraint:interpolation}.
	\end{proof}
		

	\section{Large deviation estimates for linear random object}
	
\label{sec:large-dev}
	
	In this section we prove the bounds claimed in $\mathrm{Loc}(N)$ (Definition~\ref{loc(N)}) at step $2N$ for the random objects. An important ingredient is the equivalence in law of product of terms of type (C). 
	
	\subsection{Equivalence in law}

		Recall that $\mathcal{B}_{\leq N}$ is the $\sigma$-algebra generated by Gaussians~$(g_{n,k})_{|k|\leq n, n\leq N}$, and that for $n\in\N$,  $E_{n}$ is the eigenspace associated to the eigenvalue $\lambda_{n}^2$. Equipped with the $L^2(\S^{2})$ norm, this space is isometric to $\mathbb{C}^{2n+1}$. For~$n\approx N$ and $|t|\leq T$, the colored term is
		\[
		e_n^{N,\dag}(t)=\mathcal{H}_n^{N,\dag}(t)(e_n^{\omega})\,,
		\]	
	where $\mathcal{H}_n^{N,\dag}(t)$ is an unitary operator on $E_n$ for $|t|\leq T$ and is $\mathcal{B}_{\leq N/2}$ measurable. 
	\begin{lemme}[law-equivalence]\label{loiequiavlent} 
		Let $l\in\N^*$ and $n_1,\cdots, n_l\in (N/2,N]$ (not necessarily distinct). Let $F$ be a bounded Borel measurable functional on $\mathbb{C}\times E_{n_{1}}\times\cdots\times E_{n_{l}}$, and $Y$ be $\mathcal{B}_{\leq N/2}$-measurable random variable. Then for all $|t|\leq T$ and almost surely in $\omega$,
		\[
		\mathbb{E}\big[F\big(Y,\mathcal{H}_{n_1}^{N,\dag}(t)(e_{n_1}^{\omega}),\cdots, \mathcal{H}_{n_l}^{N,\dag}(t)(e_{n_l}^{\omega})\big)|\mathcal{B}_{\leq \frac{N}{2}}\big]=\mathbb{E}[F(Y,e_{n_1}^{\omega},\cdots,e_{n_l}^{\omega})|\mathcal{B}_{\leq  \frac{N}{2}}]\,.
		\]
		In particular, if $F$ is of the product form $F(Y,\cdot)=Y\cdot G(\cdot)$, then for all $|t|\leq T$ and almost surely in $\omega$, 
		\[
		\mathbb{E}\big[YG\big(\mathcal{H}_{n_1}^{N,\dag}(t)(e_{n_1}^{\omega}),\cdots, \mathcal{H}_{n_l}^{N,\dag}(t)(e_{n_l}^{\omega})\big)|\mathcal{B}_{\leq \frac{N}{2}}\big]
		=Y\cdot\mathbb{E}[G(e_{n_1}^{\omega},\cdots,e_{n_l}^{\omega})]\,.
		\]
	\end{lemme}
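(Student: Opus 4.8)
The statement is a conditional version of the law-invariance principle from Lemma~\ref{lemme:law}. The key structural facts to exploit are: (i) for $n_1,\dots,n_l\in(N/2,N]$, the Gaussian vectors $e_{n_1}^\omega,\dots,e_{n_l}^\omega$ are jointly a standard Gaussian vector on the product Hilbert space $E_{n_1}\times\cdots\times E_{n_l}$ (after real/complex identification), and this vector is independent of $\mathcal B_{\le N/2}$ since all the $g_{n_j,k}$ have frequency $n_j>N/2$; (ii) each $\mathcal H_{n_j}^{N,\dag}(t)$ is, for $|t|\le T$, a unitary (orthogonal, after realification) operator on $E_{n_j}$ that is $\mathcal B_{\le N/2}$-measurable; (iii) $Y$ is $\mathcal B_{\le N/2}$-measurable. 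The plan is to assemble these into one big orthogonal transformation acting on the ambient Gaussian vector and invoke (a conditional form of) Lemma~\ref{lemme:law}.

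\textbf{Main steps.} First I would set $X=(e_{n_1}^\omega,\dots,e_{n_l}^\omega)$, viewed as a standard real Gaussian vector of dimension $d=\sum_j 2(2n_j+1)$, and $A(t)=\operatorname{diag}\bigl(\mathcal H_{n_1}^{N,\dag}(t),\dots,\mathcal H_{n_l}^{N,\dag}(t)\bigr)$, which is a $\mathcal B_{\le N/2}$-measurable random element of $\mathcal O(d)$ for $|t|\le T$ by Lemma~\ref{unitary} (via Lemma~\ref{lem:unitaryextend}, as recorded in the $\dag$-construction). Second, I would establish the conditional analogue of Lemma~\ref{lemme:law}: if $X\sim\mathscr N_{\R^d}(0;\operatorname{Id})$ is independent of a sub-$\sigma$-algebra $\mathscr G$ and $A$ is $\mathscr G$-measurable with values in $\mathcal O(d)$, then for any bounded Borel $H$ and any $\mathscr G$-measurable $Y$,
\[
\mathbb E\bigl[H(Y,AX)\mid\mathscr G\bigr]=\mathbb E\bigl[H(Y,X)\mid\mathscr G\bigr]\quad\text{a.s.}
\]
This is proved exactly as Lemma~\ref{lemme:law}: test against characteristic functions in the $X$-slot, i.e.\ show $\mathbb E[e^{i\langle\xi,AX\rangle}\mid\mathscr G]=\mathbb E[e^{i\langle\xi,X\rangle}\mid\mathscr G]=e^{-|\xi|^2/2}$ using that $A^\ast\xi$ is $\mathscr G$-measurable and $X\perp\mathscr G$, then freeze $Y$ (legitimate since it is $\mathscr G$-measurable) and use that characteristic functions determine the conditional law. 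Third, I would apply this with $\mathscr G=\mathcal B_{\le N/2}$, $A=A(t)$, $H(y,x_1,\dots,x_l)=F(y,x_1,\dots,x_l)$ (after transporting back through the realification isometry $E_{n_j}\cong\R^{2(2n_j+1)}$), obtaining the first displayed identity. Fourth, the ``in particular'' clause follows: when $F(Y,\cdot)=Y\cdot G(\cdot)$ and $Y$ is additionally independent of $\mathcal B_N$ — hence of $\sigma(\mathcal B_{\le N/2},\,(g_{n,k})_{N/2<n\le N})$ — one pulls $Y$ out of the conditional expectation, uses $\mathbb E[G(e_{n_1}^\omega,\dots,e_{n_l}^\omega)\mid\mathcal B_{\le N/2}]=\mathbb E[G(e_{n_1}^\omega,\dots,e_{n_l}^\omega)]$ by independence of the $e_{n_j}^\omega$ from $\mathcal B_{\le N/2}$, and then uses independence of $Y$ and $\mathcal B_{\le N/2}$ once more to get $\mathbb E[Y\mid\mathcal B_{\le N/2}]=\mathbb E[Y]$.

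\textbf{Main obstacle.} None of the steps is deep; the only point requiring care is the bookkeeping in step one and three: verifying that ``unitary on the complex Hilbert space $E_{n_j}$'' transfers to ``orthogonal on the realified space'' (it does, since a $\mathbb C$-linear isometry is in particular an $\mathbb R$-linear isometry), that the joint family $(e_{n_j}^\omega)_j$ really forms a standard Gaussian on the product after this identification (immediate from $e_n^\omega=(2n+1)^{-1/2}\sum_{|k|\le n}g_{n,k}\mathbf b_{n,k}$ and the normalization of the $g_{n,k}$), and — the genuinely load-bearing fact — that $\mathcal H_{n_j}^{N,\dag}(t)$ is $\mathcal B_{\le N/2}$-measurable, which holds because it solves \eqref{def:H2Nextension} with coefficient $\mathcal P_{n_j}^{N,\dag}$ depending only on $u_{N/2}^{\dag}$, itself $\mathcal B_{\le N/2}$-measurable. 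Once these identifications are in place, the proof is a one-line application of the conditional version of Lemma~\ref{lemme:law}.
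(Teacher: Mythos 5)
Your proposal is correct and follows essentially the same route as the paper: reduce to the independence of $X=(e_{n_1}^{\omega},\dots,e_{n_l}^{\omega})$ from $\mathcal{B}_{\le N/2}$, the $\mathcal{B}_{\le N/2}$-measurability and unitarity of the block operator, and the rotation invariance of the standard Gaussian (Lemma \ref{lemme:law}); the paper implements the conditioning by an explicit disintegration of $\mu_Z\otimes\mu_X$ and Fubini, whereas you phrase it as a conditional/freezing version of Lemma \ref{lemme:law}, which is an equivalent formulation. The only point to patch is your claim (i): when the $n_j$ are \emph{not} distinct the tuple $(e_{n_1}^{\omega},\dots,e_{n_l}^{\omega})$ is supported on a diagonal and is not a standard Gaussian on the product space, so you must first reduce to distinct indices by collapsing repeated slots of $F$ (using that $\mathcal{H}_{n_i}^{N,\dag}=\mathcal{H}_{n_j}^{N,\dag}$ when $n_i=n_j$) --- exactly the one-line reduction the paper makes at the start of its proof.
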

	\begin{proof} 
		We first do some preliminary reductions. We identify the eigenspace $E_{n}$ as $\mathbb{C}^{2n+1}$ endowed with the $L^2(E_{n})$, which is isometric to the canonical norm of $\mathbb{C}^{2n+1}$. Let us denote by $\mathbf{U}(2n+1)$ the unitary group on $E_{n}=\mathbb{C}^{2n+1}$. Up to modifying the functional $F$, we can assume that $n_1,n_2,\cdots,n_l$ are distinct. Denote by
		\[
		X:=(e_{n_1}^{\omega},\cdots,e_{n_l}^{\omega}),\; \mathcal{H}(X):=\big(\mathcal{H}_{n_1}^N(e_{n_1}^{\omega}),\cdots, \mathcal{H}_{n_l}^N(e_{n_l}^{\omega})\big)\,.
		\]
		We need to show that for any bounded Borel function $G$,
		\begin{equation}
		\label{tojustify} 
		\mathbb{E}[F(Y,\mathcal{H}(X))G(Y)]=\mathbb{E}[F(Y,X)G(Y)]\,.
		\end{equation}
		Since $\mathcal{B}_{\leq N/2}$ is generated by $Z:=(e_{n}^{\omega})_{n\leq N/2}$, then almost surely, $Y=\varphi(Z)$ for some Borel measurable function $\varphi$ on the product space $\Pi_{n\leq N/2}E_{n}$.
		Similarly, since $\mathcal{H}_{n_j}^N$ are $\mathcal{B}_{\leq N/2}$ measurable there exist some $\mathbf{U}(2n+1)$-valued Borel functions $U_{n_j}$ such that $\mathcal{H}_{n_j}^{N}=U_{n_j}(Z)$.
		\medskip
		
		Denote by $\mu_Z(\cdot)$ the distribution of $Z$ on $\Pi_{n\leq N/2}\mathbb{C}^{2n+1}$ and $\mu_X(\cdot)$ the  distribution of $X$ on $\Pi_{j=1}^{l}\mathbb{C}^{2n_j+1}$. By independence between $X$ and $Z$, the joint distribution of $(Z,X)$ is $\mu_Z\otimes\mu_X$. Thus
		\begin{multline*}
			\mathbb{E}[F(Y,\mathcal{H}(X))G(Y)]\\
			=\int_{\prod_{n\leq \frac{N}{2}}\mathbb{C}^{2n+1} }
			G(\varphi(\mathbf{z}))
			\Big(
			\int_{\prod_{j=1}^{l}\C^{2n_{j}+1}}F(\varphi(\mathbf{z}),U_{n_1}(\mathbf{z})x_{n_1},\cdots,U_{n_l}(\mathbf{z})x_{n_l})d\mu_X(\mathbf{x})
			\Big)
			d\mu_{Z}(\mathbf{z})
			\,.
		\end{multline*}
		It suffices to show that for almost every  $\mathbf{z}$,
		\begin{multline*}
		\int_{\prod_{j=1}^{l}\C^{2n_{j}+1}}
		F(\varphi(\mathbf{z}),U_{n_1}(\mathbf{z})x_{n_1},\cdots,U_{n_l}(\mathbf{z})x_{n_l})d\mu_X(\mathbf{x})
		\\
		=
		\int_{\prod_{j=1}^{l}\C^{2n_{j}+1}}
		F(\varphi(\mathbf{z}),x_{n_1},\cdots,x_{n_l})d\mu_X(\mathbf{x}).
		\end{multline*}
		This amounts to show that for fixed $\mathbf{z}$, the random variables
		\[
		U(X)=:(U_{n_1}(\mathbf{z})e_{n_1},\cdots,U_{n_l}(\mathbf{z})e_{n_l})\quad  \text{and}\quad X=(e_{n_1},\cdots,e_{n_l})
		\]
		have the same law. This follows from the fact that
		\[
		\mu_X=\otimes_{j=1}^l\mathcal{N}_{\mathbb{C}^{2n_j+1}}(0,1)\,,
		\]
		and from Lemma \ref{lemme:law} (the unitary group leaves the complex normal distribution invariant). This yields \eqref{tojustify} and completes the proof of Lemma~\ref{loiequiavlent}.
		\end{proof}
	
	\subsection{Fourier-Lebesgue norm of the colored term}
	In the first Lemma, we deduce some bounds on the Fourier-Lebesgue norms of colored terms from the random information encoded in the random averaging operators (RAOs). Recall the definitions ~\eqref{def:G2N}, \eqref{def:h2Nextension},~\eqref{def:e2Nextension} and~\eqref{def:pinpsi2N} of
	$\mathcal{G}_n^{2N,\dag}, \mathbf{h}_n^{2N,\dag}, \mathbf{g}_n^{2N,\dag} $ and $ \mathbf{f}_n^{2N,\dag}
	$ respectively. 
	\begin{lemme}[Fourier-Lebesgue bounds]\label{bd:colored}
		Assume that $\Xi$ is a $\mathcal{B}_{\leq N}$-measurable set such that on $\Xi$, 
		\[ \|\mathbf{g}_n^{2N,\dag}(t)\|_{S_n^{q,\gamma,*}}\leq R^{-1},\quad 
		\|\mathcal{G}_n^{2N,\dag}(t)\|_{S_n^{q,\gamma,*}}\leq R\,.
		\]
		Then there exists a $\mathcal{B}_{\leq 2N}$-measurable set $\Xi'$ such that $\mathbb{P}(\Omega\setminus\Xi')<C_0\e^{-c_0(NR)^{\delta_0}}$, such that on $\Xi\cap\Xi'$,
		\begin{align*} 
			\|\mathbf{f}_n^{2N,\dag}\|_{X_{q,\infty}^{0,\gamma}(E_n)}\leq N^{\frac{2}{q}+\delta}R,
			\quad	
			\|\psi_{2N}^{\dag}\|_{X_{q,q,\infty}^{0,\gamma}}\leq T^{-\gamma+\frac{1}{q'}}R
			N^{-(\alpha-\frac{1}{2})+\frac{2}{q}+\delta},
		\end{align*}
where $C_0, c_{0},\delta_{0}>0$ are parameters depending only on $q,\delta,\gamma$.
	\end{lemme}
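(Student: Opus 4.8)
The plan is to reduce both bounds to a \emph{conditional} Gaussian large deviation estimate, exploiting that for $N<n\le 2N$ the operator $\mathbf{h}_n^{2N,\dag}$ (hence also $\mathbf{g}_n^{2N,\dag}=(\mathbf{h}_n^{2N,\dag})^\ast$ on $\{|t|\le T\}\supset\operatorname{supp}\chi_T$) is $\mathcal{B}_{\le N}$-measurable, while $e_n^\omega=(2n+1)^{-1/2}\sum_{|k|\le n}g_{n,k}\mathbf{b}_{n,k}$ is $\mathcal{B}_N$-measurable, hence independent of $\mathcal{B}_{\le N}$. Writing $\mathcal{T}_n:=\chi_T\mathbf{h}_n^{2N,\dag}$, linearity of the twisted time-Fourier transform gives $\widetilde{\mathbf{f}_n^{2N,\dag}}(\kappa,x)=(2n+1)^{-1/2}\sum_{|k|\le n}g_{n,k}\,\widetilde{\mathcal{T}_n(\mathbf{b}_{n,k})}(\kappa,x)$, so conditionally on $\mathcal{B}_{\le N}$ this is a centred complex Gaussian field in $(\kappa,x)$ with conditional variance $\sigma_n^2(\kappa,x)=(2n+1)^{-1}\sum_{|k|\le n}|\widetilde{\mathcal{T}_n(\mathbf{b}_{n,k})}(\kappa,x)|^2$. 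First I would control this variance \emph{pointwise in $x$}: the pointwise bound \eqref{pointwiseproba} from the proof of Lemma \ref{typicalS_nnormbound}, together with Weyl's law \eqref{Weyl}, yields $\sup_x\|\langle\kappa\rangle^\gamma\sigma_n(\cdot,x)\|_{L^q_\kappa}\le\|\langle\kappa\rangle^\gamma(\widetilde{\mathcal{T}}_n(\kappa))^\ast\|_{E_n\to L^q_\kappa E_n}=\|\chi_T\mathbf{g}_n^{2N,\dag}\|_{S_n^{q,\gamma,\ast}}$, whence, by the time-localization of the $S_n^{q,\gamma,\ast}$-norm (cf. \eqref{eq:chi-H}) and the hypothesis defining $\Xi$,
\[
\sup_{x\in\S^2}\big\|\langle\kappa\rangle^\gamma\sigma_n(\cdot,x)\big\|_{L^q_\kappa}\le C\,T^{\frac1{q'}-\gamma}R^{-1}\qquad\text{on }\Xi.
\]

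Next I would convert this into the $X_{q,\infty}^{0,\gamma}(E_n)$-bound. Since $\widetilde{\mathbf{f}_n^{2N,\dag}}(\kappa,\cdot)\in E_n$, an elementary Bernstein inequality (from $\|\nabla h\|_{L^\infty}\lesssim n\|h\|_{L^\infty}$ for $h\in E_n$) gives $\|h\|_{L^\infty(E_n)}\lesssim n^{2/p_\ast}\|h\|_{L^{p_\ast}(E_n)}$; I would fix $p_\ast\ge 2/\delta$ (loss $\le n^\delta$), exchange norms by Minkowski ($p_\ast\ge q$), take the conditional $L^r_\omega$-moment with $r\ge\max(p_\ast,q)$, exchange once more, apply the pointwise Gaussian moment bound $\|\langle\kappa\rangle^\gamma\widetilde{\mathbf{f}_n^{2N,\dag}}(\kappa,x)\|_{L^r_\omega\mid\mathcal{B}_{\le N}}\lesssim\sqrt r\,\langle\kappa\rangle^\gamma\sigma_n(\kappa,x)$, and bound $L^{p_\ast}_x$ by $L^\infty_x$ (normalized measure). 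This gives, on $\Xi$ and a.s., $\big\|\,\|\mathbf{f}_n^{2N,\dag}\|_{X_{q,\infty}^{0,\gamma}(E_n)}\,\big\|_{L^r_\omega\mid\mathcal{B}_{\le N}}\le C\,n^\delta\sqrt r\,T^{\frac1{q'}-\gamma}R^{-1}$. Chebyshev's inequality with $r\simeq(NR)^{\delta_0}$ then shows that $\{\|\mathbf{f}_n^{2N,\dag}\|_{X_{q,\infty}^{0,\gamma}(E_n)}> N^{\frac1q+\delta}R\}$ has conditional probability $\le C_0\e^{-c_0(NR)^{\delta_0}}$ on $\Xi$; in fact the bound obtained is much stronger, $\lesssim N^{2\delta}T^{\frac1{q'}-\gamma}R$, and it is this sharper form I would record for use below. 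A union bound over $n\in(N,2N]$ (absorbed into the exponential after shrinking $c_0$, since $\log N\ll(NR)^{\delta_0}$) and integration over $\Xi$ via $\mathbb{P}((\cdot)\cap\Xi)=\mathbb{E}[\mathbf{1}_\Xi\mathbb{P}(\cdot\mid\mathcal{B}_{\le N})]$ produce the set $\Xi'$, which is $\mathcal{B}_{\le 2N}$-measurable because $\mathbf{f}_n^{2N,\dag}$ and $\psi_{2N}^\dag$ are.

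For $\psi_{2N}^\dag$ I would use the splitting $\pi_n\psi_{2N}^\dag=\lambda_n^{-(\alpha-\frac12)}\big(\chi_T(t)\e^{-it\lambda_n^2}e_n^\omega+\mathbf{f}_n^{2N,\dag}(t)\big)$ from \eqref{def:pinpsi2N}. The twisted Fourier transform of the linear term is $\widehat{\chi_T}(\kappa)e_n^\omega(x)$, whose $X_{q,\infty}^{0,\gamma}(E_n)$-norm equals $\|\langle\kappa\rangle^\gamma\widehat{\chi_T}\|_{L^q_\kappa}\|e_n^\omega\|_{L^\infty_x}\lesssim T^{\frac1{q'}-\gamma}\|e_n^\omega\|_{L^\infty}$, and Lemma \ref{concentration} together with Bernstein bounds $\|e_n^\omega\|_{L^\infty}$ by $n^\delta(NR)^{\delta_0/2}$ off an event of probability $\le C_0\e^{-c_0(NR)^{\delta_0}}$; for the $\mathbf{f}$-term I would invoke the sharper bound from the previous step. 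Summing the two contributions over $n\approx 2N$ in $\ell^q_n$ (costing $N^{1/q}$) and using $2/p_\ast+\delta_0/2\le 2\delta$ and $R^{\delta_0/2}\le R$, I obtain, after enlarging $\Xi'$ by exceptional events of the same order,
\[
\|\psi_{2N}^\dag\|_{X_{q,q,\infty}^{0,\gamma}}\le C\,T^{\frac1{q'}-\gamma}R\,N^{-(\alpha-\frac12)+\frac1q+2\delta}.
\]

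The routine ingredients are the Khinchin-type Gaussian moment bounds, Chebyshev, and the dyadic union bound. The hard part is the pointwise-in-$x$ variance estimate of the first step: one must see that, after the $L^q_\kappa$-integration, $\sigma_n(\kappa,x)$ is bounded \emph{uniformly in $x$} by the operator norm $\|\chi_T\mathbf{g}_n^{2N,\dag}\|_{S_n^{q,\gamma,\ast}}$ with no loss of powers of $n$, which is exactly where Weyl's law $\sum_{|\ell|\le n}|\mathbf{b}_{n,\ell}(x)|^2=2n+1$ and the argument of Lemma \ref{typicalS_nnormbound} enter, and the reason the hypothesis is phrased via the adjoint $S_n^{q,\gamma,\ast}$-norms rather than the $S_n^{q,\gamma}$-norms of $\mathbf{h}_n^{2N,\dag},\mathcal{H}_n^{2N,\dag}$. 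A secondary technicality is keeping the $L^r_\omega$, $L^{p_\ast}_x$, $L^q_\kappa$ norms in an order that makes every Minkowski exchange admissible, which forces $p_\ast\ge q$ and $r\ge\max(p_\ast,q)$; the latter is harmless, since the statement is vacuous unless $(NR)^{\delta_0}$ exceeds a fixed constant.
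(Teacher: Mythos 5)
Your proposal is correct and follows essentially the same route as the paper's proof: conditional Gaussian/Wiener-chaos moment bounds given $\mathcal{B}_{\leq N}$, the pointwise estimate \eqref{pointwiseproba} combined with Weyl's law \eqref{Weyl} to control the conditional variance uniformly in $x$ by $\|\chi_T\mathbf{g}_n^{2N,\dag}\|_{S_n^{q,\gamma,*}}\lesssim T^{\frac{1}{q'}-\gamma}R^{-1}$, Minkowski exchanges and Chebyshev with $r\simeq (NR)^{\delta_0}$, together with the decomposition \eqref{def:pinpsi2N} for $\psi_{2N}^{\dag}$. The only (harmless) deviations are that you pass from $L_x^{\infty}$ to $L_x^{p_*}$ by Bernstein with loss $n^{2/p_*}$ where the paper reduces to $L_x^q$ via the projection kernel and spends the allowed $N^{1/q}$ there, and that you retain the factor $T^{\frac{1}{q'}-\gamma}$ in the variance bound where the paper exploits $\mathbf{g}^{\dag}(0)=0$ to avoid it; both variants land within the claimed thresholds.
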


\begin{remarque}
Assuming \(\mathrm{Loc}(N)\), we will need to verify the bounds for the objects  \(\mathbf{g}_{n}^{2N,\dag}\) and \(\mathcal{G}_{n}^{2N,\dag}\) supposed in the statement of the Lemma. This is done in paragraph \ref{sec:s1}, Step 1. 
\medskip
\end{remarque}

\begin{remarque}
Note that this Lemma will be used to propagate the bound \eqref{eq:p-psi-} in the induction scheme, to go from $\mathrm{Loc}(N)$ to $\mathrm{Loc}(2N)$. 
\end{remarque}

	\begin{proof}
		Thanks to Lemma \ref{concentration} on the \(L^{p}\)-bounds of random spherical harmonics and to the Sobolev embedding, we have 
		\[
		\|\chi_{T}(t)\e^{-it\lambda_{n}^{2}}e_{n}^{\omega}\|_{X_{q,q,\infty}^{0,\gamma}}\lesssim_{q} T^{-\gamma+\frac{1}{q'}}RN^{\frac{2}{q}+\delta}\,,
		\] 
		outside a set of probability smaller than $\mathcal{O}(\e^{-c_0N^{\theta}R^{2}})$. Then, according to the decomposition \eqref{def:pinpsi2N}, it suffices to prove the bound for $\mathbf{f}_n^{2N,\dag}$ to obtain the bound for~$\psi_{2N}^{\dag}$. 
		\medskip
		
		 Given an orthonormal basis $(\mathbf{b}_{n,k})_{|k|\leq n}$ of $E_{n}$, recall that $\mathbf{h}_{n;\ell,k}^{2N,\dag}(t)$ denotes the matrix-element of $\mathbf{h}_{n}^{2N,\dag}(t)$.
		\medskip

		Let us first make two observations. Thanks to the eigenfunction estimate, by losing $N^{\frac{1}{q}}$, it suffices to estimate the $X_{q,q}^{0,\gamma}(E_n)$-norm of $\mathbf{f}_{n}^{\dag}$ and to prove the bound
		\[ \|\mathbf{f}_{n}^{\dag} \|_{X_{q,q}^{0,\gamma}(E_n)}\leq N^{\delta}R
		\]
		outside of the set with probability smaller than $\mathcal{O}(\e^{-c_0(NR)^{\theta}})$. 
		Moreover, since $\mathbf{f}^{\dag}$ is compactly supported on $[-T,T]$, we may replace the operator $\mathbf{h}^{\dag}$ (which is supported on $[-2T,2T]$) by $\eta(t/T)\mathbf{h}^{\dag}$ for some $\eta\in C_c^{\infty}((-1,1))$ such that $\eta(t/T)\mathbf{f}^{\dag}=\mathbf{f}^{\dag}$. 
		Since $(\mathbf{h}^{\dag})^*=\mathbf{g}^{\dag}$ on $|t|\leq T$, we have
		$\big(\mathbf{h}^{\dag} \big)^{*}=\eta(t/T)\mathbf{g}^{\dag}$.
		This simple observation allows us to replace the operator norm of the adjoint $(\mathbf{h}^{\dag})^*$ by the operator norm $\mathbf{g}^{\dag}$ in the arguments below.
\medskip

	Recall the notations introduced in Section~\ref{sec:4}. From the discussion above, we implicitly insert $\eta(t/T)$ in the operators $\mathbf{h}^{\dag}(t),\mathbf{g}^{\dag}(t)$ and obtain
		\begin{align*}
			\lg\lambda\rg^{\gamma}	\widetilde{\mathbf{f}}_{n}^{\dag}(\lambda,x)=\sum_{|k|,|\ell|\leq n}\langle\lambda\rangle^{\gamma}\widetilde{\mathbf{h}}^{\dag}_{\ell,k}(\lambda)\frac{g_{n,k}(\omega)}{\sqrt{2n+1}}\mathbf{b}_{n,\ell}(x).
		\end{align*}
		By Chebyshev and Minkowski, for $q<\infty$ and $p\geq q$, 
		\begin{align*}
			\mathbb{P}\big[\|\mathbf{1}_{\Xi}\cdot \lg\lambda\rg^{\gamma}	\widetilde{\mathbf{f}}_{n}^{\dag}(\lambda,x) \|_{L_{\lambda,x}^q}>N^{\delta}R \big]\leq & \frac{1}{(N^{\delta}R)^p}\|\mathbf{1}_{\Xi}\cdot\lg\lambda\rg^{\gamma}	\widetilde{\mathbf{f}}_{n}^{\dag}(\lambda,x) \|_{L_{\omega}^pL_{\lambda,x}^q}^p\\
			=&\frac{1}{(N^{\delta}R)^p}\Big\|
			\mathbf{1}_{\Xi}\cdot\big\|\|\lg\lambda\rg^{\gamma}\widetilde{\mathbf{f}}_{n}^{\dag}(\lambda,x)\|_{L_{\lambda,x}^q}\big\|_{L_{\omega}^p|\mathcal{B}_{\leq N}}
			\Big\|_{L_{\omega}^p}^p
			\\
			\leq & \frac{1}{(N^{\delta}R)^p}
			\Big\|
			\mathbf{1}_{\Xi}\cdot
			\big\|
			\|
			\lg\lambda\rg^{\gamma}\widetilde{\mathbf{f}}_{n}^{\dag}(\lambda,x)\|_{L_{\omega}^p|\mathcal{B}_{\leq N}}\big\|_{L_{\lambda,x}^q}
			\Big\|_{L_{\omega}^p}^p
		\end{align*}
		Recall that the matrices $\widetilde{\mathbf{h}}_{\ell,k}^{\dag}(\lambda)$ are independent of the Borel $\sigma$-algebra generated by Gaussians $\mathcal{B}_{2N}$. Therefore, for fixed $\lambda,x$ the conditional Wiener-chaos estimate yields
		\begin{align*}
			\|\mathbf{1}_{\Xi}\cdot\lg\lambda\rg^{\gamma}	\widetilde{\mathbf{f}}_{n}^{\dag}(\lambda,x) \|_{L_{\omega}^p|\mathcal{B}_{\leq N}}\leq &  Cp^{\frac{1}{2}}\Big\|\mathbf{1}_{\Xi}\langle\lambda\rangle^{\gamma}\cdot\sum_{|k|,|\ell|\leq n}\widetilde{\mathbf{h}}^{\dag}_{\ell,k}(\lambda)\frac{g_{n,k}(\omega)}{\sqrt{2n+1} }\mathbf{b}_{n,\ell}(x) \Big\|_{L_{\omega}^2|\mathcal{B}_{\leq N}}.
		\end{align*}
		Taking $L_{\lambda}^q$ of the inequality above, we have for fixed $x$,
		\begin{align}\label{eq:LDV1} 
			\|\mathbf{1}_{\Xi}\lg\lambda\rg^{\gamma}\widetilde{\mathbf{f}}_{n}^{\dag}(\lambda,x)\|_{L_{\lambda}^qL_{\omega}^p|\mathcal{B}_{\leq N}}
			\leq
			\frac{Cp^{\frac{1}{2}}}{\sqrt{2n+1}}\Big\|\Big(\sum_{|k|\leq n}\Big| 
			\sum_{|\ell|\leq n}\langle\lambda\rangle^{\gamma}\widetilde{\mathbf{h}}^{\dag}_{\ell,k}(\lambda)\mathbf{b}_{n,\ell}(x)\Big|^2
			\Big)^{\frac{1}{2}}\Big\|_{L_{\lambda}^q}.
		\end{align}
		Applying \eqref{pointwiseproba} to $\mathcal{T}_n=\mathcal{\mathbf{h}}^{\dag}$, we obtain the bound
		\[
		\Big\| \Big(\sum_{|k|\leq n}\Big| 
		\sum_{|\ell|\leq n}\langle\lambda\rangle^{\gamma}\widetilde{\mathbf{h}}^{\dag}_{\ell,k}(\lambda)\mathbf{b}_{n,\ell}(x)\Big|^2
		\Big)^{\frac{1}{2}}\Big\|_{L_{\lambda}^{q}}
		\leq
		C\sqrt{2n+1}\|\lg\lambda\rg^{\gamma}(\widetilde{\mathcal{\mathbf{h}}^{\dag}}(\lambda))^*\|_{E_n\rightarrow L_{\lambda}^qE_n}.
		\]
		Plugging into \eqref{eq:LDV1} and using the fact that $(\mathbf{h}^{\dag}(t))^*=\eta(t/T)\mathbf{g}^{\dag}(t)$,
		we finally obtain the bound
		\begin{align*}
			\eqref{eq:LDV1}\leq &Cp^{\frac{1}{2}}\|\lg\lambda\rg^{\gamma}(\widetilde{\mathbf{h}}^{\dag}(\lambda))^* \|_{E_n\rightarrow L_{\lambda}^qE_n}\\=&Cp^{\frac{1}{2}}
			\|\lg\lambda\rg^{\gamma}\mathcal{F}_t(\e^{-it\lambda_n^2}\mathbf{g}^{\dag}(t) )(-\lambda)  \|_{E_n\rightarrow L_{\lambda}^qE_n}
			=
			Cp^{\frac{1}{2}}\|\eta\big(\frac{t}{T}\big)\mathbf{g}^{\dag}(t)\|_{S_n^{q,\gamma,*}}.
		\end{align*}
		Since $\mathbf{g}^{\dag}(0)=0$, by Corollary \ref{timecutoffappendix} , the right hand side is bounded by 
		\[
		Cp^{\frac{1}{2}}\|\mathbf{g}^{\dag}\|_{S_n^{q,\gamma,*}} \leq Cp^{\frac{1}{2}}R^{-1}
		\]
		on $\Xi$. Taking the $L_x^q$-norm, optimizing the choice of $p$, and then taking the \(\ell_{n}^{q}\)-norm we obtain the desired estimate.
	\end{proof}

	\subsection{Pointwise bound on the colored terms} 
	
In this subsection, we prove large deviation estimates for norms defined in the physical space. We use the equivalence in law of Lemma \ref{lemme:law} to reduce the analysis to time $t=0$, when the random objects are much simpler. 
	\begin{lemme}[Pointwise bounds]
		\label{lem:wick} 
		Suppose that on a $\mathcal{B}_{\leq N}$-measurable set $\Omega_{N}$, the colored term $\psi_{2N}^{\dag}$ is well defined and $\mathrm{Loc}(N)$ is true with parameters $(T,R,\sigma)$ with $\sigma$ sufficiently small, and $(q,\gamma,\delta) = (q_{\sigma},\gamma_{\sigma},\delta_{\sigma})$. There exists a $\mathcal{B}_{\leq 2N}$-measurable set $\Xi'$ with $\mathbb{P}(\Omega\setminus\Xi')<C_{0}\e^{-c_{0}(NR)^{\delta_{0}}}$ such that on $\Omega_{N}\cap\Xi'$, 
		\begin{align}
			\label{eq:wick}
			\|\psi_{2N}^{\dag}\|_{L_{t}^{q}L_{x}^{\infty}}
			&\leq
			RT^{\frac{1}{q}}N^{-(\alpha-1)+\frac{1}{q}+\delta}\,,
			\\
			\|\psi^{\dag}_{2N}\hexagonal{=}\overline{\psi^{\dag}_{2N}}\|_{L_{t}^{q}L_{x}^{\infty}}
			&\leq
			R^{2}T^{\frac{1}{q}}N^{-2(\alpha-\frac{1}{2})+\frac{1}{2}+ \frac{1}{q} + \delta}\,,
			\end{align}
		where the constants $C_{0},c_{0},\delta_{0}>0$ only depend on $q,\delta,\gamma$. 
	\end{lemme}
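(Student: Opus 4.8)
\emph{Strategy.} The plan is to follow the scheme announced just before the statement: use the law-equivalence of Lemma~\ref{loiequiavlent} to reduce both estimates to time $t=0$, where $\psi_{2N}^{\dag}$ becomes a pure Gaussian (resp.\ quadratic-Gaussian) field, and then run an elementary large-deviation estimate based on Bernstein and Khintchine/hypercontractivity. Recall that on $\Xi$ the operators $\mathcal{H}_n^{2N,\dag}(t)$, $N<n\le2N$, are $\mathcal{B}_{\le N}$-measurable and, for $|t|\le T$, unitary on $E_n$, and that $\pi_n\psi_{2N}^{\dag}(t)=\lambda_n^{-(\alpha-\frac12)}\chi_T(t)\,\mathcal{H}_n^{2N,\dag}(t)(e_n^{\omega})$. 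Fix $t\in[-T,T]$ and $p\ge1$. For a nonnegative Borel functional $G$ on $\prod_{N<n\le2N}E_n$ (applied to suitable truncations and passing to the limit by monotone convergence), Lemma~\ref{loiequiavlent} at frequency level $2N$ (with trivial scalar factor) together with the independence of $(e_n^{\omega})_{N<n\le2N}$ from $\mathcal{B}_{\le N}\ni\mathbf 1_\Xi$ gives
\[
\mathbb{E}\big[\mathbf 1_{\Xi}\,G\big((e_n^{2N,\dag}(t))_n\big)\big]=\mathbb{P}(\Xi)\,\mathbb{E}\big[G\big((e_n^{\omega})_n\big)\big]\le \mathbb{E}\big[G\big((e_n^{\omega})_n\big)\big].
\]
Using this with $G=\big\|\sum_n\lambda_n^{-(\alpha-\frac12)}\cdot\big\|_{L_x^\infty}^p$, resp.\ $G=\big\|\sum_n\lambda_n^{-2(\alpha-\frac12)}(|\cdot|^2-\|\cdot\|_{L^2}^2)\big\|_{L_x^\infty}^p$, reduces the $p$-th moment of $\|\mathbf 1_\Xi\psi_{2N}^{\dag}(t)\|_{L_x^\infty}$ and of $\|\mathbf 1_\Xi\,\psi_{2N}^{\dag}\hexagonal{=}\overline{\psi_{2N}^{\dag}}(t)\|_{L_x^\infty}$ to the corresponding moments of $g:=\sum_{N<n\le2N}\lambda_n^{-(\alpha-\frac12)}e_n^{\omega}$ and $Y:=\sum_{N<n\le2N}\lambda_n^{-2(\alpha-\frac12)}(|e_n^{\omega}|^2-\|e_n^{\omega}\|_{L^2}^2)$, up to the harmless factors $|\chi_T(t)|$, $|\chi_T(t)|^2$. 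Here $g$ is a Gaussian field and $Y$ is a homogeneous element of the second Wiener chaos (its mean vanishes since $\mathbb{E}|e_n^{\omega}(x)|^2=\mathbb{E}\|e_n^{\omega}\|_{L^2}^2=1$, the first equality by the exact Weyl law \eqref{Weyl}; cf.\ Lemma~\ref{Wickcancellation}).

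\emph{The $t=0$ bounds.} First I would trade the $L_x^\infty$ norm for a high but \emph{fixed} spatial Lebesgue exponent $r=\lceil 8/\delta\rceil$: since $g$ and $Y$ are spectrally supported at frequencies $\lesssim N$, Bernstein on $\mathbb{S}^2$ gives $\|\cdot\|_{L_x^\infty}\lesssim N^{2/r}\|\cdot\|_{L_x^r}\le N^{\delta/4}\|\cdot\|_{L_x^r}$. Then, by Minkowski's integral inequality (valid for $p\ge r$): for fixed $x$, $g(x)$ is a mean-zero complex Gaussian of variance $\sum_{N<n\le2N}\lambda_n^{-2(\alpha-\frac12)}\lesssim N^{-2(\alpha-1)}$ (again by \eqref{Weyl}), so $\big\|\|g\|_{L_x^r}\big\|_{L_\omega^p}\le\big\|\,\|g(x)\|_{L_\omega^p}\,\big\|_{L_x^r}\lesssim \sqrt p\,N^{-(\alpha-1)}$; and each summand of $Y$ at fixed $x$ is a mean-zero second-chaos element of $L_\omega^2$-size $\lesssim\lambda_n^{-2(\alpha-\frac12)}$ uniformly in $x$, so by orthogonality in $n$ and hypercontractivity ($\|\cdot\|_{L_\omega^p}\lesssim p\,\|\cdot\|_{L_\omega^2}$ on the second chaos) $\big\|\|Y\|_{L_x^r}\big\|_{L_\omega^p}\lesssim p\big(\sum_{N<n\le2N}\lambda_n^{-4(\alpha-\frac12)}\big)^{1/2}\lesssim p\,N^{\frac32-2\alpha}$. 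Taking the $L_t^q$ norm, using Minkowski once more (for $p\ge q$) and $\|\chi_T\|_{L_t^q}+\|\chi_T^2\|_{L_t^q}\lesssim T^{1/q}\le1$, I arrive, for all $p\ge\max(q,r)$, at
\[
\big\|\mathbf 1_\Xi\,\psi_{2N}^{\dag}\big\|_{L_\omega^pL_t^qL_x^\infty}\lesssim \sqrt p\,N^{\delta/4-(\alpha-1)},\qquad \big\|\mathbf 1_\Xi\,\psi_{2N}^{\dag}\hexagonal{=}\overline{\psi_{2N}^{\dag}}\big\|_{L_\omega^pL_t^qL_x^\infty}\lesssim p\,N^{\delta/4+\frac32-2\alpha}.
\]

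\emph{Conclusion and main point.} Chebyshev's inequality then gives, with $\Lambda_1:=RN^{-(\alpha-1)+\delta}$ and $\Lambda_2:=R^2N^{-2(\alpha-\frac12)+\frac12+\delta}$,
\[
\mathbb{P}\big[\mathbf 1_\Xi\|\psi_{2N}^{\dag}\|_{L_t^qL_x^\infty}>\Lambda_1\big]\le\Big(\tfrac{C\sqrt p}{RN^{3\delta/4}}\Big)^p,\qquad \mathbb{P}\big[\mathbf 1_\Xi\|\psi_{2N}^{\dag}\hexagonal{=}\overline{\psi_{2N}^{\dag}}\|_{L_t^qL_x^\infty}>\Lambda_2\big]\le\Big(\tfrac{Cp}{R^2N^{3\delta/4}}\Big)^p,
\]
and optimizing in $p$ (the optimal choices $p\sim R^2N^{3\delta/2}$, resp.\ $p\sim R^2N^{3\delta/4}$, exceed $\max(q,r)$ unless $NR$ is bounded, in which case the asserted bound is vacuous for $C_0$ large) yields tails $\lesssim e^{-cR^2N^{c'\delta}}$ with absolute $c,c'>0$. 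Letting $\Xi'$ be the $\mathcal{B}_{\le2N}$-measurable union of these two exceptional events, we get $\mathbb{P}(\Xi\cap\Xi')\le C_0e^{-c_0R^2N^{c'\delta}}\le C_0e^{-c_0(NR)^{\delta_0}}$ for any $\delta_0\le c'\delta$, and \eqref{eq:wick} holds on $\Xi\setminus\Xi'$, with $C_0,c_0,\delta_0$ depending only on $q,\delta$. I do not expect a serious obstacle here: the argument is a soft large-deviation estimate, and the only genuine choice is to decouple the intermediate spatial exponent $r\sim1/\delta$ from the Fourier--Lebesgue exponent $q=q_\sigma$, so that the Bernstein loss $N^{2/r}$ is absorbed by the $N^{\delta}$ margin in \eqref{eq:wick}; the one point requiring a little care is interchanging the fixed-time law-equivalence with the $L_t^q$ average and the cut-off $\mathbf 1_\Xi$, which is handled by the Minkowski embeddings $L_\omega^pL_t^q\hookrightarrow L_t^qL_\omega^p$, $L_\omega^pL_x^r\hookrightarrow L_x^rL_\omega^p$ (for $p$ large) and the conditional-expectation identity displayed above.
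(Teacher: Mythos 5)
Your proof is correct and follows essentially the same route as the paper's: law-equivalence (Lemma \ref{loiequiavlent}) to reduce to the time-zero Gaussian and second-chaos fields, exact Weyl's law, independence and hypercontractivity for the fixed-$x$ moments, and Chebyshev with optimization in the moment exponent. Your choice of an intermediate spatial exponent $r\sim 1/\delta$ for the Bernstein step, decoupled from $q$, is in fact the careful way to keep the $L_x^\infty$-reduction loss within the $N^{\delta}$ margin, and your uniform-in-$x$ $L_\omega^2$ bound on the diagonal part of $|e_n^{\omega}(x)|^2-\|e_n^{\omega}\|_{L^2}^2$ rests on exactly the Weyl cancellation $\sum_{k}(|\mathbf{b}_{n,k}(x)|^2-1)=0$ that the paper isolates as the term $\operatorname{II}(x)$.
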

	\begin{proof}

		As in the proof of Lemma \ref{bd:colored} (and using a Sobolev embedding) we reduce the proof to moment bounds: there exists $C>0$ such that for all $r\geq q$, 
		\begin{align}
		\label{eq:m0}
		\|\psi^{\dag}_{2N}\|_{L_{\omega}^{r}L_{t,x}^{q}}
		&\leq
		\sqrt{r}CT^{\frac{1}{q}}N^{-(\alpha-1)}\,,
		\\
			\label{eq:moment-wick}
			\| \psi^{\dag}_{2N}\hexagonal{=}\overline{\psi_{2N}^{\dag}}\|_{L_\omega^rL_{t,x}^q} 
			&\leq r CT^{\frac{1}{q}}N^{-2(\alpha-\frac{1}{2})+\frac{1}{2}}\,. 
		\end{align}
		We first prove \eqref{eq:m0}. Recall the decomposition 
		\[
		\psi_{2N}^{\dag}(t,x) = 
		\sum_{n\approx2N}
		\pi_n\psi_{2N}^{\dag}(t)
		=
		\sum_{n\approx2N}
		\lambda_n^{-(\alpha-\frac{1}{2})}e_n^{2N,\dag}(t)\,.
		\]
		According to the equivalence in law of Lemma \ref{loiequiavlent} that for fixed $t,x$
		\[
		\|\psi_{2N}^{\dag}(t,x)\|_{L_{\omega}^{r}}
		=
		\big\|
		\|
		\psi_{2N}^{\dag}(t,x)
		\|_{L_{\omega}^{r}\mid\mathcal{B}_{\leq N}}
		\big\|_{L_{\omega}^{r}}
		=
		\chi_{T}^{2}(t)
		\|
		\sum_{n\approx2N}\lambda_{n}^{-(\alpha-\frac{1}{2})}
		e_{n}^{\omega}(x)
		\|_{L_{\omega}^{r}}\,.
		\]
		Then, as in the proof of Lemma \ref{concentration},  we conclude by the Khinchin's inequality:
	\begin{align*}
	\|
	\sum_{n\approx2N}\lambda_{n}^{-(\alpha-\frac{1}{2})}e_{n}^{\omega}(x)
	\|_{L_{\omega}^{r}}
	&\lesssim
	\sqrt{r}
	\mathbb{E}
	\big[
	\big|
	\sum_{n\approx2N}\lambda_{n}^{-(\alpha-\frac{1}{2})}
	\frac{1}{\sqrt{2n+1}}
	\sum_{|k|\leq n}
	g_{n,k}(\omega)\mathbf{b}_{n,k}(x)
	\big|^{2}
	\big]^{\frac{1}{2}} \\
	& \lesssim \sqrt{r}
	\big(
	\sum_{n\approx2N}\lambda_{n}^{-2(\alpha-\frac{1}{2})}
	\frac{1}{2n+1}
	\sum_{|k|\leq n}
	|\mathbf{b}_{n,k}(x)|^{2}
	\big)^{\frac{1}{2}} \\
	&\lesssim\sqrt{r} N^{-(\alpha-1)}\,,
	\end{align*}
		where we used the Weyl's law \eqref{Weyl}. 
		\medskip
		
		Let us now turn to the proof of \eqref{eq:moment-wick}. According to equivalence of laws, for fixed~$t,x$,
		\begin{align*}
		\|\psi^{\dag}_{2N}\hexagonal{=}\ov{\psi_{2N}^{\dag}}(t,x)\|_{L_{\omega}^r}
		=&
		\big\|
		\|
		\sum_{n\approx2N} 
		\lambda_n^{-2(\alpha-\frac{1}{2})}
		(|e_n^{2N,\dag}(t,x)|^2-\|e_n^{2N,\dag}\|_{L^2(\S^2)}^2)
		\|_{L_{\omega}^{r}|\mathcal{B}_{\leq N}}
		\big\|_{L_{\omega}^{r}}
		 \\ 
		 = &\chi_T(t)^{2}
		 \big\|
		 \sum_{n\approx2N}
		 \lambda_n^{-2(\alpha-\frac{1}{2})}
		 (|e^{\omega}_n(x)|^2-\|e^{\omega}_n\|_{L^2(\S^2)}^2)
		 \big\|_{L_{\omega}^{r}}\,,
		\end{align*}
		Then, for $n\approx 2N$ we fix an orthonormal basis $(\mathbf{b}_{n,k})_{|k|\leq n}$ of $E_n$ and we decompose into two parts:
		\begin{align*}
			|e_n^{\omega}(x)|^2-\|e^{\omega}_n\|_{L^2(\S^2)}^2 &= \frac{1}{(2n+1)}\sum_{\substack{k\neq k'}}g_{n,k}(\omega)\overline{g}_{n,k'}(\omega)\mathbf{b}_{n,k}(x)\overline{\mathbf{b}}_{n,k'}(x)\\
			&+ \frac{1}{(2n+1)}\sum_{k}|g_{n,k}(\omega)|^2(|\mathbf{b}_{n,k}(x)|^2-1)\,.
		\end{align*}
		According to this decomposition, we write 
		\[
		\sum_{n\approx L}\lambda_n^{-2(\alpha-\frac{1}{2})}(|e_n^{\omega}(x)|^2-\|e_n^{\omega}\|_{L^2}^2) = \operatorname{I}(x)+\operatorname{II}(x)\,.
		\]
		Note that we have no dependence on $t$, thanks to the invariance of the law. Note also that on $\mathbb{T}^d$ the term $\mathrm{II}(x)$ would not appear since the amplitude of a plane wave is always one.
		$\\$
		
		\noi$\bullet${\bf Term $\operatorname{I}(x)$}: We deduce from hypercontractivity that 
		\begin{align*}
			\mathbb{E}[\operatorname{I}(x)^r]^\frac{1}{r} &= 
			\mathbb{E}\Big[ \Big| \sum_{n\approx 2N}\lambda_n^{-2(\alpha-\frac{1}{2})}\sum_{\substack{k,k'\\ k\neq k'}}g_{n,k}(\omega)\overline{g}_{n,k'}(\omega)\frac{\mathbf{b}_{n,k}(x)\overline{\mathbf{b}}_{n,k'}(x)}{2n+1} \Big|^r \Big]^\frac{1}{r} \\
			&\lesssim r \mathbb{E}\Big[ \Big| \sum_{n\approx 2N}\lambda_n^{-2(\alpha-\frac{1}{2})}\sum_{\substack{k,k'\\ k\neq k'}}g_{n,k}(\omega)\overline{g}_{n,k'}(\omega)\frac{\mathbf{b}_{n,k}(x)\overline{\mathbf{b}}_{n,k'}(x)}{2n+1} \Big|^2 \Big]^\frac{1}{2}  \\
			& \lesssim r \Big( \sum_{n\approx 2N}\lambda_n^{-4(\alpha-\frac{1}{2})}\sum_{\substack{k,k'\\ k\neq k'}}  \frac{|\mathbf{b}_{n,k}(x)|^2|\mathbf{b}_{n,k'}(x)|^2}{(2n+1)^2} \Big)^\frac{1}{2}\,,
		\end{align*}
		where we used the independence on the last line. We conclude from Weyl's law \eqref{Weyl} that 
		\[
		\mathbb{E}[|\operatorname{I}(x)|^r]^\frac{1}{r} \leq r \Big(\sum_{n\approx 2N}\lambda_n^{-4(\alpha-\frac{1}{2})} \big(\sum_{|k|\leq n}\frac{|{\bf b}_{n,k}(x)|^2}{2n+1}\big)^2\Big)^\frac{1}{2} 
		\lesssim rN^{-2(\alpha-\frac{1}{2})+\frac{1}{2}}\,.
		\]
		Note that the above bound does not depend on $x$. 
		$\\$
		
		\noi$\bullet${\bf Term $\operatorname{II}(x)$}: For the second term, we also use hypercontractivity to obtain
		\[
		\mathbb{E}
		[|\operatorname{II}(x)|^r]^\frac{1}{r}\leq r\mathbb{E}
		\Big[\Big(
		\sum_{n\approx 2N}\lambda_n^{-2(\alpha-\frac{1}{2})}\sum_{|k|\leq n}|g_{n,k}(\omega)|^2\frac{|\mathbf{b}_{n,k}(x)|^2-1}{2n+1}\Big)^2\Big]^\frac{1}{2}\,.
		\]
		We observe from Weyl's law~\eqref{Weyl} that for every $n$ and $x\in\S^2$, 
		\[
		\sum_{|k|\leq n}|g_{n,k}(\omega)|^2
		\frac{|\mathbf{b}_{n,k}(x)|^2-1}{2n+1} 
		= \sum_{|k|\leq n}(|g_{n,k}(\omega)|^2-1)\frac{|\mathbf{b}_{n,k}(x)|^2-1}{2n+1}\,.
		\]
		This allows us to use random oscillations and to deduce from the independence that  
		\begin{align*}
			\mathbb{E}[|\operatorname{II}(x)|^r]^\frac{1}{r}
			&\leq r
			\Big(\sum_{n\approx 2N}\lambda_n^{-4(\alpha-\frac{1}{2})}\sum_{|k|\leq n}\mathbb{E}[(|g_{n,k}(\omega)|^2-1)^2]\Big(\frac{|\mathbf{b}_{n,k}(x)|^2-1}{2n+1}\Big)^2\Big)^\frac{1}{2} \\
			&\leq r\Big(\sum_{n\approx 2N}\lambda_n^{-4(\alpha-\frac{1}{2})}\sum_{|k|\leq n}\Big(\frac{|\mathbf{b}_{n,k}(x)|^2-1}{2n+1}\Big)^2\Big)^\frac{1}{2} \\
			&\leq r\Big(\sum_{n\approx 2N}\lambda_n^{-4(\alpha-\frac{1}{2})}\frac{1}{(2n+1)^{2}}\Big(\sum_{|k|\leq n}\Big||\mathbf{b}_{n,k}(x)|^2-1\Big|\Big)^2\Big)^\frac{1}{2} \,,
		\end{align*}
		Applying the Weyl's law~\eqref{Weyl} we conclude that
		\[
		\mathbb{E}[|\operatorname{II}(x)|^r]^\frac{1}{r}\lesssim r \Big(\sum_{n\approx 2N}\lambda_n^{-4(\alpha-\frac{1}{2})-1}\Big)^\frac{1}{2}\lesssim rN^{-2(\alpha-\frac{1}{2})}\,.
		\]
		This gives~\eqref{eq:moment-wick} and completes the proof of Lemma~\ref{lem:wick}.
	\end{proof}
Summarizing Lemmas \ref{bd:colored} and \ref{lem:wick}, we obtain the bounds on \(\psi_{2N}^{\dag}\) claimed in $\mathrm{Loc(N)}$ (Definition \ref{loc(N)}) at step $2N$, provided we proved the bounds for the RAO's by assuming Loc(\(N\)), as detailed in paragraph~\ref{sec:s1}, Step 1.
	\medskip
	
	The rest of the paper is devoted to prove the bounds \eqref{eq:bo-r} on the remainder $w_{2N}^{\dag}$, which is solution to the equation \eqref{eq:wNdag}. We may implicitly suppose that the random objects of generation $2N$ satisfy the bound \eqref{eq:p-psi} in $\mathrm{Loc(2N)}$. 
\medskip
	


	\section{Large deviation estimates for the (C)(C)(C) terms}\label{sec:CCC}
	Given $N_{1}, N_{2}, N_{3}$ and $\psi_{N_{1}}^{\dag}, \psi_{N_{2}}^{\dag}, \psi_{N_{3}}^{\dag}$ of type (C)(C)(C) we derive some moments estimates,  from which we deduce the large deviation bounds claimed in Proposition~\ref{multilinear:random}. Note that the estimates obtained in this Section are also effective when~$\alpha~=~1$, on the support of the Gibbs measure.
	\medskip
	
	We recall that \(N_{(1)} \geq N_{(2)} \geq N_{(3)}\) denotes the decreasing rearrangement of the dyadic integers \(N_{1}, N_{2}, N_{3}\), and that, in the multilinear estimates at step \(2N\) of the induction scheme, we always assume \(N_{(1)} = 2N\). 
We also remind the reader that the hierarchy of parameters such as \(q\) and \(\delta\) is governed by the common small parameter \(\sigma\), as specified in equations~\eqref{smallparameters},~\eqref{smallparameters1},~\eqref{smallparameters2}, and~\eqref{smallparameters3}.
\begin{proposition}
\label{prop:CCC}
Let \(\alpha \geq 1\) and \(b_{1} > \frac{1}{2}\). Suppose that \(\operatorname{Loc}(N)\) holds on \(\Omega_{N}\) for the parameters \((T, R, \sigma)\), and that \(\Xi'\) is a \(\mathcal{B}_{\leq 2N}\)-measurable set as in Lemmas~\ref{bd:colored} and~\ref{lem:wick}. 

Then, for all \(N_{1}, N_{2}, N_{3} \leq 2N\) with \(N_{(1)} = 2N\), and for all \(r \geq q\), the following estimates hold:
\begin{itemize}
    \item If \(N_{(1)} = N_{(2)}\) or \(N_{(1)} = N_{2}\), then
    \[
    \big\| 
    \chi \mathcal{N}_{(0,1)}(\psi_{N_{1}}^{\dag}, \psi_{N_{2}}^{\dag}, \psi_{N_{3}}^{\dag}) 
    \big\|_{L_{\omega}^{r}(\Omega_{N} \cap \Xi'; X^{0,-b_{1}})} 
    \lesssim_{\sigma, \alpha, b_{1}} 
    \left(r^{\frac{1}{2}} R T^{-\gamma + \frac{1}{q'}}\right)^3 
    N^{-(\alpha - \frac{1}{2}) + \frac{1}{q} + \delta}\,.
    \]
    
    \item Otherwise, assuming only that \(N_{(1)} = 2N\), we obtain
    \[
    \big\| 
    \chi \mathcal{N}_{[0,1]}(\psi_{N_{1}}^{\dag}, \psi_{N_{2}}^{\dag}, \psi_{N_{3}}^{\dag}) 
    \big\|_{L_{\omega}^{r}(\Omega_{N} \cap \Xi'; X^{0,-b_{1}})} 
    \lesssim_{\sigma,\alpha,b_{1}} 
    \left(r^{\frac{1}{2}} R T^{-\gamma + \frac{1}{q'}}\right)^3 
    N^{-(\alpha - \frac{1}{2}) + \frac{1}{q} + \delta}\,.
    \]
\end{itemize}
\end{proposition}

\medskip

\noindent
We emphasize that the constraint \(N_{(1)} = N_{(2)}\) or \(N_{(1)} = N_{2}\) in the case \(\mathcal{N} = \mathcal{N}_{(0,1)}\) is essential to ensure the nonlinear smoothing effect, as shown in~\cite{BCLST}. This condition is naturally handled through our ansatz~\eqref{eq:psiN}.
	
	\begin{proof}[Proof of Proposition \ref{multilinear:random} (assuming Proposition \ref{prop:CCC})]
	
	Recall that  $b\in(\frac{1}{2},\frac{1}{2}+\theta)$. We introduce the parameter $b'=\frac{1}{2}-2\theta$, so that 
	\[
	1-b-b' >\theta\,.
	\]
	According to the standard inhomogeneous estimate recalled in Proposition \ref{inhomogeneous} (applied with \(q=r=2\))
	\[
	\|\chi_{T}\mathcal{I}(\chi\mathcal{N}(\psi_{N_{1}}^{\dag},\psi_{N_{2}}^{\dag},\psi_{N_{3}}^{\dag}))\|_{X^{0,b}} \lesssim T^{1-b-b'} \|\chi\mathcal{N}(\psi_{N_{1}}^{\dag},\psi_{N_{2}}^{\dag},\psi_{N_{3}}^{\dag}) \|_{X^{0,-b'}}\,.
	\]	
	Then, to control the right-hand-side we interpolate the bounds claimed in Proposition~\ref{prop:CCC} with the trivial estimate ($N_{(1)}=2N$)
	\[
	\|\chi\mathcal{N}(\psi_{N_1}^{\dag},\psi_{N_2}^{\dag},\psi_{N_3}^{\dag}) \|_{L_{\omega}^r(\Omega_N\cap \Xi';X^{0,0})}\lesssim_{\alpha} N^{10}R^3\,. 
	\]
This gives
	\[
	\|\chi\mathcal{N}(\psi_{N_1}^{\dag},\psi_{N_2}^{\dag},\psi_{N_3}^{\dag}) \|_{L_{\omega}^r(\Omega_N\cap \Xi
	';X^{0,-b'})}
	\lesssim 
	(2r)^{\frac{3b'}{2b_1}}
	R^{3}
	T^{-3\frac{b'}{b_1}(\gamma-\frac{1}{q'})}N^{-\frac{b'}{b_1}s_1+10(1-\frac{b'}{b_1})}\,, 
	\]
where we denoted $s_{1}:=(\alpha-\frac{1}{2})-\frac{1}{q}-\delta$. It follows from our choice of parameters~\eqref{smallparameters}  that for $b_{1}>\frac{1}{2}$,
	 \[
	 1-b-b' - 3\frac{b'}{b_{1}}(\gamma-\frac{1}{q'}) >\theta - 3\sigma^{10}>\frac{\theta}{2}\,.
	 \]
This gives the desired factor $T^{\frac{\theta}{2}}$ on the right-hand-side.	Then, for \(b_{1}\) close enough to~$\frac{1}{2}$ and satisfying $\frac{1}{2}<b_1$, we have
\[
\frac{b'}{b_1}s_1-10(1-\frac{b'}{b_1})
	>s_{1}-100\theta 
	= \alpha-\frac{1}{2} - \frac{1}{q}-\delta-100\theta\,.
\]
This gives the factor \(N^{-\alpha+\frac{1}{2}+\frac{1}{q}+\delta+100\theta}\). The proof of Proposition \ref{multilinear:random} then follows from the standard argument using Chebyshev. Let us remark that the growth $R^4$ in the statement of Proposition \ref{multilinear:random} is responsible for the tail probability $C_0\exp(-N^{c_0}R^{c_0})$ of the exceptional set,  since the growth in the moment bound is~$R^3$. The same comment applies for the growth $N^{2\delta}$ in Proposition~\ref{multilinear:random} (compared to~$N^{\delta}$ in the moment bound).
	\end{proof}	
	The rest of this section is devoted to the proof of Proposition \ref{prop:CCC}. Let us start with a general probabilistic multilinear estimate used in many places of the proof. 
\begin{lemme}[$k$-linear probabilistic estimates at generation $2N$]
		\label{lem:multi-C} Fix $N\geq1$ such that $\operatorname{Loc}(N)$ holds on $\Omega_{N}$ for parameters $(R,T,\sigma)$, and that the operator bound~\eqref{eq:chi-H} holds at generation $2N$ on a $\mathcal{B}_{\leq2N}$-measurable set $\Xi'$. For all $k\geq 1$, for every  $\mathcal{B}_{\leq N}$-measurable function $f\in L_x^\infty L_\omega^r\ell_{n_1,\cdots,n_k}^{2}L_{\kappa_1,\cdots,\kappa_k}^{q'}$ and every $x$ in $\mathbb{S}^2$, 
		\begin{multline}
		\label{eq:CCC}
			\Big
			\|
			\int_{\R^k}\sum_{\substack{n_1,\cdots, n_k\\ \text{distinct}}} \Big(\prod_{i=1}^k \pi_{n_{i}}\widetilde{\psi_{2N}^{\dag}}^{\iota_{i}}(\kappa_{i},x)\Big)f_{n_1,\cdots,n_k}(\kappa_1,\cdots \kappa_k, x)  
			d\kappa_1\cdots d\kappa_k 
			\Big
			\|_{L_\omega^r(\Omega_{N}\cap\Xi')} \\
			\lesssim_{k} 
			\big\|
			\big(\prod_{i=1}^k\langle\kappa_i\rangle^{-\gamma}\big)
			f_{n_1,\cdots,n_k}
			(\kappa_1,\cdots \kappa_k, x)
			\big\|_{L_\omega^r(\Omega_{N}\cap\Xi'; \ell_{n_1,\cdots,n_k}^2L_{\kappa_1,\cdots,\kappa_k}^{q'})}
			\\
			(r^\frac{1}{2}T^{-\gamma+\frac{1}{q'}} RN^{-(\alpha-\frac{1}{2})})^k\,,
		\end{multline}
		where $\iota_{i}$ stands for the possible comlpex conjugaison bar, which is not important here. 
	\end{lemme}
	This type of estimate allows one to control the time-modulated multilinear interactions between $k$-colored Gaussian variables from the same generation $2N$,  possibly interacting with a random function $f$ constructed at a former generation, which is therefore  $\mathcal{B}_{\leq N}$-measurable.
	\begin{remarque}
		Lemma \ref{lem:multi-C} will be repeatedly applied in several parts of the multilinear probabilistic estimates, with $k\leq 3$. Hence, we neglect the dependence on $k$ of the constants.
	\end{remarque}
	\begin{proof}
It turns out that the conjugaison bar plays no role, and we suppose that $\iota_{i}=1$ for all $i\in\{1,\cdots,k\}$.  Recall that for $n\approx 2N$,
	\[
			\pi_{n}\psi_{2N}^{\dag}(t,x) = \chi_{T}(t)\lambda_{n}^{-(\alpha-\frac{1}{2})}e_n^{2N,\dag}(t,x) 
			=  \frac{\lambda_{n}^{-(\alpha-\frac{1}{2})}}{(2n+1)^\frac{1}{2}}\sum_{|k|, |\ell|\leq n} \chi_{T}(t) H_{n ; k,\ell}^{2N,\dag}(t)g_{n,\ell}(\omega)\mathbf{b}_{n,k}(x)\,,
	\]
	where $(H_{n;k,\ell}^{2N,\dag})_{|k|,|\ell|\leq n}$ are the matrix elements in the basis $(\mathbf{b}_{n,k})_{|k|\leq n}$ of the random averaging operator $\mathcal{H}_{n}^{2N,\dag}$ (acting on $E_{n}$). 
	These coefficients are $\mathcal{B}_{\leq N}$-measurable and therefore independent of the Gaussian variables $(g_{n,\ell})_{|\ell|\leq n}$ when $n\approx 2N$. Thus, we deduce from the conditional Wiener chaos estimate and from the non-pairing condition that  almost surely in $\omega$, 
		\begin{align*}
			\mathrm{l.h.s}\ \eqref{eq:CCC}&=\|\int_{\R^k} \sum_{\substack{n_1,\cdots, n_k\\ \text{distinct}}} 
			\prod_{i=1}^k
			\pi_{n_{i}}\widetilde{\psi_{2N}^{\dag}}(\kappa_i,x) f_{n_1,\cdots,n_k}(\kappa_1,\cdots \kappa_k, x)
			d\kappa_1\cdots d\kappa_k \|_{L_\omega^rL_\omega^r|\mathcal{B}_{\leq N}} \\
			&\lesssim r^\frac{k}{2}\|\int_{\R^k}
			\sum_{\substack{n_1,\cdots, n_k\\ \text{distinct}}}
			\prod_{i=1}^k\pi_{n_{i}}
			\widetilde{\psi_{2N}^{\dag}}(\kappa_i,x) f_{n_1,\cdots,n_k}(\kappa_1,\cdots \kappa_k, x)
			d\kappa_1\cdots d\kappa_k
			\|_{L_\omega^rL_\omega^2|\mathcal{B}_{\leq N}}\,.
		\end{align*}
		Expanding the colored Gaussian variables and using independence along with the non-pairing condition, we conclude that almost surely in $\omega$,
		\begin{multline}
		\label{eq:88}
			\|\int_{\R^k}d\kappa_1\cdots d\kappa_k\sum_{\substack{n_1,\cdots, n_k\\ \text{distinct}}}\prod_{i=1}^k\pi_{n_{i}}
			\widetilde{\psi_{2N}^{\dag}}(\kappa_i,x) f_{n_1,\cdots,n_k}(\kappa_1,\cdots \kappa_k, x,\omega) \|_{L_\omega^2|\mathcal{B}_{\leq N}}^2
			\\
			= \int_{\R^{2k}}d\kappa_1\cdots d\kappa_k d\kappa_1'\cdots d\kappa_k'
			\sum_{\sigma\in\mathfrak{S}_k }\sum_{\substack{n_1,\cdots,n_k\\ \mathrm{distinct} }}
			f_{n_{1},\cdots,n_{k}}(\kappa_1,\cdots,\kappa_k,x,\omega)
			\ov{f_{n_{\sigma(1)},\cdots,n_{\sigma(k)}}}(\kappa_1',\cdots,\kappa_k',x,\omega) 
			\\
			\prod_{i=1}^k\lambda_{n_i}^{-2(\alpha-\frac{1}{2})}
			\sum_{|\ell_i|\leq n_i}
			\Big(\sum_{|k_i|\leq n_i}
			\widetilde{(\chi_{T}H_{n_i;k_i,\ell_i}^{2N,\dag})}(\kappa_i)\frac{\mathbf{b}_{n_i,k_i}(x)}{(2n_i+1)^\frac{1}{2}}
			\Big)
			\\
			\Big(\overline{\sum_{|k_i'|\leq n_i}\widetilde{(\chi_{T}H_{n_i;k_i',\ell_i}^{2N,\dag})}(\kappa_i')\frac{\mathbf{b}_{n_i,k_i'}(x)}{(2n+1)^\frac{1}{2}}}\Big)\,,
		\end{multline}
	where $\mathfrak{S}_k$ is the permutation group of $\{1,2,\cdots,k\}$. We apply Cauchy-Schwarz's inequality in the sums over $\ell_i$ to see that 
		\begin{multline*}
			\eqref{eq:88}\lesssim
			\int_{\R^{2k}}d\kappa_1\cdots d\kappa_k d\kappa_1'\cdots d\kappa_k' \\
		\sum_{\sigma\in\mathfrak{S}_k}	\sum_{n_1,\cdots,n_k}|f_{n_1,\cdots,n_k}(\kappa_1,\cdots,\kappa_k,x,\omega)\overline{f_{n_{\sigma(1)},\cdots,n_{\sigma(k)}}(\kappa_1',\cdots,\kappa_k',x,\omega)}| \\
		\times 	\prod_{i=1}^k\lambda_{n_i}^{-2(\alpha-\frac{1}{2})}
			\Big(\sum_{|\ell_i|\leq n_i}\Big|\sum_{|k_i|\leq n_i}\widetilde{(\chi_{T}H_{n_i ; k_i,\ell_i}^{2N,\dag})}(\kappa_i)\frac{\mathbf{b}_{n_i,k_i}(x)}{(2n_i+1)^\frac{1}{2}}\Big|^2\Big)^\frac{1}{2}\\
			\times\Big(\sum_{|\ell_i|\leq n_i}\Big|\sum_{|k_i|\leq n_i}\widetilde{(\chi_{T}H_{n_i ; k_i,\ell_i}^{2N,\dag})}(\kappa_i')\frac{\mathbf{b}_{n_i,k_i}(x)}{(2n_i+1)^\frac{1}{2}}\Big|^2\Big)^\frac{1}{2}\,.
		\end{multline*}
		We conclude from H{\"o}lder's inequality in the $\kappa_i$'s variables that
		\begin{multline*}
			\eqref{eq:88}\lesssim \sum_{n_1,\cdots,n_k}
			\Big\|
			\big(\prod_{i=1}^k\langle\kappa_i\rangle^{-\gamma}\lambda_{n_i}^{-(\alpha-\frac{1}{2})})  f_{n_1,\cdots,n_k}(\kappa_1,\cdots,\kappa_k,x,\omega
			\big)
			\Big\|_{L_{\kappa_1,\cdots,\kappa_k}^{q'}}^2\\
			\prod_{i=1}^k \Big\|\langle\kappa_i\rangle^{\gamma}\Big(\sum_{|\ell_i|\leq n_i}\Big|\sum_{|k_i|\leq n_i}\widetilde{(\chi_{T}H_{n_i ; k_i,\ell_i}^{2N,\dag})}(\kappa_i)\frac{\mathbf{b}_{n_i,k_i}(x)}{(2n_i+1)^\frac{1}{2}}\Big|^2\Big)^\frac{1}{2} \Big\|_{L_{\kappa_i}^q}^2\,.
		\end{multline*}
		Then, the result easily follows from~\eqref{pointwiseproba} and from the induction assumption $\operatorname{Loc}(N)$. Recalling the definition of the \eqref{Hn*norm}-norm, the above quantity is bounded by
		\begin{multline}
		\label{eq:dim}
		\eqref{eq:88}\lesssim
		\sum_{n_1,\cdots,n_k}
		\big
		\|
		(\prod_{i=1}^k\langle\kappa_i\rangle^{-\gamma}\lambda_{n_i}^{-(\alpha-\frac{1}{2})}) f_{n_1,\cdots,n_k}(\kappa_1,\cdots,\kappa_k,x,\omega)
		\big\|_{L_{\kappa_1,\cdots,\kappa_k}^{q'}}^2 
		\\
		\|(\chi_{T}\mathcal{H}_n^{2N,\dag})^\ast\|_{\ell_n^\infty S_n^{q,\gamma,\ast}}^{2k}\,.
		\end{multline}
		We conclude by using Minkowski ($2\geq q'$) and by taking the $L^r$-norm on the set $\Omega_{N}\cap\Xi'$, where we supposed that the operator bound~\eqref{eq:chi-H} at generation $2N$ is true:
		\begin{multline*}
		\mathrm{l.h.s}\ \eqref{eq:CCC}
		=
		r^{\frac{k}{2}}\|\eqref{eq:dim}^{\frac{1}{2}}\|_{L_{\omega}^{r}(\Omega_{N}\cap\Xi')}
				\lesssim_{k} 
		\\
			\big
			\|
			(\prod_{i=1}^k\langle\kappa_i\rangle^{-\gamma})
			 f_{n_1,\cdots,n_k}(\kappa_1,\cdots,\kappa_k,x,\omega
			)
			\big\|_{L_{\omega}^{r}L_{\kappa_1,\cdots,\kappa_k}^{q'}\ell_{n_{1},\cdots,n_{k}}^{2}}
			(r^{\frac{1}{2}}T^{-\gamma+\frac{1}{q'}}RN^{-(\alpha-\frac{1}{2})})^{k}
\,.
		\end{multline*}
		This completes the proof of~ Lemma~\ref{lem:multi-C}. 
	\end{proof}
We are now ready to prove Proposition \ref{prop:CCC}. 

\begin{proof}[Proof of Proposition \ref{prop:CCC}]

	The proof of Proposition~\ref{prop:CCC} boils down to show that there exists $\delta>0$ such that for all $r\geq 2$, 
	\begin{multline}
		\label{eq:CCC-moment}
		\big\|\langle\kappa\rangle^{-b_1}\int_{\R^3}d\vec{\kappa}
		\sum_{(n_1,n_2,n_3)\in\Gamma(n)}
		\widehat{\chi}(\widetilde{\kappa}-\Omega(\vec{n}))
		\pi_{n}\big(\pi_{n_{1}}\widetilde{\psi_{N_{1}}^{\dag}}(\kappa_{1})
		\pi_{n_{2}}\overline{\widetilde{\psi_{N_{2}}^{\dag}}}(\kappa_{2})
		\diamond\pi_{n_{3}}\widetilde{\psi_{N_{3}}^{\dag}}(\kappa_{3})\big) 
		\big\|_{L_\omega^r(\Omega_{N}\cap\Xi' ; \ell_n^2L_{\kappa,x}^2)}
		\\
		\lesssim (r^\frac{1}{2}RT^{-\gamma+\frac{1}{q'}})^{3}
		N^{-(\alpha-\frac{1}{2})+\frac{1}{q}+\delta}
		\,.
	\end{multline}
	We recall the following notations:
\[
\vec{n} = (n_1, n_2, n_3, n), \quad 
\widetilde{\kappa} := \kappa_1 - \kappa_2 + \kappa_3 - \kappa, \quad 
d\vec{\kappa} = d\kappa_1\, d\kappa_2\, d\kappa_3, \quad 
\Omega(\vec{n}) := \lambda_{n_1}^2 - \lambda_{n_2}^2 + \lambda_{n_3}^2 - \lambda_n^2.
\]
Given \(n\), the set \(\Gamma(n)\) represents a constraint to be specified. The Wick product, denoted by \(\diamond\), was defined in~\eqref{defwicksquare}.

\medskip

For clarity, we omit the constraint \(\omega \in \Omega_N \cap \Xi\) from the notation in what follows.

\begin{remarque} \label{rem:deg}
Before beginning the proof, we emphasize that—due to degree considerations for the spherical harmonics—only the terms with \(n \lesssim N_{(1)}\) contribute to the sum over \(n\) (corresponding to the output frequencies).
\end{remarque}

To prove~\eqref{eq:CCC-moment} and control the moments of order \(r\), we exploit different properties of the colored Gaussian random variables, depending on both the pairing configuration \(\Gamma(n)\) and the relative sizes of \(N_1, N_2, N_3\), and \(2N\).
	\medskip

\noindent{\bf Case 1: Pairing $n_{1}=n$} (corresponding to $\mathcal{N}_{(0,1)}$). In the analysis below, $\psi_{N_{2}}^{\dag}$ and $\psi_{N_{3}}^{\dag}$ play a symmetric role so we may suppose that $N_{2}\geq N_{3}$. Only the terms with $N_{(1)}=N_{(2)}=2N$ contribute since the other frequency interactions are absorbed in the ansatz \eqref{eq:psiN}. We have
	\[
	\mathrm{l.h.s}\ \eqref{eq:CCC-moment} =  \Big\|\langle\kappa\rangle^{-b_1}\int_{\R^3}
		\widehat{\chi}(\widetilde{\kappa}-(\lambda_{n_{2}}^{2}-\lambda_{n_{3}}^{2}))
		\mathcal{N}_{(0,1)}(\psi_{N_{1}}^{\dag},\psi_{N_{2}}^{\dag},\psi_{N_{3}}^{\dag})
		d\vec{\kappa}
		\Big\|_{L_{\omega}^{r}(\Omega_{N}\cap\Xi' ;\ell_n^2L_\kappa^2L_x^2)}\,,
	\]
	\medskip
	
	\noi{\bf $\bullet$ Case 1.1: Partial pairing: $\mathcal{N}_{(0,1)[2,3]}$}. In this case we have $N_{1}\leq 2N\,,$ $N_{2}=2N$. Since the resonant function  $\Omega(\vec{n})=\lambda_{n_{2}}^{2}-\lambda_{n_{3}}^{2}$ only depends on $n_{2}\neq n_{3}$, the strategy is to leverage these time-oscillations to sum over $n_{2}$ and $n_{3}$ together, loosing only $\mathcal{O}(N^{\epsilon})$. 

\medskip

	First, observe that we save a square root in the sum over $n$ thanks to the orthogonal projectors $\pi_{n}$. Then, we get rid of the terms $\pi_{n}\widetilde{\psi_{N_{1}}^{\dag}}(\kappa_1)$ by using the bound \eqref{eq:p-psi-}:
	\begin{multline*}
	\mathrm{l.h.s}\ \eqref{eq:CCC-moment}  
	= 
	\|
	\chi\mathcal{N}_{(0,1)[2,3]}(\psi_{N_{1}}^{\dag},\psi_{2N}^{\dag},\psi_{N_{3}}^{\dag})\|_{L_{\omega}^{r}(\Omega_{N}\cap\Xi',X^{0,-b_{1}})} 
	\\
		\lesssim 
		\|\langle\kappa\rangle^{-b_1}\sum_{n_2\neq n_3}
		\int_{\R^3} \widehat{\chi}
		(\widetilde{\kappa}-(\lambda_{n_{2}}^{2}-\lambda_{n_{3}}^{2})))
		\pi_{n}\widetilde{\psi_{N_{1}}^{\dag}}
		(\kappa_1,x)\pi_{n_{2}}
		\overline{\widetilde{\psi_{N_2}^{\dag}}}(\kappa_2,x)\pi_{n_{3}}\widetilde{\psi_{N_{3}}^{\dag}}(\kappa_3,x)d\vec{\kappa} 
		\|_{L_\omega^rL_\kappa^2L_x^2\ell_{n}^2}\\
		\lesssim
		\| \langle\kappa_1\rangle^{\gamma}\pi_{n}\widetilde{\psi_{N_{1}}^{\dag}} \|_{L_\omega^\infty(\Omega_{N}\cap\Xi_{2N}' ; \ell_{n}^{q}  L_{\kappa_1}^qL_x^\infty)}
		\|\langle\kappa\rangle^{-b_1}\langle\kappa_1\rangle^{-\gamma}f(\kappa,\kappa_1,x)  \|_{L_\omega^rL_{\kappa,x}^2\ell_{n}^{\frac{2q}{q-2}}L_{\kappa_1}^{q'}}
		 \\
		 \lesssim
		 \| \langle\kappa_1\rangle^{\gamma}\pi_{n}\widetilde{\psi_{N_{1}}^{\dag}} \|_{L_\omega^\infty(\Omega_{N}\cap\Xi_{2N}' ; \ell_{n}^{q}  L_{\kappa_1}^qL_x^\infty)}
		N_{1}^{\frac{q-2}{2q}}\|\langle\kappa\rangle^{-b_1}\langle\kappa_1\rangle^{-\gamma}f(\kappa,\kappa_1,x)  \|_{L_\omega^rL_{\kappa,x}^2L_{\kappa_1}^{q'}} \,,
	\end{multline*}
	with 
	\[
	f(\kappa_1,\kappa,x) := \int_{\R^2}
	\sum_{n_2\neq n_3}
	\widehat{\chi}(\widetilde{\kappa}-(\lambda_{n_{2}}^{2}-\lambda_{n_{3}}^{2}))
	\pi_{n_{2}}\overline{\widetilde{\psi_{N_2}^{\dag}}}(\kappa_2,x)
	\pi_{n_{3}}\widetilde{\psi_{N_{3}}^{\dag}}(\kappa_3,x)
	d\kappa_2d\kappa_3\,.
	\]
	In the last line we used that the sum over $n$ runs over $\mathcal{O}(N_{1})$-terms (because we have the condition $n=n_{1}$). We obtain 
	\[
	\mathrm{l.h.s}\ \eqref{eq:CCC-moment}
	\lesssim  
		N_{1}^{\frac{1}{2}-\frac{1}{q}}\|
		\psi_{N_{1}}^{\dag}
		\|_{L_{\omega}^{\infty}(\Omega_{N}\cap\Xi' ; X_{q,q,\infty}^{0,\gamma})}
		\|
		\langle\kappa\rangle^{-b_1}\langle\kappa_1\rangle^{-\gamma}f(\kappa,\kappa_1,x)
		\|_{L_\omega^rL_{\kappa,x}^2L_{\kappa_1}^{q'}}\,.
	\]
	According to the assumption that the bound \eqref{eq:p-psi-} is satisfied for all $\omega\in\Xi'$, we get
	\[
	\mathrm{l.h.s}\ \eqref{eq:CCC-moment}  
	\lesssim 
	RT^{-\gamma+\frac{1}{q'}}
	N_{1}^{-(\alpha-\frac{1}{2})+\frac{1}{2}+\frac{1}{q}+\delta}
	\|
	\langle\kappa\rangle^{-b_1}\langle\kappa_1\rangle^{-\gamma}f(\kappa,\kappa_1,x)
	\|_{L_\omega^rL_{\kappa,x}^2L_{\kappa_1}^{q'}}\,.
	\]
	We are now in a good position to apply Lemma \ref{lem:multi-C}. 
	\medskip
	
	\noindent $\bullet$ {\bf Case 1.1 a):} Suppose first that $N_{2}>N_{3}$, in which case we apply Lemma~\ref{lem:multi-C} twice. We represent
	\[
	f(\kappa,\kappa_1,x) 
	=: \int_{\R}
	\sum_{n_2\approx N_2}
	\pi_{n_{2}}\overline{\widetilde{\psi_{N_2}^{\dag}}}(\kappa_2)
	f_{n_2}(\kappa,\kappa_1,\kappa_2,x)
	d\kappa_2\,,
	\] 
	with 
	\[
	f_{n_2}(\kappa,\kappa_1,\kappa_2,x) 
	:=\int_\R
	\sum_{n_3\approx N_3}
	\widehat{\chi}(\widetilde{\kappa}-(\lambda_{n_{2}}^{2}-\lambda_{n_{3}}^{2}))
	\pi_{n_{3}}
	\widetilde{\psi_{N_{3}}^{\dag}}(\kappa_{3})
	d\kappa_3\,.
	\]
	To control $f$ we first use Lemma~\ref{lem:multi-C} applied at generation $N_2$ with $f_{n_2}$ and $k=1$: using Minkowski's (which is possible since $r\geq 2\geq q'$), we get
	\begin{multline*}
		\|
		\langle\kappa\rangle^{-b_1}
		\langle\kappa_1\rangle^{-\gamma}
		f(\kappa,\kappa_1,x)  
		\|_{L_\omega^rL_{\kappa,x}^2L_{\kappa_1}^{q'}}
		\leq 
		\|
		\langle\kappa\rangle^{-b_1}\langle\kappa_1\rangle^{-\gamma}f(\kappa,\kappa_1,x)  
		\|_{L_\omega^rL_{\kappa,x}^2L_{\kappa_1}^{q'}L_\omega^r|\mathcal{B}_{\leq N_3}} \\
		\lesssim 
		r^\frac{1}{2}RT^{-\gamma+\frac{1}{q'}}N_2^{-(\alpha-\frac{1}{2})}
		\|
		\langle\kappa\rangle^{-b_1}\langle\kappa_1\rangle^{-\gamma}\langle\kappa_2\rangle^{-\gamma}f_{n_2} 
		\|_{L_\omega^rL_{\kappa,x}^2L_{\kappa_1}^{q'}\ell_{n_2}^2L_{\kappa_2}^{q'}  }\,.
	\end{multline*}
	Subsequently, we write
	\[
	f_{n_2}(\kappa,\kappa_1,\kappa_2) = \int_\R\sum_{n_3\approx N_3}
	\pi_{n_{3}}\widetilde{\psi_{N_{3}}^{\dag}}(\kappa_3)f_{n_2,n_3}(\kappa,\kappa_1,\kappa_2,\kappa_3,x)
	d\kappa_3\,,
	\]
	with
	\[
	f_{n_2,n_3}(\kappa,\kappa_1,\kappa_2,\kappa_3) 
	= \widehat{\chi}(\widetilde{\kappa}-(\lambda_{n_{2}}^{2}-\lambda_{n_{3}}^{2}))\,.
	\]
	We apply Lemma~\ref{lem:multi-C} at generation $N_3$ with $f_{n_2,n_3}$ and $k=1$ and Minkowski ($2\geq q'$) to conclude that 
	\begin{multline*}
		\mathrm{l.h.s}\ \eqref{eq:CCC-moment} 
		\lesssim 
		r(RT^{-\gamma+\frac{1}{q'}})^{3}
		N_{1}^{-(\alpha-\frac{1}{2})+\frac{1}{2}+\frac{1}{q}+\delta}
		(N_2N_{3})^{-(\alpha-\frac{1}{2})}
		 \\
		\| \mathbf{1}_{n_{2}\neq n_{3}}
		\langle\kappa\rangle^{-b_1}
(\langle\kappa_1\rangle\langle\kappa_2\rangle\langle\kappa_3\rangle)^{-\gamma}\widehat{\chi}(\widetilde{\kappa}-
	(\lambda_{n_{2}}^{2}-\lambda_{n_{3}}^{2}))
		\|
		_{L_{\kappa}^2
		L_{\kappa_1,\kappa_2,\kappa_3}^{q'}
		\ell_{n_2,n_3}^2}\,.
	\end{multline*}
	Since $n_{2}\neq n_{3}$ in the sums, we can conclude from the divisor bound 
\begin{equation}
\label{eq:small-div}
		\sup_{l}\#
		\big\{
		n_2\neq n_3: n_j\sim N_j, \lambda_{n_2}^2-\lambda_{n_3}^2=l 
		\big\}
		\lesssim_{\epsilon} N_3^{\epsilon}
\end{equation}
		that we can sum over $n_2$ and $n_3$ together. We obtain that for all $\epsilon>0$, 
	\[
		\mathrm{l.h.s}\ \eqref{eq:CCC-moment} 
		\lesssim_{\epsilon} 
		r(RT^{-\gamma+\frac{1}{q'}})^{3}
		N_{1}^{-(\alpha-\frac{1}{2})+\frac{1}{2}+\frac{1}{q}+\delta}
		(N_2N_{3})^{-(\alpha-\frac{1}{2})}N_{3}^{\epsilon}
		\,.
	\]
	Recall that $\alpha\geq1$. In the case when $N_{(1)}=N_{(2)}=2N$, we get the bound
		\[
		\mathrm{l.h.s}\ \eqref{eq:CCC-moment} 
		\lesssim_{\epsilon}
		r(RT^{-\gamma+\frac{1}{q'}})^{3}
		N^{-2(\alpha-\frac{1}{2})+\frac{1}{2}+\frac{1}{q}+\delta}
		N_{(3)}^{-(\alpha-\frac{1}{2})+\epsilon}
		\,.
		\]
When $N_2=2N$, we get
		\[
		\mathrm{l.h.s}\ \eqref{eq:CCC-moment} 
		\lesssim_{\epsilon}
		r(RT^{-\gamma+\frac{1}{q'}})^3
		N^{-(\alpha-\frac{1}{2})+\frac{1}{q}+\delta}
		N_{1}^{-(\alpha-1)}N_{3}^{-(\alpha-\frac{1}{2})+\epsilon}
		\,.
		\]
This is conclusive in both cases (since \(-2(\alpha-\frac{1}{2})+\frac{1}{2} = -(\alpha-\frac{1}{2})-(\alpha-1) \leq -(\alpha-\frac{1}{2}) \)).
		\medskip
	
		\noindent
$\bullet$ \textbf{Case 1.1 b): \(N_{2} = N_{3}\).} In this case, we only need to consider the configuration \(N_{2} = N_{3} = 2N\), since—by assumption—interactions of the form \(\mathcal{N}_{(0,1)}\) require either \(N_{(2)} = N_{(1)} = 2N\) or \(N_{2} = 2N\). Applying Lemma~\ref{lem:multi-C} once, with \(k = 2\), we obtain:
	\[
	\mathrm{l.h.s}\ \eqref{eq:CCC-moment}
	\lesssim
	(r^{\frac{1}{2}}RT^{-\gamma+\frac{1}{q'}})^3 
	N_1^{-(\alpha-\frac{1}{2})+\frac{1}{2}+\frac{1}{q}+\delta}
	N_2^{-2(\alpha-\frac{1}{2})}
	\lesssim
	(r^{\frac{1}{2}}RT^{-\gamma+\frac{1}{q'}})^3
	N^{-3(\alpha-\frac{1}{2})+\frac{1}{2}+\frac{1}{q}+\delta}\,,
	\]
which is also conclusive.
\medskip

	\noindent
$\bullet$ \textbf{Case 1.2 — Full pairing: \(\mathcal{N}_{(0,1)(2,3)}\).} In this resonant case, we need to exploit—at least when \(n_1 \neq n_2\)—the Wick renormalization, particularly Lemma~\ref{lem:wick}. Moreover, we cannot expect any gain from the modulation, as there is no time oscillation. Hence, we do not consider the Fourier transform in time.

\medskip

For the interaction \(\mathcal{N}_{(0,1)}\), we assumed \(N_{(2)} = 2N\) or \(N_2 = 2N\). Since here \(N_2 = N_3\), and thus \(N_2 \geq N_{(2)}\), it follows necessarily that \(N_2 = N_3 = 2N\). Therefore, we will exploit both the Wick ordering and the gain in \(N_2\) provided by Lemma~\ref{lem:wick}. We then obtain:
	\begin{align*}
	\mathrm{l.h.s}\ \eqref{eq:CCC-moment} 
	&=
		\|\chi\sum_{n\approx N_{1}}\pi_{n}(\pi_{n}\psi_{N_{1}}^{\dag}(\psi_{2N}^{\dag}\hexagonal{=}\overline{\psi_{2N}^{\dag}})) \|_{L_\omega^r(\Omega_{N}\cap\Xi' ;X^{0,-b_1})}\\
		&\lesssim 
		\|\chi\sum_{n\approx N_{1}}\pi_{n}(\pi_{n}\psi_{N_{1}}^{\dag}(\psi_{2N}^{\dag}\hexagonal{=}\overline{\psi_{2N}^{\dag}}))
		\|_{L_\omega^r(\Omega_{N}\cap\Xi';X^{0,0})}
		\\
		&\lesssim 
		\|
		\chi \pi_{n}\psi_{N_{1}}^{\dag}(\psi_{2N}^{\dag}\hexagonal{=}\overline{\psi_{2N}^{\dag}})
		\|_{L_{\omega}^{r}(\Omega_{N}\cap\Xi';\ell_{n}^{2}L_{t}^{2}L_{x}^{2})}\,.
	\end{align*}
	Using the assumption that \eqref{eq:p-psi} is true for all $\omega\in\Xi'$, namely
	\[
		\|\psi_{2N}^{\dag}\hexagonal{=}\overline{\psi_{2N}^{\dag}}\|_{L_{t}^{q}L_{x}^{\infty}} \leq CR^{2}
		N^{-2(\alpha-\frac{1}{2})+\frac{1}{2}+\frac{1}{q}+\delta}\,,
	\]
	we obtain
	\begin{align*}
		\mathrm{l.h.s}\ \eqref{eq:CCC-moment} 
		&\lesssim 
		\big\|
		 \|
		 \chi \pi_{n}\psi_{N_{1}}^{\dag} 
		 \|_{L_{t}^{\frac{2q}{q-2}}\ell_{n}^{2}L_{x}^{2}}
		 \|\psi_{2N}^{\dag}\hexagonal{=}\overline{\psi_{2N}^{\dag}}\|_{L_{t}^{q}L_{x}^{\infty}}
		 \big
		 \|_{L_{\omega}^{r}(\Omega_{N}\cap\Xi_{2N}')} 
		 \\
		&\lesssim 
		\|\pi_{n}\psi_{N_{1}}^{\dag}
		\|
		_{L_{\omega}^{r}L_{t}^{\frac{2q}{q-2}}
		\ell_{n}^{2}L_{x}^{2}} R^{2}
		N^{-2(\alpha-\frac{1}{2})+\frac{1}{2}+\frac{1}{q}+\delta}\,. 
	\end{align*}
	Using that $\pi_{n}\psi_{N_{1}}^{\dag}(t,x)=\lambda_{n}^{-(\alpha-\frac{1}{2})}\mathcal{H}_{n}^{N_{1},\dag}(t)(e_{n}^{\omega})(x)$, and that $\mathcal{H}_{n}^{N_{1},\dag}(t)$ is unitary on~$E_{n}$, we obtain that for all $t$
	\begin{equation}
	\label{eq:psiN1}
	\|
	\pi_{n}\psi_{N_{1}}^{\dag}(t)\|_{L_{\omega}^{r}\ell_{n}^{2}L_{x}^{2}} 
	\lesssim 
	\big(
	\sum_{n\approx N_{1}}
	\lambda_{n}^{-2(\alpha-\frac{1}{2})}
	\|e_{n}^{\omega}
	\|_{L_{\omega}^{r}L_{x}^{2}}
	\big)^{\frac{1}{2}} 
	\lesssim r^{\frac{1}{2}}RN_{1}^{1-\alpha}\,,
	\end{equation}
	which is acceptable $\alpha\geq1$. Hence, by applying Minkowski's inequality to invert the integral in \(\omega\) and in\(t\) --- which is possible since $r\geq \frac{2q}{q-2}$, we obtain that
	\[
	\mathrm{l.h.s}\ \eqref{eq:CCC-moment}  \lesssim 
	R^{3}
	N_{1}^{1-\alpha}
	N^{-2(\alpha-\frac{1}{2})+\frac{1}{2}+\frac{1}{q}+2\delta} \lesssim R^{3}N^{-2(\alpha-\frac{1}{2})+\frac{1}{2}+\frac{1}{q}+\delta}\,.
	\]
	This is also conclusive.  
	\medskip

\noindent\textbf{Case 2 — No pairing: \(n_1, n_2, n_3, n\) are distinct.} In this case, we have \(N_{(1)} = 2N\), and without loss of generality, we assume \(N_1 = N_{(1)} \geq N_2 \geq N_3\), since the three terms are symmetric in the analysis of this configuration.

We focus on the more technical situation where the three frequencies belong to distinct generations, namely \(2N = N_1 > N_2 > N_3\). In this case, we apply Lemma~\ref{lem:multi-C} with \(k = 1\) three times. 

The other cases can be handled in a similar way: for instance, if \(N_1 = N_2 = 2N\), we first apply Lemma~\ref{lem:multi-C} at generation \(2N\) with \(k = 2\), and then at generation \(N_3\) with \(k = 1\).\medskip

In Case 2, when $2N=N_{1}>N_{2}>N_{3}$, we have 
		\[
		\mathrm{l.h.s}\ \eqref{eq:CCC-moment} = 
		\|
		\langle\kappa\rangle^{-b_1}
		\sum_{n_{1}\approx 2N}
		\int_\R 
		\pi_{n_{1}}\widetilde{\psi_{2N}^{\dag}}(\kappa_1,x)f_{n_1}(\kappa_1,n,\kappa,x) 
		d\kappa_1 
		\|_{L_\omega^r \ell_n^2L_\kappa^2L_x^2}\,,
		\]
		where, given $n_1\approx 2N$,
		\[
		f_{n_1}(\kappa_1,n,\kappa,x):=\sum_{\substack{n_2\approx N_2, n_3\approx N_3\\n_{2}\neq n_{3}}}
		\int_{\R^2} 
		\widehat{\chi}(\widetilde{\kappa}-\Omega(\vec{n}))
		\pi_{n_{2}}\overline{\widetilde{\psi_{N_{2}}^{\dag}}}(\kappa_2,x)
		\pi_{n_{3}}\widetilde{\psi_{N_{3}}^{\dag}}(\kappa_3,x)
		d\kappa_2d\kappa_3\,,
		\]
		is a $\mathcal{B}_{\leq N}$-measurable function. We first apply the conditional Minkowski's inequality ($r\geq2$) and Lemma~\ref{lem:multi-C} at generation $N_1$ with $f_{n_1}$ and $k=1$ to see that 
		\begin{align*}
			\mathrm{l.h.s}\,\eqref{eq:CCC} 
			&\lesssim 
			\|\langle\kappa\rangle^{-b_1}
			\int_\R
			\sum_{n_{1}\approx 2N}
			\pi_{n_{1}}\widetilde{\psi_{N_{1}}^{\dag}}(\kappa_1,x)f_{n_1}(\kappa_1,n,\kappa,x)
			d\kappa_1 
			\|_{L_\omega^r\ell_n^2L_\kappa^2L_x^2}\\
			&\lesssim r^\frac{1}{2}RT^{-\gamma+\frac{1}{q'}}N^{-(\alpha-\frac{1}{2})}
			\|
			\langle\kappa\rangle^{-b_1}\langle\kappa_1\rangle^{-\gamma}f_{n_1} 
			\|_{L_\omega^r\ell_n^2L_\kappa^2L_x^2\ell_{n_1}^2L_{\kappa_1}^{q'}}\\
			&\lesssim r^{\frac{1}{2}}RT^{-\gamma+\frac{1}{q'}}N^{-(\alpha-\frac{1}{2})}
				\|
			\langle\kappa\rangle^{-b_1}\langle\kappa_1\rangle^{-\gamma}f_{n_1} 
			\|_{L_\omega^r
			L_\kappa^2L_x^2
			L_{\kappa_1}^{q'}
			\ell_{n,n_1}^2
			}
			\,,
		\end{align*}
	where we used Minkowski's inequality in the last step.

\medskip

Subsequently, for fixed \(n_1\), we write
		\[
		f_{n_1}(\kappa_1,\kappa,n,x) 
		:=\int_{\R} \sum_{n_2\approx N_2} \pi_{n_{2}}
		\overline{\widetilde{\psi_{N_{2}}^{\dag}}}(\kappa_2,x)f_{n_1,n_2}(\kappa_1,\kappa_2,n,\kappa,x)
		d\kappa_2\,,
		\]
		with
		\[
		f_{n_1,n_2}(\kappa_1,\kappa_2,n,\kappa,x) := \sum_{n_3\approx N_3}
		\int_\R \pi_{n_{3}}\widetilde{\psi_{N_{3}}^{\dag}}(\kappa_3,x)
		\widehat{\chi}(\widetilde{\kappa}-\Omega(\vec{n}))
		d\kappa_3\,.
		\]
		We apply Lemma~\ref{lem:multi-C} at generation $N_2$ with $f_{n_1,n_2}$ (which is a $\mathcal{B}_{\leq N_3}$-measurable function, with $N_{3}\leq \frac{N_{2}}{2}$) and $k=1$ together with the Minkowski's inequality to deduce that 
		\begin{multline*}
			\|
			\langle\kappa\rangle^{-b_{1}}\langle\kappa_1\rangle^{-\gamma}f_{n_1} 
			\|
			_{L_\omega^r(\Omega_N\cap\Xi';\ell_{n,n_1}^2L_\kappa^2L_x^2L_{\kappa_1}^{q'})}
			\\
			\lesssim 
			r^\frac{1}{2}RT^{-\gamma+\frac{1}{q'}} 
			N_{2}^{-(\alpha-\frac{1}{2})}
			\|
			\langle\kappa\rangle^{-b_{1}}
			 \prod_{i=1}^2\langle\kappa_i\rangle^{-\gamma} f_{n_1,n_2} 
			\|
			 _{L_{\omega}^{r}(\Omega_{N}\cap\Xi';L_x^2L_{\kappa_1,\kappa_2}^{q'}L_{\kappa}^2\ell_{n,n_1,n_2}^2)}
			\,.
		\end{multline*}
		Hence, 
\[
	\mathrm{l.h.s}\, \eqref{eq:CCC-moment}			
	\lesssim 
	r(RT^{-\gamma+\frac{1}{q'}})^2
	(NN_{2})^{-(\alpha-\frac{1}{2})}
	\|\langle\kappa\rangle^{-b_1}\prod_{i=1}^2\langle\kappa_{n_i}\rangle^{-\gamma}
	f_{n_1,n_2} 
	\|_{L_\omega^r(\Omega_N\cap\Xi' ; L_x^2L_{\kappa_1,\kappa_2}^{q'}L_{\kappa}^2\ell_{n,n_1,n_2}^2)}\,.
		\]
		Then, we apply Lemma~\ref{lem:multi-C} at generation $N_3$ with $f_{n_1,n_2,n_3}$ and $k=1$, where  
		\[
		f_{n_1,n_2,n_3}(\kappa_1,\kappa_2,\kappa_3,n,\kappa) := \widehat{\chi}(\widetilde{\kappa}-\Omega(\vec{n}))
		= \widehat{\chi}(\widetilde{\kappa}-(\lambda_{n}^{2}-\lambda_{n_{1}}^{2}+\lambda_{n_{2}}^{2}-\lambda_{n_{3}}^{2}))\,.
		\]
		We obtain
		\[
		\mathrm{l.h.s}\, \eqref{eq:CCC-moment}	
		\lesssim 
		r^\frac{3}{2}(RT^{\gamma-\frac{1}{q'}})^3
		(NN_{2}N_{3})^{-(\alpha-\frac{1}{2})}
		\|\langle\kappa\rangle^{-b_1} 
		\prod_{i=1}^3\langle\kappa_i\rangle^{-\gamma}
		\widehat{\chi}(\widetilde{\kappa}-\Omega(\vec{n})) \|_{L_\kappa^2L_{\kappa_1,\kappa_2,\kappa_3}^{q'}\ell_{n,n_1,n_2,n_3}^2 }\,.
		\]
		Then, we can use the divisor bound as in \eqref{eq:small-div} together with Remark \ref{rem:deg} and sum over $n_{1}$ and $n$ together for fixed $\vec{\kappa}$ and $n_{2}, n_{3}$: for all $\epsilon>0$,
	\begin{multline*}
		\mathrm{l.h.s}\, \eqref{eq:CCC-moment}
		\lesssim 
		(r^\frac{1}{2}RT^{\gamma-\frac{1}{q'}})^3(N_1N_{2}N_{3})^{-(\alpha-\frac{1}{2})}(N_{2}N_{3})^{\frac{1}{2}} \\
		\Big\|
		\langle \kappa\rangle^{-b_{1}}\prod_{i=1}^{3}\langle \kappa_{i}\rangle^{-\gamma}\kappa^{-b_{1}}
		\|
		\widehat{\chi}(\widetilde{\kappa}-\Omega(\vec{n}))
		\|_{L_{\kappa, \kappa_{1}, \kappa_{2}, \kappa_{3}}^{\infty}\ell_{n_{2},n_{3}}^{\infty}\ell_{n,n_{1}}^{2}}
		\Big\|_{L_{\kappa}^{2}L_{\kappa_{1},\kappa_{2},\kappa_{3}}^{q}}
		\\
		\lesssim_{\epsilon} r^{\frac{3}{2}}(RT^{\gamma-\frac{1}{q'}})^{3}N^{-(\alpha-\frac{1}{2})+\epsilon}(N_{2}N_{3})^{1-\alpha}\,,
	\end{multline*}
which is conclusive at least when $\alpha\geq1$. 

	\medskip

\noindent{\bf Case 3: Partial pairing with wick-ordering: $\mathcal{N}_{[0,1](23)}$.} 
We distinguish between two cases, whether $N_{1}\leq N_{2}=N_{3}=2N$, or $N_{2}=N_{3}\leq N$ and $N_{1}=2N$. 

	\medskip

	\noi
	$\bullet$ {\bf Case 3.1:} $N_{1}\leq N_2=N_{3}=2N$. In this case, it suffices to exploit the gain provided by the Wick ordering. For this reason, we do not need to use the Fourier transform in time:
	\begin{align*}
	\mathrm{l.h.s}\ \eqref{eq:CCC-moment}  &= 
		\|
		\chi \sum_{n}\sum_{\substack{n_{1}\approx N_{1}\\n_{1}\neq n}} \pi_{n}
		(\pi_{n_{1}}\psi_{N_{1}}^{\dag}
		(\psi_{2N}^{\dag}\hexagonal{=}\ \psi_{2N}^{\dag}))
		\|_{L_{\omega}^{r}(\Omega_{N}\cap\Xi';X^{0,-b_{1}})} \\
		&\lesssim
		\|
		\chi \sum_{n}\sum_{\substack{n_{1}\approx N_{1}\\n_{1}\neq n}} 
		\pi_{n}
		(\pi_{n_{1}}\psi_{N_{1}}^{\dag}
		(\psi_{2N}^{\dag}\hexagonal{=}\ \psi_{2N}^{\dag}))
		\|_{L_{\omega}^{r}(\Omega_{N}\cap\Xi';X^{0,0})}\,.
	\end{align*}
	Observe that 
		\begin{multline*}
		\|\chi \sum_{n}\sum_{\substack{n_{1}\approx N_{1}\\n_{1}\neq n}} \pi_{n}
		(\pi_{n_{1}}\psi_{N_{1}}^{\dag}
		(\psi_{2N}^{\dag}\hexagonal{=}\ \psi_{2N}^{\dag}))
		\|_{L_{\omega}^{r}(\Omega_{N}\cap\Xi';L_{t,x}^{2})} \\
		\leq 
		\|
		\chi \sum_{n}\sum_{n_{1}\approx N_{1}} \pi_{n}
		(\pi_{n_{1}}\psi_{N_{1}}^{\dag}
		(\psi_{2N}^{\dag}\hexagonal{=}\ \psi_{2N}^{\dag}))
		\|_{L_{\omega}^{r}(\Omega_{N}\cap\Xi';L_{t,x}^{2})}\\
		+ 
		\|
		\chi \sum_{n}\pi_{n}
		(\pi_{n}\psi_{N_{1}}^{\dag}
		(\psi_{2N}^{\dag}\hexagonal{=}\ \psi_{2N}^{\dag}))
		\|_{L_{\omega}^{r}(\Omega_{N}\cap\Xi';L_{t,x}^{2})}\,.
		\end{multline*}
	We have already controlled the second term in Case 1.2 (in the present case when $N_{2}=2N $), and we deduce from this bound that
		\[
		\mathrm{l.h.s}\ \eqref{eq:CCC-moment} 
		\lesssim
		\|
		\chi 
		\psi_{N_{1}}^{\dag}(\psi_{2N}^{\dag}\hexagonal{=}\ \psi_{2N}^{\dag})
		\|_{L_{\omega}^{r}(\Omega_{N}\cap\Xi';L_{t,x}^{2})}	
		+ 
		R^{3}N^{-2(\alpha-\frac{1}{2})+\frac{1}{2}+\frac{1}{q}+\delta}\,.
		\]
	The first term on the right-hand side can be estimated as follows:
	\[
		\|
		\chi 
		\psi_{N_{1}}^{\dag}(\psi_{2N}^{\dag}\hexagonal{=}\ \psi_{2N}^{\dag})
		\|_{L_{\omega}^{r}(\Omega_{N}\cap\Xi';L_{t,x}^{2})}	
		\leq
		\|\psi_{N_{1}}^{\dag}\|_{L_{t}^{\frac{2q}{q-2}}L_{x}^{2}}
		\|\psi_{2N}^{\dag}\hexagonal{=}\ \psi_{2N}^{\dag}\|_{L_{t}^{q}L_{x}^{\infty}}\,.
	\]
	We have from Minkowski's inequality $(r\geq\frac{2q}{q-2})$ and from \eqref{eq:psiN1} that 
	\[
	\|\psi_{N_{1}}^{\dag}\|_{L_{\omega}^r(L_{t}^{\frac{2q}{q-2}}L_{x}^{2})} 
	\lesssim r^{\frac{1}{2}}RN_{1}^{1-\alpha}\,,
	\]
	and, since \(\omega\in\Xi'\) given by Lemma~\ref{lem:wick}, we have
	\[
	\|\psi_{2N}^{\dag}\hexagonal{=}\ \psi_{2N}^{\dag}\|_{L_{t}^{q}L_{x}^{\infty}} 
	\lesssim R^{2}
	N^{-2(\alpha-\frac{1}{2})+\frac{1}{2}+\frac{1}{q}+\delta}\,.
	\]
	This proves that in Case 3.1, the same estimate as in Case 1.2 holds:
	\[
	\mathrm{l.h.s}\ \eqref{eq:CCC-moment}\lesssim R^{2}
	N^{-2(\alpha-\frac{1}{2})+\frac{1}{2}+\frac{1}{q}+\delta}\,, 
	\]
	for all $\alpha\geq1$. 
	
	\medskip

	\noi
	$\bullet$ {\bf Case 3.2: $N_2=N_{3}\leq N$, $N_{1}=2N$}. In this case, it suffices to gain over $N_{1}$ by using the time-modulation, and the non-pairing condition $n\neq n_{1}$. 
	\medskip
	
	We do not exploit the saving from the Wick-ordering (for instance, when $N_{2}\ll N_{1}$ this saving is of no use). In Case 3.2, we have
	\begin{multline*}
	\mathrm{l.h.s}\ \eqref{eq:CCC-moment} 
		\lesssim 
		\big\|
			\langle\kappa\rangle^{-b_{1}}\sum_{n_{1}\approx 2N}
			\int_{\R^{3}}
			\widehat{\chi}(\widetilde{\kappa}-(\lambda_{n}^{2}-\lambda_{n_{1}}^{2}))
			\pi_{n}
			\big(\pi_{n_{1}}\widetilde{\psi_{N_{1}}^{\dag}}(\kappa_{1},x)
			\\
			\overline{\widetilde{\psi_{N_{2}}^{\dag}}}(\kappa_{2},x)\hexagonal{=}\ 
			\widetilde{\psi_{N_{2}}^{\dag}}(\kappa_{3},x)
			\big)
			d\vec{\kappa}
		\big\|_{L_{\omega}^{r}\ell_{n}^{2}L_{\kappa}^{2}L_{x}^{2}}
		\,.
	\end{multline*}
	Since  $N_{2}\leq N$, the random function $\psi_{N_{2}}^{\dag}$ is $\mathcal{B}_{\leq N}$-measurable, we can readily apply Lemma \ref{lem:multi-C}  at generation $2N$ with $k=1$ and
	\[
	f_{n_{1}}(\kappa,\kappa_{1},x,n) 
		:=
		\int_{\R^{2}}
		\widehat{\chi}(\widetilde{\kappa}-(\lambda_{n}^{2}-\lambda_{n_{1}}^{2}))
		\overline{\widetilde{\psi_{N_{2}}^{\dag}}}(\kappa_{2},x)
		\hexagonal{=}\ 
		\widetilde{\psi_{N_{2}}^{\dag}}(\kappa_{3},x)
	d\kappa_{2}d\kappa_{3}\,.
	\]
	We obtain
	\[
	\mathrm{l.h.s}\ \eqref{eq:CCC-moment}
		\lesssim r^{\frac{1}{2}}RT^{-\gamma+\frac{1}{q'}}N^{-(\alpha-\frac{1}{2})}
		\big\|
		\langle \kappa\rangle^{-b_{1}}\langle \kappa_{1}\rangle^{-\gamma}f_{n_{1}}(\kappa,\kappa_{1},x,n)
		\big\|_{L_{\omega}^{r}L_{\kappa}^{2}L_{x}^{2}\ell_{n,n_1}^2L_{\kappa_{1}}^{q'}}\,.		
	\]
	It follows from the Minkowski's inequality ($q'<2$) and the Hölder's inequality that 
	\begin{multline*}
	\mathrm{l.h.s}\ \eqref{eq:CCC-moment}
		\lesssim r^{\frac{1}{2}}RT^{-\gamma+\frac{1}{q'}}N^{-(\alpha-\frac{1}{2})}\\
		\|
		\langle\kappa\rangle^{-b_{1}}(\langle\kappa_{1}\rangle\langle\kappa_{2}\rangle\langle\kappa_{3}\rangle)^{-\gamma}\widehat{\chi}(\widetilde{\kappa}-(\lambda_{n}^{2}-\lambda_{n_{1}}^{2}))
		\|_{L_{\kappa}^{2}L_{\kappa_{1},\kappa_{2},\kappa_{3}}^{q'}\ell_{n,n_{1}}^{2}}
		N_{2}^{1-\frac{2}{q}}\|\psi_{N_{2}}^{\dag}\|_{X_{q,q,\infty}^{0,\gamma}}^{2}\,.
	\end{multline*}
	Using the bound \eqref{eq:p-psi-} for $\psi_{N_{2}}^{\dag}$, which holds on $\Omega_N$, we get that for all $\omega\in \Omega_N\cap \Xi'$,
	\begin{multline*}
	\mathrm{l.h.s}\ \eqref{eq:CCC-moment}\ \lesssim 
		r^{\frac{1}{2}}(RT^{-\gamma+\frac{1}{q'}})^{3} 
		N^{-(\alpha-\frac{1}{2})}
		N_{2}^{1-2(\alpha-\frac{1}{2})+2\delta}
		\\
		\|\langle \kappa\rangle^{-b_{1}}(\langle\kappa_{1}\rangle\langle\kappa_{2}\rangle\langle\kappa_{3}\rangle)^{-\gamma}\widehat{\chi}(\widetilde{\kappa}-(\lambda_{n}^{2}-\lambda_{n_{1}}^{2}))\|
		_{L_{\kappa}^{2}L_{\kappa_{1},\kappa_{2},\kappa_{3}}^{q'}\ell_{n,n_{1}}^{2}}.
	\end{multline*}
	We conclude from the divisor bound and Remark \ref{rem:deg}, which allows to sum $n, n_{1}\lesssim N$ together. For all $\epsilon>0$, 
	\[
	\mathrm{l.h.s}\ \eqref{eq:CCC-moment}
	\lesssim_{\epsilon}
	r^{\frac{1}{2}}(RT^{-\gamma+\frac{1}{q'}})^{3}N^{-(\alpha-\frac{1}{2})+\epsilon+2\delta}
	N_{2}^{1-2(\alpha-\frac{1}{2})}\,,
	\]
	which is conclusive when $\alpha\geq1$.
	This completes the proof of Proposition \ref{prop:CCC}. 
\end{proof}


\section{Deterministic trilinear estimates}\label{sec:det}
In this section, we prove the trilinear estimates involving at least one term of type~($\mathrm{D}$), which are collected in Proposition~\ref{multilinear:det-2}. This will complete the proof of the main result. In the following, the functions $v_{N_j}^{\dag}$ denote either $\psi_{N_j}^{\dag}$ or $z_{N_j}^{\dag}$. We divide the various cases into the following groups.
\medskip

\begin{itemize}
	\item \textbf{Group I:} There is at least one term of type (D), say $z_{M}^{\dag}$, with $M \geq \frac{1}{100}N$.
	\medskip
	 
	\item \textbf{Group II:} This group includes interactions that are not in Group I and where the type (C) term $\psi_{2N}^{\dag}$ appears in the first or third position. These include interactions of the form:
	\[
	\mathcal{N}_{(0,1)}(\psi_{2N}^{\dag}, \psi_{2N}^{\dag}, z^{\dag}_{N_3}), \quad 
	\mathcal{N}_{(0,1)}(\psi_{2N}^{\dag}, z^{\dag}_{N_2}, \psi_{2N}^{\dag}), \quad 
	\mathcal{N}_{[0,1]}(\psi_{2N}^{\dag}, v_{N_2}^{\dag}, v_{N_3}^{\dag}),
	\]
	where at least one of $v_{N_2}^{\dag}, v_{N_3}^{\dag}$ is of type (D). Note that we exclude the singular interactions $\mathcal{N}_{(0,1)}(\psi_{2N}^{\dag}, v_{N_2}^{\dag}, v_{N_3}^{\dag})$ with $N_2, N_3 \leq N$, thanks to the ansatz.
	\medskip
	
	\item \textbf{Group III:} This group includes interactions not belonging to Group I or II, and where the term $\psi_{2N}^{\dag}$ appears in the second position:
	\[
	\mathcal{N}(v_{N_1}^{\dag}, \psi_{2N}^{\dag}, v_{N_3}^{\dag})\,,
	\] 
	with $N_1, N_3 \leq N$, and where each $v_{N_i}^{\dag}$ is of type (C) or (D), with at least one of type (D).
\end{itemize}
\medskip

Before estimating the interactions in each group, we prove an elementary lemma useful for controlling terms of type (D). This lemma exploits the fact that type (D) terms are not arbitrary elements of $X^{s,b}$. In fact, they satisfy a semi-classical energy estimate and are essentially oscillating at frequency $N$.  

\begin{lemme}\label{lem:locN-c}
Suppose that $\mathrm{Loc}(N)$ holds. Then for all $M \leq N$ and for any $z_{M}^{\dag}$ of type $\mathrm{(D)}$ and $\epsilon_0 > 0$, we have
\[
\|z_{M}^{\dag}\|_{X^{s - \epsilon_0, b}} \lesssim R^{-1} M^{-\epsilon_0}\,.
\]
\end{lemme}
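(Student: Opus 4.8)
\textbf{Proof plan for Lemma \ref{lem:locN-c}.}

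The statement to prove is that if $\mathrm{Loc}(N)$ holds, then for all $M\leq N$, all $z_M^{\dag}$ of type $\mathrm{(D)}$, and all $\epsilon_0>0$, one has $\|z_M^{\dag}\|_{X^{s-\epsilon_0,b}}\lesssim R^{-1}M^{-\epsilon_0}$. The key structural fact to exploit is the second bound in \eqref{eq:bo-r}, namely the frequency-localization estimate $\|\Pi_L^{\perp}z_M^{\dag}\|_{X^{0,b}}\leq (M/L)^{10}M^{-s}R^{-1}$ for $L\geq 2M$, together with the plain bound $\|z_M^{\dag}\|_{X^{0,b}}\leq M^{-s}R^{-1}$ in the definition of type $\mathrm{(D)}$ (Definition \ref{Type}). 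In other words, type $\mathrm{(D)}$ terms are not arbitrary elements of $X^{0,b}$: they are essentially supported at frequencies $\lesssim M$, with a polynomially decaying tail at higher frequencies. This is exactly what allows one to upgrade the $X^{0,b}$ control to an $X^{s-\epsilon_0,b}$ control at the modest cost of a factor $M^{-\epsilon_0}$.

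The plan is to decompose $z_M^{\dag}$ dyadically in frequency as $z_M^{\dag}=\sum_{L}\mathbf{P}_L z_M^{\dag}$ and estimate each dyadic piece in $X^{s-\epsilon_0,b}$. First I would split the sum into the low-frequency part $L\leq 2M$ and the high-frequency tail $L> 2M$. For the low part, on each $\mathbf{P}_L z_M^{\dag}$ the $X^{s-\epsilon_0,b}$ norm is comparable to $L^{s-\epsilon_0}$ times the $X^{0,b}$ norm, so summing over the $O(\log M)$ dyadic scales $L\lesssim M$ and using $\|z_M^{\dag}\|_{X^{0,b}}\leq M^{-s}R^{-1}$ gives a bound of order $M^{s-\epsilon_0}\cdot M^{-s}R^{-1}=R^{-1}M^{-\epsilon_0}$ (the logarithm is absorbed since the highest scale dominates, or one simply notes $\sum_{L\leq 2M}L^{s-\epsilon_0}\lesssim M^{s-\epsilon_0}$). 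For the high part $L>2M$, I would write $\mathbf{P}_L z_M^{\dag}=\mathbf{P}_L(\Pi_{L/2}^{\perp}z_M^{\dag})$ and invoke the tail estimate in \eqref{eq:bo-r}: $\|\mathbf{P}_L z_M^{\dag}\|_{X^{s-\epsilon_0,b}}\lesssim L^{s-\epsilon_0}\|\Pi_{L/2}^{\perp}z_M^{\dag}\|_{X^{0,b}}\lesssim L^{s-\epsilon_0}(M/L)^{10}M^{-s}R^{-1}$. Since $s-\epsilon_0<10$ (recall $s=s_\sigma=\tfrac12+100\sigma$ is close to $\tfrac12$), the power of $L$ is strongly negative, the geometric series over $L>2M$ converges and is dominated by its first term $L\sim M$, yielding again a bound of order $M^{s-\epsilon_0}M^{-s}R^{-1}=R^{-1}M^{-\epsilon_0}$. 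Summing the two contributions closes the estimate.

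There is no serious obstacle here; the only point requiring a little care is bookkeeping the dyadic sums so that the decay factor $(M/L)^{10}$ genuinely beats the loss $L^{s-\epsilon_0}$ — this is immediate from the hierarchy of parameters in \eqref{smallparameters} which forces $s<1<10$, so the exponent $s-\epsilon_0-10$ is bounded above by a fixed negative constant uniformly in the small parameter $\sigma$ and in $\epsilon_0\in(0,s)$. One should also make sure that $\epsilon_0$ is implicitly assumed to lie in a range where $s-\epsilon_0\geq 0$ (otherwise the statement is trivial, being weaker than the plain type $\mathrm{(D)}$ bound), which is the regime in which the Lemma is applied in Section \ref{sec:det}. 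The implicit constants depend only on $s$ (hence on $\sigma$), not on $M$, $N$, or $R$, which is what is needed for the subsequent deterministic trilinear estimates.
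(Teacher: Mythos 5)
Your argument is correct and is essentially identical to the paper's proof: a Littlewood--Paley decomposition, with the low frequencies $L\lesssim M$ controlled by the plain bound $\|z_M^{\dag}\|_{X^{0,b}}\leq M^{-s}R^{-1}$ and the tail $L\geq 2M$ controlled by the frequency-localization estimate in \eqref{eq:bo-r}, the factor $(M/L)^{10}$ easily absorbing the loss $L^{s-\epsilon_0}$. Your side remarks (convergence of the dyadic sums, the implicit restriction $\epsilon_0<s$) are accurate and consistent with how the lemma is used in Section \ref{sec:det}.
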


\begin{proof}
The bound~\eqref{eq:bo-r} on the high-frequencies yields
\begin{align*}
\|z_{M}^{\dag}\|_{X^{s - \epsilon_0, b}} 
&\leq \sum_{K} K^{s - \epsilon_0} \|\mathbf{P}_{K} z_{M}^{\dag}\|_{X^{0,b}} \\
&\lesssim \sum_{K \leq M} K^{s - \epsilon_0} \|z_{M}^{\dag}\|_{X^{0,b}} 
+ R^{-1} \sum_{K \geq 2M} \left( \frac{M}{K} \right)^{10 - s} K^{-\epsilon_0} \\
&\lesssim R^{-1} M^{-\epsilon_0}\,,
\end{align*}
which proves Lemma~\ref{lem:locN-c}.
\end{proof}

We now turn to estimating the terms in Group~I, which contains terms of type (D) constructed at a generation $M$ with $M \geq \frac{N}{100}$. The goal is to place this high-frequency term in $X^{0,b}$ to avoid losing derivatives. The argument relies solely on the semi-classical Strichartz estimate stated in Proposition~\ref{prop:str}.
\begin{lemme}\label{lem:Dmax} Suppose that  $0<\epsilon_{0}<\frac{1}{2}(s-\frac{1}{2})$. There exists $C>0$ such that for all function $v_{N_{2}}^{\dag}, v_{N_{3}}^{\dag}$ of type $\mathrm{(C)}$ or $\mathrm{(D)}$, and $b_{1}>\frac{1}{2}$, 
	\[
	\|\chi\mathcal{N}( z_{N_{1}}^{\dag},v_{N_{2}}^{\dag},v_{N_{3}}^{\dag})\|_{X^{0,-b_{1}}} \leq C\|z_{N_{1}}^{\dag}\|_{X^{0,b}}
	\prod_{i\in\{2,3\}}
	\min(\|v_{N_{i}}^{\dag}\|_{X^{s-2\epsilon_{0},b}},\|v_{N_{i}}^{\dag}\|_{L_{t}^{q}L_{x}^{\infty}})\,.
	\] 
Similarly, for all functions $v_{N_1}^{\dag},v_{N_3}^{\dag}$ of type $\mathrm{(C)}$ or $\mathrm{(D)}$,
	\[
\|\chi\mathcal{N}( v_{N_{1}}^{\dag},z_{N_{2}}^{\dag},v_{N_{3}}^{\dag})\|_{X^{0,-b_{1}}} \leq C\|z_{N_{2}}^{\dag}\|_{X^{0,b}}
\prod_{i\in\{1,3\}}
\min(\|v_{N_{i}}^{\dag}\|_{X^{s-2\epsilon_{0},b}},\|v_{N_{i}}^{\dag}\|_{L_{t}^{q}L_{x}^{\infty}})\,.
\] 
\end{lemme}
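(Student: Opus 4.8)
\textbf{Plan of proof for Lemma \ref{lem:Dmax}.}

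The plan is to estimate the trilinear form by duality, pairing $\chi\mathcal{N}(z_{N_1}^\dag,v_{N_2}^\dag,v_{N_3}^\dag)$ against a test function $h\in X^{0,b_1}$ with $\|h\|_{X^{0,b_1}}\leq 1$, and then distributing the three input functions into four Lebesgue-space slots on $\R\times\S^2$ via Hölder. Since $b_1>\tfrac12$, the embedding $X^{0,b_1}\hookrightarrow L^\infty_tL^2_x$ is available, so the natural split is to put the high-frequency term $z_{N_1}^\dag$ (resp. $z_{N_2}^\dag$ in the second statement) in $L^2_{t,x}$ — this is the only slot in which we spare derivatives — pair $h$ with $L^2_{t,x}$ as well after using a time cutoff to absorb the $\chi$, and place the two remaining factors $v_{N_2}^\dag, v_{N_3}^\dag$ in $L^q_tL^\infty_x$ each (with $2/q + 2/q \ll 1$, leaving room). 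Concretely, writing $\tfrac1{b_1'}$-type Hölder in time, we bound
\[
\Big|\int \chi\,\mathcal{N}(z_{N_1}^\dag,v_{N_2}^\dag,v_{N_3}^\dag)\,\overline{h}\Big|
\lesssim \|z_{N_1}^\dag\|_{L^\infty_tL^2_x}\|v_{N_2}^\dag\|_{L^{q}_tL^\infty_x}\|v_{N_3}^\dag\|_{L^{q}_tL^\infty_x}\|h\|_{L^{p}_tL^2_x}\,,
\]
and close via $\|h\|_{L^p_tL^2_x}\lesssim \|h\|_{X^{0,b_1}}$ for the appropriate $p$. One then trades $\|z_{N_1}^\dag\|_{L^\infty_tL^2_x}\lesssim \|z_{N_1}^\dag\|_{X^{0,b}}$ (Sobolev embedding in time, $b>\tfrac12$). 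This directly yields the version of the bound with the $L^q_tL^\infty_x$-norms of $v_{N_2}^\dag,v_{N_3}^\dag$.

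To obtain the alternative bound with $X^{s-2\epsilon_0,b}$-norms in place of the $L^q_tL^\infty_x$-norms, I would instead place each such $v_{N_i}^\dag$ in $L^{p_i}_tL^\infty_x$ for an admissible exponent and invoke the semiclassical Strichartz estimate of Proposition \ref{prop:str}: since $v_{N_i}^\dag$ is essentially localized at frequency $N_i$, we have, after a dyadic (Littlewood--Paley) decomposition, $\|v_{N_i}^\dag\|_{L^{p}_tL^\infty_x}\lesssim N_i^{1/p}\,N_i^{\theta}\|v_{N_i}^\dag\|_{X^{1/p,b'}}$ for a suitable $b'>1$, hence $\lesssim \|v_{N_i}^\dag\|_{X^{1,b}}$ up to harmless losses; combined with $s-2\epsilon_0>\tfrac12>\tfrac1p$ for $p$ large enough, we can take $p$ large so that $\tfrac1p < s-2\epsilon_0$ and absorb the frequency loss, getting $\|v_{N_i}^\dag\|_{L^{p}_tL^\infty_x}\lesssim \|v_{N_i}^\dag\|_{X^{s-2\epsilon_0,b}}$. (For $p=\infty$ one uses the crude Sobolev embedding $H^{1+}(\S^2)\hookrightarrow L^\infty(\S^2)$ directly.) The point of $\epsilon_0<\tfrac12(s-\tfrac12)$ is precisely to leave this room. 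Taking the minimum of the two estimates obtained in each slot finishes the first displayed inequality; the second one is identical up to relabeling, since the roles of the first and second factors in $\mathcal{N}$ are symmetric for the purposes of these crude estimates (the complex conjugation on the middle slot plays no role, and the precise pairing/non-pairing restrictions only decrease the number of terms).

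I expect the only mildly delicate point to be the bookkeeping with the parameters: one must check that the chosen Hölder exponents in time are compatible with $b,b_1>\tfrac12$ (so that $\tfrac1{b'} + \tfrac1{b_1'}$ leaves enough room for two $L^q_t$ factors with $q=q_\sigma$ large), and that the frequency loss $N_i^{1/p+\theta}$ incurred by the Strichartz estimate is strictly beaten by the $N_i^{-(s-2\epsilon_0)}$ gain — i.e. $\tfrac1p + \theta < s - 2\epsilon_0$ — which is guaranteed by the hierarchy $0<\delta\ll\theta\ll \tfrac1q\ll s-\tfrac12$ in \eqref{smallparameters} together with $\epsilon_0 < \tfrac12(s-\tfrac12)$. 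None of this requires any bilinear improvement: the whole point of this Lemma, as the text emphasizes, is that terms of type (D) constructed near the top generation can be handled by placing them in $X^{0,b}$ and using only linear (semiclassical) Strichartz estimates, so there is no genuine obstacle, just a routine verification of admissibility of exponents.
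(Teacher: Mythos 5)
Your overall strategy coincides with the paper's: duality against $h\in X^{0,b_1}$, Hölder in $(t,x)$ placing the distinguished type-(D) term and the dual function in $L_t^\infty L_x^2$ (using $X^{0,b}\hookrightarrow C_tL_x^2$ for $b>\tfrac12$), the remaining two factors in $L_t^{p_i}L_x^\infty$, with the $L_t^qL_x^\infty$-half of the minimum obtained by taking $p_i=q$ and the $X^{s-2\epsilon_0,b}$-half obtained from the semiclassical Strichartz estimate of Proposition \ref{prop:str} combined with Sobolev embedding in $x$. So the architecture is correct and no bilinear input is needed, as you say.

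However, your exponent bookkeeping for the Strichartz half is wrong in a way that would make the argument fail as written. To pass from the admissible estimate $\|v\|_{L_t^{p}L_x^{q_{\mathrm{adm}}}}\lesssim\|v\|_{X^{1/p,b}}$ (with $\tfrac1p+\tfrac1{q_{\mathrm{adm}}}=\tfrac12$) to $L_t^{p}L_x^\infty$, the Sobolev embedding on $\S^2$ costs an additional $\tfrac{2}{q_{\mathrm{adm}}}=1-\tfrac2p$ derivatives, so the total regularity required is $\tfrac{2}{q_{\mathrm{adm}}}+\tfrac1p=1-\tfrac1p$, not $\tfrac1p+\theta$ as you claim. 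Taking $p$ large (let alone $p=\infty$) drives this cost towards $1$, which is far above $s-2\epsilon_0\in(\tfrac12,\tfrac12+100\sigma)$; indeed $H^{s-2\epsilon_0}(\S^2)\not\hookrightarrow L^\infty(\S^2)$, so your fallback $\|v_{N_i}^\dag\|_{L_t^\infty L_x^\infty}\lesssim\|v_{N_i}^\dag\|_{X^{s-2\epsilon_0,b}}$ is false. The correct choice, as in the paper, is $p_i>2$ \emph{close to} $2$, so that $1-\tfrac1{p_i}\downarrow\tfrac12$; the hypothesis $\epsilon_0<\tfrac12(s-\tfrac12)$ is precisely what guarantees $s-2\epsilon_0>\tfrac12$ and hence leaves room for such a $p_i$. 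With that correction (and noting that the time-Hölder budget $\tfrac1{p_2}+\tfrac1{p_3}<1$ is compatible with $p_2,p_3$ slightly above $2$, the leftover integrability being absorbed by $\chi$), your proof closes and agrees with the paper's.
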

\begin{proof} By duality it suffices to show that for all $v\in X^{0,b_{1}}$ such that $\|v\|_{X^{0,b_{1}}}\leq1$, there holds
	\begin{align*}
	\Big|
	\int_{\R}\int_{\mathbb{S}^{2}}
	\chi(t)\mathcal{N}( z_{N_{1}}^{\dag},&v_{N_{2}}^{\dag},v_{N_{3}}^{\dag})(t,x)\overline{v(t,x)}\mathrm{d}t\mathrm{d}x
	\Big| \\
	\leq
	&C
	\|z_{N_{1}}^{\dag}\|_{X^{0,b}}\|v\|_{X^{0,b_1}}\prod_{i\in\{2,3\}}\min\big(\|v_{N_i}^{\dag}\|_{X^{s-2\epsilon_0,b}},\|v_{N_i}^{\dag}\|_{L_t^qL_x^{\infty}} \big).
  \end{align*}
Since the Wick renormalization plays no role in the analysis below, without loss of generality, we will replace the nonlinearity $\mathcal{N}(f_1,f_2,f_3)$ by the usual multiplication $f_1\ov{f}_2f_3$. 
	By Hölder's inequality, 
	\begin{multline*}
		\Big|\int_{\R}\int_{\mathbb{S}^{2}}
		\chi(t)\mathcal{N}( z_{N_{1}}^{\dag},v_{N_{2}}^{\dag},v_{N_{3}}^{\dag})(t,x)\overline{v(t,x)}\mathrm{d}t\mathrm{d}x
		\Big|
		\\
		\leq	
		\|z_{N_{1}}^{\dag}\|_{L_{t}^{\infty}L_{x}^{2}}
		\|v\|_{L_{t}^{\infty}L_{x}^{2}}
		\|v_{N_{2}}^{\dag}\|_{L_{t}^{p_{2}}L_{x}^{\infty}}
		\|v_{N_{3}}^{\dag}\|_{L_{t}^{p_{3}}L_{x}^{\infty}}
		\|\chi\|_{L_{t}^{p_{0}}}\,,
	\end{multline*}
	where $p_{2},p_{3}\in(2,+\infty)$, will be chosen later, and $p_{0}\in(1,+\infty)$ such that~$1~=~\frac{1}{p_{0}}~+~\frac{1}{p_{2}}~+~\frac{1}{p_{3}}$. 
	By Lemma \ref{Embedding} we obtain 
	\[
	\|z_{N_{1}}^{\dag}\|_{L_{t}^{\infty}L_{x}^{2}}
	\lesssim \|z_{N_{1}}^{\dag}\|_{X^{0,b}}\,,\quad 
	\|v\|_{L_{t}^{\infty}L_{x}^{2}}
	\lesssim \|v\|_{X^{0,b_{1}}}\,.
	\]
	As for the terms $v_{N_{2}}$ and $v_{N_{3}}$, of type $\mathrm{(C)}$ or $\mathrm{(D)}$, we proceed as follows. When $v_{N_{i}}$ is of type $\mathrm{(C)}$, we take $p_{i}=q_{\sigma}$, where the parameter $q_{\sigma}$ is given in~\eqref{smallparameters}, and we use the pointwise bound~\eqref{eq:wick}. When $v_{N_{i}}$ is of type $\mathrm{(D)}$, we let $(p_{i},q_{i})$ be an admissible pair and we apply the Sobolev embedding and the semi-classical Strichartz estimate from \cite{BGT} recalled in Proposition \ref{prop:str}:
	\[
	\|v_{N_{i}}^{\dag}\|_{L_{t}^{p_{i}}L_{x}^{\infty}}
	\lesssim
	\|v_{N_{i}}^{\dag}\|_{X^{\frac{2}{q_{i}}+\frac{1}{p_{i}},b}} 
	\lesssim
	\|v_{N_{i}}^{\dag}\|_{X^{1-\frac{1}{p_{i}},b}}\,,
	\]
	where we used the admissibility condition $\frac{2}{p_{i}}+\frac{2}{q_{i}}=1$. The desired bound for terms of type $\mathrm{(D)}$ follows by choosing $p_{i}>2$ sufficiently close to $2$, such that $1-\frac{1}{p_{i}}<s-2\epsilon_{0}$. This is possible thanks to the assumption $\epsilon_{0}<\frac{1}{2}(s-\frac{1}{2})$.
\end{proof}
To handle terms in Group II, we first isolate a special contribution where our analysis relies on the Wick ordering: 
\begin{lemme}[Terms in Group II requiring the Wick-ordering]
\label{lem:wick-b}For all $z^{\dag}_{N_{3}}$ of type~$\mathrm{(D)}$, we have 
	\[
	\| \chi\mathcal{N}_{(1,2)}(\psi_{2N}^{\dag},\psi_{2N}^{\dag},z^{\dag}_{N_3}) \|_{X^{0,-b_1}}\lesssim R^{2}N^{-2(\alpha-\frac{1}{2})+\frac{1}{2}+\frac{1}{q}+2\delta}
	N_{3}^{-s}
	\,.
	\]
\end{lemme}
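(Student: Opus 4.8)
\textbf{Proof plan for Lemma \ref{lem:wick-b}.}
The statement concerns the resonant interaction $\mathcal{N}_{(1,2)}(\psi_{2N}^{\dag},\psi_{2N}^{\dag},v_{N_{3}}^{\dag})$, in which the first two colored factors are paired in frequency. By the notational conventions of Section \ref{sec:5}, pairing the indices $n_{1}=n_{2}$ in $\mathcal{N}(f_{1},f_{2},f_{3})=\sum_{n_{0}}\pi_{n_{0}}(\pi_{n_{1}}f_{1}\,\overline{\pi_{n_{2}}f_{2}}\,\pi_{n_{3}}f_{3})$ together with the Wick subtraction forces the bracket $\pi_{n}\psi_{2N}^{\dag}\,\overline{\pi_{n}\psi_{2N}^{\dag}}-\langle\pi_{n}\psi_{2N}^{\dag}\mid\pi_{n}\psi_{2N}^{\dag}\rangle$ to appear, so that
\[
\mathcal{N}_{(1,2)}(\psi_{2N}^{\dag},\psi_{2N}^{\dag},v_{N_{3}}^{\dag}) = \big(\psi_{2N}^{\dag}\hexagonal{=}\overline{\psi_{2N}^{\dag}}\big)\cdot v_{N_{3}}^{\dag}\,,
\]
with $\psi_{2N}^{\dag}\hexagonal{=}\overline{\psi_{2N}^{\dag}}$ as in \eqref{def:hexagon'}. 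The plan is to estimate the $X^{0,-b_{1}}$ norm by duality: it suffices to bound $\big|\int_{\R\times\S^{2}}\chi(t)\,(\psi_{2N}^{\dag}\hexagonal{=}\overline{\psi_{2N}^{\dag}})\,v_{N_{3}}^{\dag}\,\overline{v}\,\mathrm{d}t\,\mathrm{d}x\big|$ for all $v$ with $\|v\|_{X^{0,b_{1}}}\leq1$, where $b_{1}>\frac{1}{2}$.

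First I would apply Hölder's inequality in $(t,x)$, placing the Wick square in $L_{t}^{q}L_{x}^{\infty}$, the type (D) term $v_{N_{3}}^{\dag}$ in $L_{t}^{\frac{2q}{q-2}}L_{x}^{2}$, and the dual function $v$ in $L_{t}^{\infty}L_{x}^{2}$ (absorbing $\chi$ into an $L_{t}^{r}$ factor with a harmless exponent, since $\frac1q+\frac{q-2}{2q}<1$). Then I would invoke the embedding Lemma \ref{Embedding} (equivalently $X^{0,b}\hookrightarrow C_{t}L_{x}^{2}$ for $b>\frac12$) to get $\|v\|_{L_{t}^{\infty}L_{x}^{2}}\lesssim\|v\|_{X^{0,b_{1}}}\leq1$ and $\|v_{N_{3}}^{\dag}\|_{L_{t}^{\frac{2q}{q-2}}L_{x}^{2}}\lesssim\|v_{N_{3}}^{\dag}\|_{L_{t}^{\infty}L_{x}^{2}}\lesssim\|v_{N_{3}}^{\dag}\|_{X^{0,b}}$ (the time cutoff is compactly supported, so there is no loss passing to $L_{t}^{\infty}$; alternatively one keeps $L_{t}^{\frac{2q}{q-2}}$ and uses the compact support of $\chi$). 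The decisive ingredient is then the bound on the Wick square: since $\mathrm{Loc}(N)$ holds with parameters $(T,R,\sigma)$ and $T=R^{-10/\theta}\leq1$, the second estimate of \eqref{eq:p-psi} (equivalently the second estimate of \eqref{eq:wick} in Lemma \ref{lem:wick}, now at generation $2N$) gives
\[
\|\psi_{2N}^{\dag}\hexagonal{=}\overline{\psi_{2N}^{\dag}}\|_{L_{t}^{q}L_{x}^{\infty}} \lesssim R^{2}N^{-2(\alpha-\frac12)+\frac12+\frac1q+2\delta}\,.
\]
Combining the three factors yields the claimed bound $\lesssim R^{2}N^{-2(\alpha-\frac12)+\frac12+\frac1q+\delta}\|v_{N_{3}}^{\dag}\|_{X^{0,b}}$ (the exponent $\delta$ in the statement being weaker than the $2\delta$ we produce, so there is room to spare).

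The only genuine point requiring care — rather than an obstacle — is the bookkeeping of the precise power of $N$ and of the time-cutoff exponents: one must check that the $L_{t}^{q}$ integrability of the Wick square together with the $L_{t}^{\frac{2q}{q-2}}$ slot for $v_{N_{3}}^{\dag}$ and $L_{t}^{\infty}$ for $v$ are Hölder-compatible (they are, since $\frac1q+\frac{q-2}{2q}=\frac12+\frac1{2q}<1$, leaving a positive exponent for $\chi$), and that no derivative is lost on the (D) factor since it is measured in $X^{0,b}$, exactly matching the right-hand side. Since the Wick renormalization is fully used here (the subtracted mean is what produces the gain $N^{1/2}$ relative to the naive $\|\,|\pi_{n}\psi_{2N}^{\dag}|^{2}\,\|$), no extra structure is needed, and the proof is a short Hölder argument plus the already-established large-deviation bound on $\psi_{2N}^{\dag}\hexagonal{=}\overline{\psi_{2N}^{\dag}}$.
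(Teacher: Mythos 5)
Your argument is exactly the paper's: the published proof is precisely this physical-space duality/H\"older scheme, placing $\psi_{2N}^{\dag}\hexagonal{=}\overline{\psi_{2N}^{\dag}}$ in $L_{t}^{q}L_{x}^{\infty}$ and invoking Lemma \ref{lem:wick}, with the remaining factors handled as in Lemma \ref{lem:Dmax}. One small correction to your bookkeeping: since $N^{2\delta}\geq N^{\delta}$, the bound \eqref{eq:p-psi} with exponent $+\frac{1}{q}+2\delta$ does \emph{not} by itself yield the stated exponent $+\frac{1}{q}+\delta$; you should quote the second estimate of \eqref{eq:wick}, which gives $+\delta$ with no $\frac{1}{q}$ loss and therefore does leave the room you claim.
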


\begin{proof}
The proof is carried out in the physical space, following the approach used in Lemma~\ref{lem:Dmax}. Specifically, we place \(\psi_{2N}^{\dag} = \overline{\psi_{2N}^{\dag}}\) in \(L_{t}^{q}L_{x}^{\infty}\) for some \(q \in (2, +\infty)\), and apply Lemma~\ref{lem:wick}. The term \(z_{N_3}^{\dag}\) is placed in \(X^{0,b}\), and we exploit the estimate~\eqref{eq:tD}---which follows from the assumption that \(z_{N_3}^{\dag}\) is of type~\((\mathrm{D})\)---to gain the decaying factor~\(R^{-1}N_3^{-s}\).
\end{proof}
It remains to handle the other interactions in Group II. In the next Lemma we control the remaining interactions not covered by Lemma \ref{lem:Dmax} and Lemma \ref{lem:wick-b} (even if there might be some overlap in some cases).

\begin{lemme}[Terms in Group II: remaining cases]\label{lem:no-pairing} For $i\in\{2,3\}$,  we suppose that $N_{i}\leq 2N$ and $v_{N_{i}}$ is of type $\mathrm{(C)}$ or $\mathrm{(D)}$ with at least one of type $\mathrm{(D)}$. Then, for all~$0<\epsilon<\min(\frac{1}{2}(s-\frac{1}{2}),\alpha-1-\frac{2}{q}-\delta,100\sigma)$~\footnote{Our assumptions~\eqref{smallparameters3} on the small parameter \(\sigma\) ensures that the quantity on the right-hand-side is positive.}, we have
	\begin{align}
		\label{eq:38}
		\|\chi\mathcal{N}_{[0,1][1,2]}(\psi_{2N}^{\dag},v_{N_{2}},v_{N_{3}})\|_{X^{0,-b_{1}}}  &\lesssim_{\epsilon} R
		N^{-s}(N_{2}N_{3})^{-\epsilon}\,.
		\intertext{Moreover, if $N_{3}\leq2N,$}
		\label{eq:39}
		\|\chi\mathcal{N}_{(0,1)}(\psi_{2N}^{\dag},\psi_{2N}^{\dag},z_{N_{3}}^{\dag})\|_{X^{0,-b_{1}}} &\lesssim_{\epsilon} RN^{-(\alpha-\frac{1}{2})-(\alpha-1)}N_{3}^{-(s-\frac{1}{2})+\epsilon}\,.
			\intertext{If $N_{2}\leq2N$,}
		\label{eq:39'}
		\|\chi\mathcal{N}(\psi_{2N}^{\dag},z_{N_{2}}^{\dag},\psi_{2N}^{\dag})\|_{X^{0,-b_{1}}} &\lesssim_{\epsilon} RN^{-(\alpha-\frac{1}{2})-(\alpha-1)}N_{2}^{-(s-\frac{1}{2})+\epsilon}\,.
	\end{align}
	\begin{proof}
		Thanks to the high frequency decay assumption~\eqref{eq:bo-r} in $\mathrm{Loc(N)}$ we can suppose that the terms of type (D) are truncated at frequency $4N$. Consequently the output frequency is less than $100N$. As in the proof of Lemma \ref{lem:Dmax}, it suffices to replace the nonlinearity $\mathcal{N}_{\cdots}$ by the usual multiplication, subjecting to the specific constraint.
		\medskip
		
		We first prove \eqref{eq:38}.
		\medskip
		
		\noindent{$\bullet$ \bf Case 1:} $\mathrm{(C)(C)(D)}$. By duality and Cauchy--Schwarz in $x\in\mathbb{S}^{2}$,
		\begin{multline}
			\label{eq:93}
			\|\chi\mathcal{N}_{[0,1][1,2]}(\psi_{2N}^{\dag},\psi_{N_{2}}^{\dag},\Pi_{2N_{3}}z_{N_{3}}^{\dag})\|_{X^{0,-b_{1}}} 
			\\
			\lesssim 
			\sum_{n_{0}\lesssim N\,,\,n_{2}\sim N_{2}}\sum_{\substack{n_{1}\sim N\\ n_{1}\neq n_{0}, n_{2}}}
			(\lambda_{n_{1}}\lambda_{n_{2}})^{-(\alpha-\frac{1}{2})}
			\int_{\R^{4}}
			\frac{|\widehat{\chi}(\widetilde{\kappa}-\Omega(\vec{n}))|}
			{
				\langle \kappa_{0}\rangle^{b_{1}}
				\langle \kappa_{1}\rangle^{\gamma}
				\langle\kappa_{2}\rangle^{\gamma}
				\langle\kappa_{3}\rangle^{b}
			}
			\prod_{i=0}^{3}\mathbf{a}_{n_{i}}^{(i)}(\kappa_{i})
			\mathrm{d}\vec{\kappa}\,,
		\end{multline}
where given a function $v\in X^{0,b_{1}}$ with $\|v\|_{X^{0,b_{1}}}\leq1$ we denoted
		\begin{align*}
			\mathbf{a}_{n_{0}}^{(0)}(\kappa_{0})
			&:= \langle\kappa_{0}\rangle^{b_{1}}
			\|\pi_{n_{0}}\widehat{v}_{n_{0}}(\kappa_{0}-\lambda_{n_{0}}^{2})\|_{L_{x}^{2}}
			\,,
			\quad
			\mathbf{a}_{n_{1}}^{(1)}(\kappa_{1})
			:=\langle\kappa_{1}\rangle^{\gamma}
			\|\widehat{e_{n_{1}}^{2N,\dag}}(\kappa_{1}-\lambda_{n_{1}}^{2})\|_{L_{x}^{\infty}}\,,
			\\
			\mathbf{a}_{n_{2}}^{(2)}(\kappa_{2})
			&:=\langle\kappa_{2}\rangle^{\gamma}\|
			\widehat{e_{n_{2}}^{N_{2},\dag}}(\kappa_{2}-\lambda_{n_{2}}^{2})\|_{L_{x}^{\infty}}\,,
			\quad
			\mathbf{a}_{n_{3}}^{(3)}(\kappa_{3})
			:=
			\langle \kappa_{3}\rangle^{b}
			\|\pi_{n_{3}}
			\widehat{z_{N_{3}}^{\dag}}(\kappa_{3}-\lambda_{n_{3}}^{2})\|_{L_{x}^{2}}\,. 
		\end{align*}
		Under our assumptions, together with Hölder's and Minkowski's inequalities to recover the estimate \eqref{eq:p-psi-} for type~(C) terms, we obtain
		\[
		\|\mathbf{a}_{n_{0}}^{(0)}(\kappa_{0})\|_{\ell_{n_{0}}^{2}L_{\kappa_{0}}^{2}} 
		= \|v\|_{X^{0,b_{1}}}\leq1\,,
		\quad 
		\|\mathbf{a}_{n_{i}}^{(i)}(\kappa_{i})\|
		_{L_{\kappa_{i}}^{q}\ell_{n_{i}}^{\infty}}
		\lesssim RN_{i}^{\frac{2}{q}+\delta}\quad \text{for}\ i\in\{1,2\}\,,
		\]
		and for all $\epsilon>0$, according to Lemma \ref{lem:locN-c},
		\begin{equation}
			\label{eq:n3}
			\|\mathbf{a}_{n_{3}}^{(3)}(\kappa_{3})\|_{L_{\kappa_{3}}^{2}\ell_{n_{3}}^{1}} 
			\lesssim_{\epsilon} \|\lambda_{n_{3}}^{\frac{1}{2}+\epsilon}\mathbf{a}_{n_{3}}^{(3)}(\kappa_{3})\|_{L_{\kappa_{3}}^{2}\ell_{n_{3}}^{2}} 
			\lesssim_{\epsilon} \|z_{N_{3}}^{\dag}\|_{X^{\frac{1}{2}+\epsilon,b}}
			\lesssim_{\epsilon} N_{3}^{-(s-\frac{1}{2})+\epsilon}R^{-1}\,.
		\end{equation}
		For fixed $n_2,n_3$ and $\vec{\kappa}$,  
		\[
		\sum_{n_{0}\lesssim N}
		\sum_{\substack{n_{1}\sim N\\ n_{1}\neq n_{0},n_{2}}}
		|\widehat{\chi}(\widetilde{\kappa}-\Omega(\vec{n}))|\mathbf{a}_{n_{1}}^{(1)}(\kappa_{1})\lambda_{n_{1}}^{-(\alpha-\frac{1}{2})}  
		\lesssim_{\epsilon} N^{-(\alpha-\frac{1}{2})+\epsilon}\|\mathbf{a}_{n_{1}}^{(1)}(\kappa_{1})\|_{\ell_{n_{1}}^{\infty}}\,.
		\]
		Indeed, for fixed $n_{2}$, $n_{3}$ and $\vec{\kappa}$ we have from the small divisor bound that
		\begin{align*}
		\sum_{n_{0}\lesssim N}
		\sum_{\substack{n_{1}\sim N\\ n_{1}\neq n_{0},n_{2}}}
		&|\widehat{\chi}(\widetilde{\kappa}-\Omega(\vec{n}))| \mathbf{a}_{n_{1}}^{(1)}(\kappa_{1})\lambda_{n_{1}}^{-(\alpha-\frac{1}{2})}  
			\\
			&\lesssim  
			N^{-(\alpha-\frac{1}{2})}\|\mathbf{a}_{n_{1}}^{(1)}(\kappa_{1})\|_{\ell_{n_{1}}^{\infty}}\sum_{m}
		\sum_{n_{0}\lesssim N}
		\sum_{\substack{n_{1}\sim N\\ n_{1}\neq n_{0},n_{2}}}
			\mathbf{1}_{\lambda_{n_{1}}^{2}-\lambda_{n_{0}}^{2}=m}
			|
			\widehat{\chi}(\widetilde{\kappa}-\lambda_{n_{3}}^{2}+\lambda_{n_{2}}^{2}-m)
			| 
			\\
			&\lesssim_{\chi} N^{-(\alpha-\frac{1}{2})}
			\|\mathbf{a}_{n_{1}}^{(1)}(\kappa_{1})\|_{\ell_{n_{1}}^{\infty}}
			\sup_{m}\#\{n_{0}\neq n_{1}\lesssim N\,,\ \lambda_{n_{1}}^{2}-\lambda_{n_{0}}^{2}=m \}
			\\
			&\lesssim_{\epsilon,\chi}N^{-(\alpha-\frac{1}{2})+\epsilon}\|\mathbf{a}_{n_{1}}^{(1)}(\kappa_{1})\|_{\ell_{n_{1}}^{\infty}}\,.
		\end{align*}
	Similarly, for fixed \(n_{0}, n_{3}\) and \(\vec{\kappa}\), we prove that
	\[
	\sum_{n_{2} \sim N_{2}}
	\sum_{\substack{n_{1} \sim N \\ n_{1} \neq n_{0}, n_{2}}}
	|\widehat{\chi}(\widetilde{\kappa} - \Omega(\vec{n}))| \,
	\mathbf{a}_{n_{1}}^{(1)}(\kappa_{1}) \,
	\lambda_{n_{1}}^{-(\alpha - \frac{1}{2})}
	\lesssim_{\epsilon, \chi} N^{-(\alpha - \frac{1}{2}) + \epsilon}
	\left\|
	\mathbf{a}_{n_{1}}^{(1)}(\kappa_{1})
	\right\|_{\ell_{n_{1}}^{\infty}}.
	\]
	We deduce from the Schur's test in the sum over $(n_{0},n_{2})$ that for fixed $n_{3}$ and $\vec{\kappa}$,
		\begin{align*}
			\sum_{n_{0}\lesssim N\,,\,n_{2}\sim N_{2}}
			\sum_{\substack{n_{1}\sim N\\ n_{1}\neq n_{0},n_{2}}}
			(\lambda_{n_{1}}\lambda_{n_{2}})^{-(\alpha-\frac{1}{2})}
			&|\widehat{\chi}(\widetilde{\kappa}-\Omega(\vec{n}))|
			\mathbf{a}_{n_{0}}^{(0)}(\kappa_{0})\mathbf{a}_{n_{1}}^{(1)}(\kappa_{1})
			\mathbf{a}_{n_{2}}^{(2)}(\kappa_{2})\\	
			&\lesssim_{\epsilon}
			N^{-(\alpha-\frac{1}{2})+\epsilon} 
			\|\mathbf{a}_{n_{1}}^{(1)}(\kappa_{1})\|_{\ell_{n_{1}}^{\infty}}
			\|\mathbf{a}_{n_{0}}^{(0)}(\kappa_{0})\|_{\ell_{n_{0}}^{2}} \|\lambda_{n_{2}}^{-(\alpha-\frac{1}{2})}\mathbf{a}_{n_{2}}^{(2)}(\kappa_{2})\|_{\ell_{n_{2}}^{2}}
			\\	
			&\lesssim_{\epsilon}N^{-(\alpha-\frac{1}{2})+\epsilon}N_{2}^{-(\alpha-1)}\|\mathbf{a}_{n_{1}}^{(1)}(\kappa_{1})\|_{\ell_{n_{1}}^{\infty}}
			\|\mathbf{a}_{n_{0}}^{(0)}(\kappa_{0})\|_{\ell_{n_{0}}^{2}}
			\|\mathbf{a}_{n_{2}}^{(2)}(\kappa_{2})\|_{\ell_{n_{2}}^{\infty}}\,.
		\end{align*}
		Summing over $n_{3}$ in \eqref{eq:93}, using the bound~\eqref{eq:n3}, and integrating over $\vec{\kappa}$, we conclude from Hölder's inequality that 
		\begin{align*}
			|\eqref{eq:93}| 
			&\lesssim_{\epsilon}N^{-(\alpha-\frac{1}{2})-\epsilon}N_{2}^{-(\alpha-1)}\|\mathbf{a}_{n_{0}}^{(0)}(\kappa_{0})\|_{L_{\kappa_{0}}^{2}\ell_{n_{0}}^{2}}
			\|\mathbf{a}_{n_{1}}^{(1)}(\kappa_{1})\|_{L_{\kappa_{1}}^{q}\ell_{n_{1}}^{\infty}}
			\|\mathbf{a}_{n_{2}}^{(2)}(\kappa_{2})\|_{L_{\kappa_{2}}^{q}\ell_{n_{2}}^{\infty}}
			\|\mathbf{a}_{n_{3}}^{(3)}(\kappa_{3})\|_{L_{\kappa_{3}}^{2}\ell_{n_{3}}^{1}} 
			\\
			&\lesssim_{\epsilon}
			N^{-(\alpha-\frac{1}{2})+\frac{2}{q}+\delta+\epsilon}N_{2}^{-(\alpha-1)+\frac{2}{q}+\delta}N_{3}^{-(s-\frac{1}{2})+\epsilon}R
			\\
			&\lesssim_{\epsilon}
			N^{-s}(N_{2}N_{3})^{-\epsilon}R\,,
		\end{align*}
		which gives the estimate~\eqref{eq:38}. 
		\medskip
		
		\noindent $\bullet$ {\bf Case 2:} (C)(D)(C). This case is analog to the previous one but inverting the role of $n_{2}$ and $n_{3}$, by first fixing $n_{2}$ while summing over $n_{0},n_{1},n_{3}$. When we apply the Schur's test, we apply the divisor bound
		\[
		\sup_{m}\ \#\{n_{1},n_{3}\lesssim N\ |\ \lambda_{n_{1}}^{2}+\lambda_{n_{3}}^{3}=m\} \lesssim_{\epsilon}N^{\epsilon}\,.	
		\]

		\noindent $\bullet$ {\bf Case 3:} (C)(D)(D). We do the same analysis as in Case 1, but replacing $\|\lambda_{n_{2}}^{-(\alpha-\frac{1}{2})}\mathbf{a}_{n_{2}}^{(0)}(\kappa_{2})\|_{\ell_{n_{2}}^{2}}$ by 
		\[
		\|\langle\kappa_{2}\rangle^{b}\pi_{n_{2}}\widehat{z_{N_{2}}^{\dag}}(\kappa_{2}-\lambda_{n_{2}}^{2})\|_{L_{\kappa}^{2}\ell^{2}L_{x}^{\infty}} \lesssim_{\epsilon} \|z_{N_{2}}^{\dag}\|_{X^{\frac{1}{2}+\epsilon,b}} \lesssim N_{2}^{-(s-\frac{1}{2})+\epsilon}R^{-1}\,,
		\] 
		where we used the Sobolev embedding and applied Lemma \ref{lem:locN-c} to obtain the last bound.
		\medskip
		
		We now turn to the proof of \eqref{eq:39}. In this case, we have the analog of \eqref{eq:93}:
		\begin{multline}
			\label{eq:94}
			\|\chi\mathcal{N}_{(0,1)}(\psi_{2N}^{\dag},\psi_{2N}^{\dag},\Pi_{2N_{3}}z_{N_{3}}^{\dag})\|_{X^{0,-b_{1}}}\\
			\lesssim
			\sum_{\substack{n,n_{2},n_{3}\\ n,n_{2}\approx N}}
			(\lambda_{n}\lambda_{n_{2}})^{-(\alpha-\frac{1}{2})}
			\int_{\R^{4}}
			\frac{|\widehat{\chi}(\widetilde{\kappa}-\Omega(\vec{n}))|}
			{\langle\kappa_{0}\rangle^{b_{1}}\langle\kappa_{1}\rangle^{\gamma}\langle\kappa_{2}\rangle^{\gamma}\langle\kappa_{3}\rangle^{b}}
			\mathbf{a}_{n}^{(0)}(\kappa_{0})\mathbf{a}_{n}^{(1)}(\kappa_{1})\mathbf{a}_{n_{2}}^{(2)}(\kappa_{2})\mathbf{a}_{n_{3}}^{(3)}(\kappa_{3})
			d\vec{\kappa}\,,
		\end{multline}
		where the $\mathbf{a}^{(i)}$ for $i\in\{0,1,2,3\}$ are as in \eqref{eq:93}. First, we fix $n,n_{3}$ and $\vec{\kappa}$ to sum over~$n_{2}$:
		\[
		\sum_{n_{2}\approx N}\lambda_{n_{2}}^{-(\alpha-\frac{1}{2})}\mathbf{a}_{n_{2}}^{(2)}(\kappa_{2})|\widehat{\chi}(\widetilde{\kappa}-(\lambda_{n_{1}}^{2}-\lambda_{n_{2}}^{2}+\lambda_{n_{3}}^{2}-\lambda_{n_{0}}^{2}))|
		\lesssim
		N^{-(\alpha-\frac{1}{2})}\|\mathbf{a}_{n_{2}}^{(2)}(\kappa_{2})\|_{\ell_{n_{2}}^{\infty}}\,.
		\]
		By Cauchy-Schwarz, summing over $n$ gives 
		\[
		\sum_{n\approx N}\lambda_{n}^{-(\alpha-\frac{1}{2})}\mathbf{a}_{n}^{(0)}(\kappa_{0})\mathbf{a}_{n}^{(1)}(\kappa_{1})
		\lesssim N^{-(\alpha-1)}\|\mathbf{a}_{n}^{(0)}\|_{\ell_{n}^{2}}\|\mathbf{a}_{n}^{(1)}\|_{\ell_{n}^{\infty}}\,.
		\]
		We conclude by integrating over $\vec{\kappa}$ using Hölder, and summing over $n_{3}$ using \eqref{eq:n3} to finally obtain \eqref{eq:39}. Since the proof of \eqref{eq:39'} is similar, we omit it. This finishes the proof of Lemma~\ref{lem:no-pairing}
	\end{proof}
	
\end{lemme}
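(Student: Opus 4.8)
The plan is to run the whole estimate on the Fourier side after a duality reduction, and to extract the decisive gain of one full power $N^{-(\alpha-\frac12)}$ from the high-frequency factor $\psi_{2N}^{\dag}$ — either from a divisor bound (when the output index is forced to differ from the frequency of $\psi_{2N}^{\dag}$, as in \eqref{eq:38}) or from the Schwartz decay of $\widehat{\chi}$ together with the $\lambda_{n}^{-(\alpha-\frac12)}$ weights (when two factors $\psi_{2N}^{\dag}$ are paired, as in \eqref{eq:39}--\eqref{eq:39'}). First I would use the high-frequency truncation recorded in $\mathrm{Loc}(N)$ to assume every type $\mathrm{(D)}$ factor is spectrally localized below $4N$, so that the output frequency is $\lesssim N$; I would also discard the Wick renormalization, which only removes lower-order $\langle\cdot\rangle$-terms and does not affect any of the bounds, and replace the nonlinearity by the plain product subject to the stated frequency constraints. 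Dualizing against $v$ with $\|v\|_{X^{0,b_{1}}}\le1$, the left-hand side of each inequality becomes a sum over $\vec n=(n_{0},n_{1},n_{2},n_{3})$ (with the relevant pairing/non-pairing restrictions) of $\int_{\R^{4}}|\widehat{\chi}(\widetilde\kappa-\Omega(\vec n))|\prod_{i}\langle\kappa_{i}\rangle^{-\beta_{i}}\mathbf a^{(i)}_{n_{i}}(\kappa_{i})\,d\vec\kappa$, where $\mathbf a^{(0)}$ carries the $X^{0,b_{1}}$-profile of $v$, each $\mathbf a^{(i)}$ attached to a type $\mathrm{(C)}$ factor carries the $L^{\infty}_{x}$-profile of $\langle\kappa_{i}\rangle^{\gamma}\widehat{e^{N_{i},\dag}_{n_{i}}}(\kappa_{i}-\lambda_{n_{i}}^{2})$ (with the extra weight $\lambda_{n_{i}}^{-(\alpha-\frac12)}$ kept explicit), and each $\mathbf a^{(i)}$ attached to a type $\mathrm{(D)}$ factor carries the $L^{2}_{x}$-profile of $\langle\kappa_{i}\rangle^{b}\widehat{z^{\dag}_{N_{i}}}(\kappa_{i}-\lambda_{n_{i}}^{2})$.

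Next I would insert the a priori bounds available under $\mathrm{Loc}(N)$ and on $\Xi_{2N}'$: for type $\mathrm{(C)}$ factors $\|\mathbf a^{(i)}_{n_{i}}(\kappa_{i})\|_{L^{q}_{\kappa_{i}}\ell^{\infty}_{n_{i}}}\lesssim R$, which is \eqref{eq:p-psi} read through the definition of the norm $X_{q,q,\infty}^{0,\gamma}$, and for type $\mathrm{(D)}$ factors $\|\mathbf a^{(i)}_{n_{i}}(\kappa_{i})\|_{L^{2}_{\kappa_{i}}\ell^{1}_{n_{i}}}\lesssim_{\epsilon}N_{i}^{-(s-\frac12)+\epsilon}R^{-1}$, which is Lemma~\ref{lem:locN-c} combined with the Sobolev loss $\ell^{1}\hookrightarrow\ell^{2}$ at cost $N_{i}^{\frac12+\epsilon}$. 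The hypothesis $\epsilon_{0}<s-\frac12$ is forced here, because a type $\mathrm{(D)}$ factor sitting in the position $n_{1}$ or $n_{2}$ has no orthogonality to offer and must be summed through $\ell^{1}$. The combinatorial heart of \eqref{eq:38} is then that in $\Omega(\vec n)$ the non-paired pair appears as $\lambda_{n_{1}}^{2}-\lambda_{n_{0}}^{2}$ (in the $\mathrm{(C)(C)(D)}$ and $\mathrm{(C)(D)(D)}$ configurations) or as $\lambda_{n_{1}}^{2}+\lambda_{n_{3}}^{2}$ (in the $\mathrm{(C)(D)(C)}$ configuration): fixing the remaining frequencies and $\vec\kappa$, one sums over that pair jointly using $\sup_{m}\#\{n_{0}\ne n_{1}\lesssim N:\lambda_{n_{1}}^{2}-\lambda_{n_{0}}^{2}=m\}\lesssim_{\epsilon}N^{\epsilon}$, respectively the sum-of-two-squares divisor bound for $\lambda^{2}+\lambda^{2}=m$, which produces one power $N^{-(\alpha-\frac12)}$ at the cost of only $N^{\epsilon}$; the one remaining frequency is summed by Cauchy--Schwarz or a Schur test, yielding $N_{2}^{-(\alpha-1)}$ from $\big(\sum_{n_{2}}\lambda_{n_{2}}^{-2(\alpha-\frac12)}\big)^{1/2}$ times an $\ell^{1}$-loss $N_{2}^{1/2}$, together with $N_{3}^{-(s-\frac12)+\epsilon}$ from the $\mathrm{(D)}$ factor. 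Integrating over $\vec\kappa$ by Hölder, using $\gamma q'>1$, $b,b_{1}>\frac12$ and the rapid decay of $\widehat{\chi}$, closes the bound; since $\epsilon_{0}<\alpha-1$ the product $N^{-(\alpha-\frac12)+\epsilon}N_{2}^{-(\alpha-1)}N_{3}^{-(s-\frac12)+\epsilon}$ absorbs the desired $(N_{2}N_{3})^{-\epsilon_{0}}$.

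For \eqref{eq:39} (and \eqref{eq:39'}, which is symmetric) the Wick factor $\psi_{2N}^{\dag}\diamond\overline{\psi_{2N}^{\dag}}$ supplies two high frequencies, $n\,(=n_{0}=n_{1})\approx N$ and $n_{2}\approx N$, with the third factor $z^{\dag}_{N_{3}}$ carried at $n_{3}\le2N$; here no non-pairing is available, but $\Omega(\vec n)=\lambda_{n_{3}}^{2}-\lambda_{n_{2}}^{2}$ still depends on $n_{2}$, so for fixed $n,n_{3},\vec\kappa$ one sums $\sum_{n_{2}}\lambda_{n_{2}}^{-(\alpha-\frac12)}\mathbf a^{(2)}_{n_{2}}(\kappa_{2})\,|\widehat{\chi}(\widetilde\kappa-\Omega(\vec n))|\lesssim N^{-(\alpha-\frac12)}\|\mathbf a^{(2)}_{n_{2}}(\kappa_{2})\|_{\ell^{\infty}_{n_{2}}}$ using the Schwartz decay of $\widehat{\chi}$ and the near-injectivity of $n_{2}\mapsto\lambda_{n_{2}}^{2}$, and then $\sum_{n}\lambda_{n}^{-(\alpha-\frac12)}\mathbf a^{(0)}_{n}\mathbf a^{(1)}_{n}\lesssim N^{-(\alpha-1)}\|\mathbf a^{(0)}_{n}\|_{\ell^{2}_{n}}\|\mathbf a^{(1)}_{n}\|_{\ell^{\infty}_{n}}$ by Cauchy--Schwarz; the sum over $n_{3}$ uses the type $\mathrm{(D)}$ bound above, giving $N^{-(\alpha-\frac12)-(\alpha-1)}N_{3}^{-(s-\frac12)+\epsilon_{0}}$, and likewise for \eqref{eq:39'}. \textbf{The main obstacle} I anticipate is not a single delicate inequality but the bookkeeping: in each of the configurations $\mathrm{(C)(C)(D)}$, $\mathrm{(C)(D)(C)}$, $\mathrm{(C)(D)(D)}$ one must identify the correct high index, choose the order of summation so that the divisor bound in its correct form ($\lambda^{2}-\lambda^{2}=m$ versus $\lambda^{2}+\lambda^{2}=m$) applies, and check that the type $\mathrm{(D)}$ factor forced into $L^{2}_{x}$ with an $\ell^{1}$-loss is precisely the one for which $\epsilon_{0}<\min(s-\frac12,\alpha-1)$ still leaves a positive residual power; a minor technical point is to track the $\chi$-localization carefully so that the $\langle\kappa_{i}\rangle$-weights are not wasted.
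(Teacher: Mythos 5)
Your proposal follows essentially the same route as the paper's proof: duality, Fourier-side profiles $\mathbf{a}^{(i)}$ with the type (C) bounds from $\mathrm{Loc}(N)$ and the type (D) bound from Lemma \ref{lem:locN-c} through an $\ell^{1}$-loss, the divisor bound on the non-paired pair $(n_{0},n_{1})$ (difference of squares) resp. $(n_{1},n_{3})$ (sum of squares) to extract $N^{-(\alpha-\frac12)+\epsilon}$, Cauchy--Schwarz for the paired case \eqref{eq:39}, and H\"older in $\vec\kappa$. The case breakdown and the role of the hypothesis $\epsilon_{0}<\min(s-\frac12,\alpha-1)$ match the paper's argument.
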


Finally, we handle the interactions in Group III where $\psi_{2N}^{\dag}$ is at the second position and the other terms are of generation $\leq N$. 

\begin{lemme}[Remaining interactions with $\psi_{2N}^{\dag}$ in second position]\label{lem:psi2} 
	Suppose that for $i\in\{1,3\}$,  $N_{i}\leq N$ and $v_{N_{i}}$ is of type $\mathrm{(C)}$ or $\mathrm{(D)}$ with at least one of type $\mathrm{(D)}$. Moreover, for type $\mathrm{(D)}$ terms we suppose that $N_{i}\leq\frac{1}{50}N$. For all $b_{1}>\frac{1}{2}$ and $0<\epsilon<\frac{1}{2}\min(s-\frac{1}{2},\alpha-1)$, we have
	\begin{equation}
		\label{eq:40}
		\|\chi\mathcal{N}(v_{N_{1}},\psi_{2N}^{\dag},v_{N_{3}})\|_{X^{0,-b_{1}}} \lesssim_{\epsilon} N^{-(\alpha-\frac{1}{2})+\epsilon}(N_{1}N_{3})^{-\epsilon}\,.
	\end{equation}
\end{lemme}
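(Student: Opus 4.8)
\textbf{Proof plan for Lemma~\ref{lem:psi2}.} The plan is to argue by duality in the physical space, in the same spirit as Lemma~\ref{lem:Dmax} and Lemma~\ref{lem:no-pairing}, but now placing the high-frequency colored term $\psi_{2N}^{\dag}$ (which sits at the second position and therefore carries the complex conjugate) in an $L_{t}^{q}L_{x}^{\infty}$-type norm controlled by the bound \eqref{eq:p-psi} of $\mathrm{Loc}(N)$, namely $\|\psi_{2N}^{\dag}\|_{X_{q,q,\infty}^{0,\gamma}}\lesssim RT^{-\gamma+\frac{1}{q'}}N^{-(\alpha-\frac{1}{2})+\frac{1}{q}+2\delta}$, which after the embedding $X_{q,\infty}^{0,\gamma}(E_n)\hookrightarrow C_{t}L^{\infty}(E_{n})$ of Lemma~\ref{Embedding} and a dyadic summation over $n\approx 2N$ gives an $L_{t}^{q}L_{x}^{\infty}$ control with a gain $N^{-(\alpha-\frac{1}{2})+\epsilon}$. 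As in the previous lemmas the Wick renormalization plays no role here (the frequencies $N_{1},N_{3}\leq N$ are strictly smaller than $2N$, so there is no pairing among them and the output frequency, forced to be $\lesssim N$, cannot coincide with the second one for most configurations), so I would replace $\mathcal{N}(\cdot,\cdot,\cdot)$ by the raw product $f_{1}\overline{f_{2}}f_{3}$, and reduce to estimating
\[
\Big|\int_{\R}\int_{\mathbb{S}^{2}}\chi(t)\,v_{N_{1}}(t,x)\,\overline{\psi_{2N}^{\dag}(t,x)}\,v_{N_{3}}(t,x)\,\overline{v(t,x)}\,\mathrm{d}t\,\mathrm{d}x\Big|
\]
for test functions $v$ with $\|v\|_{X^{0,b_{1}}}\leq 1$.

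First I would apply Hölder in time and space: put $\overline{\psi_{2N}^{\dag}}$ in $L_{t}^{q}L_{x}^{\infty}$, put $\overline{v}$ in $L_{t}^{\infty}L_{x}^{2}$ (controlled by $\|v\|_{X^{0,b_{1}}}$ via Lemma~\ref{Embedding}), and distribute the remaining integrability between $v_{N_{1}}$ and $v_{N_{3}}$. For a factor $v_{N_{i}}$ of type $\mathrm{(C)}$ I would use the $L_{t}^{q}L_{x}^{\infty}$ bound coming from the definition of type $\mathrm{(C)}$ together with Lemma~\ref{Embedding}; for a factor of type $\mathrm{(D)}$ I would use the semiclassical Strichartz estimate of Proposition~\ref{prop:str}, $\|v_{N_{i}}^{\dag}\|_{L_{t}^{p_{i}}L_{x}^{\infty}}\lesssim \|v_{N_{i}}^{\dag}\|_{X^{1-\frac{1}{p_{i}},b}}$ with $p_{i}>2$ close to $2$, and then invoke Lemma~\ref{lem:locN-c} to convert the bound $\|v_{N_{i}}^{\dag}\|_{X^{s-2\epsilon_{0},b}}\lesssim R^{-1}N_{i}^{-2\epsilon_{0}}$ into a decaying power of $N_{i}$. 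The exponents $p_{1},p_{3}$ are chosen so that $\tfrac{1}{p_{1}}+\tfrac{1}{p_{3}}<1$ (leaving room for the $L_{t}^{q}$ factor of $\psi_{2N}^{\dag}$ with $\tfrac{1}{p_{1}}+\tfrac{1}{p_{3}}+\tfrac{1}{q}<1$, possible since $q$ is large), and so that $1-\tfrac{1}{p_{i}}<s-2\epsilon_{0}$, which is possible because $\epsilon_{0}<\min(s-\tfrac{1}{2},\alpha-1)$ and $s>\tfrac{1}{2}$. Collecting the gains yields the factor $RN^{-(\alpha-\frac{1}{2})+\epsilon}(N_{1}N_{3})^{-\epsilon_{0}}$ (absorbing the $T^{-\gamma+\frac{1}{q'}}$ factor of $\|\psi_{2N}^{\dag}\|$ and the powers $N^{\frac{1}{q}+2\delta}$ into the $N^{\epsilon}$, at the cost of slightly shrinking $\epsilon_{0}$), which is exactly \eqref{eq:40}.

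There is one subtlety that I would need to address separately: when \emph{both} $v_{N_{1}}$ and $v_{N_{3}}$ are of type $\mathrm{(C)}$, the Hölder argument above still works (one simply uses two $L_{t}^{q}L_{x}^{\infty}$ factors plus the $L_{t}^{q}L_{x}^{\infty}$ factor of $\psi_{2N}^{\dag}$, with $\tfrac{3}{q}<1$), but this is excluded by hypothesis since at least one factor is of type $\mathrm{(D)}$; so the genuine cases are $\mathrm{(C)(C)(D)}$-type in my labelling (one of $v_{N_{1}},v_{N_{3}}$ is type $\mathrm{(D)}$) and $\mathrm{(D)(C)(D)}$-type (both are type $\mathrm{(D)}$). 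In the latter case one uses two Strichartz factors, and the constraint $\tfrac{1}{p_{1}}+\tfrac{1}{p_{3}}+\tfrac{1}{q}<1$ with $p_{i}$ close to $2$ forces $q$ to be large, which is consistent with our choice of parameters in \eqref{smallparameters}. The main obstacle, and the only place requiring care, is bookkeeping the $\epsilon$ and $\epsilon_{0}$ losses: the $N^{\frac{1}{q}+2\delta}$ and $T^{-\gamma+\frac{1}{q'}}$ growth from \eqref{eq:p-psi}, the $\epsilon$ loss allowed in the statement, and the requirement $\epsilon_{0}<\min(s-\frac{1}{2},\alpha-1)$ must all be reconciled — but since $\delta,\tfrac{1}{q}$ are much smaller than $s-\tfrac{1}{2}$ and $\alpha-1$ by the hierarchy in the dictionary section, this is routine. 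As in Lemma~\ref{lem:no-pairing}, I would also note at the outset that by the high-frequency assumption of $\mathrm{Loc}(N)$ the type $\mathrm{(D)}$ terms may be taken truncated at frequency $\lesssim N$, so that the output frequency is $\lesssim N$ and the sum over $n$ in the first slot runs over $\mathcal{O}(N)$ values, consistent with the gain claimed.
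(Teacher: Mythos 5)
Your overall architecture (duality, Strichartz plus Lemma \ref{lem:locN-c} for the type $\mathrm{(D)}$ factors, frequency truncation of the $\mathrm{(D)}$ terms) is fine, but the treatment of the key factor $\psi_{2N}^{\dag}$ contains a fatal quantitative gap. You claim that \eqref{eq:p-psi} together with the embedding of Lemma \ref{Embedding} and ``a dyadic summation over $n\approx 2N$'' yields $\|\psi_{2N}^{\dag}\|_{L_{t}^{q}L_{x}^{\infty}}\lesssim N^{-(\alpha-\frac{1}{2})+\epsilon}$. This is not true: the indices $n\approx 2N$ are the $\sim N$ \emph{consecutive} integers in $(N,2N]$, not a dyadic family, and \eqref{eq:p-psi} only controls the $\ell_{n}^{q}$-norm of the pieces $\pi_{n}\psi_{2N}^{\dag}$. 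Summing them by H\"older costs $N^{1/q'}\approx N$, and even the sharp probabilistic bound (square-root cancellation over the $\sim N$ independent blocks, i.e.\ \eqref{eq:wick} in Lemma \ref{lem:wick}) only gives $\|\psi_{2N}^{\dag}\|_{L_{t}^{q}L_{x}^{\infty}}\lesssim RN^{-(\alpha-1)+\delta}=RN^{-(\alpha-\frac{1}{2})+\frac{1}{2}+\delta}$. Any argument that first takes a spatial Lebesgue norm of the \emph{full} sum $\psi_{2N}^{\dag}$ is therefore short by at least $N^{\frac{1}{2}}$ of the target \eqref{eq:40}; for $\alpha$ close to $1$ it produces $N^{0-}$ instead of the $N^{-\frac{1}{2}-}$ decay that is indispensable to close the fixed point for $w_{2N}^{\dag}$ at regularity $s>\frac{1}{2}$ in Proposition \ref{multilinear:det-2}.

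The missing half power cannot be recovered inside a pure physical-space H\"older scheme: one must keep the decomposition $\psi_{2N}^{\dag}=\sum_{n_{2}\approx 2N}\pi_{n_{2}}\psi_{2N}^{\dag}$ and show that, for fixed output frequency and modulations, only $O(N^{\epsilon})$ values of $n_{2}$ actually contribute. This is exactly what the paper's proof does: it works on the Fourier side, uses the resonance weight $\widehat{\chi}(\widetilde{\kappa}-\Omega(\vec{n}))$ together with the divisor bound $\sup_{m}\#\{n_{0},n_{2}\lesssim N:\lambda_{n_{0}}^{2}+\lambda_{n_{2}}^{2}=m\}\lesssim_{\epsilon}N^{\epsilon}$ (this is where the hypotheses $N_{1}\leq N$, resp.\ $N_{1}\leq\frac{1}{50}N$ for $\mathrm{(D)}$ terms, guaranteeing $n_{1}\neq n_{2}$, are actually used), and can consequently pay only $\|\mathbf{a}_{n_{2}}^{(2)}\|_{\ell_{n_{2}}^{\infty}}$ for the colored factor, so that each piece contributes its individual weight $\lambda_{n_{2}}^{-(\alpha-\frac{1}{2})}\approx N^{-(\alpha-\frac{1}{2})}$ with essentially no loss from the sum over $n_{2}$. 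Your plan never invokes the time oscillations (nor any probabilistic decoupling in $\omega$), so the step producing the factor $N^{-(\alpha-\frac{1}{2})+\epsilon}$ is unjustified and, as stated, false.
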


\begin{proof} The proof is similar to the proof of Lemma \ref{lem:no-pairing}. We reduce to the case when the terms of type are truncated at frequency $4N$ and the output frequency is less than $100N$. As in the proof of Lemma \ref{lem:Dmax}, it suffices to replace the nonlinearity $\mathcal{N}_{\cdots}$ by the usual multiplication, subjecting to the specific constraint.
	\medskip
	
	\noindent{$\bullet$ \bf Case 1:} (C)(C)(D). By duality and Cauchy--Schwarz in $x\in\S^{2}$, 
	\begin{multline}
		\|\chi\mathcal{N}(\psi_{N_{1}}^{\dag},\psi_{2N}^{\dag},\Pi_{2N_{3}}z_{N_{3}}^{\dag})\|_{X^{0,-b_{1}}}
		\lesssim
		\sum_{n_{0}\lesssim N, n_{1}\approx N_{1}, n_{2}\approx 2N, n_{3}\lesssim N_{3}}
		\int_{\R^{4}}d\vec{\kappa}
		\\
		(\lambda_{n_{1}}\lambda_{n_{2}})^{-(\alpha-\frac{1}{2})}
		\frac{|\widehat{\chi}(\widetilde{\kappa}-\Omega(\vec{n}))|}
		{\langle\kappa_{0}\rangle^{b_{1}}\langle \kappa_{1}\rangle^{\gamma}\langle\kappa_{2}\rangle^{\gamma}\langle\kappa_{3}\rangle^{b}}
		\prod_{i=0}^{3}\mathbf{a}_{n_{i}}^{(i)}(\kappa_{i})\,,
	\end{multline}
	where, given a function $v\in X^{0,b_{1}}$ with $\|v\|_{X^{0,b_{1}}}\leq1$, we set
	\begin{align*}
		\mathbf{a}_{n_{0}}^{(0)}(\kappa_{0})
		&:= \langle\kappa_{0}\rangle^{b_{1}}
		\|\pi_{n_{0}}\widehat{v}_{n_{0}}(\kappa_{0}-\lambda_{n_{0}}^{2})\|_{L_{x}^{2}}
		\,,
		\quad
		\mathbf{a}_{n_{1}}^{(1)}(\kappa_{1})
		:=\langle\kappa_{1}\rangle^{\gamma}
		\|\widehat{e_{n_{1}}^{N_{1},\dag}}(\kappa_{1}-\lambda_{n_{1}}^{2})\|_{L_{x}^{\infty}}\,,
		\\
		\mathbf{a}_{n_{2}}^{(2)}(\kappa_{2})
		&:=\langle\kappa_{2}\rangle^{\gamma}\|
		\widehat{e_{n_{2}}^{2N,\dag}}(\kappa_{2}-\lambda_{n_{2}}^{2})\|_{L_{x}^{\infty}}\,,
		\quad
		\mathbf{a}_{n_{3}}^{(3)}(\kappa_{3})
		:=
		\langle \kappa_{3}\rangle^{b}
		\|\pi_{n_{3}}
		\widehat{z_{N_{3}}^{\dag}}(\kappa_{3}-\lambda_{n_{3}}^{2})\|_{L_{x}^{2}}\,.
	\end{align*}
	 Under our assumptions, we have that for all $\epsilon>0$,
	\begin{multline*}
		\|\mathbf{a}_{n_{0}}^{(0)}(\kappa_{0})\|_{\ell_{n_{0}}^{2}L_{\kappa_{0}}^{2}} 
		= \|v\|_{X^{0,b_{1}}}\leq1\,,
		\quad 
		\|\mathbf{a}_{n_{i}}^{(i)}(\kappa_{i})\|_{L_{\kappa_{i}}^{q}\ell_{n_{1}}^{\infty}}
		\lesssim R\quad \text{for}\ i\in\{1,2\}\,,
		\\ 
		\|\mathbf{a}_{n_{3}}^{(3)}(\kappa_{3})\|_{L_{\kappa_{3}}^{2}\ell_{n_{3}}^{1}} 
		\lesssim_{\epsilon} N_{3}^{-(s-\frac{1}{2})+\epsilon}R^{-1}\,,
	\end{multline*}
	where the last estimate was proved in \eqref{eq:n3}. We now fix $n_{3}$ and $\vec{\kappa}$ and we sum over $n_{0},n_{1},n_{2}$ in the same spirit as in the proof of Lemma \ref{lem:no-pairing}, but  in this case we will sum over $n_{0}$,$n_{1}$ using Schur's test and avoid to sum over $n_{2}$ by using the modulation.  The assumption $N_{1}\leq 2N$ ensures the non-pairing condition $n_{1}\neq n_{2}$. Moreover, we use the divisor bound 
	\[
	\sup_{m}\#\{n_{0},n_{2}\lesssim N\ |\ \lambda_{n_{0}}^{2}+\lambda_{n_{2}}^{2}=m\} \lesssim_{\epsilon}N^{\epsilon}\,.
	\]

	\noindent$\bullet$ {\bf Case 2:} (D)(C)(D). In this case we assume that $N_{1}\leq \frac{1}{50}N$ to ensure the non-pairing condition $n_{1}\neq n_{2}$ (otherwise this case falls into the scope of Group I and can be bounded by Lemma \ref{lem:Dmax}). The rest of the analysis is as in the previous cases and we do not give further details. 
\end{proof}

We are now ready to prove the trilinear deterministic estimates claimed in Proposition \ref{multilinear:det-2}. 

\begin{proof}[Proof of Proposition \ref{multilinear:det-2}] 
		Cases when there is a term  $z_{2N}^{\dag}$ follow from Lemma \ref{lem:Dmax}, Lemma \ref{lem:locN-c} and an interpolation argument (using \cite[Proposition 1.2.1]{DeSz} for instance, in order to have a crude estimate for the $X^{0,0}$-norm but loosing arbitrarily small powers of $N_{2}$ only). 
		\medskip
		
		The other cases are source terms, so we can afford to lose a small power of $N$ in the interpolation. These terms are covered by Lemma \ref{lem:wick-b}, \ref{lem:no-pairing} and \ref{lem:psi2} (always in the case $\alpha>1$ under the assumption~\eqref{smallparameters3} on $\sigma$), where we collected before the statement of Proposition \ref{multilinear:det-1} the different configurations that contribute to the equation \eqref{eq:wNdag} satisfied by $w_{2N}^{\dag}$.
\end{proof}

	\appendix

	\section{Banach-valued Fourier-Lebesgue function spaces} 
	Let $\mathcal{X}$ be a Banach space and $\mathbf{F}:\R\rightarrow \mathcal{X}$ a Schwartz function such that
	\begin{align}\label{def:FLabstract} \|\mathbf{F}\|_{\mathcal{F}L_q^{\gamma}(\mathcal{X})}:=\|\lg\tau\rg^{\gamma}\widehat{\mathbf{F}}(\tau)\|_{L^q(\R_{\tau};\mathcal{X})}<\infty.
	\end{align}
	The space $\mathcal{F}L_q^{\gamma}(\mathcal{X})$ is the closure of $\mathcal{X}$-valued Schwartz functions $\mathcal{S}(\R;\mathcal{X})$ with respect to the topology defined via the above norm. We will state and prove estimates for a general Banach space $\mathcal{X}$ in this appendix, but one should keep in mind that in our application, $\mathcal{X}=L^p(E_n)$ for some $1\leq p\leq \infty$ which is a finite (but large as $n\rightarrow\infty$) dimensional space.

	\subsection{Time localization properties}
	\begin{proposition}\label{timelocalization}
		Let $\mathcal{X}$ be a Banach space, $1<q<\infty, \gamma\in\R$, and $\varphi\in \mathcal{S}(\R)$.
		\begin{enumerate}
		\item   For any $\mathbf{F}\in\mathcal{F}L_q^{\gamma}(\mathcal{X})$,
		\[ \|\varphi(t)\mathbf{F}\|_{\mathcal{F}L_q^{\gamma}(\mathcal{X})}\lesssim \|\mathbf{F}\|_{\mathcal{F}L_q^{\gamma}(\mathcal{X})}.
		\]
		\item For $0<T\leq 1$, denote $\varphi_T(t)=\varphi(T^{-1}t)$ . For $0\leq \gamma\leq \gamma_1<1$, we have
		\begin{align*} 
		\|\varphi_T(t)\mathbf{F}\|_{\mathcal{F}L_q^{\gamma}(\mathcal{X})}
		&\lesssim T^{\gamma_1-\gamma}\|\mathbf{F}\|_{\mathcal{F}L_{q}^{\gamma_1}(\mathcal{X})}
		\quad \text{ if }\gamma_1<\frac{1}{q'}\,,
		\intertext{and}
		 \|\varphi_T(t)\mathbf{F}\|_{\mathcal{F}L_q^{\gamma}(\mathcal{X})}
		 &\lesssim T^{\gamma_1-\gamma}\log\big(1+\frac{1}{T}\big)\|\mathbf{F}\|_{\mathcal{F}L_{q}^{\gamma_1}(\mathcal{X})}
		 \quad \text{ if }\gamma_1=\frac{1}{q'},
		\intertext{and}
		\|\varphi_T(t)\mathbf{F}\|_{\mathcal{F}L_q^{\gamma}(\mathcal{X})}
		&\lesssim T^{\frac{1}{q'}-\gamma}\|\mathbf{F}\|_{\mathcal{F}L_{q}^{\gamma_1}(\mathcal{X})}
		\quad \text{ if }\gamma_1>\frac{1}{q'}\,.
		\end{align*}
		\item If in addition $\mathbf{F}(0)=0$, then for any $0<\gamma\leq \gamma_1<1$ (with possibly $\gamma_1>\frac{1}{q'}$), 
		\[ \|\varphi_T(t)\mathbf{F}\|_{\mathcal{F}L_q^{\gamma}(\mathcal{X})}\lesssim T^{\gamma_1-\gamma}\|\mathbf{F}\|_{\mathcal{F}L_{q}^{\gamma_1}(\mathcal{X})}.
		\]
		\end{enumerate}
	\end{proposition}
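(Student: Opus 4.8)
\textbf{Plan of proof for Proposition \ref{timelocalization}.}
The statement is a collection of elementary but slightly delicate Fourier-analytic facts about the effect of multiplication by a (possibly rescaled) Schwartz cutoff on Fourier-Lebesgue norms. Since the space $\mathcal{X}$ enters only through the codomain of the Bochner integral, all estimates will in fact reduce to scalar-valued estimates: the key point is that multiplication on the time side becomes convolution on the frequency side, $\widehat{\varphi_T \mathbf{F}}(\tau) = (\widehat{\varphi_T}\ast\widehat{\mathbf{F}})(\tau)$, and the triangle inequality in $\mathcal{X}$ together with Minkowski's inequality allows one to pull the $\mathcal{X}$-norm inside. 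So I would first record the reduction: it suffices to bound, for scalar weights, quantities of the form $\big\|\langle\tau\rangle^{\gamma}\int K_T(\tau-\sigma)\langle\sigma\rangle^{-\gamma_1}h(\sigma)\,d\sigma\big\|_{L^q_\tau}$ by $C(T)\|h\|_{L^q}$, where $K_T(\tau) = T\widehat{\varphi}(T\tau)$ is an $L^1$-normalized bump concentrated at scale $T^{-1}$.

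For part (1) ($T=1$), this is immediate: $\widehat{\varphi}$ is Schwartz, so $\langle\tau\rangle^{\gamma}|\widehat{\varphi}(\tau-\sigma)| \lesssim \langle\sigma\rangle^{\gamma}\langle\tau-\sigma\rangle^{|\gamma|+10}|\widehat{\varphi}(\tau-\sigma)|$ by Peetre's inequality, and the remaining kernel is in $L^1$, so Young's inequality in $L^q$ closes it. For part (2), the dangerous regime is $|\tau-\sigma|\gtrsim |\sigma|$ where we must move the weight $\langle\tau\rangle^{\gamma}$ onto the kernel; after doing so one is left estimating $\int T\langle T(\tau-\sigma)\rangle^{-M}\langle\tau-\sigma\rangle^{\gamma}\langle\sigma\rangle^{-\gamma_1}|h(\sigma)|\,d\sigma$. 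Splitting the $\sigma$-integral according to whether $|\tau-\sigma|\le 1$ or $\ge 1$, applying Hölder with exponents $q,q'$ in the far region, one sees the relevant scalar quantity is $\big(\int_{|\mu|\ge 1} T^{q'}\langle T\mu\rangle^{-Mq'}\langle\mu\rangle^{(\gamma-\gamma_1)q'}\,d\mu\big)^{1/q'}$ (up to lower-order pieces), and the scaling $\mu\mapsto \mu/T$ produces the factor $T^{\gamma_1-\gamma}$ together with an integral $\int \langle\nu\rangle^{-Mq'}|\nu/T|^{(\gamma-\gamma_1)q'}\,d\nu$ over $|\nu|\ge T$, which is $O(1)$ if $\gamma_1<1/q'$, $O(\log(1+1/T))$ if $\gamma_1=1/q'$, and produces an extra $T^{-(\gamma_1-1/q')}$ (hence the net $T^{1/q'-\gamma}$) if $\gamma_1>1/q'$. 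I would organize this as a single lemma handling all three subcases by tracking the exponent $(\gamma-\gamma_1)q' - Mq'$ near $\nu\sim T$.

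For part (3), the improvement uses the vanishing $\mathbf{F}(0)=0$: write $\varphi_T(t)\mathbf{F}(t) = \varphi_T(t)\int_0^t \mathbf{F}'(s)\,ds$, or equivalently work at the level of $\widehat{\mathbf{F}}$ with the constraint $\int\widehat{\mathbf{F}}(\tau)\,d\tau = 0$, which lets one subtract $\widehat{\varphi_T}(\tau)$ times this vanishing mean and gain one power of $T\mu$ near $\mu = 0$, effectively converting the borderline/bad regime into the good one $\gamma_1<1/q'$ and recovering the clean $T^{\gamma_1-\gamma}$. Concretely I would prove it by first treating $\varphi_T\mathbf{F} - \varphi_T(0)\mathbf{F}$ trivially and then using that $\mathbf{F}(0)=0$ kills the remaining term, or more robustly by the identity $\widehat{\varphi_T\mathbf{F}}(\tau) = \int(\widehat{\varphi_T}(\tau-\sigma)-\widehat{\varphi_T}(\tau))\widehat{\mathbf{F}}(\sigma)\,d\sigma$ and $|\widehat{\varphi_T}(\tau-\sigma)-\widehat{\varphi_T}(\tau)|\lesssim \min(T|\sigma|, 1)\cdot(\text{rapidly decaying in }T\tau,T(\tau-\sigma))$, which supplies the extra $T|\sigma|$ factor exactly where it is needed.

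The main obstacle is purely bookkeeping: keeping the three weight regions (near-diagonal $|\tau-\sigma|\lesssim\max(1,|\sigma|)$, far with $|\tau|\sim|\tau-\sigma|$, far with $|\tau|\sim|\sigma|$) straight while tracking the precise power of $T$ and the logarithmic borderline, and making sure Minkowski's integral inequality is applied in the right order so that the $\mathcal{X}$-norm never obstructs the scalar estimates. None of the individual steps is hard, but the critical case $\gamma_1 = 1/q'$ requires care to extract exactly the $\log(1+1/T)$ and no more. I would prove this as \ref{timecutoffappendix} in the appendix with the scalar kernel lemma isolated first, then quote it three times.
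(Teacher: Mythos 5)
Parts (1) and (3) of your plan are sound and essentially coincide with the paper's argument: (1) is Peetre plus Minkowski/Young, and for (3) the identity $\widehat{\varphi_T\mathbf{F}}(\tau)=\int\big(\widehat{\varphi_T}(\tau-\sigma)-\widehat{\varphi_T}(\tau)\big)\widehat{\mathbf{F}}(\sigma)\,d\sigma$, valid because $\int\widehat{\mathbf{F}}=2\pi\mathbf{F}(0)=0$, is exactly the mechanism used (the paper applies the vanishing in the equivalent form $\int_{|\eta|<1/T}\widehat{\mathbf{F}}=-\int_{|\eta|\geq1/T}\widehat{\mathbf{F}}$ and then H\"older over $|\eta|\geq 1/T$, which converges precisely because $\gamma_1>\tfrac{1}{q'}$).

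Part (2), however, has a genuine gap. Your reduction replaces $\langle\tau\rangle^{\gamma}\langle\sigma\rangle^{-\gamma_1}$ by a weight $\langle\mu\rangle^{(\gamma-\gamma_1)q'}$ in the convolution variable $\mu=\tau-\sigma$ alone. This domination fails exactly in the regime $|\sigma|\ll 1/T\lesssim|\tau|$ (low input frequency, where $\langle\sigma\rangle^{-\gamma_1}\approx 1$ while $\langle\mu\rangle^{-\gamma_1}\approx\langle\tau\rangle^{-\gamma_1}$ is small), and that regime is precisely where the trichotomy lives. Two symptoms: first, your displayed integral places the critical threshold at $\gamma_1-\gamma=\tfrac{1}{q'}$ rather than at $\gamma_1=\tfrac{1}{q'}$; second, when $\gamma_1>\tfrac{1}{q'}$ your kernel integral is $O(T^{\gamma_1-\gamma})$ after rescaling, which is \emph{false} without the vanishing condition — take $\mathbf{F}=\varphi$ with $\varphi(0)\neq0$, for which $\widehat{\varphi_T\mathbf{F}}(\tau)\approx 2\pi\varphi(0)\,T\widehat{\varphi}(T\tau)$ and hence $\|\varphi_T\mathbf{F}\|_{\mathcal{F}L_q^{\gamma}}\sim T^{\frac{1}{q'}-\gamma}\gg T^{\gamma_1-\gamma}$. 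The correct argument must keep the two weights decoupled: split the input frequency at $|\eta|=1/T$; for $|\eta|\geq 1/T$ run a Schur test on the kernel $\widehat{\varphi}(T(\lambda-\eta))\,T\langle\lambda\rangle^{\gamma}\langle\eta\rangle^{-\gamma_1}\mathbf{1}_{|\eta|\geq1/T}$ (a pointwise H\"older in $\sigma$ followed by $L^q_\tau$ does not close here when $\gamma_1-\gamma\leq\tfrac1q$, e.g.\ $\gamma=\gamma_1$); for $|\eta|<1/T$ subtract $\widehat{\varphi_T}(\lambda)$ from the kernel to gain $T|\eta|$, and isolate the rank-one remainder $\widehat{\varphi_T}(\lambda)\int_{|\eta|<1/T}\widehat{\mathbf{F}}(\eta)\,d\eta$, whose norm is the product $\|\langle\lambda\rangle^{\gamma}\widehat{\varphi_T}\|_{L^q_\lambda}\cdot\|\langle\eta\rangle^{-\gamma_1}\mathbf{1}_{|\eta|<1/T}\|_{L^{q'}_\eta}\sim T^{\frac{1}{q'}-\gamma}\cdot\min\big(T^{\gamma_1-\frac{1}{q'}},\log^{1/q'}(1+\tfrac1T),1\big)$; this product is the sole source of the three cases and of the need for $\mathbf{F}(0)=0$ in part (3).
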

\begin{remarque}\label{remtimelocalization}
	It will be clear from the proof that all implicit bounds in Proposition~\ref{timelocalization} can be chosen to depend on
	\[
	\|\varphi\|_{W_t^{10,1}(\R)}:= \sum_{k=0}^{10}\|\varphi^{(k)}\|_{L^1(\R)}\,.
	\]
\end{remarque}	
	\begin{proof}
		(1)	By the Fourier inversion formula,
		\[ \varphi(t)\mathbf{F}(t)=\frac{1}{2\pi}\int_{\R}\widehat{\varphi}(\eta)\mathbf{F}(t)\e^{it\eta}d\eta\,.
		\]
		Since $\lg\tau\rg^{\gamma}\lesssim \lg\eta\rg^{\gamma}\lg\tau-\eta\rg^{\gamma}$, we deduce that
		\[
		\|\varphi(t)\mathbf{F}(t)\|_{\mathcal{F}L_q^{\gamma}(\mathcal{X})}\lesssim \int_{\R}|\widehat{\varphi}(\eta)|
		\lg\eta\rg^{\gamma}\|
		\lg\tau-\eta\rg^{\gamma}\widehat{\mathbf{F}}(\tau-\eta)\|_{L_{\tau}^q(\mathcal{X})}d\eta\leq \|\varphi\|_{\mathcal{F}L_{1}^{\gamma}(\R)} \|\mathbf{F}\|_{\mathcal{F}L_q^{\gamma}(\mathcal{X})}.
		\]
		Inequality (1) then follows from the fact that $\varphi\in\mathcal{S}(\R)$.
\medskip

		(2) Let us prove the second inequality with time cutoff $\varphi_T(t)$. 
%
		We split $\mathbf{F}=\mathbf{F}_1+\mathbf{F}_2$ with
		\[ \widehat{\mathbf{F}}_1(\tau)=\mathbf{1}_{|\tau|\geq\frac{1}{T}}\widehat{\mathbf{F}}(\tau),\quad \widehat{\mathbf{F}}_2(\tau)=\mathbf{1}_{|\tau|<\frac{1}{T}}\widehat{\mathbf{F}}(\tau),
		\] 
		where $\widehat{\mathbf{F}}(\tau)$ is the time-Fourier transform of $\mathbf{F}$.
		It suffices to show that
		\begin{align}\label{appendix2-2}
			\tag{a}
			\|\lg\tau\rg^{\gamma}(\widehat{\varphi_T}\ast\widehat{\mathbf{F}}_1)(\tau)\|_{L_{\tau}^q(\mathcal{X})}
			&\lesssim T^{\gamma_1-\gamma}\|\lg\eta\rg^{\gamma_1}\widehat{\mathbf{F}}(\eta) \|_{L_{\eta}^q(\mathcal{X})}\,,\\
			\label{appendix2-22}
			\tag{b}
			\|\lg\tau\rg^{\gamma}(\widehat{\varphi_T}*\widehat{\mathbf{F}}_2)(\tau)\|_{L_{\tau}^q(\mathcal{X})}
			&\lesssim T^{\gamma_1-\gamma}\|\lg\eta\rg^{\gamma_1}\widehat{\mathbf{F}}(\eta) \|_{L_{\eta}^q(\mathcal{X})}.
		\end{align}
		To prove \eqref{appendix2-2}, we note that for fixed $\tau\in\R$,
		\[
			\|\lg\tau\rg^{\gamma}(\widehat{\varphi_T}*\widehat{\mathbf{F}}_1)(\tau)
			\|_{\mathcal{X}}
			=\big
			\|
			\int_{\R}K_T^+(\tau,\eta)\lg \eta\rg^{\gamma_1}\widehat{\mathbf{F}}(\eta)d \eta
			\big
			\|_{\mathcal{X}} 
			\leq 
			\int_{\R}|K_T^+(\tau,\eta)|\lg \eta\rg^{\gamma_1}\|\widehat{\mathbf{F}}(\eta)
			\|_{\mathcal{X}}d \eta\,,
		\]
		where
		\[
		K_T^+(\lambda,\eta)
		=\widehat{\varphi}(T(\lambda-\eta))T\frac{\lg\lambda\rg^{\gamma}}{\lg \eta\rg^{\gamma_1}}\mathbf{1}_{|\eta|\geq \frac{1}{T}}\,.
		\]
		By Young's inequality, it suffices to show that there exists $C>0$, not depending on~$T$, such that 
		\[
			\sup_{\lambda\in\R}\int_{\R}|K_T^+(\lambda,\eta)|d \eta
			\ +\ 
			\sup_{\eta\in\R}\int_{\R}|K_T^+(\lambda,\eta)|d\lambda
			\leq CT^{\gamma_1-\gamma }\,.
		\]
		One can verify these two inequalities by direct computation. Here, we provide an informal explaination. Since $\widehat{\varphi}$ is a Schwartz function,  $|\lambda-\eta|$ is essentially bounded by $O(\frac{1}{T})$. Due to the fact that $|\eta|\geq \frac{1}{T}$, $\lambda$ is also essentially constrained in the region $ |\lambda| \sim |\tau|\gtrsim \frac{1}{T}$. Hence, when freezing one of the two variables $\lambda$ or $\eta$ the other one is restricted to an interval of length $\lesssim\frac{1}{T}$. We deduce that two integrations are bounded by $O(1)T^{\gamma_1-\gamma}$. We note that there is no constraint about $0\leq \gamma\leq \gamma_1<1$ for this contribution. 
		\medskip
		
		Next we prove \eqref{appendix2-22}. 
		By splitting $\widehat{\varphi_T}(\lambda-\eta)=[\widehat{\varphi_T}(\lambda-\eta)-\widehat{\varphi_T}(\lambda)]+\widehat{\varphi_T}(\lambda)$, we have
		\begin{align}\label{splitingF2}
			&\|(\widehat{\varphi_T}\ast\widehat{\mathbf{F}}_2)(\lambda)\|_{\mathcal{X}}=\big\|\int_{|\eta|<\frac{1}{T}} \wh{\varphi_T}(\lambda-\eta)\widehat{\mathbf{F}}(\eta)d \eta
			\big\|_{\mathcal{X}}\notag \\
			\leq &
			\big\|
			\int_{|\eta|<\frac{1}{T}}[\widehat{\varphi_T}(\lambda-\eta)-\widehat{\varphi_T}(\lambda)]\widehat{\mathbf{F}}(\eta)d\eta
			\big\|_{\mathcal{X}}+|\widehat{\varphi_T}(\lambda)|\big\|\int_{|\eta|<\frac{1}{T}}\widehat{\mathbf{F}}(\eta)d\eta  
			\big\|_{\mathcal{X}}.
		\end{align}
		Since $\widehat{\varphi_T}(\lambda)=T\widehat{\varphi}(T\lambda)$, we have 
		\[
		[\widehat{\varphi_T}(\lambda-\eta)-\widehat{\varphi_T}(\lambda)]\mathbf{1}_{T|\eta|<1}=T\cdot \mathcal{O}(|T\eta|\lg T\lambda\rg^{-100}).
		\]
		By H\"older, we deduce that
		\begin{multline*}  
		\big|
		\int_{|\eta|<\frac{1}{T}}\|\widehat{\mathbf{F}}(\eta)\|_{\mathcal{X}}[\widehat{\varphi_T}(\lambda-\eta)-\widehat{\varphi_T}(\lambda)]d\eta 
		\big| 
			\leq T^2\lg T\lambda\rg^{-100}
			\|\lg\eta\rg^{\gamma_1}\widehat{\mathbf{F}}(\eta)
			\|_{L_{\eta}^q(\mathcal{X})} 
			\|\frac{|\eta|}{\lg\eta\rg^{\gamma_1}}
			\|_{L^{q'}(|\eta|\leq \frac{1}{T})} 
			\\
			\lesssim T^{\gamma_1+1-\frac{1}{q'}}\lg T\lambda\rg^{-100}\|\mathbf{F}\|_{\mathcal{F}L_q^{\gamma_1}(\mathcal{X})}.
		\end{multline*}
		Multiplying by $\lg\lambda\rg^{\gamma}$ and taking the $L_{\lambda}^q$ norm of the above quantity, we obtain the bound
		\[ 
		\big\|
		\lg\lambda\rg^{\gamma}
		\int_{|\eta|<\frac{1}{T}}[\widehat{\varphi_T}(\lambda-\eta)-\widehat{\varphi_T}(\lambda)]\widehat{\mathbf{F}}(\eta)d\eta
		\big\|_{L_{\lambda}^q(\mathcal{X})}
		\lesssim
		T^{\gamma_1-\gamma}\|\mathbf{F}\|_{\mathcal{F}L_q^{\gamma_1}(\mathcal{X})}.
		\]
		Once again, we note that there is no constraint about $0\leq \gamma\leq \gamma_1<1$ for this contribution.
		
		Finally, 
		\begin{multline}
		\label{crudebound} 
			\big
			\|
			\lg
			\lambda\rg^{\gamma}\widehat{\varphi_T}(\lambda)
			\|
			\int_{|\eta|<\frac{1}{T}}\widehat{\mathbf{F}}(\eta)
			d\eta
			\|_{\mathcal{X}} 
			\big\|_{L_{\lambda}^q}
			\\
			\leq 
			\|
			\lg\lambda\rg^{\gamma}\widehat{\varphi_T}(\lambda)\|_{L_{\lambda}^q}\|\mathbf{F}\|_{\mathcal{F}L_q^{\gamma_1}(\mathcal{X})}
			\|
			\frac{1}{\lg\eta\rg^{\gamma_1}}\mathbf{1}_{|\eta|<\frac{1}{T}}
			\|_{L_{\eta}^{q'}}.
		\end{multline}
		Since for $0<T\leq 1$, $\lg\lambda\rg^{\gamma}\leq T^{-\gamma}\lg T\lambda\rg^{\gamma}$, we have $\|\lg\lambda\rg^{\gamma}\widehat{\varphi_T}(\lambda)\|_{L_{\lambda}^q}\|\lesssim T^{\frac{1}{q'}-\gamma}$.
		If $0\leq \gamma\leq \gamma_1< \frac{1}{q'}$, $\|\lg\eta\rg^{-\gamma_1}\mathbf{1}_{|\eta|<T^{-1}}\|_{L_{\eta}^{q'}}\lesssim T^{\gamma_1-\frac{1}{q'}}$, we obtain that
		\[ 
		\big\|
		\lg\lambda\rg^{\gamma}\widehat{\varphi_T}(\lambda)
		\big\|
		\int_{|\eta|<\frac{1}{T}}\widehat{\mathbf{F}}(\eta)d\eta
		\big\|_{\mathcal{X}} 
		\big\|_{L_{\lambda}^q}\lesssim T^{\gamma_1-\gamma}
		\|\mathbf{F}\|_{\mathcal{F}L_q^{\gamma_1}(\mathcal{X})}.
		\]
		When $\gamma_1=\frac{1}{q'}$, we get an extra factor $\log\big(1+\frac{1}{T}\big)$. When $\gamma_1>\frac{1}{q'}$ and $0\leq \gamma\leq \gamma_1$, we only get the bound 
		$ T^{\frac{1}{q'}-\gamma}\|\mathbf{F}\|_{\mathcal{F}L_q^{\gamma_1}(\mathcal{X})}.
		$
\medskip

		(3) If in addition $\mathbf{F}(0)=0$, we are able to improve the estimate~\eqref{appendix2-22}. This is an analogue of Proposition 2.7 of \cite{DNY1}.
		
		From the argument in (2), we realize that it suffices to improve the second term on the right hand side of \eqref{splitingF2}:
		\begin{align}\label{improvebound} 
			\Big\|\lg\lambda\rg^{\gamma}\widehat{\varphi_T}(\lambda)\Big\|\int_{|\eta|<\frac{1}{T}}\widehat{\mathbf{F}}(\eta)d\eta\Big\|_{\mathcal{X}} \Big\|_{L_{\lambda}^q}\lesssim T^{\gamma_1-\gamma}\|\mathbf{F}\|_{\mathcal{F}L_q^{\gamma_1}(\mathcal{X})}
		\end{align}
		in the range $\gamma_1> \frac{1}{q'}$.   
		The point is to exploit the cancellation from the condition $\mathbf{F}(0)=0$. By the Fourier inversion formula, we have
		\begin{align}\label{vanishing}  \int_{\R}\widehat{\mathbf{F}}(\eta)d \eta=0=\int_{|\eta|<\frac{1}{T}}\widehat{\mathbf{F}}(\eta)d \eta+\int_{|\eta|\geq \frac{1}{T}}\widehat{\mathbf{F}}(\eta)d \eta.
		\end{align}

		Therefore, the left hand side of \eqref{improvebound} is equal to
		\begin{align*}
			\big
			\|\lg\lambda\rg^{\gamma}T\widehat{\varphi}(T\lambda)
			\big
			\|\int_{|\eta|\geq \frac{1}{T}}\widehat{\mathbf{F}}(\eta)d\eta
			\big\|_{\mathcal{X}} 
			\big\|_{L_{\lambda}^q}.
		\end{align*}
		By H\"older's inequality and the fact that $\gamma_1>\frac{1}{q'}$,
		\begin{align*}
		\int_{|\eta|\geq \frac{1}{T}}\|\widehat{\mathbf{F}}(\eta)\|_{\mathcal{X}}d\eta
		&\leq \|\lg\eta\rg^{\gamma_1}\widehat{\mathbf{F}}(\eta)\|_{L_{\eta}^q(\mathcal{X})}\cdot T^{\gamma_1-\frac{1}{q'}},
		\intertext{hence (using $\lg\lambda\rg\leq T^{-1}\lg T\lambda\rg$ if $0<T\leq 1$)
		}
		\big
		\|\lg\lambda\rg^{\gamma}T\wh{\varphi}(T\lambda)\int_{|\eta|\geq \frac{1}{T}}\|\widehat{\mathbf{F}}(\eta)\|_{\mathcal{X}}d\eta
		\big\|_{L_{\lambda}^q}
		&\lesssim T^{\gamma_1-\gamma}
		\|\lg\eta\rg^{\gamma_1}\widehat{\mathbf{F}}(\eta)
		\|_{L_{\eta}^q(\mathcal{X})}.
		\end{align*}
		This completes the proof of Proposition \ref{timelocalization}.
	\end{proof}
	\begin{remarque}
		Note that on the right hand side of \eqref{crudebound}, when $\gamma_1>\frac{1}{q'}$, the function $\lg\eta\rg^{-\gamma_1}$belongs to $L^{q'}$, and we do not gain positive power in $T$ when integrating the region $|\eta|\leq \frac{1}{T}$ to compensate the power $T^{\frac{1}{q'}-\gamma}$ from $\|\lg\lambda\rg^{\gamma}\widehat{\varphi_T}(\lambda) \|_{L_{\lambda}^{q}}$. By applying vanishing property, finally we are allowed to integrate $\lg\eta\rg^{-\gamma_1}$ from the region $|\eta|>\frac{1}{T}$, and this explains the gain.
		
	\end{remarque}

	\begin{corollaire}\label{timecutoffappendix} 
		Let $1\leq p,r\leq \infty,$ and $1\leq q<\infty$, $\frac{1}{q'}<\gamma\leq \gamma_1<1$. For any $u\in X_{p,q,r}^{\gamma_1}$ such that $u(t=0,\cdot)=0$, we have
		\[
		\|\chi_T(t)u\|_{X_{p,q,r}^{s,\gamma}}\lesssim T^{\gamma_1-\gamma}\|u\|_{X_{p,q,r}^{s,\gamma_1}}
		\]
		for all $0<T<1$.
	\end{corollaire}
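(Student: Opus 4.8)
The statement is the vector-valued, norm-lifted version of Proposition~\ref{timelocalization}(3). The plan is to reduce the $X_{p,q,r}^{s,\gamma}$ estimate to the scalar Fourier--Lebesgue estimate on the Banach space $\mathcal{X}=L^{r}(E_{n})$ by unfolding the definition of the norm and observing that all the Littlewood--Paley layers decouple. Concretely, recall from Section~\ref{sec:4} that
\[
\|u\|_{X_{p,q,r}^{s,\gamma}} = \big\|\,\langle n\rangle^{s}\,\|\langle\kappa\rangle^{\gamma}\widetilde{\pi_{n}u}(\kappa,\cdot)\|_{L_{\kappa}^{q}L_{x}^{r}}\,\big\|_{\ell_{n}^{p}} = \big\|\langle n\rangle^{s}\,\|\pi_{n}u\|_{\mathcal{F}L_{q}^{\gamma}(L^{r}(E_{n}))}\big\|_{\ell_{n}^{p}}\,,
\]
where for fixed $n$ the object $\pi_{n}u$, viewed through $\widetilde{\pi_{n}u}(\kappa,\cdot)=\widehat{\mathrm{e}^{-it\Delta}\pi_{n}u}(\kappa,\cdot)$, is a Schwartz function of $t$ with values in $\mathcal{X}=L^{r}(E_{n})$.

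\textbf{Key steps.} First I would note that the twisted Fourier transform commutes with multiplication by the time cutoff: since $\chi_{T}$ depends only on $t$,
\[
\widetilde{\chi_{T}\pi_{n}u}(\kappa,\cdot) = \widehat{\chi_{T}\,\mathrm{e}^{-it\Delta}\pi_{n}u}(\kappa,\cdot)\,,
\]
so on each eigenspace the problem is exactly: estimate $\|\chi_{T}\mathbf{F}_{n}\|_{\mathcal{F}L_{q}^{\gamma}(\mathcal{X})}$ in terms of $\|\mathbf{F}_{n}\|_{\mathcal{F}L_{q}^{\gamma_{1}}(\mathcal{X})}$, where $\mathbf{F}_{n}(t):=\mathrm{e}^{-it\Delta}\pi_{n}u(t)=\mathrm{e}^{-it\lambda_{n}^{2}}\pi_{n}u(t)$ is $\mathcal{X}$-valued. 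The hypothesis $u(0,\cdot)=0$ gives $\pi_{n}u(0,\cdot)=0$, hence $\mathbf{F}_{n}(0)=0$ for every $n$, which is precisely the vanishing hypothesis needed for part (3) of Proposition~\ref{timelocalization}. Since $\frac{1}{q'}<\gamma\le\gamma_{1}<1$, that proposition applies on each $n$ with the \emph{same} constant (indeed, by Remark~\ref{remtimelocalization} the implicit constant depends only on $\|\chi\|_{W_{t}^{10,1}(\R)}$, which is independent of $n$), yielding
\[
\|\chi_{T}\mathbf{F}_{n}\|_{\mathcal{F}L_{q}^{\gamma}(\mathcal{X})} \lesssim T^{\gamma_{1}-\gamma}\,\|\mathbf{F}_{n}\|_{\mathcal{F}L_{q}^{\gamma_{1}}(\mathcal{X})}
\]
uniformly in $n$. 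Finally I would multiply by $\langle n\rangle^{s}$ and take the $\ell_{n}^{p}$ norm; because the bound above holds termwise with an $n$-independent constant, this immediately gives
\[
\|\chi_{T}u\|_{X_{p,q,r}^{s,\gamma}} = \big\|\langle n\rangle^{s}\|\chi_{T}\mathbf{F}_{n}\|_{\mathcal{F}L_{q}^{\gamma}(\mathcal{X})}\big\|_{\ell_{n}^{p}} \lesssim T^{\gamma_{1}-\gamma}\big\|\langle n\rangle^{s}\|\mathbf{F}_{n}\|_{\mathcal{F}L_{q}^{\gamma_{1}}(\mathcal{X})}\big\|_{\ell_{n}^{p}} = T^{\gamma_{1}-\gamma}\|u\|_{X_{p,q,r}^{s,\gamma_{1}}}\,,
\]
which is the claim. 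One should also verify the edge cases $p=\infty$ or $r=\infty$ cause no issue: the argument is purely termwise in $n$ and does not interact with the $\ell_{n}^{p}$ or $L_{x}^{r}$ structure beyond treating $L^{r}(E_{n})$ as an abstract Banach space, which is exactly the generality in which Proposition~\ref{timelocalization} is stated.

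\textbf{Main obstacle.} There is essentially no analytic difficulty here; the corollary is a bookkeeping consequence of Proposition~\ref{timelocalization}(3). The only point requiring a little care is checking that the vanishing condition $u(0,\cdot)=0$ really transfers to every $\mathbf{F}_{n}(0)=0$ and that one is allowed to apply the scalar result with a uniform constant --- this is where Remark~\ref{remtimelocalization} is used. A secondary, entirely cosmetic point is that $u\in X_{p,q,r}^{\gamma_{1}}$ (with $s$ suppressed) should be read as $X_{p,q,r}^{s,\gamma_{1}}$ as in the statement, so that the weights $\langle n\rangle^{s}$ match on both sides; this does not affect the argument since $\langle n\rangle^{s}$ is an inert multiplier in $n$.
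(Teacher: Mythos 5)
Your proof is correct and coincides with the paper's own argument: the paper likewise reduces to the fixed-$n$ estimate $\|\chi_T(t)\pi_n u\|_{X_{q,r}^{0,\gamma}(E_n)}\lesssim T^{\gamma_1-\gamma}\|\pi_n u\|_{X_{q,r}^{0,\gamma_1}(E_n)}$, rewrites it via the twist $v_n(t)=\e^{i\lambda_n^2 t}\pi_n u(t)$ as a statement in $\mathcal{F}L_q^{\gamma}(L^r(E_n))$, and invokes part (3) of Proposition~\ref{timelocalization} (with the $n$-uniform constant of Remark~\ref{remtimelocalization}) before summing in $\ell_n^p$. Nothing further is needed.
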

	
	\begin{proof} 
		
		It suffices to prove that for $u=\pi_nu\in E_n$ such that $u|_{t=0}=0$, the following inequality:
		\begin{align}\label{appendix2-1} \|\chi_T(t)u\|_{X_{q,r}^{0,\gamma}(E_n)}\lesssim T^{\gamma_1-\gamma}\|u\|_{X_{q,r}^{0,\gamma_1}(E_n) }.
		\end{align}	
		Note that with $v_n(t)=\e^{i\lambda_n^2t}u(t)$, this is equivalent to 
		\[ \|\chi_T(t)v_n(t)\|_{\mathcal{F}L_q^{\gamma}(L^r(E_n))}\lesssim T^{\gamma_1-\gamma}\|v_n\|_{\mathcal{F}L_q^{\gamma_1}(L^r(E_n))},
		\]
		the consequence of (3) of Proposition \ref{timelocalization}.
	\end{proof}

	\subsection{Embedding properties}

	\begin{lemme}\label{FLtoHolder}
		Let $\mathcal{X}$ be a Banach space, $1<q<\infty$ and $\gamma\in\big(\frac{1}{q'},1\big)$. For all $0<\alpha\leq \gamma-\frac{1}{q'}$, there exists $C>0$ such that for all $\mathbf{F}\in C_{c}^{\infty}(\R;\mathcal{X})$,
		\[
		\|\mathbf{F}\|_{C^{\alpha}(\R;\mathcal{X})}\leq C \|\lg\tau\rg^{\gamma}\widehat{\mathbf{F}}(\tau)\|_{L_q^{\gamma}(\mathcal{X})}\,.
		\]
	\end{lemme}
	
	\begin{proof}
		Pick $t,t'\in\R$ such that $0<|t-t'|<1$. By the inverse Fourier transform for vector-valued function $f\in C_c^{\infty}(\R;\mathcal{X})$, 
		\begin{align*}  \mathbf{F}(t)-\mathbf{F}(t')=&\frac{1}{2\pi}\int_{\R}\widehat{\mathbf{F}}(\tau)\e^{it'\tau}(\e^{i(t-t')\tau}-1)d\tau\\
			=&\frac{1}{2\pi}\Big(\int_{|\tau|\leq \frac{1}{|t-t'|}}+\int_{|\tau|>\frac{1}{|t-t'|}}\Big)\lg\tau\rg^{-\gamma}\cdot \lg\tau\rg^{\gamma}\widehat{\mathbf{F}}(\tau)\e^{it'\tau}(\e^{i(t-t')\tau}-1)d\tau.
		\end{align*}
		For the contribution $|\tau|> \frac{1}{|t-t'|}$, by H\"older,
		\begin{multline*}  \Big\|\int_{|\tau|>\frac{1}{|t-t'|}}\lg\tau\rg^{-\gamma}\cdot \lg\tau\rg^{\gamma}\widehat{\mathbf{F}}(\tau)\e^{it'\tau}(\e^{i(t-t')\tau}-1)d\tau\Big\|_{\mathcal{X}}
		\\
		\leq 2\|\langle\tau\rangle^{-\gamma}\mathbf{1}_{|\tau||t-t'|>1} \|_{L_{\tau}^{q'}}\|\lg\tau\rg^{\gamma}\widehat{\mathbf{F}}(\tau)\|_{L_{\tau}^q\mathcal{X}}
			\lesssim  |t-t'|^{\gamma-\frac{1}{q'}}\|\lg\tau\rg^{\gamma}\widehat{\mathbf{F}}(\tau)\|_{L^q(\R;\mathcal{X})}.
		\end{multline*}
		For the contribution $|\tau|\leq \frac{1}{|t-t'|}$, from $|\e^{i(t-t')\tau}-1|\leq |t-t'||\tau|$, we have
		\begin{multline*}  \Big\|\int_{|\tau|\leq \frac{1}{|t-t'|}}\lg\tau\rg^{-\gamma}\cdot \lg\tau\rg^{\gamma}\widehat{\mathbf{F}}(\tau)\e^{it'\tau}(\e^{i(t-t')\tau}-1)d\tau\Big\|_{\mathcal{X}}
		\\
		\leq |t-t'|\|\langle\tau\rangle^{1-\gamma}\mathbf{1}_{|\tau||t-t'|\leq 1} \|_{L_{\tau}^{q'}}\|\lg\tau\rg^{\gamma}\widehat{\mathbf{F}}(\tau)\|_{L_{\tau}^q\mathcal{X}}
			\lesssim  |t-t'|^{\gamma-\frac{1}{q'} }\|\lg\tau\rg^{\gamma}\widehat{\mathbf{F}}(\tau)\|_{L^q(\R;\mathcal{X})}.
		\end{multline*}
		The boundedness of $\|\mathbf{F}(t)\|_{\mathcal{X}}$ follows similarly, and we omit the details. The proof of Lemma \ref{FLtoHolder} is complete. 
	\end{proof}

	\begin{lemme}[Vector-valued inequality: Theorem 5.5.1 of \cite{Graf}]\label{vectorvaluedlemma}
		Let $\mathcal{H}$ be a separable Hilbert space and $T$ be a linear operator that is bounded from $L^p(\R;\C)$ to $L^r(\R;\C)$ for some $1<p,r<\infty$. Then $T$ can be extended as a bounded linear operator from $L^p(\mathbb{R};\mathcal{H})$ to $L^{r}(\mathbb{R};\mathcal{H})$, denoted by $\mathbf{T}$, such that
		\[
		\|\mathbf{T}(\mathbf{F})\|_{L^r(\R;\mathcal{H})}\lesssim_{p,r}
		\|T\|_{L^p(\R)\rightarrow L^r(\R)}\|\mathbf{F}\|_{L^p(\R;\mathcal{H})}.
		\]
		\end{lemme}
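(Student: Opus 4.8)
The statement is a form of the Marcinkiewicz--Zygmund vector-valued inequality, and I would prove it by the classical Gaussian randomization argument together with a density reduction. \textbf{Step 1 (reduction to finitely many coordinates).} Since $\mathcal{H}$ is separable, fix an orthonormal basis $(e_{j})_{j\geq1}$ and for $\mathbf{F}\in L^{p}(\R^{d};\mathcal{H})$ write $\mathbf{F}=\sum_{j}F_{j}e_{j}$ with $F_{j}=\langle \mathbf{F},e_{j}\rangle_{\mathcal{H}}\in L^{p}(\R^{d})$. On the dense subspace of finitely supported sums $\mathbf{F}=\sum_{j=1}^{n}F_{j}e_{j}$ one sets $\mathbf{T}\mathbf{F}:=\sum_{j=1}^{n}(TF_{j})e_{j}$. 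Since for such $\mathbf{F}$ one has $\|\mathbf{T}\mathbf{F}\|_{L^{r}(\R^{d};\mathcal{H})}=\|(\sum_{j}|TF_{j}|^{2})^{1/2}\|_{L^{r}(\R^{d})}$ and $\|\mathbf{F}\|_{L^{p}(\R^{d};\mathcal{H})}=\|(\sum_{j}|F_{j}|^{2})^{1/2}\|_{L^{p}(\R^{d})}$, the whole matter reduces to the scalar estimate of Step 2 with a constant independent of $n$; the limit $n\to\infty$ is then handled by monotone convergence on both sides, and the resulting bounded operator $\mathbf{T}$ is independent of the basis because it agrees with $f\mapsto (Tf)h$ on elementary tensors $fh$ ($f\in L^{p}(\R^{d})$, $h\in\mathcal{H}$), which span a dense subspace.

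\textbf{Step 2 (the Marcinkiewicz--Zygmund inequality).} The core estimate is that for all $f_{1},\dots,f_{n}\in L^{p}(\R^{d})$,
\[
\Big\|\Big(\sum_{j=1}^{n}|Tf_{j}|^{2}\Big)^{1/2}\Big\|_{L^{r}(\R^{d})}\leq C_{p,r}\,\|T\|_{L^{p}\to L^{r}}\Big\|\Big(\sum_{j=1}^{n}|f_{j}|^{2}\Big)^{1/2}\Big\|_{L^{p}(\R^{d})}.
\]
Let $(g_{j})_{j}$ be i.i.d.\ standard Gaussians on an auxiliary probability space. For $\rho\in[1,\infty)$ one has, pointwise in $x$, the exact identity $(\E|\sum_{j}g_{j}a_{j}|^{\rho})^{1/\rho}=\gamma_{\rho}(\sum_{j}|a_{j}|^{2})^{1/2}$ (the sum being a Gaussian of variance $\sum_{j}|a_{j}|^{2}$), with $\gamma_{\rho}=(\E|g_{1}|^{\rho})^{1/\rho}$. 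Applying this with $\rho=r$ and $a_{j}=Tf_{j}(x)$, integrating in $x$, and using linearity of $T$ and Fubini,
\[
\Big\|\Big(\sum_{j}|Tf_{j}|^{2}\Big)^{1/2}\Big\|_{L^{r}}^{r}=\gamma_{r}^{-r}\,\E\,\Big\|T\Big(\sum_{j}g_{j}f_{j}\Big)\Big\|_{L^{r}}^{r}\leq \gamma_{r}^{-r}\|T\|^{r}\,\E\,\Big\|\sum_{j}g_{j}f_{j}\Big\|_{L^{p}}^{r}.
\]
It remains to bound $\E\|\sum_{j}g_{j}f_{j}\|_{L^{p}}^{r}$. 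If $r\geq p$, Minkowski's integral inequality for the mixed norm $L^{r/p}_{\omega}L^{1}_{x}$ gives $(\E\|\sum_{j}g_{j}f_{j}\|_{L^{p}}^{r})^{p/r}\leq\int(\E|\sum_{j}g_{j}f_{j}(x)|^{r})^{p/r}\,dx$; if $r<p$, Jensen's inequality (concavity of $t\mapsto t^{r/p}$) gives $\E\|\sum_{j}g_{j}f_{j}\|_{L^{p}}^{r}\leq(\E\|\sum_{j}g_{j}f_{j}\|_{L^{p}}^{p})^{r/p}=(\int\E|\sum_{j}g_{j}f_{j}(x)|^{p}\,dx)^{r/p}$. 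In either case a second, pointwise application of the Gaussian identity (with exponent $r$, resp.\ $p$) turns the right-hand side into $\|(\sum_{j}|f_{j}|^{2})^{1/2}\|_{L^{p}}^{r}$ up to a constant, which closes the estimate and produces $C_{p,r}=\gamma_{r}^{-1}\max(\gamma_{r},\gamma_{p})$ (any finite constant depending only on $p,r$).

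\textbf{Step 3 (the Fourier transform) and the main obstacle.} For the last assertion one takes $d=1$, $p=r=2$ and $T=\mathcal{F}$; since the scalar Fourier transform is an isometry of $L^{2}(\R)$, Steps 1--2 yield a bounded extension to $L^{2}(\R;\mathcal{H})$ (and a direct Plancherel computation on elementary tensors shows it is in fact an isometry). The only genuinely delicate point in the whole argument is the dichotomy $r\geq p$ versus $r<p$ in Step 2: the two cases use Minkowski's integral inequality and Jensen's inequality in opposite directions, and one must check that no structural hypothesis on $T$ (kernel representation, translation invariance, positivity) is invoked anywhere — only linearity and $L^{p}\to L^{r}$ boundedness — so that $C_{p,r}$ depends solely on the Gaussian moments $\gamma_{p},\gamma_{r}$. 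The remaining ingredients (the density and monotone-convergence argument of Step 1, Fubini, separability of $\mathcal{H}$) are entirely routine.
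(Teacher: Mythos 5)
Your proof is correct and follows essentially the same randomization argument as the paper: reduce to finitely many coordinates, randomize the sum, use linearity and the scalar $L^p\to L^r$ bound, and de-randomize via moment identities for the random signs. The only differences are cosmetic or supplementary: you use Gaussian weights with (near-)exact moment identities where the paper uses Rademacher variables and Khintchine's two-sided inequality, and you also treat the case $r<p$ via Jensen, which the paper explicitly omits (it only proves $r\ge p$, sufficient for its application, and defers the general case to Grafakos).
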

	In particular, the Fourier transform extends to the space $L^2(\mathbb{R};\mathcal{H})$.
	\begin{proof} 
		The proof can be found in the book \cite{Graf}. Here we provide the proof by using the probabilistic method. Let~$(\mathbf{h}_j)_{j\in\N}$ be an orthonormal basis of $\mathcal{H}$. Then any $\mathbf{F}\in \mathcal{S}(\R;\mathcal{H})$ can be represented as 
		$ \mathbf{F}=\sum_{j\geq 1}F_j\mathbf{h}_j,  
		$ where $F_j\in \mathcal{S}(\R;\C)$. It suffices to show that there exists a constant $C>0$ such that  for any finite number~$m$,
		\begin{equation}
		\label{vectorvalued}
		\big\|
		\big(
		\sum_{j=1}^m|TF_j|^2
		\big)^{\frac{1}{2}}
		\big\|_{L^r(\R)}
		\leq 
		C\|T\|_{L^p\rightarrow L^r}
		\big\|\big(\sum_{j=1}^m|F_j|^2
		\big)^{\frac{1}{2}}\big\|_{L^p(\R)}\,.
		\end{equation}
		Once this inequality is proved, a density argument allows us to conclude. Consider a sequence of i.i.d. random variables $(\epsilon_j^{\omega})_{j\in\N}$, such that
		\[
		 \mathbb{P}[\epsilon_j^{\omega}=+1]=\mathbb{P}[\epsilon_j^{\omega}=-1]=\frac{1}{2}\,,
		\]
		and the corresponding randomized function
		$ F^{\omega}:=\sum_{j=1}^m\epsilon_j^{\omega}F_j.
		$
		For each fixed event~$\omega$, from the boundedness of $T$, we have
		\[ \|TF^{\omega}\|_{L^r(\R)}\leq \|T\|_{L^p\rightarrow L^r}\|F^{\omega}\|_{L^p(\R)}.
		\]
		In the case when $r\geq p$, by taking the $L_{\omega}^r$ norm on both sides and using Minkowski, we have
		\begin{equation}
		\label{randombound}  
		\|TF^{\omega}\|_{L^r(\R;L_{\omega}^r)}
			\leq 
			\|T\|_{L^p\rightarrow L^r}\|F^{\omega}\|_{L^p(\R;L_{\omega}^r)}.
		\end{equation}
		By the Khinchin's inequality, for fixed $x\in\R^d$,
		\[
		\big(\sum_{j=1}^m|TF_j(x)|^2 
		\big)^{\frac{1}{2}}
		\lesssim_{r}
		 \big\|
		 \sum_{j=1}^m\epsilon_j^{\omega}TF_j(x)
		\big\|_{L_{\omega}^r}\,,
		\quad \big\|\sum_{j=1}^m\epsilon_j^{\omega}F_j(x)
		\big\|_{L_{\omega}^r}
		\lesssim_{r}
		\big(\sum_{j=1}^m|F_j(x)|^2 \big)^{\frac{1}{2}}. 
		\]
		Plugging into \eqref{randombound}, we obtain \eqref{vectorvalued}.
		Similarly, when \(r< p\) we obtain
		\begin{align*}
		\|TF^{\omega}\|_{L^r(\R;L_{\omega}^r)}
			&\leq 
			\|T\|_{L^p\rightarrow L^r}\|F^{\omega}\|_{L_{\omega}^{r}L^{p}(\R)}
			\\
			&\leq
			\|T\|_{L^p\rightarrow L^r}\|F^{\omega}\|_{L_{\omega}^{p}L^{p}(\R)}
			\\
			&\lesssim_{p} 
			\|T\|_{L^p\rightarrow L^r}
			\big\|\big(\sum_{j=1}^m|F_j(x)|^2 \big)^{\frac{1}{2}}\big\|_{L^{p}(\R)}\,,
		\end{align*}
		and we bound from below the left-hand-side as in the previous case (\(r\geq p\)). This completes the proof of Lemma~\ref{vectorvaluedlemma}.
	\end{proof}
	
	\begin{corollaire}[Hausdorff-Young]\label{Hausdorff-Young}
		Let $\mathcal{H}$ be a Hilbert space and $\mathbf{F}\in L^{p'}(\R;\mathcal{H})$ for some $p \geq  2$. Then $\mathbf{F}\in \mathcal{F}L_{p}^{0}(\mathcal{H})$ and
		\[
		\|\mathbf{F}\|_{\mathcal{F}L_{p}^{0}(\mathcal{H})}\lesssim_p \|\mathbf{F}\|_{L^{p'}(\R;\mathcal{H})}.
		\]
	\end{corollaire}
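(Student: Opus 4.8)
\textbf{Plan for the proof of Corollary \ref{Hausdorff-Young}.} The statement is exactly the Hausdorff--Young inequality for Hilbert-space-valued functions, so the plan is to deduce it from the scalar Hausdorff--Young inequality by an application of the vector-valued transference principle established in Lemma \ref{vectorvaluedlemma}. First I would recall that, for the scalar Fourier transform $T = \mathcal{F}\colon \mathcal{S}(\R)\to\mathcal{S}(\R)$, one has the classical bound $\|\widehat{f}\|_{L^{p}(\R)}\lesssim_{p}\|f\|_{L^{p'}(\R)}$ for every $p\geq 2$; this follows by interpolating the trivial $L^{1}\to L^{\infty}$ bound with the Plancherel $L^{2}\to L^{2}$ identity via the Riesz--Thorin theorem. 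Thus $T$ is bounded $L^{p'}(\R)\to L^{p}(\R)$ with $p' \leq 2 \leq p$, so it is in particular bounded from $L^{p'}$ to $L^{p}$ with $p \geq p'$, which is exactly the hypothesis needed to apply Lemma \ref{vectorvaluedlemma} (recall that in that lemma the roles of the two exponents are $(p,r) = (p',p)$, and the case $r\geq p$, used in its probabilistic proof, is precisely $p\geq p'$, which holds here).

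Next I would invoke Lemma \ref{vectorvaluedlemma} with $\mathcal{X} = \mathcal{H}$ a separable Hilbert space (any Hilbert space appearing in our applications, namely $\mathcal{X} = L^2(E_n) = E_n$, is finite-dimensional, hence separable), to conclude that $T = \mathcal{F}$ extends to a bounded operator $\mathbf{T}\colon L^{p'}(\R;\mathcal{H})\to L^{p}(\R;\mathcal{H})$ with
\[
\|\mathbf{T}(\mathbf{F})\|_{L^{p}(\R;\mathcal{H})}\leq C_{p}\,\|\mathcal{F}\|_{L^{p'}(\R)\to L^{p}(\R)}\,\|\mathbf{F}\|_{L^{p'}(\R;\mathcal{H})}\lesssim_{p}\|\mathbf{F}\|_{L^{p'}(\R;\mathcal{H})}.
\]
Finally I would identify $\mathbf{T}(\mathbf{F})$ with $\widehat{\mathbf{F}}$ (the time-Fourier transform of the $\mathcal{H}$-valued distribution $\mathbf{F}$, which is the natural continuous extension of the transform on $\mathcal{S}(\R;\mathcal{H})$ already fixed in the definition of the spaces $\mathcal{F}L^{\gamma}_{q}(\mathcal{X})$), and recall that by definition $\|\mathbf{F}\|_{\mathcal{F}L_{p}^{0}(\mathcal{H})} = \|\langle\tau\rangle^{0}\widehat{\mathbf{F}}(\tau)\|_{L^{p}(\R_{\tau};\mathcal{H})} = \|\widehat{\mathbf{F}}\|_{L^{p}(\R;\mathcal{H})}$. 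Combining the last two displays yields the claim, with a density argument in $\mathcal{S}(\R;\mathcal{H})$ (already built into the definition of the Fourier-Lebesgue spaces) justifying that the bound holds for all $\mathbf{F}\in L^{p'}(\R;\mathcal{H})$.

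There is essentially no obstacle here: the only point requiring a moment of care is the bookkeeping of exponents when applying Lemma \ref{vectorvaluedlemma} — making sure that $p' \leq 2 \leq p$ places us in the regime $r \geq p$ (in the notation of that lemma) where the probabilistic proof via Khintchine's inequality applies, and that the separability hypothesis on $\mathcal{H}$ is harmless in our setting. Everything else is a direct quotation of the scalar Hausdorff--Young inequality and of the transference principle already proved in the appendix.
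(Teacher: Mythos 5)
Your proposal is correct and follows essentially the same route as the paper: the scalar Hausdorff--Young inequality combined with the vector-valued transference principle of Lemma \ref{vectorvaluedlemma}, whose probabilistic proof indeed covers the relevant regime since $p'\leq 2\leq p$. The paper's own proof is just a terser version of the same argument (it additionally records the trivial $L^{1}\to L^{\infty}$ endpoint via the triangle inequality, but the substance is identical).
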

	\begin{proof}
		First, if $\mathbf{F}\in L^1(\mathbb{R};\mathcal{H})$, by the triangle inequality, we get
		\[ \|\widehat{\mathbf{F}}(\tau)\|_{\mathcal{H}}\leq \int_{\R}\|\mathbf{F}(t)\|_{\mathcal{H}}dt=\|\mathbf{F}\|_{L^1(\mathbb{R};\mathcal{H})}.
		\]
		If $2\leq p<\infty$, by Hausdorff-Young and Lemma \ref{vectorvaluedlemma}, the Fourier transform $\mathcal{F}$ extends to $\mathbf{F}\in L^{p'}(\R;\mathcal{H})$, and the desired inequality holds.
	\end{proof}

	\subsection{Inhomogeneous linear estimates}
	
	\begin{lemme}\label{inhomogenousabstract} 
		Let $\mathcal{X}$ be a Banach space and $\mathbf{F}:\R\rightarrow \mathcal{X}$ a Schwartz function such that for some $1<q<+\infty$,
		\[ \|\mathbf{F}\|_{\mathcal{F}L_q^{\gamma}(\mathcal{X})}:=\|\lg\tau\rg^{\gamma}\widehat{\mathbf{F}}(\tau)\|_{L^q(\R_{\tau};\mathcal{X})}<\infty.
		\]
		Then for any $\chi\in C_c^{\infty}(\R)$, $\chi_T(t):=\chi(T^{-1}t)$ for $ 0<T\leq 1$ and $\gamma$ with $\frac{1}{q'}<\gamma \leq 1$,
		\[
		\|\chi_T(t)\int_{0}^{t}\mathbf{F}(t')dt'\|_{\mathcal{F}L_q^{\gamma}(\mathcal{X})}\lesssim \|\mathbf{F}\|_{\mathcal{F}L_q^{\gamma-1}(\mathcal{X})},
		\]
		where the implicit constant does not depend on $T>0$. 
	\end{lemme}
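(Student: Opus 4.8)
The statement is an inhomogeneous linear estimate for the Duhamel operator $\mathcal{I}\mathbf{F}(t)=\int_0^t \e^{i(t-t')\Delta}\mathbf{F}(t')\,dt'$, placed in a Banach-valued Fourier--Lebesgue space $\mathcal{F}L_q^{\gamma}(\mathcal{X})$. The plan is to reduce to the scalar situation by working with the twisted function $\mathbf{G}(t):=\e^{-it\Delta}\mathcal{I}\mathbf{F}(t)=\int_0^t \e^{-it'\Delta}\mathbf{F}(t')\,dt'$, so that $\mathcal{F}L_q^{\gamma}(\mathcal{X})$-norms of $\mathcal{I}\mathbf{F}$ become $\mathcal{F}L_q^{\gamma}(\mathcal{X})$-norms of $\mathbf{G}$ (the twist $\e^{-it\Delta}$ acts diagonally on each $E_n$ by the phase $\e^{-it\lambda_n^2}$, hence is an isometry on each fiber and commutes with the time-Fourier weight after the corresponding shift $\tau\mapsto\tau+\lambda_n^2$; since the estimate here is stated in $\mathcal{F}L_q^{\gamma}(\mathcal{X})$ for a fixed Banach space $\mathcal{X}$, we may simply absorb the phase into $\mathbf{F}$ and work with $\widetilde{\mathbf{F}}(t):=\e^{-it\Delta}\mathbf{F}(t)$, noting $\|\widetilde{\mathbf{F}}\|_{\mathcal{F}L_q^{\gamma-1}(\mathcal{X})}=\|\mathbf{F}\|_{\mathcal{F}L_q^{\gamma-1}(\mathcal{X})}$).

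First I would write $\mathbf{G}(t)=\int_0^t \widetilde{\mathbf{F}}(t')\,dt'$ and split according to the standard dichotomy in $b$-spaces, separating the resonant and non-resonant parts of $\widehat{\widetilde{\mathbf{F}}}(\tau)$. Concretely, using the Fourier inversion formula $\widetilde{\mathbf{F}}(t')=\frac{1}{2\pi}\int_{\R}\widehat{\widetilde{\mathbf{F}}}(\tau)\e^{it'\tau}\,d\tau$, one has
\[
\mathbf{G}(t)=\frac{1}{2\pi}\int_{\R}\widehat{\widetilde{\mathbf{F}}}(\tau)\,\frac{\e^{it\tau}-1}{i\tau}\,d\tau,
\]
and the kernel $\frac{\e^{it\tau}-1}{i\tau}$ splits naturally as $\frac{\e^{it\tau}}{i\tau}-\frac{1}{i\tau}$. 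On the region $|\tau|\ge 1$ this gives two pieces: $\mathbf{G}_1(t):=\frac{1}{2\pi}\int_{|\tau|\ge1}\frac{\widehat{\widetilde{\mathbf{F}}}(\tau)}{i\tau}\e^{it\tau}\,d\tau$ whose Fourier transform is $\tau\mapsto \mathbf{1}_{|\tau|\ge1}\widehat{\widetilde{\mathbf{F}}}(\tau)/(i\tau)$, and a constant-in-$t$ term $\mathbf{c}:=-\frac{1}{2\pi}\int_{|\tau|\ge1}\widehat{\widetilde{\mathbf{F}}}(\tau)/(i\tau)\,d\tau$. On the region $|\tau|<1$ one writes $\frac{\e^{it\tau}-1}{i\tau}=\sum_{k\ge1}\frac{(it)^k\tau^{k-1}}{k!}$, producing a smooth, entire-in-$t$ function $\mathbf{G}_2(t)$ which, after multiplication by $\chi_T$, is controlled by expanding the Taylor series and using Proposition \ref{timelocalization}(2) together with $\|\mathbf{1}_{|\tau|<1}\langle\tau\rangle^{1-\gamma}\widehat{\widetilde{\mathbf{F}}}(\tau)\|_{L^1_\tau(\mathcal{X})}\lesssim\|\widetilde{\mathbf{F}}\|_{\mathcal{F}L_q^{\gamma-1}(\mathcal{X})}$ by Hölder (this uses $\gamma\le1$ so $\langle\tau\rangle^{1-\gamma}$ is bounded on $|\tau|<1$, and $\gamma>1/q'$ only enters in the subsequent time-localization). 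For $\mathbf{G}_1$, since $\langle\tau\rangle^{\gamma}|\tau|^{-1}\lesssim\langle\tau\rangle^{\gamma-1}$ on $|\tau|\ge1$, we get $\|\mathbf{G}_1\|_{\mathcal{F}L_q^\gamma(\mathcal{X})}\lesssim\|\widetilde{\mathbf{F}}\|_{\mathcal{F}L_q^{\gamma-1}(\mathcal{X})}$ directly, and then $\|\chi_T\mathbf{G}_1\|_{\mathcal{F}L_q^\gamma(\mathcal{X})}\lesssim\|\mathbf{G}_1\|_{\mathcal{F}L_q^\gamma(\mathcal{X})}$ by Proposition \ref{timelocalization}(1). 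For the constant term, $\|\mathbf{c}\|_{\mathcal{X}}\lesssim\|\mathbf{1}_{|\tau|\ge1}\langle\tau\rangle^{1-\gamma}\|_{L^{q'}_\tau}\|\widetilde{\mathbf{F}}\|_{\mathcal{F}L_q^{\gamma-1}(\mathcal{X})}$, which is finite precisely because $\gamma>1/q'$ forces $(1-\gamma)q'<1$ wait — more carefully, $\langle\tau\rangle^{1-\gamma}\in L^{q'}(|\tau|\ge1)$ iff $(\gamma-1)q'>1$, which is false; instead one pairs $|\tau|^{-1}\langle\tau\rangle^{\gamma}$ against $\langle\tau\rangle^{-\gamma}$, i.e. $\|\mathbf{c}\|_{\mathcal{X}}\lesssim\|\mathbf{1}_{|\tau|\ge1}\langle\tau\rangle^{-\gamma}\|_{L^{q'}_\tau}\|\langle\tau\rangle^{\gamma-1}\langle\tau\rangle\,\widehat{\widetilde{\mathbf{F}}}/\tau\|$ — the clean way is $\|\mathbf{c}\|_{\mathcal{X}}\le\int_{|\tau|\ge1}\langle\tau\rangle^{-\gamma}\cdot\langle\tau\rangle^{\gamma-1}\|\widehat{\widetilde{\mathbf F}}(\tau)\|_{\mathcal X}\,d\tau\lesssim\|\mathbf{1}_{|\tau|\ge1}\langle\tau\rangle^{-\gamma}\|_{L^{q'}}\|\widetilde{\mathbf{F}}\|_{\mathcal{F}L_q^{\gamma-1}(\mathcal{X})}$, finite since $\gamma>1/q'$. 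A constant function times $\chi_T$ has $\mathcal{F}L_q^\gamma$-norm $\lesssim\|\mathbf{c}\|_{\mathcal{X}}\|\langle\tau\rangle^\gamma\widehat{\chi_T}\|_{L^q_\tau}\lesssim T^{1/q'-\gamma}\|\mathbf{c}\|_{\mathcal{X}}$, which is $\lesssim\|\mathbf{c}\|_{\mathcal X}$ only if $1/q'\ge\gamma$, i.e. it is \emph{not} bounded — so the constant term must be combined with $\mathbf{G}_2$ (whose low-frequency piece also contributes a constant-in-$t$ term $\int_{|\tau|<1}\widehat{\widetilde{\mathbf F}}(\tau)/(i\tau)\,d\tau$ — but this integral diverges, which is exactly why $\mathbf{G}(0)=0$ forces the cancellation).

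The correct bookkeeping, and the step I expect to be the main obstacle, is therefore to organize the decomposition so that the genuinely dangerous constant-in-time contributions cancel using $\mathcal{I}\mathbf{F}(0)=0$: one should not split off $\mathbf{c}$ from $|\tau|\ge1$ alone, but rather group $\mathbf{G}$ as (a smooth bounded-in-$\mathcal{F}L_q^\gamma$ part supported on $|\tau|\ge1$) plus ($\chi_T$ times a function vanishing at $t=0$), and invoke Proposition \ref{timelocalization}(3) — whose hypothesis $\mathbf F(0)=0$ and range $\gamma>1/q'$ match exactly — to gain the time-localization without the forbidden $T^{1/q'-\gamma}$ loss. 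Once the decomposition respects the vanishing-at-zero structure, each piece is handled by Hölder in $\tau$ (using $\gamma>1/q'$, equivalently $\gamma'<1$ where $\gamma'$ is the conjugate exponent weight, so that $\langle\tau\rangle^{-\gamma}\in L^{q'}$) together with parts (1)--(3) of Proposition \ref{timelocalization}, and Lemma \ref{vectorvaluedlemma} to justify that all the scalar Fourier-side manipulations (Hausdorff--Young, convolution bounds) extend to $\mathcal{X}$-valued functions. I would then assemble the three contributions to conclude $\|\chi_T\mathcal{I}\mathbf{F}\|_{\mathcal{F}L_q^\gamma(\mathcal{X})}\lesssim\|\mathbf{F}\|_{\mathcal{F}L_q^{\gamma-1}(\mathcal{X})}$ with constant independent of $T\in(0,1]$, as claimed.
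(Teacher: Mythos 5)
Your overall architecture (Duhamel kernel $\frac{\e^{it\tau}-1}{i\tau}$, low/high frequency split, Taylor series on the low frequencies, oscillatory plus constant pieces on the high frequencies) is the same as the paper's, but one step fails as written. You split at the $T$-independent threshold $|\tau|=1$ and then claim $\|\chi_T\mathbf{G}_1\|_{\mathcal{F}L_q^{\gamma}}\lesssim\|\mathbf{G}_1\|_{\mathcal{F}L_q^{\gamma}}$ by Proposition \ref{timelocalization}(1). That proposition is for a \emph{fixed} Schwartz multiplier; its constant is controlled by $\|\varphi\|_{\mathcal{F}L_1^{|\gamma|}}$, which for $\varphi=\chi_T$ grows like $T^{-\gamma}$, and in fact no uniform-in-$T$ bound can hold here: since $\mathbf{G}_1(0)=-\mathbf{c}$ is nonzero in general, $\chi_T\mathbf{G}_1$ equals $\chi_T\cdot\mathbf{G}_1(0)$ plus a bounded remainder, and $\|\chi_T\cdot\mathbf{G}_1(0)\|_{\mathcal{F}L_q^{\gamma}}\sim T^{\frac{1}{q'}-\gamma}\|\mathbf{c}\|_{\mathcal{X}}\to\infty$. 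This is exactly the divergence you correctly diagnosed for the constant term $\mathbf{c}$; the two divergences cancel only if you estimate $\chi_T(\mathbf{G}_1+\mathbf{c})$ as a single object, using $(\mathbf{G}_1+\mathbf{c})(0)=0$. Your closing regrouping, which keeps ``a smooth bounded part supported on $|\tau|\ge1$'' separate from the vanishing-at-zero piece, does not repair this (and the candidate $\mathbf{c}+\mathbf{G}_2$ does not vanish at $t=0$). A second, smaller point: Proposition \ref{timelocalization}(3) cannot be applied directly to $\mathbf{G}_1+\mathbf{c}$ or to $\mathbf{G}$, because a nonzero constant contributes a Dirac mass at $\tau=0$ and these functions are not globally in $\mathcal{F}L_q^{\gamma_1}(\mathcal{X})$. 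One must first insert a fixed cutoff $\tilde\chi\in C_c^{\infty}$ with $\tilde\chi\equiv1$ on $[-1,1]$, write $\chi_T(\mathbf{G}_1+\mathbf{c})=\chi_T\cdot\big(\tilde\chi(\mathbf{G}_1+\mathbf{c})\big)$, verify $\|\tilde\chi(\mathbf{G}_1+\mathbf{c})\|_{\mathcal{F}L_q^{\gamma}}\lesssim\|\mathbf{F}\|_{\mathcal{F}L_q^{\gamma-1}}$, and only then apply part (3) with $\gamma_1=\gamma$. With these two repairs your route does close; your treatment of the low-frequency Taylor part is fine.

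For contrast, the paper avoids any appeal to the vanishing at $t=0$ by splitting at the $T$-dependent threshold $|\tau|=1/T$. The constant term then gains $T^{\gamma-\frac{1}{q'}}$ from Hölder over $\{|\tau|>1/T\}$, which exactly cancels the $T^{\frac{1}{q'}-\gamma}$ cost of $\|\chi_T\|_{\mathcal{F}L_q^{\gamma}}$; and the oscillatory high-frequency piece $J_T$ satisfies the extra bound $\|J_T\|_{\mathcal{F}L_q^{0}}\lesssim T^{\gamma}\|\mathbf{F}\|_{\mathcal{F}L_q^{\gamma-1}}$, which absorbs the $T^{-\gamma}$ coming from $\|\langle\cdot\rangle^{\gamma}\widehat{\chi_T}\|_{L^1}$ in the Young-inequality step. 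Each of the three pieces is then bounded separately with all powers of $T$ cancelling, so no cancellation between pieces is needed.
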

	\begin{proof} 
		We follow the argument in \cite{Gin}. Write
			\[
			\chi_T(t)\int_{0}^{t}\mathbf{F}(t')dt'
			=\frac{\chi_T(t)}{2\pi}\int_0^tdt'\int_{\R}\widehat{\mathbf{F}}(\tau)\e^{it'\tau}d\tau
			=\frac{\chi_T(t)}{2\pi}\int_{\R}\widehat{\mathbf{F}}(\tau)\frac{\e^{it\tau}-1}{i\tau}d\tau
			\]
			We consider three different contributions:
			\begin{multline*}
			\frac{\chi_T(t)}{2\pi}\int_{\R}\widehat{\mathbf{F}}(\tau)\frac{\e^{it\tau}-1}{i\tau}d\tau
			=\underbrace{\frac{\chi_T(t)}{2\pi}\sum_{k\geq 0}\frac{t^k}{k!}\int_{T|\tau|\leq 1} (i\tau)^{k-1}\widehat{\mathbf{F}}(\tau)d\tau}_{\mathrm{I}}
			\\-
			\underbrace{\frac{\chi_T(t)}{2\pi}\int_{T|\tau|> 1} (i\tau)^{-1}\widehat{\mathbf{F}}(\tau)d\tau}_{\mathrm{II}}
			+ \underbrace{\frac{\chi_T(t)}{2\pi}\int_{T|\tau|> 1} \e^{it\tau}(i\tau)^{-1}\widehat{\mathbf{F}}(\tau)d\tau}_{\mathrm{III}}\,.
		\end{multline*}
		Provided that $\gamma\leq 1$,
		\begin{multline*}
		\int_{T|\tau|\leq 1}|(i\tau)^{k-1}|\|\widehat{\mathbf{F}}(\tau)\|_{\mathcal{X}}d\tau 
		\leq T^{-(k-1)}\|\mathbf{F}\|_{\mathcal{F}L_q^{\gamma-1}(\mathcal{X})}\Big(\int_{|\tau|T\leq 1}\lg\tau\rg^{(1-\gamma)q'}d\tau\Big)^{\frac{1}{q'}}
		\\
		\leq CT^{-(k-\gamma+\frac{1}{q'}) }\|\mathbf{F}\|_{\mathcal{F}L_q^{\gamma-1}(\mathcal{X})}\,.
		\end{multline*}
		Similarly, provided that $\gamma>\frac{1}{q'}$,
		\[
		\int_{T|\tau|>1}|\tau|^{-1}\|\widehat{\mathbf{F}}(\tau)\|_{\mathcal{X}}d\tau
		\leq
		\|\mathbf{F}\|_{\mathcal{F}L_q^{\gamma-1}(\mathcal{X})}\Big(\int_{T|\tau|>1}\lg\tau\rg^{-q'\gamma}d\tau\Big)^{\frac{1}{q'}}
			\leq CT^{\gamma-\frac{1}{q'}}
			\|\mathbf{F}\|_{\mathcal{F}L_q^{\gamma-1}(\mathcal{X})}\,.
		\]
		Then
		\begin{multline*}
			\|\mathrm{I}\|_{\mathcal{F}L_q^{\gamma}(\mathcal{X})}
			\leq C\sum_{k\geq 0}\frac{
			\|\chi_T(t)t^k\|_{\mathcal{F}L_q^{\gamma}(\R)}
		}{k!}
			\big\|
			\int_{T|\tau|\leq 1}(i\tau)^{k-1}\widehat{\mathbf{F}}(\tau)d\tau 
			\big\|
			_{\mathcal{X}}
			\\
			\leq C\sum_{k\geq 0}\frac{k^{2}T^{k-\gamma+\frac{1}{q'}}}{k!}\cdot T^{-(k-\gamma+\frac{1}{q'})}\|\mathbf{F}\|_{\mathcal{F}L_q^{\gamma-1}(\mathcal{X})}
			\leq C\|\mathbf{F}\|_{\mathcal{F}L_q^{\gamma-1}(\mathcal{X})}\,,
		\end{multline*} 
	where we use the fact that
	\[
	\|\chi_T(t)(t/T)^k \|_{\mathcal{F}L_q^{\gamma}(\R)}\lesssim T^{\frac{1}{q'}-\gamma} \|\chi(t)t^{k}\|_{\mathcal{F}L_q^{\gamma}(\R)}\lesssim T^{\frac{1}{q'}-\gamma}\|\chi(t)t^k\|_{W_t^{2,q'}}\lesssim k^2T^{\frac{1}{q'}-\gamma}\,.
	\]
		This also prove that, provided $\gamma>\frac{1}{q'}$,
		\[
		\|\mathrm{II}\|_{\mathcal{F}L_q^{\gamma}(\mathcal{X})}\leq C\|\chi_T(t)\|_{\mathcal{F}L_q^{\gamma}(\R)}T^{\gamma-\frac{1}{q'}}\|\mathbf{F}\|_{\mathcal{F}L_q^{\gamma-1}(\mathcal{X})}.
		\]
		For III, set
		\[ J_T(t):=\frac{1}{2\pi}\int_{T|\tau|>1} \frac{\widehat{\mathbf{F}}(\tau)}{i\tau}\e^{it\tau}d\tau,
		\]
		then by H\"older, we have
		\[ \|J_T\|_{\mathcal{F}L_q^{\gamma}(\mathcal{X})}\leq C\|\mathbf{F}\|_{\mathcal{F}L_q^{\gamma-1}(\mathcal{X})},\quad \|J_T\|_{\mathcal{F}L_q^{0}(\mathcal{X})}\leq CT^{\gamma}\|\mathbf{F}\|_{\mathcal{F}L_q^{\gamma-1}(\mathcal{X})}.
		\]
		From $\lg\eta\rg^{\gamma}\leq C\lg\eta-\tau\rg^{\gamma}+C\lg\tau\rg^{\gamma}$ for any $\tau\in\R$, we deduce that
		\begin{align*}  \|\mathrm{III}\|_{\mathcal{F}L_q^{\gamma}(\mathcal{X})}\leq & C\|(\lg\cdot\rg^{\gamma}\widehat{\chi_T})\ast \widehat{J}_T(\eta) \|_{L_{\eta}^q\mathcal{X}}+ C\|\widehat{\chi_T}\ast (\lg\cdot\rg^{\gamma}\widehat{J}_T)(\eta) \|_{L_{\eta}^q\mathcal{X}}\\
			\leq & C\|\lg\cdot\rg^{\gamma}\widehat{\chi_T}\|_{L^1}\|J_T\|_{\mathcal{F}L_q^{0}(\mathcal{X})}+C\|\widehat{\chi_T}\|_{L^1}\|J_T\|_{\mathcal{F}L_q^{\gamma}(\mathcal{X})}\\
			\leq &C\|\mathbf{F}\|_{\mathcal{F}L_q^{\gamma-1}(\mathcal{X})}.
		\end{align*}
		where to the second inequality, we used Young's convolution inequality, and 
		\[ \|\lg\cdot\rg^{\gamma}\widehat{\chi_T}\|_{L^1}\lesssim T^{-\gamma}.
		\] 
		The proof of Lemma \ref{inhomogenousabstract} is now complete.
	\end{proof}
	
	\begin{lemme}\label{inhomogeneousabstract2}
		Let $\chi\in C_c^{\infty}(\R), 0<T<1$,  $1<q<\infty, \theta\geq 0, \gamma\in\big(\frac{1}{q'},1\big)$. Then for any  $\mathbf{A}(t)\in\mathcal{S}(\R;\mathcal{L}(\mathcal{X}) )$, the operator $\chi_T(t)\int_0^t\mathbf{A}(t')\d t'\in \mathcal{L}(\mathcal{F}L_q^{\gamma}(\mathcal{X}) )$. Moreover, for any $\mathbf{F}(t)\in \mathcal{F}L_q^{\gamma}(\mathcal{X})$,
		\[
		\Big\|\chi_T(t)\Big(\int_0^{t}\mathbf{A}(t')\d t'\Big)(\mathbf{F}(t)) \Big\|_{\mathcal{F}L_q^{\gamma}(\mathcal{X}) }\lesssim T^{\theta-\gamma+\frac{1}{q'}} \|\mathbf{A}\|_{\mathcal{L}(\mathcal{X},\mathcal{F}L_q^{\gamma-1+\theta}(\mathcal{X}) )} \|\mathbf{F}(t)\|_{\mathcal{F}L_{q}^{\gamma}(\mathcal{X}) }.
		\]  
	\end{lemme}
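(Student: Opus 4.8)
\textbf{Proof plan for Lemma \ref{inhomogeneousabstract2}.}
The plan is to combine the scalar inhomogeneous estimate of Lemma \ref{inhomogenousabstract} with the operator-valued framework by testing against arbitrary input vectors. First I would fix $\mathbf{F}\in\mathcal{F}L_q^{\gamma}(\mathcal{X})$ and observe that the quantity $t\mapsto \big(\int_0^t\mathbf{A}(t')\,\mathrm{d}t'\big)(\mathbf{F}(t))$ is not of the form $\mathcal{I}\mathbf{G}$ for a fixed $\mathbf{G}$, because $\mathbf{F}$ depends on $t$; so a direct application of Lemma \ref{inhomogenousabstract} is not possible. The trick is to first discard the $t$-dependence coming from $\mathbf{F}(t)$: write $\mathbf{F}(t)=\frac{1}{2\pi}\int_{\R}\widehat{\mathbf{F}}(\sigma)\mathrm{e}^{it\sigma}\,\mathrm{d}\sigma$, so that
\[
\chi_T(t)\Big(\int_0^t\mathbf{A}(t')\,\mathrm{d}t'\Big)(\mathbf{F}(t))=\frac{1}{2\pi}\int_{\R}\chi_T(t)\Big(\int_0^t\mathbf{A}(t')\mathrm{e}^{it'\sigma}\,\mathrm{d}t'\Big)\big(\widehat{\mathbf{F}}(\sigma)\big)\mathrm{e}^{i(t-t')\sigma}\ldots
\]
— more cleanly, substitute $\mathbf{A}_\sigma(t'):=\mathbf{A}(t')$ applied after the modulation, i.e. treat $\mathbf{G}_\sigma(t):=\mathbf{A}(t)(\widehat{\mathbf{F}}(\sigma)\mathrm{e}^{it\sigma})$ as a fixed $\mathcal{X}$-valued function of $t$, apply $\mathcal{I}$ to it, and afterwards multiply by $\mathrm{e}^{it\sigma}$ and integrate in $\sigma$ weighted by nothing extra (the modulation is already absorbed). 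Then for each fixed $\sigma$, $\chi_T\mathcal{I}\mathbf{G}_\sigma$ is exactly the object controlled by Lemma \ref{inhomogenousabstract}.

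The second step is to run the scalar estimate uniformly in $\sigma$. By Lemma \ref{inhomogenousabstract} (with the time-cutoff refinement from Proposition \ref{timelocalization}, which upgrades the gain to $T^{\theta-\gamma+\frac1{q'}}$ when one trades $\gamma-1$ for $\gamma-1+\theta$ and invests $\theta$ powers of $T$ via the $\chi_T$ localization),
\[
\|\chi_T\,\mathcal{I}\mathbf{G}_\sigma\|_{\mathcal{F}L_q^{\gamma}(\mathcal{X})}\lesssim T^{\theta-\gamma+\frac1{q'}}\|\mathbf{G}_\sigma\|_{\mathcal{F}L_q^{\gamma-1+\theta}(\mathcal{X})}.
\]
Since $\mathbf{G}_\sigma(t)=\mathbf{A}(t)\big(\mathrm{e}^{it\sigma}\widehat{\mathbf{F}}(\sigma)\big)$ and $\widehat{\mathbf{F}}(\sigma)$ is a fixed vector, the hypothesis $\mathbf{A}\in\mathcal{L}(\mathcal{X},\mathcal{F}L_q^{\gamma-1+\theta}(\mathcal{X}))$ gives $\|\mathbf{A}(\cdot)(v)\|_{\mathcal{F}L_q^{\gamma-1+\theta}(\mathcal{X})}\le \|\mathbf{A}\|\,\|v\|_{\mathcal{X}}$ for any $v$; the modulation $\mathrm{e}^{it\sigma}$ only shifts the time-Fourier variable and costs a harmless factor $\langle\sigma\rangle^{|\gamma-1+\theta|}\lesssim\langle\sigma\rangle$, which I would handle exactly as in part (1) of Proposition \ref{timelocalization}. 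Thus $\|\mathbf{G}_\sigma\|_{\mathcal{F}L_q^{\gamma-1+\theta}(\mathcal{X})}\lesssim \langle\sigma\rangle\,\|\mathbf{A}\|\,\|\widehat{\mathbf{F}}(\sigma)\|_{\mathcal{X}}$. Reassembling via Minkowski's integral inequality in $\sigma$ and then in the time-Fourier variable, and noting that $\gamma$ is chosen so that $\langle\sigma\rangle^{-(\gamma-1)}$ — wait, rather: one reorganizes the double Fourier integral so that the modulation in $\sigma$ recombines with the $\langle\tau\rangle^\gamma$ weight; the net effect is that the $\sigma$-integral of $\langle\sigma\rangle\widehat{\mathbf{F}}(\sigma)$ against the decaying kernel produced by $\chi_T\mathcal{I}$ is bounded by $\|\langle\sigma\rangle^\gamma\widehat{\mathbf{F}}(\sigma)\|_{L_\sigma^q(\mathcal{X})}=\|\mathbf{F}\|_{\mathcal{F}L_q^\gamma(\mathcal{X})}$, because $\gamma>\frac1{q'}$ ensures the requisite integrability. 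This yields the claimed bound.

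The main obstacle is the bookkeeping in the last step: one must be careful that the modulation $\mathrm{e}^{it\sigma}$ applied to the input does not spoil the $T$-gain, and that the double convolution structure (one convolution from $\chi_T$, one from reassembling the $\sigma$-integral) is controlled with constants independent of $T$. The cleanest way to avoid circular reasoning is probably to not decompose $\mathbf{F}$ in Fourier at all, but instead to repeat \emph{verbatim} the three-piece decomposition $\mathrm{I}+\mathrm{II}+\mathrm{III}$ from the proof of Lemma \ref{inhomogenousabstract} with the scalar kernel $1/(i\tau)$ replaced by the operator-valued primitive of $\mathbf{A}$, using $\|\mathbf{A}\|_{\mathcal{L}(\mathcal{X},\mathcal{F}L_q^{\gamma-1+\theta}(\mathcal{X}))}$ wherever the scalar proof used $\|\mathbf{F}\|_{\mathcal{F}L_q^{\gamma-1}(\mathcal{X})}$, and invoking the operator version of part (3) of Proposition \ref{timelocalization} for the $T^\theta$ improvement. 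Either route works; I expect the operator-kernel route to be shorter to write, since it sidesteps introducing an auxiliary integration variable and directly mirrors the already-established scalar argument.
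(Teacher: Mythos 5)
Your fallback route --- repeating the three-piece decomposition $\mathrm{I}+\mathrm{II}+\mathrm{III}$ of Lemma \ref{inhomogenousabstract} with the Fourier expansion performed on $\mathbf{A}$ (so that the kernel $\frac{\e^{it\tau_1}-1}{i\tau_1}$ multiplies $\widehat{\mathbf{A}}(\tau_1)$, while $\chi_T\mathbf{F}$ rides along as a factor) --- is exactly the paper's proof, and it works. Two small corrections there: the paper does not invoke part (3) of Proposition \ref{timelocalization} (that requires $\mathbf{F}(0)=0$, which is not assumed); it uses part (2), which is why the stated bound carries the loss $T^{-\gamma+\frac{1}{q'}}$ relative to Lemma \ref{inhomogenousabstract}. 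The $T^{\theta}$ gain comes from trading $|\tau_1|^{-1}$ against the weight $\langle\tau_1\rangle^{\gamma-1+\theta}$ on the region $T|\tau_1|>1$ (and from the Taylor expansion on $T|\tau_1|\leq1$), not from the time-cutoff lemma.

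Your primary route, however, has a genuine gap. If you Fourier-decompose $\mathbf{F}(t)=\frac{1}{2\pi}\int\widehat{\mathbf{F}}(\sigma)\e^{it\sigma}d\sigma$ and apply the scalar lemma to $\mathbf{G}_\sigma(t)=\mathbf{A}(t)(\widehat{\mathbf{F}}(\sigma))$ for each fixed $\sigma$, you must afterwards multiply $\chi_T\mathcal{I}\mathbf{G}_\sigma$ by $\e^{it\sigma}$ and integrate in $\sigma$. The modulation shifts the time-Fourier variable, so $\|\e^{it\sigma}H\|_{\mathcal{F}L_q^{\gamma}(\mathcal{X})}\leq C\langle\sigma\rangle^{\gamma}\|H\|_{\mathcal{F}L_q^{\gamma}(\mathcal{X})}$, and Minkowski then leaves you with $\int_{\R}\langle\sigma\rangle^{\gamma}\|\widehat{\mathbf{F}}(\sigma)\|_{\mathcal{X}}\,d\sigma$. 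The condition $\gamma>\frac{1}{q'}$ controls $\int\|\widehat{\mathbf{F}}(\sigma)\|_{\mathcal{X}}d\sigma$ by $\|\mathbf{F}\|_{\mathcal{F}L_q^{\gamma}(\mathcal{X})}$, but \emph{not} the weighted integral $\int\langle\sigma\rangle^{\gamma}\|\widehat{\mathbf{F}}(\sigma)\|_{\mathcal{X}}d\sigma$: by H\"older that requires $\mathbf{F}\in\mathcal{F}L_q^{\gamma+s}(\mathcal{X})$ for some $s>\frac{1}{q'}$, i.e.\ a loss of almost a full power of modulation regularity, which defeats the purpose of the lemma. There is no decaying kernel in the $\sigma$ variable to absorb this loss; the $|\tau_1|^{-1}$ decay that saves the argument lives on the $\mathbf{A}$ side, because it is produced by the $t'$-integration $\int_0^t\e^{it'\tau_1}dt'$. (Note also that inserting the modulation \emph{inside} the argument of $\mathbf{A}(t')$, as your definition of $\mathbf{G}_\sigma$ suggests at one point, reassembles to $\int_0^t\mathbf{A}(t')(\mathbf{F}(t'))dt'$, which is not the operator in the statement.) So only the second route closes.
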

\begin{remarque}
The reason for which we make the assumption  $\mathbf{A}(t)\in \mathcal{S}(\R;\mathcal{L}(\mathcal{X}))$ is to avoid the issue of justifying interchange of orders of integral as well as the time Fourier transform. This regularity assumption is always satisfied in our application, as every object is smooth (in time and in space).
\end{remarque}
\begin{remarque}
Comparing to Lemma \ref{inhomogenousabstract}, we have an extra loss $T^{-\gamma+\frac{1}{q'}}$. This is basically due to the fact that $\|\chi_T(t)\mathbf{F}(t) \|_{\mathcal{F}L_q^{\gamma}(\mathcal{X})}\lesssim T^{\frac{1}{q'}-\gamma}\|\mathbf{F}(t)\|_{\mathcal{F}L_q^{\gamma}(\mathcal{X})}$ since $\gamma>\frac{1}{q'}$. 
\end{remarque}
	
	\begin{proof}
			The proof is similar to the proof of Lemma \ref{inhomogenousabstract}. Without loss of generality, we assume that $\mathbf{F}(t)\in\mathcal{S}(\R;\mathcal{X})$ so that all manipulation can be justified.  The function $\mathbf{A}(t')\mathbf{F}(t)$ is well-defined with its time Fourier transform (with respect to $t'$ variable) is $\widehat{\mathbf{A}}(\tau_1)\mathbf{F}(t)$.  Moreover, the Fourier transform in $t$-variable is $\widehat{\mathbf{A}}(\tau_1)\widehat{\mathbf{F}}(\tau)$.
			
			 We write $	\chi_T(t)\int_0^t\mathbf{A}(t')\mathbf{F}(t)\d t'$ as
		\begin{align*}
		&\frac{\chi_T(t)}{2\pi}\int_0^t\d t'\int_{\R}\widehat{\mathbf{A}}(\tau_1)\mathbf{F}(t)\e^{it_1\tau_1}\d \tau_1
			=\frac{\chi_T(t)}{2\pi}\int_{\R}
		\frac{\e^{it\tau_1}-1}{i\tau_1}
		\widehat{\mathbf{A}}(\tau_1)\mathbf{F}(t)
		\d \tau_1\\
			=&\underbrace{\frac{\chi_T(t)}{2\pi}\int_{T|\tau_1|\leq 1}\frac{\widehat{\mathbf{A}}(\tau_1)\mathbf{F}(t)}{i\tau_1}\sum_{k=1}^{\infty}\frac{(i\tau_1t)^k}{k!}\d\tau_1}_{\mathrm{I}}-\underbrace{\frac{\chi_T(t)}{2\pi}\int_{T|\tau_1|>1}\widehat{\mathbf{A}}(\tau_1)\mathbf{F}(t)\frac{1}{i\tau_1}\d\tau_1}_{\mathrm{II}} \\
			+&\underbrace{\frac{\chi_{T}(t)}{2\pi}\int_{T|\tau_1|>1}\widehat{\mathbf{A}}(\tau_1)\mathbf{F}(t)\frac{\e^{it\tau_1}}{i\tau_1}d\tau_1}_{\mathrm{III}}.
		\end{align*}

\noi
$\bullet${\bf Estimate for I}:  	Note that
\begin{align*} 
	&\int_{T|\tau_1|\leq 1}|(i\tau_1)^{k-1}|\|\lg\tau\rg^{\gamma}\widehat{\mathbf{A}}(\tau_1) \mathcal{F}_{t}(\chi_T(t)t^k\mathbf{F})(\tau)\|_{\mathcal{X}}\d\tau_1\\  \leq &T^{-k+1}\|\lg\tau\rg^{\gamma}\lg\tau_1\rg^{\gamma-1+\theta}\widehat{\mathbf{A}}(\tau_1)\mathcal{F}_{t}(\chi_T(t)t^k\mathbf{F})(\tau)\|_{L_{\tau_1}^q(\mathcal{X})}\Big(\int_{T|\tau_1|\leq 1}\lg\tau_1\rg^{(1-\gamma-\theta)q'}\d\tau_1\Big)^{\frac{1}{q'}}\\ \lesssim & T^{\gamma-\frac{1}{q'}+\theta}\|\lg\tau\rg^{\gamma}\lg\tau_1\rg^{\gamma-1+\theta}\widehat{\mathbf{A}}(\tau_1)\mathcal{F}_{t}(\chi_T(t))(t/T)^k\mathbf{F}))(\tau)\|_{L_{\tau_1}^q(\mathcal{X})}.
\end{align*}
Set $\mathbf{G}_k(t)= \chi_T(t)(t/T)^k\mathbf{F}(t)$. From Remark \ref{remtimelocalization} and Proposition \ref{timelocalization},
	\[ \|\mathbf{G}_k \|_{\mathcal{F}L_q^{\gamma}(\mathcal{X}) }\lesssim T^{\frac{1}{q'}-\gamma} \|\chi(t)t^k\|_{W_t^{10,1}} \lesssim  k^{10}T^{\frac{1}{q'}-\gamma}\|\mathbf{F}\|_{\mathcal{F}L_q^{\gamma}(\mathcal{X}) },
	\]
where the implicit constant is independent of $k$. 
Therefore,
\begin{align*}
\| \mathrm{I}\|_{\mathcal{F}L_q^{\gamma}(\mathcal{X})} \lesssim & T^{\gamma-\frac{1}{q'}+\theta}
\sum_{k=1}^{\infty}\frac{\big\| \| \lg\tau\rg^{\gamma}\lg\tau_1\rg^{\gamma-1+\theta}\widehat{\mathbf{A}}(\tau_1)\widehat{\mathbf{G}_k}(\tau)\|_{\mathcal{X}}  \big \|_{L_{\tau,\tau_1}^q }}{k!} \\
=& T^{\gamma-\frac{1}{q'}+\theta}
\sum_{k=1}^{\infty}
\frac{\big\| \| \lg\tau\rg^{\gamma}\lg\tau_1\rg^{\gamma-1+\theta}\widehat{\mathbf{A}}(\tau_1)\widehat{\mathbf{G}_k}(\tau)\|_{L_{\tau_1}^q\mathcal{X}}  \big \|_{L_{\tau}^q }}{k!}\\
\leq & T^{\gamma-\frac{1}{q'}+\theta}
\sum_{k=1}^{\infty}\frac{ \big\|
\|\mathbf{A}\|_{\mathcal{X}\rightarrow \mathcal{F}L_q^{\gamma-1+\theta} } \|\lg\tau\rg^{\gamma}\widehat{\mathbf{G}_k}(\tau) \|_{\mathcal{X}}
\big\|_{L_{\tau}^q}
}{k!}\\
\lesssim &T^{\theta} \|\mathbf{A}\|_{\mathcal{X}\rightarrow \mathcal{F}L_q^{\gamma-1+\theta} }
\sum_{k=1}^{\infty}\frac{k^{10}\|\mathbf{F}\|_{\mathcal{F}L_q^{\gamma}(\mathcal{X})} }{k!}\\
\lesssim &
T^{\theta}\|\mathbf{A}\|_{\mathcal{X}\rightarrow \mathcal{F}L_q^{\gamma-1+\theta} }
\|\mathbf{F}\|_{\mathcal{F}L_q^{\gamma}(\mathcal{X})},
\end{align*}
as desired. 
\medskip

\noi
$\bullet${\bf Estimate for II}: \begin{align*} \int_{T|\tau_1|>1}&\frac{\|
			\lg\tau \rg^{\gamma}
			\mathbf{A}(\tau_1)\widehat{\chi_T\mathbf{F}}(\tau)\|_{\mathcal{X}}}{|\tau_1| }\d\tau_1\\ \leq &\|\lg\tau\rg^{\gamma}\lg\tau_1\rg^{\gamma-1+\theta}\widehat{\mathbf{A}}(\tau_1)\widehat{\chi_T\mathbf{F}}(\tau) \|_{L_{\tau_1}^q(\mathcal{X})}\Big(\int_{T|\tau_1|>1}\lg\tau_1\rg^{-q'(\gamma+\theta)}\d\tau_1\Big)^{\frac{1}{q'}}\\
	\lesssim & T^{\gamma-\frac{1}{q'}+\theta} \|\lg\tau\rg^{\gamma} \lg\tau_1\rg^{\gamma-1+\theta}\widehat{\mathbf{A}}(\tau_1)\widehat{\chi_T\mathbf{F}}(\tau) \|_{L_{\tau_1}^q(\mathcal{X})},
\end{align*}
provided that $\gamma>\frac{1}{q'}$. The right hand side can be bounded by
	\[
	 T^{\gamma-\frac{1}{q'}+\theta}\|\mathbf{A}\|_{\mathcal{X}\rightarrow \mathcal{F}L_q^{\gamma-1+\theta} }\|\lg\tau\rg^{\gamma}
\widehat{\chi_T\mathbf{F}}(\tau) \|_{\mathcal{X}}.
	\]
Taking the $L_{\tau}^q$-norm, we obtain the bound
	\[
	T^{\gamma-\frac{1}{q'}+\theta}
\|\mathbf{A}\|_{\mathcal{X}\rightarrow \mathcal{F}L_q^{\gamma-1+\theta} }\|\chi_T\mathbf{F} \|_{\mathcal{F}L_q^{\gamma}(\mathcal{X})}\lesssim T^{\theta}
\|\mathbf{A}\|_{\mathcal{X}\rightarrow \mathcal{F}L_q^{\gamma-1+\theta} }
\|\mathbf{F}\|_{\mathcal{F}L_q^{\gamma}(\mathcal{X})},
	\]
where we have used (2) of Proposition \ref{timelocalization}.
\medskip

\noi
$\bullet${\bf Estimate of III :}
	Finally, for III,  we denote by 
	\[ \mathbf{J}(\tau)=\frac{1}{2\pi}\int_{T|\tau_1|>1}\frac{\widehat{\mathbf{A}}(\tau_1)\widehat{\chi_T\mathbf{F}}(\tau-\tau_1)   }{i\tau_1}\d\tau_1
	\]
and we need to control $\|\lg\tau\rg^{\gamma} \mathbf{J}(\tau) \|_{L_{\tau}^q\mathcal{X} }$. First,
\begin{align*}
\lg\tau\rg^{\gamma}\|\mathbf{J}(\tau)\|_{\mathcal{X}}\leq & \int_{T|\tau_1|>1}\lg\tau\rg^{\gamma}\lg\tau_1\rg^{-1}\|\widehat{\mathbf{A}}(\tau_1)\widehat{\chi_T\mathbf{F}}(\tau-\tau_1)\|_{\mathcal{X}}\d \tau_1.
\end{align*}
Next, we observe that if $T|\tau_1|>1$ and $|\tau|\lesssim |\tau_1|$,
$ \lg\tau\rg^{\gamma}\lg\tau_1\rg^{-1}\lesssim T^{\theta}\lg\tau_1\rg^{\gamma-1+\theta}.
$
If  $|\tau|\gg |\tau_1|$, we have $|\tau|\lesssim |\tau-\tau_1|$,  hence
$ \lg\tau\rg^{\gamma}\lg\tau_1\rg^{-1}\sim T^{\theta}\lg\tau-\tau_1\rg^{\gamma}\lg\tau_1\rg^{\gamma-1+\theta}\lg\tau_1\rg^{-\gamma}.
$
Thus
\begin{align}\label{control:mathbfJ}
\lg\tau\rg^{\gamma}\|\mathbf{J}(\tau)\|_{\mathcal{X}}\lesssim & T^{\theta}\int_{T|\tau_1|>1} \|
\lg\tau_1\rg^{\gamma-1+\theta}
\widehat{\mathbf{A}}(\tau_1)\widehat{\chi_T\mathbf{F}}(\tau-\tau_1) \|_{\mathcal{X}}\d \tau_1\notag  \\
+&T^{\theta}\int_{T|\tau_1|>1}\| \lg\tau_1\rg^{\gamma-1+\theta}\lg\tau-\tau_1\rg^{\gamma}
\widehat{\mathbf{A}}(\tau_1)\widehat{\chi_T\mathbf{F}}(\tau-\tau_1)
\|_{\mathcal{X}}\cdot \lg\tau_1\rg^{-\gamma} \d \tau_1.
\end{align}
By changing variables, the first term on the right hand side can be written as
	\[
	 T^{\theta}\int_{T|\tau-\tau_1|>1}\|\lg\tau-\tau_1\rg^{\gamma-1+\theta}\widehat{\mathbf{A}}(\tau-\tau_1)\widehat{\chi_T\mathbf{F}}(\tau_1) \|_{\mathcal{X}}\d \tau_1\,.
	\]
By Minkowski, its $L_{\tau}^q$-norm can be bounded by
\begin{align*}
 &T^{\theta}\int_{\R}\|\lg\tau-\tau_1\rg^{\gamma-1+\theta}\widehat{\mathbf{A}}(\tau-\tau_1) \widehat{\chi_T\mathbf{F}}(\tau_1) \|_{L_{\tau}^q\mathcal{X}}\d \tau_1\\
 = & T^{\theta}\int_{\R}\|\lg\tau\rg^{\gamma-1+\theta}\widehat{\mathbf{A}}(\tau) \lg\tau_1\rg^{\gamma}\widehat{\chi_T\mathbf{F}}(\tau_1) \|_{L_{\tau}^q\mathcal{X}}\lg\tau_1\rg^{-\gamma}\d \tau_1 \\
 \leq & T^{\theta}\|\mathbf{A}\|_{\mathcal{X}\rightarrow \mathcal{F}L_q^{\gamma-1+\theta}(\mathcal{X}) }\|\chi_T\mathbf{F}\|_{\mathcal{F}L_q^{\gamma}(\mathcal{X})}\|\lg\tau_1\rg^{-\gamma} \|_{L_{\tau_1}^{q'}}\\
 \lesssim & T^{\theta+\frac{1}{q'}-\gamma}\|\mathbf{A}\|_{\mathcal{X}\rightarrow \mathcal{F}L_q^{\gamma-1+\theta}(\mathcal{X}) } \|\mathbf{F}\|_{\mathcal{F}L_q^{\gamma}(\mathcal{X})},
\end{align*}
where to the last step, we have used H\"older and (2) in Proposition \ref{timelocalization}.

Similarly, to control the $L_{\tau}^q$-norm of the second term on the right hand side of \eqref{control:mathbfJ}, we apply H\"older and Minkowski to get 
\begin{align*}
&T^{\theta}\big\| \|\lg\tau-\tau_1\rg^{\gamma-1+\theta} \lg\tau_1\rg^{\gamma}\widehat{\mathbf{A}}(\tau-\tau_1)\widehat{\chi_T\mathbf{F}}(\tau_1) \|_{L_{\tau_1}^q\mathcal{X}}\|\lg\tau-\tau_1\rg^{-\gamma}\mathbf{1}_{T|\tau-\tau_1|>1 } \|_{L_{\tau_1}^{q'} }
\big\|_{L_{\tau}^q}\\
\lesssim & T^{\theta+\gamma-\frac{1}{q'}}\big\|
\|\lg\tau-\tau_1\rg^{\gamma-1+\theta}\widehat{\mathbf{A}}(\tau-\tau_1)
\lg\tau_1\rg^{\gamma}
\widehat{\chi_T\mathbf{F}}(\tau_1) \|_{L_{\tau}^q\mathcal{X}}
  \big\|_{L_{\tau_1}^q} \\
 \lesssim & T^{\theta+\gamma-\frac{1}{q'}}\|\mathbf{A} \|_{\mathcal{X}\rightarrow \mathcal{F}L_q^{\gamma-1+\theta}(\mathcal{X}) }\|\chi_T\mathbf{F} \|_{\mathcal{F}L_q^{\gamma}}\\
 \lesssim & T^{\theta}
 \|\mathbf{A} \|_{\mathcal{X}\rightarrow \mathcal{F}L_q^{\gamma-1+\theta}(\mathcal{X}) }\|\mathbf{F} \|_{\mathcal{F}L_q^{\gamma}},
\end{align*}
where we used (2) in Proposition \ref{timelocalization} in the last step. This completes the proof of Lemma \ref{inhomogeneousabstract2}.\end{proof}

	\section{Some elementary facts}

	\begin{lemme}\label{lem:unitary}
		Let $\mathcal{A}(t)$ be a operator-valued function that is self-adjoint on a  Hilbert space $\mathcal{X}$ for every $t\in\R$. Let $\mathcal{H}(t)$ be the solution operator of the well-posed Cauchy problem
		\[
		i\partial_t\mathcal{H}(t)=\mathcal{A}(t)\mathcal{H}(t),\quad \mathcal{H}(0)=\mathrm{Id}\,. 
		\]	
		Then $\mathcal{H}(t)$ is unitary. 
	\end{lemme}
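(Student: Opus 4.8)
The plan is to show that $\frac{d}{dt}\langle \mathcal{H}(t)u \mid \mathcal{H}(t)v\rangle_{\mathcal{X}} = 0$ for all $u,v\in\mathcal{X}$, so that the inner product is preserved, and then to promote this isometry property to unitarity using invertibility of $\mathcal{H}(t)$. First I would observe that, since the Cauchy problem $i\partial_t\mathcal{H}(t) = \mathcal{A}(t)\mathcal{H}(t)$, $\mathcal{H}(0)=\mathrm{Id}$ is well-posed, the solution operator $\mathcal{H}(t)$ is a bounded operator depending (strongly) differentiably on $t$, and the same is true for the backward problem, so that $\mathcal{H}(t)$ is invertible for every $t$ (its inverse being the solution operator of the Cauchy problem run from time $t$ back to time $0$). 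This invertibility is what upgrades "isometry" to "unitary": a surjective isometry of a Hilbert space is unitary.

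Next I would carry out the differentiation. Fix $u,v\in\mathcal{X}$ and set $\phi(t) = \langle \mathcal{H}(t)u \mid \mathcal{H}(t)v\rangle$. Differentiating and using the equation $\partial_t\mathcal{H}(t) = -i\mathcal{A}(t)\mathcal{H}(t)$,
\[
\phi'(t) = \langle \partial_t\mathcal{H}(t)u \mid \mathcal{H}(t)v\rangle + \langle \mathcal{H}(t)u \mid \partial_t\mathcal{H}(t)v\rangle = \langle -i\mathcal{A}(t)\mathcal{H}(t)u \mid \mathcal{H}(t)v\rangle + \langle \mathcal{H}(t)u \mid -i\mathcal{A}(t)\mathcal{H}(t)v\rangle.
\]
Pulling out the scalars (being careful with the conjugation in the first slot, where $-i$ becomes $+i$) and using that $\mathcal{A}(t)$ is self-adjoint, the two terms become $i\langle \mathcal{A}(t)\mathcal{H}(t)u \mid \mathcal{H}(t)v\rangle$ and $-i\langle \mathcal{H}(t)u \mid \mathcal{A}(t)\mathcal{H}(t)v\rangle = -i\langle \mathcal{A}(t)\mathcal{H}(t)u \mid \mathcal{H}(t)v\rangle$, which cancel. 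Hence $\phi'(t)\equiv 0$, so $\phi(t) = \phi(0) = \langle u\mid v\rangle$ for all $t$. Therefore $\mathcal{H}(t)^\ast\mathcal{H}(t) = \mathrm{Id}$, i.e. $\mathcal{H}(t)$ is an isometry, and combined with the invertibility noted above we get $\mathcal{H}(t)^\ast\mathcal{H}(t) = \mathcal{H}(t)\mathcal{H}(t)^\ast = \mathrm{Id}$, so $\mathcal{H}(t)$ is unitary.

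The only genuine subtlety — and the step I would be most careful about — is the regularity bookkeeping needed to justify the differentiation of $\phi(t)$: one needs $t\mapsto\mathcal{H}(t)u$ to be strongly differentiable with derivative $-i\mathcal{A}(t)\mathcal{H}(t)u$, and one needs the product rule for the inner product, which follows once both factors are strongly differentiable and the inner product is continuous. In our application $\mathcal{X} = E_n$ is finite-dimensional and $\mathcal{A}(t) = 2\pi_n(\,\cdot\,|u_{N/2}(t)|^{\diamond 2})$ (up to adding the scalar $\lambda_n^2$, which only contributes a self-adjoint multiple of the identity and does not affect the argument) depends smoothly on $t$, so all of this is automatic and the well-posedness hypothesis is trivially satisfied; in the general Banach/Hilbert setting one simply reads these facts off from the assumed well-posedness of the Cauchy problem. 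No other obstacle is expected.
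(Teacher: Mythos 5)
Your proof is correct and follows essentially the same route as the paper: differentiate the inner product along the flow and use the self-adjointness of $\mathcal{A}(t)$ to see that it is conserved. You are in fact slightly more careful than the paper on the final step, since you justify passing from the isometry property $\mathcal{H}(t)^{*}\mathcal{H}(t)=\mathrm{Id}$ to full unitarity via invertibility of $\mathcal{H}(t)$ (automatic in the finite-dimensional application to $E_{n}$), a point the paper's proof glosses over.
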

	\begin{proof} 
		Note that $\mathcal{H}(t)$ is a linear operator for any $t\in\R$. Since $\mathcal{H}(0)=\mathrm{Id}$, it suffices to show that  $\mathcal{H}(t)$ preserves the norm of the Hilbert-space. the For any $f\in\mathcal{X}$, consider
		\begin{align*}
			\frac{d}{dt}\|\mathcal{H}(t)f\|_{\mathcal{X}}^2=2\Re(\partial_t\mathcal{H}(t)f,\mathcal{H}(t)f)_{\mathcal{X}}=2\Im(\mathcal{A}(t)\mathcal{H}(t)f,\mathcal{H}(t)f)_{\mathcal{X}}=0,
		\end{align*}
		since $(\mathcal{A}(t))^*=\mathcal{A}(t)$ for all $t\in\R$. Therefore, $\|\mathcal{H}(t)f\|_{\mathcal{X}}=\|\mathcal{H}(0)f\|_{\mathcal{X}}=\|f\|_{\mathcal{X}}$, for any $f\in\mathcal{X}$. This leads to $\mathcal{H}(t)\mathcal{H}(t)^*=\mathrm{Id}$.
	\end{proof}
	
	\begin{lemme}\label{lem:unitaryextend}
		Let $\mathcal{A}(t)$ be in Lemma \ref{lem:unitary}. Let $0<T<\frac{1}{2}$. Assume that $\chi\in C_c^{\infty}(\R)$ is a bump function that equals to $1$ on $|t|\leq \frac{1}{2}$ and vanishes for $|t|\geq 1$. Let  $\mathcal{H}^{\dag}(t)$ and $\mathcal{G}^{\dag}(t)$ be the unique solutions of the equations
		\begin{align*}
			&\mathcal{H}^{\dag}(t)=\chi(t)\mathrm{Id}-i\chi\big(\frac{t}{2T}\big)\int_0^t\mathcal{A}(t')\mathcal{H}^{\dag}(t')dt',\\
			&\mathcal{G}^{\dag}(t)=\chi(t)\mathrm{Id}+i\chi\big(\frac{t}{2T}\big)\int_0^t\mathcal{A}(t')\mathcal{H}^{\dag}(t')\mathcal{G}^{\dag}(t)dt'.
		\end{align*}
		Then for all $|t|\leq T$, $\mathcal{H}^{\dag}(t)$ is unitary and 
		$ \mathcal{G}^{\dag}(t)=(\mathcal{H}^{\dag}(t))^*.
		$
	\end{lemme}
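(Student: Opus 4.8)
\textbf{Proof plan for Lemma \ref{lem:unitaryextend}.}

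The plan is to reduce the statement to the abstract unitarity result in Lemma \ref{lem:unitary} on the interval $[-T,T]$, where both cutoffs $\chi(t)$ and $\chi(t/(2T))$ are identically $1$ (since $0<T<1$ so $\chi(t)\equiv1$ on $[-T,T]\subset[-\tfrac12,\tfrac12]$, and $\chi(t/(2T))\equiv1$ on $[-T,T]$). On that interval the defining integral equations become
\begin{align*}
\mathcal{H}^{\dag}(t)&=\mathrm{Id}-i\int_0^t\mathcal{A}(t')\mathcal{H}^{\dag}(t')\,dt',\\
\Gamma^{\dag}(t)&=\mathrm{Id}+i\int_0^t\mathcal{A}(t')\mathcal{H}^{\dag}(t')\Gamma^{\dag}(t)\,dt'.
\end{align*}
The first is precisely the Cauchy problem $i\partial_t\mathcal{H}^{\dag}(t)=\mathcal{A}(t)\mathcal{H}^{\dag}(t)$, $\mathcal{H}^{\dag}(0)=\mathrm{Id}$, so Lemma \ref{lem:unitary} applies verbatim and gives that $\mathcal{H}^{\dag}(t)$ is unitary for all $|t|\leq T$. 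The only subtlety is that these fixed-point equations are posed globally with cutoffs; I would first note that well-posedness (existence and uniqueness of $\mathcal{H}^{\dag},\Gamma^{\dag}$ as smooth operator-valued functions) follows from a standard Picard iteration using $\mathcal{A}(t)\in C^\infty$ and the compact support of the cutoffs, so in particular the restrictions to $[-T,T]$ are the unique solutions of the cutoff-free equations displayed above.

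Next I would identify $\Gamma^{\dag}(t)$ with $(\mathcal{H}^{\dag}(t))^{*}$ on $[-T,T]$. The cleanest route is to differentiate: from $i\partial_t\mathcal{H}^{\dag}=\mathcal{A}\mathcal{H}^{\dag}$ and self-adjointness of $\mathcal{A}(t)$ one gets $-i\partial_t(\mathcal{H}^{\dag})^{*}=(\mathcal{H}^{\dag})^{*}\mathcal{A}$, i.e. $i\partial_t(\mathcal{H}^{\dag})^{*}=-(\mathcal{H}^{\dag})^{*}\mathcal{A}$. On the other hand, differentiating the integral equation for $\Gamma^{\dag}$ on $[-T,T]$ — after rewriting it, using invertibility of $\mathrm{Id}-i\int_0^t\mathcal{A}\mathcal{H}^{\dag}$ (which equals $\mathcal{H}^{\dag}(t)$, hence is unitary, hence invertible) — shows $\Gamma^{\dag}(t)=(\mathcal{H}^{\dag}(t))^{-1}=(\mathcal{H}^{\dag}(t))^{*}$, the last equality by the unitarity just established. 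Alternatively, and perhaps more transparently, I would directly verify that $\Psi(t):=(\mathcal{H}^{\dag}(t))^{*}=(\mathcal{H}^{\dag}(t))^{-1}$ solves the integral equation defining $\Gamma^{\dag}(t)$ on $[-T,T]$: starting from $\mathcal{H}^{\dag}(t)\Psi(t)=\mathrm{Id}$ and $\mathcal{H}^{\dag}(t)=\mathrm{Id}-i\int_0^t\mathcal{A}\mathcal{H}^{\dag}\,dt'$, substitute to obtain $\Psi(t)=\mathrm{Id}+i\int_0^t\mathcal{A}(t')\mathcal{H}^{\dag}(t')\Psi(t)\,dt'$, which is exactly the $\Gamma^{\dag}$-equation; uniqueness of that fixed point then forces $\Gamma^{\dag}=\Psi=(\mathcal{H}^{\dag})^{*}$ on $[-T,T]$.

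The main obstacle is purely bookkeeping rather than conceptual: one must be careful that the cutoff $\chi(t/(2T))$ multiplies the entire Duhamel integral (not just the integrand), so the equation for $\mathcal{H}^{\dag}$ away from $[-T,T]$ is genuinely a different, non-multiplicative equation — but this never enters because we only claim unitarity on $[-T,T]$, where the cutoffs are inert. I would also make sure to record that the invertibility of $\mathcal{H}^{\dag}(t)$ used in the argument for $\Gamma^{\dag}$ is legitimate precisely because of the unitarity obtained from Lemma \ref{lem:unitary}, so the two halves of the proof are done in the correct order: first unitarity of $\mathcal{H}^{\dag}$, then the identification $\Gamma^{\dag}=(\mathcal{H}^{\dag})^{*}$.
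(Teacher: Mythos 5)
Your argument is essentially the paper's own proof: restrict to $|t|\leq T$, where both cutoffs are inert, so that $\mathcal{H}^{\dag}$ coincides with the solution of the Cauchy problem of Lemma \ref{lem:unitary} (hence is unitary), and then identify $\Gamma^{\dag}$ with $(\mathcal{H}^{\dag})^{*}$ by checking that the latter satisfies the same integral equation and invoking uniqueness. The only (shared with the paper) imprecision is your claim that $0<T<1$ gives $[-T,T]\subset[-\tfrac12,\tfrac12]$; this actually requires $T\leq\tfrac12$, which holds in every application of the lemma since $T=R^{-10/\theta}\ll 1$.
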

	\begin{proof} 
		Note that $\mathcal{H}^{\dag}(0)=\mathrm{Id}$, and that $\mathcal{H}^{\dag}(t)$ solves the equation
		\[
		i\partial_t\mathcal{H}^{\dag}(t)=
		\chi'(t)\mathrm{Id}
		+
		\chi(\frac{t}{2T})\mathcal{A}(t)\mathcal{H}^{\dag}(t)-\frac{i}{2T}\chi'\big(\frac{t}{2T}\big)\int_0^t\mathcal{A}(t')\mathcal{H}^{\dag}(t')dt'\,.
		\]
		We observe that for $|t|\leq T$, $\mathcal{H}^{\dag}(t)$ solves the same equation as $\mathcal{H}(t)$ with the same initial data. By uniqueness of the linear equation, we have $\mathcal{H}^{\dag}(t)=\mathcal{H}(t)$ for $|t|\leq T$. The unitarity of $\mathcal{H}(t)$ was proved in Lemma \ref{lem:unitary}. For the $\mathcal{G}^{\dag}(t)$ part, as it solves the same integral equation
		\[
		\mathcal{G}^{\dag}(t)=\mathrm{Id}+i\int_0^t\mathcal{A}(t')\mathcal{H}(t')\mathcal{G}^{\dag}(t)dt'
		\]
		as $(\mathcal{H}(t))^*$
		for $|t|\leq T$, by uniqueness, we have $\mathcal{G}(t)^{\dag}=(\mathcal{H}^{\dag}(t))^{\ast}$.
	\end{proof}
	
	\begin{lemme}
		\label{lemme:law}
		Let $X\sim \mathscr{N}_{\R^n}(0 ; \operatorname{Id})$ be a random Gaussian vector on a probability space $(\Omega ; \mathcal{F}, \mathbb{P})$. Assume that $\mathcal{A}\subseteq \mathcal{F}$ is a sub $\sigma$-algebra, and that $X$ is independent of $\mathcal{A}$. Let $A\in \mathcal{O}(n)$ be a random orthogonal matrix that is $\mathcal{A}$-measurable. Then 
		\[
		\mathscr{L}(AX) = \mathscr{L}(X)\,.
		\]
	\end{lemme}
	\begin{proof} We compute the characteristic function of $AX$: given $\xi\in\R^n$, 
		\[
		\varphi_{AX}(\xi) = \mathbb{E}[e^{i\langle A(\omega)X , \xi \rangle} ] =  \mathbb{E}[e^{i\langle X , A^\ast(\omega)\xi \rangle} ] =  \mathbb{E}[\mathbb{E}[e^{i\langle X , A^\ast(\omega)\xi \rangle}\mid \mathcal{A} ]]\,.
		\]
		Since $A^\ast\xi$ is $\mathcal{A}$-measurable and since $X$ is independent of $\mathcal{A}$, we have that almost-surely in $\omega$, 
		\begin{multline*}
			\mathbb{E}[e^{i\langle X , A^\ast(\omega)\xi \rangle}\mid \mathcal{A} ] =  \varphi_X(A^\ast(\omega) \xi) = e^{-\frac 12\langle A^\ast(\omega)\xi , A^\ast(\omega)\xi \rangle} = e^{-\frac 12\langle AA^\ast(\omega) \xi , \xi \rangle} 
			= e^{-\frac 12 |\xi|^2 } = \varphi_X(\xi)\,.
		\end{multline*}
		Hence, 
		\[
		\varphi_{AX}(\xi) = \varphi_X(\xi)\,.
		\]
		Since this holds for every $\xi\in\R^n$, we conclude that $\mathscr{L}(AX) = \mathscr{L}(X) = \mathscr{N}_{\R^n}(0 , \operatorname{Id})$.
	\end{proof}

\end{document}